\setlist[enumerate]{label=(\arabic*)}
\newtheoremstyle{myremark}
{10pt}
{20pt}
{ }
{ }
{\bfseries}
{ }
{ }
{\thmname{#1}\thmnumber{ #2}\thmnote{ (#3)}}
\newtheorem{thm}{Theorem}[section]
\newtheorem{cor}[thm]{Corollary}
\newtheorem{lem}[thm]{Lemma}
\newtheorem{prop}[thm]{Proposition}
\theoremstyle{definition}
\newtheorem{defn}[thm]{Definition}
\theoremstyle{myremark}
\newtheorem{remark}[thm]{Remark}
\newtheorem{assumption}[thm]{Assumption}
\numberwithin{equation}{section}
\numberwithin{thm}{section}
\newcommand{\norm}[1]{\left\Vert#1\right\Vert}
\newcommand{\abs}[1]{\left\vert#1\right\vert}
\newcommand{\goesto}{\rightarrow}
\newcommand{\R}{{\mathbb{R}}}
\newcommand{\N}{{\mathbb{N}}}
\newcommand{\D}{{\mathcal{D}}}
\newcommand{\F}{{\mathcal{F}}}
\newcommand{\G}{{\mathcal{G}}}
\newcommand{\A}{{\mathcal{A}}}
\newcommand{\Schw}{{\mathcal{S}}}
\newcommand{\lap}{\Delta}
\newcommand{\pd}{\partial}
\newcommand{\dv}{\mbox{div }}
\newcommand{\diam}[1]{\mbox{diam}\left(#1\right)}
\newcommand{\supp}[1]{\mbox{supp}\left(#1\right)}
\DeclareMathOperator*{\essliminf}{ess \ lim inf}
\def\be{\begin{equation}}
\def\ee{\end{equation}}
\def\d{\delta}
\def\a{\alpha}
\def\e{\epsilon}
\def\b{\beta}
\def\l{\lambda}
\def\O{\Omega}
\def\w{\omega}
\def\n{\nu}
\begin{document}

	\title{$p$-HARMONIC FUNCTIONS IN $\R^N_+$ WITH NONLINEAR NEUMANN BOUNDARY CONDITIONS AND MEASURE DATA}
	\author{Natham Matias Aguirre Qui\~{n}onez}
	\date{December 2018}
	
\pagenumbering{gobble}

\begin{wrapfigure}{L}{0.08\textwidth}
		\includegraphics[scale=0.7]{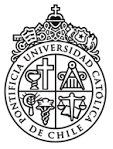} 
\end{wrapfigure}

\hfill

{\small PONTIFICIA UNIVERSIDAD CAT\'{O}LICA DE CHILE

FACULTAD DE MATEM\'{A}TICAS

\hspace{2.5\parindent}DEPARTAMENTO DE MATEM\'{A}TICA}

\vspace{0.125 \textheight}
\renewcommand{\setstretch}{2}
\begin{center}
	{$p$-HARMONIC FUNCTIONS IN $\R^N_+$ WITH NONLINEAR NEUMANN BOUNDARY CONDITIONS AND MEASURE DATA\par}
\end{center}
\renewcommand{\setstretch}{1.5}
\vspace{2\baselineskip}
\begin{center}
	POR
	
	NATHAM MATIAS AGUIRRE QUI\~{N}ONEZ
	
Tesis presentada a la Facultad de Matem\'{a}ticas de la Pontificia Universidad Cat\'{o}lica de Chile para optar al grado acad\'{e}mico de Doctor en Matem\'{a}tica
\end{center}
\vspace{2\baselineskip}
\begin{center}
	\textit{Directores de Tesis:}
	
	Marta Eugenia Garc\'ia-Huidobro Campos, Pontificia Universidad Cat\'{o}lica de Chile. \\	
	Laurent V\'{e}ron, Universit\'{e} Fran\c{c}ois Rabelais, Tours, Francia. 
	
	\textit{Comisi\'{o}n informante:}
	
	Gueorgui Dimitrov Raykov, Pontificia Universidad Cat\'{o}lica de Chile.\\
	Ignacio Guerra Benavente, Universidad de Santiago de Chile.
\end{center}
\vspace{1\baselineskip}
\begin{center}
	Diciembre, 2018\\
	Santiago, Chile.
\end{center}

\newpage

\pagenumbering{gobble}

\noindent\textcopyright 2018, Natham Matias Aguirre Qui\~{n}onez.\par



\newpage

\pagenumbering{roman}
\addcontentsline{toc}{chapter}{Dedicatoria}

\hphantom{1pt}
\vfill
\begin{flushright}
	\textit{A mi amado esposo,} \par \textit{y a nuestro futuro juntos.}
\end{flushright}
\par
\newpage

\addcontentsline{toc}{chapter}{Agradecimientos}

\begin{center}
	AGRADECIMIENTOS
\end{center}
\par

Esta tesis fue parcialmente financiada por el programa de colaboraci\'{o}n ECOS-CONICYT C14E08, por CONICYT-PCHA/Doctorado Nacional/2014-21140322, y por la Vicerrector\'{\i}a de Investigaci\'{o}n de la Pontificia Universidad Cat\'{o}lica de Chile a trav\'{e}s de sus programas `Estad\'{\i}a en el Extranjero para Tesistas de Doctorado' y `Beneficio de Residencia para Alumnos en V\'{\i}as de Graduaci\'{o}n'. 
Agradezco tambi\'{e}n al Institut Henri-Poincar\'{e} por facilitarme un agradable lugar de trabajo en mis visitas a Paris. 
\newpage


\tableofcontents
\newpage


\addcontentsline{toc}{chapter}{Resumen}

\begin{center}
	\textbf{Resumen}
\end{center}

En este trabajo proponemos y estudiamos un concepto de soluci\'{o}n renormalizada al problema
\[
\begin{cases}
-\lap_pu=0 & \mbox{ en } \R^N_+ \\
\abs{\nabla u}^{p-2}u_\n + g(u) = \mu & \mbox{ en } \pd\R^N_+
\end{cases}
\]
donde $1<p\leq N$, $N\geq 2$, $\R^N_+=\left\lbrace(x',x_N):x'\in\R^{N-1}, x_N>0\right\rbrace $, $u_\n$ es la derivada normal de $u$, $\mu$ es una medida de Radon acotada, y $g:\R\goesto\R$ es un t\'ermino no lineal. Obtenemos resultados de estabilidad y, haciendo uso de la simetr\'{\i}a del dominio, estimaciones en hiperplanos, y m\'{e}todos de potenciales, mostramos variados resultados de existencia. En particular, mostramos existencia de soluciones para problemas con t\'{e}rminos no lineales del tipo sumidero tanto en el caso subcr\'{\i}tico como el supercr\'{\i}tico. En el problema con fuente estudiamos el t\'{e}rmino no lineal $g(u)=-u^q$, mostrando existencia en el caso supercr\'{\i}tico, y no existencia en el caso subcr\'{\i}tico. Adem\'{a}s, damos una caracterizaci\'{o}n de conjuntos removibles cuando $\mu\equiv 0$ y $g(u)=-u^q$ en el caso supercr\'{\i}tico. 

Debemos resaltar que este trabajo est\'{a} motivado por resultado obtenidos para la ecuaci\'{o}n $-\lap_p u + g(x,u) = \mu$ en dominios acotados. Notamos que existen algunos resultados de existencia para problemas similares al aqu\'{i} estudiado, pero en dominios acotados, y que estos son bastante restrictivos ya que, por ejemplo, no admiten cualquier medida acotada de Radon $\mu$ como dato. En este sentido, los principales resultados aqu\'{\i} presentados son completamente nuevos.

\newpage


\addcontentsline{toc}{chapter}{Abstract}

\begin{center}
	\textbf{Abstract}
\end{center}

	We propose and study a concept of renormalized solution to the problem
	\[
	\begin{cases}
		-\lap_pu=0 & \mbox{ in } \R^N_+ \\
		\abs{\nabla u}^{p-2}u_\n + g(u) = \mu & \mbox{ on } \pd\R^N_+
	\end{cases}
	\]
	where $1<p\leq N$, $N\geq 2$, $\R^N_+=\left\lbrace(x',x_N):x'\in\R^{N-1}, x_N>0\right\rbrace $, $u_\n$ is the normal derivative of $u$, $\mu$ is a bounded Radon measure, and $g:\R\goesto\R$ is a nonlinear term. We develop stability results and, using the symmetry of the domain, apriori estimates on hyperplanes, and potential methods, we obtain several existence results. In particular, we show existence of solutions for problems with nonlinear terms of the absorption type in both the subcritical and supercritical case. For the problem with source we study the power nonlinearity $g(u)=-u^q$, showing existence in the supercritical case, and nonexistence in the subcritical one. We also give a characterization of removable sets when $\mu\equiv 0$ and $g(u)=-u^q$ in the supercritical case. 

	We remark that this work is motivated by results obtained for the problem $-\lap_pu + g(x,u)=\mu$ in bounded domains. We note that there are some existence results for similar problems to the one that we propose here, although in bounded domains, and that these are fairly restrictive since, for example, not any Radon measure $\mu$ is allowed as datum. In this sense, the main results presented here are completely new.

\newpage
\pagenumbering{arabic}
\chapter{Introduction}

In this work we consider the problem of finding solutions to 
\be\label{equation}
\begin{cases}
	-\lap_pu=0 & \mbox{ in } \R^N_+ \\
	\abs{\nabla u}^{p-2}u_\n + g(u) = \mu & \mbox{ on } \pd\R^N_+
\end{cases}
\ee
where $1<p\leq N$, $N\geq 2$, $\R^N_+=\left\lbrace(x',x_N):x'\in\R^{N-1}, x_N>0\right\rbrace $, and $\mu\in \mathfrak{M}_b\left( \pd\R^N_+\right) $. Here $\mathfrak{M}_b\left( \pd\R^N_+\right) $ is the space of Radon measures in $\R^N $ with bounded total variation which are supported in $\pd\R^N_+=\left\lbrace(x',x_N):x'\in\R^{N-1}, x_N=0\right\rbrace $, $u_\n$ is the normal derivative of $u$, $g:\R\goesto\R$ is a nonlinear term, and
\[
-\lap_pu:=-\dv \left( \abs{\nabla u}^{p-2}\nabla u\right) .
\]

Consider the related problem of finding a solution to
\be\label{equation_in_bounded_domain}
\begin{cases}
	-\lap_pu + g(x,u)=\mu & \mbox{ in } \O \\
	u= 0 & \mbox{ on } \pd\O
\end{cases}
\ee
where $\O$ is a bounded domain in $\R^N$, $\mu\in\mathfrak{M}_b\left( \O\right) $, and $g:\R^N\times \R\goesto \R$. 
If $p>N$ a unique solution can be obtained by the theory of monotone operators from $W^{1,p}_0(\O)$ into its dual $W^{-1,p'}(\O)$, since in this case any bounded measure in $\O$ belongs to this dual. 

When $1<p\leq N$ the study of problem \eqref{equation_in_bounded_domain} is based upon the theory of \textit{renormalized solutions}. In the case $g\equiv 0$ the concept of renormalized solution was first introduced in \cite{DMOP99}, wherein the authors showed existence and partial uniqueness results. The proof of existence relies in a delicate and very technical stability result. The concept of renormalized solution has been since then the main tool to study degenerate elliptic problems with measure data. We refer the reader to \cite{Veron16} for an overview of the concept and further references. Let us note that in the special case $\mu\in L^1\left( \O\right) + W^{-1,p'}\left( \O\right) $ the concept of renormalized solution coincides with the concept of \textit{entropy solutions} to \eqref{equation_in_bounded_domain} introduced in \cite{BBGGPV95} (see also \cite{BGO96}), in which it is shown both existence and uniqueness for the case $g\equiv 0$.

When $g$ is nontrivial, the nature of equation \eqref{equation_in_bounded_domain} depends on the sign of $g(x,u)u$. To emphasize this fact, and in accordance with the literature, we call it a problem with \textit{absorption} when $g(x,u)u\geq0$, and a problem with \textit{source} when $g(x,u)u\leq0$. Further, we say that the problem is subcritical whenever there are conditions imposed on the growth of $\abs{g(x,s)}$; otherwise we say that the problem is supercritical. 

For the problem with absorption, existence of renormalized solutions to \eqref{equation_in_bounded_domain} in subcritical cases has been shown in \cite{B-V03} and \cite{Veron16}. In \cite{B-V03} the author considers $g(u)=\abs{u}^{q-1}u$, while more general nonlinearities are considered in \cite{Veron16}. Let us mention that in the power case one obtains the sufficient condition $q\in (0,\frac{N(p-1)}{N-p})$ if $1<p<N$, and $q\geq 0$ if $p=N$. In the case $p=N$ exponential-type nonlinearities are considered, but under a restriction in the size of the measures.

The problem with absorption in supercritical cases has been studied in \cite{B-VHV14}. There the authors show that given a fixed nonlinear term $g$ existence of renormalized solutions to \eqref{equation_in_bounded_domain} holds for a certain class of measures. Their results, which are quite general, are based mainly on a delicate study of the Wolff potential and can be applied to establish sufficient conditions for existence of renormalized solutions when the form of the nonlinearity is more explicit. For example, when $g= \abs{u}^{q-1}u$, $q>p-1$, a sufficient conditions on the measure is that it be absolutely continuous with respect to the $L^{p,\frac{q}{q-p+1}}\left( \R^N\right) $ Bessel capacity. A boundedness condition on the measure is obtained when the nonlinearity is of exponential-type. 
 
Problem \eqref{equation_in_bounded_domain} is rather more difficult when it is of the source type. In fact, in this case only nonnegative solutions are considered. In \cite{PV08} the authors show equivalent sufficient conditions to obtain nonnegative renormalized solutions to \eqref{equation_in_bounded_domain} when $g(u)=-u^q$ and $\mu$ is a nonnegative measure. It is also shown that, if the measure is compactly supported in $\O$, these conditions are necessary. Their results, which are obtained through a careful study of the Wolff potential and its relationship to the Bessel and Riesz potential, show in particular that if $q\in (p-1, \frac{N(p-1)}{N-p})$ and $1<p<N$, or $q\geq p-1$ and $p=N$ (i.e., the subcritical case), then any nonnegative measure with small enough norm admits a solution. In the supercritical case, a sufficient condition is that the measure must be `Lipschitz' continuous with respect to the $L^{p,\frac{q}{q-p+1}}\left( \R^N\right) $ Bessel capacity. We note also that their main results allows them to present a complete characterization of removable sets for \eqref{equation_in_bounded_domain} in terms of some fractional Bessel capacities, as well as to prove Liouville-type results for problems in the whole $\R^N$.

Going back to problem \eqref{equation}, in the case $p>N$ we can similarly obtain a unique solution in the space $W^{1,p}\left( \R^N_+\right) $ by using the theory of monotone operators. This follows from the fact that if $p>N$ then functions in $W^{1,p}\left( \R^N_+\right) $ have well defined continuous and bounded traces in $\pd\R^N_+$, and so any element in $\mathfrak{M}_b\left( \pd\R^N_+\right)$ can be seen as an element in the dual of $W^{1,p}\left( \R^N_+\right) $. Of course, this works whenever $\mu$ is in the dual of $W^{1,p}\left( \R^N_+\right) $ (even if $1<p\leq N$). 
In fact, this is the approach used in \cite{WZA11}, where it is proven the existence of weak solutions to the subcritical problem with absorption
\[
\begin{cases}
-\lap_p u + \e\abs{u}^{q-1}u = f(x) & \mbox{ in } \O \\
\abs{\nabla u}^{p-2}u_\n = g(x) & \mbox{ on } \pd\O ,
\end{cases} 
\]
where $f\in L^{p'}\left( \O\right)$, $g\in W^{\frac{1}{p}-1, p'}\left( \pd\O\right) $, $\e$ is a nonnegative constant, $\O$ is a bounded domain, $\frac{2N}{N+1}<p$, and $q\geq 0$ satisfies $q\leq \frac{Np}{N-p}-1$ if $p<N$. 

In the case $1<p\leq N$ we turn to the idea of renormalized solutions. In \cite{ABBO13} a concept of renormalized solution was proposed for a Neumann problem in bounded domains and with nonnegative measures in $L^1$ (see also \cite{IO16}). However, to our knowledge, there is no proposed definition of renormalized solutions to Neumann problems such as \eqref{equation} for general bounded Radon measures. In this work we propose such a definition and then prove existence of renormalized solutions for various types of nonlinearities. Indeed, we have Theorem \ref{existence_half_space} for the case $g\equiv0$, Theorems \ref{existence_subcritical_p<N} and \ref{existence_subcritical_p=N} for subcritical problems with absorption, Theorem \ref{B-VHV14_trace} for supercritical problems with absorption, and Theorem \ref{Sufficiency_supercritical_source} for a supercritical problem with source. On the other hand, in Theorem \ref{Liouville_Theorem} we show nonexistence of nontrivial nonnegative solutions for the same problem with source but in the subcritical case.

Our approach to solving problem \eqref{equation} is to turn it into an associated problem in the whole $\R^N$. Indeed, formally, if $u$ is a solution to \eqref{equation} then we expect that $\bar{u}$, its even reflection across $\pd\R^N_+$, should be a solution to 
\be\label{equation_in_R_N}
-\lap_p\bar{u} + 2g(\bar{u})\mathcal{H}=2\mu \mbox{ in } \R^N
\ee
where $\mathcal{H}$ is a normalized $\left( N-1\right) $-dimensional Hausdorff measure concentrated in $\pd\R^N_+$. Note however that not every solution of the above problem would yield a solution to \eqref{equation}, unless it is a symmetric solution, and so the problems are not equivalent. 

The advantage of looking at this extended problem is that we can obtain a solution to \eqref{equation_in_R_N} by applying the theory developed in \cite{DMOP99}, \cite{Veron16}, \cite{B-VHV14}, and \cite{PV08}, to an increasing sequence of bounded domains. In order for this approach to work we need to establish some stability results. Then, to recover a solution to \eqref{equation}, we show that the solutions obtained through this process might in fact be taken to be symmetric with respect to $\pd\R^N_+$. It is worth mentioning that with our definition of renormalized solution to problem \eqref{equation}, $\bar{u}$ becomes in fact a \textit{local} renormalized solution of the associated problem in $\R^N$, as defined for example in \cite{B-V03} and \cite{Veron16}. 

This thesis is organized as follows. In Chapter \ref{Capacities} we collect all the relevant preliminary definitions and results we shall need both to define renormalized solutions and to obtain the existence results. Since we consider measures and functions in both $\R^N$ and $\pd\R^N_+$, we will need to consider the problem of obtaining well defined traces, as well as the interplay of the Bessel capacities defined in $\R^N$ and $\pd\R^N_+$. In particular, we will make use of trace and extension operators in the Lizorkin-Triebel spaces. 

In Chapter \ref{Renormalized solution} we give the definitions of renormalized solutions in bounded domains and of local renormalized solutions in general domains, define renormalized solutions to problem \eqref{equation}, and state estimates and other results from \cite{DMOP99} which will be used in the sequel. We prove some properties of renormalized solutions to problem \eqref{equation}, and also show that renormalized solutions to \eqref{equation} do in fact exists by proving that the fundamental solution to problem \eqref{equation}, with $g\equiv 0$, is a renormalized solution in the sense of our definition with $\mu$ the Dirac mass.

In Chapter \ref{Extension} we study the problem of obtaining local renormalized solutions to $-\lap_p u = \mu $ in $\R^N$. The existence of such solutions is given in Theorem \ref{existenceExtended}. The proof is based on two lemmas, both of which are of use later when dealing with nonlinear terms. The first states that given a sequence $\left\lbrace u_m\right\rbrace _m$ of renormalized solutions to \eqref{equation_in_bounded_domain} with $g\equiv 0$, $\O=B_m(0)=\left\lbrace\abs{x}<m\right\rbrace $, and data $\mu_m$, we can find a convergent subsequence such that the limit function $u$ has the necessary properties to be a local renormalized solution in $\R^N$. This is proven by a slight modification of the argument found in \cite{DMOP99}. The second lemma is a stability result that will be used to show that the function $u$ is indeed a local renormalized solution. This is proven for more general equations than the ones considered in the first lemma, and its proof is based on the argument given in \cite{M05} to bypass the rather involved arguments developed in \cite{DMOP99}. 

In Chapter \ref{Symmetry} we show that the solution obtained by the above method is symmetric with respect to $\pd\R^N_+$. We do this by showing that, in a bounded domain, if a measure is concentrated in $\pd\R^N_+$ and the domain is symmetric with respect to $\pd\R^N_+$ then any renormalized solution has the same symmetry. For this we use the partial uniqueness result obtained in \cite{DMOP99}. Then, in Theorem \ref{existence_half_space}, we use this symmetry to recover a solution to the original problem \eqref{equation} when $g\equiv 0$.

In Chapter \ref{absorption}, Section \ref{subcritical}, we consider the problem of obtaining renormalized solutions to \eqref{equation} in subcritical cases with absorption. Here we use the theory developed in \cite{Veron16} in the same spirit as the previous chapters, obtaining local renormalized solutions in $\R^N$ as the limit of solutions in bounded domains, and then using symmetry to obtain solutions to \eqref{equation}. Our approach is to use the existence results developed for problem \eqref{equation_in_bounded_domain} to obtain solutions to 
\be\label{equation_bounded_hausdorff}
\begin{cases}
	-\lap_pu + g(u)\mathcal{H}=\mu & \mbox{ in } \O \\
	u= 0 & \mbox{ on } \pd\O ,
\end{cases} 
\ee
as an intermediate step towards solving \eqref{equation}. We obtain solutions to the above equation by solving \eqref{equation_in_bounded_domain} when $g$ is multiplied by a sequence $\zeta_n(x_N)$ that is concentrating at the origin and then letting $n\goesto\infty$. The main result in this regard is Lemma \ref{general_passage_to_trace_nonlinearity} where we show, under very general assumptions, that if $u_n$ are the solutions with nonlinear term $\zeta_ng(u_n)$ then $\zeta_ng(u_n)$ converges, in a suitable sense, to $g(u)\mathcal{H}$. The result is proven by making a decomposition of the domain in order to use the assumptions on $g$ as well as the continuity properties of $W^{1,p}$ functions and their traces. 

In the case $p<N$ the existence result is given in Theorem \ref{existence_subcritical_p<N}, while for the case $p=N$ is given in Theorem \ref{existence_subcritical_p=N}. Let us mention that when $g(s)=\abs{s}^q$, $q\geq 0$, $1<p<N$, Theorem \ref{existence_subcritical_p<N} guarantees existence of renormalized solutions to \eqref{equation} provided 
\[
q<\frac{(N-1)(p-1)}{N-p} .
\]
If $p=N$ then Theorem \ref{existence_subcritical_p=N} only requires $q\geq 0$, and in fact exponential-type nonlinearities are allowed, but this imposes conditions on the size of the measure.

In Section \ref{supercritical} we consider supercritical problems with absorption under the condition $1<p<N$. Here we use mainly the work in \cite{B-VHV14}. We have left the definition of the Wolff potential of a measure, and other related quantities, to this chapter since they are only used from this point forward. As in the previous section, we obtain solutions to \eqref{equation_bounded_hausdorff} as an intermediate step towards solving \eqref{equation}. The main tool for this is the improvement of an estimate of renormalized solutions, in terms of the Wolff potential of their respective measures, from $a.e.$ in $\O$ to $a.e.$ in any hyperplane. We also show that, given a fixed measure and a nondecreasing sequence of domains, it is possible to obtain a nondecreasing sequence of renormalized solutions in said domains. Both results are needed to show the main existence result, Theorem \ref{B-VHV14_trace}. This theorem is then used to obtain explicit conditions on the measure when more is known about the rate of growth of the nonlinearity. For example, when $g(s)=\abs{s}^{q-1}s$, $q>p-1$, we obtain as sufficient condition that the measure must be absolutely continuous with respect to the $L^{p-1,\frac{q}{q-p+1}}\left( \R^{N-1}\right) $ Bessel capacity. Exponential-type nonlinearities are also considered.

Finally, in Chapter \ref{source} we consider nonnegative solutions to the problem with source $g(u)=-u^q$ when $q>p-1$ and $1<p<N$. Our work here follows closely the ideas in \cite{PV08}, particularly those used to treat problem \eqref{equation_in_bounded_domain} when $\O=\R^N$. We begin by establishing necessary and sufficient conditions for existence of nonnegative renormalized solutions to \eqref{equation}. In particular, in the supercritical case, we obtain existence of renormalized solutions to \eqref{equation} when the nonnegative measure $\mu$ is `Lipschitz' continuous with respect to the $\dot{L}^{p-1,\frac{q}{q-p+1}}\left( \R^{N-1}\right) $ Riesz capacity. 

In Corollary \ref{improved_testing_inequality} we note that if $u$ is a solution with datum $\mu$, then $u^q\mathcal{H}+\mu$ must satisfy the above condition which, together with the properties of the Riesz capacity, implies the nonexistence of nontrivial nonnegative solutions to \eqref{equation} in the subcritical case, that is, when  
\[
q\in \begin{cases}
\left( p-1,\frac{(N-1)(p-1)}{N-p}\right] & \mbox{ if } p<N \\
\left( p-1,\infty\right)  & \mbox{ if } p=N .
\end{cases} 
\]

This result is not surprising as it is a natural counterpart to the nonexistence result in \cite{PV08}. It is also in agreement with the nonexistence result in \cite{HU94} for the linear case (i.e., $p=2$) where it is shown that any classical, but possibly singular at the origin, nonnegative solution to \eqref{equation}, with $\mu\equiv0$, $g(u)=-u^q$, and $q$ in the range $[1, \frac{N-1}{N-2}]$, must be trivial.

We finish with the problem of characterizing when a compact set $K\subset\pd\R^N_+$ is removable for \eqref{equation} in the case $g(u)=-u^q$ and $\mu\equiv 0$. We say that such a set $K$ is removable if every nonnegative $p$-harmonic function $u$ satisfying the Neumann boundary condition $\abs{\nabla u}^{p-2}u_\n=u^q$ in $\pd\R^N_+\setminus K$ can be extended to a solution of \eqref{equation}. The necessary and sufficient conditions for existence given in Corollary \ref{improved_testing_inequality} and Corollary \ref{equivalence_supercritical_source} allows us to show that a set is removable in the supercritical case if and only if its $\dot{L}^{p-1,\frac{q}{q-p+1}}\left( \R^{N-1}\right) $ Riesz capacity is zero. 

\chapter{Preliminary definitions and results}\label{Capacities}

Here we collect basic definitions and results needed in the sequel. 
We remark that, as in \cite{DMOP99}, we shall make use of Bessel capacities to decompose measures in $\mathfrak{M}_b\left( \pd\R^N_+\right) $. Since we will frequently consider the behavior of functions and measures when restricted to hyperplanes, we will also consider capacities in $\R^{N-1}$. To this end, we will introduce the more general capacities associated with the Lizorkin-Triebel spaces $F_\a^{p,q}$.

Let us first introduce some notation. For any measurable set $E\subset \R^N$ we denote by $\abs{E}$ its Lebesgue measure. When $E\subset \pd\R^N_+\simeq \R^{N-1}$ we take this measure to be the $(N-1)$-dimensional Lebesgue measure.
We let $B_M(x)$ be the open ball of radius $M>0$ centered at $x$ (simply $B_M$ when $x=0$). Depending on the context, when $x\in \pd\R^N_+$ this could be either a $N$-dimensional ball in $\R^N$ or a $(N-1)$-dimensional ball in $\pd\R^N_+$. 
For any set $E$, we let $\chi_E$ be the characteristic function of $E$. 
The truncation of functions will be very important in the sequel. For any $k>0$, we let $T_k(s)=\min(k,\max(-k,s))$. 

By an abuse of notation, we define $W^{1,p}_{loc}\left( \R^N_+\right) :=\bigcap_{M\in \N} W^{1,p}\left( B_M\cap \R^N_+\right) $. Similarly, we define $L^{s}_{loc}\left( \R^N_+\right) :=\bigcap_{M\in \N} L^{s}\left( B_M\cap \R^N_+\right) $. We remark that given a domain $\O$, $L^s\left( \O\right) $ are the usual Lebesgue spaces, while $W^{1,p}\left( \O\right) $ are the usual Sobolev spaces. The norm in the $L^s\left( \O\right) $ spaces will be written indistinctly as $\norm{\cdot}_{L^s\left( \O\right) }$, $\norm{\cdot}_{L^s}$, or simply $\norm{\cdot}_s$. 
The $C^k\left( \O\right) $ space, $k\in \N\cup \{\infty\}$, is the usual space of $k-$ times continuously differentiable functions, and $C_0^k\left( \O\right) $ is the subspace of elements with compact support in $\O$. 

\section{Bessel capacities} We start with the standard definition of Bessel capacities in $\R^N$ (see \cite{AdamsHedberg} for details). For any compact set $K\subset\R^N$ we let 
\[
\w_K = \left\lbrace \phi \in \Schw\left( \R^N\right) \ : \ \phi\geq \chi_K\right\rbrace 
\]
where $\Schw\left( \R^N\right) $ is the Schwartz class, and define for any $\a>0$ and $1<p<\infty$
\[
cap_{\a,p,N}\left( K\right) = \inf\left\lbrace \norm{\phi}_{\a,p,\R^N}^p \ : \ \phi \in \w_K\right\rbrace 
\]
with the convention that $\inf\emptyset = +\infty$. Here $\norm{\cdot}_{\a,p,\R^N}$ is the norm in the Bessel potential spaces $L^{\a,p}\left( \R^N\right) $ of functions $f=\G_\a \ast g$ with $g\in L^p$, where $\G_\a(x)=\F^{-1}\left[ \left( 1+\abs{\cdot}^2\right)^{-\a/2}\right](x) $ is the Bessel kernel of order $\a\in \R$, defined as $\norm{f}_{\a,p,\R^N}=\norm{g}_{L^p\left( \R^N\right) }$ (note that $\Schw\left( \R^N\right) $ is a dense subset of this space). Then we extend the definition to open sets $G\subset\R^N$ by 
\[
cap_{\a,p,N}\left( G\right) = \sup\left\lbrace cap_{\a,p,N}\left( K\right)  \ : \ K\subset G \ , \ K \mbox{ compact}\right\rbrace 
\]
and finally to arbitrary sets $E\subset R^N$ by 
\[
cap_{\a,p,N}\left( E\right) = \inf\left\lbrace cap_{\a,p,N}\left( G\right)  \ : \ E\subset G \ , \ G \mbox{ open}\right\rbrace \ .
\]

Note that when $\a\in \N$ we have $L^{\a,p}\left( \R^N\right) = W^{\a,p}\left( \R^N\right) $, and so in this case the Bessel capacities can be defined using Sobolev spaces. We do not follow this approach since we will need to consider the case when $\a\in (0,1)$. On the other hand, we remark that we have the following equivalent definition of capacity:
\[
cap_{\a,p,N}\left( E\right) = \inf\left\lbrace \norm{f}^p_{L^p\left( \R^N\right) } \ : \ f \in \O_E \right\rbrace 
\]
where
\[
\O_E:=\left\lbrace f \in L^p\left( \R^N\right) \ : \ f\geq 0 \ \forall x\in \R^N , \ \G_\a \ast f\geq1 \ \forall x\in E\right\rbrace 
\]
(see Proposition 2.3.13 of \cite{AdamsHedberg}).

We will also use the Riesz capacities. They can be defined as the capacities associated to the Riesz potential spaces $\dot{L}^{\a,p}\left( \R^N\right) $, i.e., the space of functions $f=\mathcal{I}_\a \ast g$ with $g\in L^p$ , where $\mathcal{I}_\a(x)=C(N,\a)\abs{x}^{-(N-\a)}$ is the Riesz kernel of order $\a\in (0,N)$. We will denote them by $cap_{I_\a,p,N}\left( \cdot\right)$. Let us explicitly state however that our main interest are the $cap_{1,p,N}$ capacity in $\R^N$ and the $cap_{1-\frac{1}{p},p,N-1}$ capacity in $\pd\R^N_+$ (which we identify as $\R^{N-1}$). 

We say that a property holds $cap_{\a,p,N}-$quasi-everywhere in $\O$ (abbreviated as $cap_{\a,p,N}-q.e.$) if there exists a set $E$ such that the property holds in $\O\setminus E$ and $cap_{\a,p,N}\left( E\right) =0$. 

We say that a function $\w$ is $cap_{\a,p,N}-$ quasi-continuous in $\O$ if for every $\e>0$ there is an open set $E$ such that $cap_{\a,p,N}(E)<\e$ and $\w\in C\left( \O\setminus E\right) $. Unless otherwise stated, we assume that $cap_{\a,p,N}-$ quasi-continuous functions are $cap_{\a,p,N}-q.e.$ finite. Whenever we cannot assert that a $cap_{\a,p,N}-$ quasi-continuous function $w$ is $cap_{\a,p,N}-q.e.$ finite, the statement $\w\in C\left( \O\setminus E\right) $ means that $w:\O\setminus E \goesto [-\infty,\infty]$ is continuous with respect to the topology of the extended real line.

We say that a set $E \subset \R^N$ is quasi-open if for every $\e>0$ there exists an open set $\O$ such that $E\subset \O$ and $cap_{1,p,N}(\O\setminus E)<\e$. Clearly, countable unions of quasi-open sets are quasi-open. It is also immediate that if $w$ is $cap_{1,p,N}-$ quasi-continuous then the sets $\left\lbrace w>k\right\rbrace $ and $\left\lbrace w<k\right\rbrace $ are quasi-open. By a result of \cite{D83}, for every bounded quasi-open set $E$ there exists a nonnegative sequence $w_n \in W^{1,p}\left( \R^N\right)$ such that $w_n\leq \chi_E$ and $w_n\uparrow \chi_E$ $cap_{1,p,N}-q.e.$ in $\R^N$ (see Lemma 2.2 of \cite{M05}).

\begin{remark}
	When dealing with a bounded domain $\O$, it is more natural to define and use the so called \textit{condenser} capacity associated with $\O$ (see for example Section 7.6 of \cite{AdamsHedberg}). Indeed, this condenser capacity is the capacity used in works such as \cite{DMOP99} and \cite{M05}. However, Theorem 2.38 of \cite{HKM} shows that the condenser capacity is equivalent to our definition of capacity whenever $\O=B_M$ for any fixed $M$ (see also section 2.7 of \cite{AdamsHedberg}). Since in our applications we always ultimately have $\O=B_M$ for some $M \in \N$, we see that we can always assume the two definitions are equivalent. 
\end{remark}

\section{Decomposition of measures} Let $\mathfrak{M}_b\left( \R^N\right) $ be the set of Radon measures of bounded total variation. For any Borel set $\O\subset \R^N$ we let $\mathfrak{M}_b\left( \O\right) $ be the set of measures in $\mathfrak{M}_b\left( \R^N\right) $ supported in $\O$. For measures $\mu \in \mathfrak{M}_b\left( \O\right)$ we let
\[\norm{\mu}_{\mathfrak{M}_b}=\abs{\mu}\left( \O\right) \]
be its total variation in $\O$. 

We will work mainly with measures supported in $\pd\R^N_+$. Such measures can be naturally identified with measures in $\R^{N-1}$. Indeed, if $\mu \in \mathfrak{M}_b\left( \R^N\right) $ is supported in $\pd\R^N_+$ then $\tilde{\mu}(E):=\mu\left( E\times \left\lbrace x_N=0\right\rbrace \right) $ is the natural representative of $\mu$ in $\mathfrak{M}_b\left( \R^{N-1}\right) $. Similarly, if $\mu \in \mathfrak{M}_b\left( \R^{N-1}\right) $ then $\hat{\mu}\left( E\right) := \mu\left( \left\lbrace x'\in \R^{N-1} \ : \ (x',0)\in E\right\rbrace \right)$ belongs to $\mathfrak{M}_b\left( \R^N\right)$ and is supported in $\pd\R^N_+$. This gives a bijection between $\mathfrak{M}_b\left( \pd\R^N_+\right)$ and $\mathfrak{M}_b\left( \R^{N-1}\right) $. Hence, whenever convenient, we will identify the two spaces under the above construction. 

We let $\mathcal{H}$ be the $(N-1)$-dimensional Hausdorff measure concentrated in $\pd\R_+^N$, normalized so that $\mathcal{H}\left( E\right) = \abs{ E\cap\pd\R^N_+}$ for any measurable set $E\subset \R^N$. Then, we define $L^s\left( \O\cap \pd\R^N_+\right) := L^s\left( \O ; d\mathcal{H}\right)$ for any domain $\O$ and any $1\leq s\leq \infty$ (when $\O=\R^N$ we omit it from the notation). If a function $g$ belongs to $L^1_{loc}\left( \O\cap \pd \R^N_+\right) $ we write $g\mathcal{H}$ as shorthand for the measure $gd\mathcal{H}$.

We will say that a sequence of measures $\mu_n \in \mathfrak{M}_b\left( \R^N\right) $ converges to a measure $\mu\in\mathfrak{M}_b\left( \R^N\right) $ in the narrow topology of measures in a domain $\O$ if and only if
\[
\lim_{n\goesto\infty}\int_\O \phi d\mu_n = \int_\O \phi d\mu
\]
for all functions $\phi$ continuous and bounded in $\O$. We recall that the convergence is in the weak-$\ast$ topology of $\mathfrak{M}_b\left( \O\right) $ if the above holds for all functions $\phi \in C_0\left( \O\right)$. Here $C_0\left( \O\right)$ is the space of continuous functions with compact support in $\O$. 

It is standard that $cap_{\a,p,N}$ is a countably subadditive nonnegative set function (see for example \cite{AdamsHedberg}). This implies that any measure $\mu\in \mathfrak{M}_b\left( \R^N\right) $ can be uniquely decomposed as
\[ 
\mu=\mu_0+\mu_s
\]
where $\mu_0$ is absolutely continuous with respect to $cap_{\a,p,N}$, and $\mu_s$ is singular with respect to $cap_{\a,p,N}$ (see \cite{FST91}, Lemma 2.1). That is, $\mu_0(E)=0$ for every Borel set $E$ such that $cap_{\a,p,N}(E)=0$, while $\mu_s$ is supported in a Borel set $E$ such that $cap_{\a,p,N}(E)=0$. Moreover, by the Jordan decomposition theorem, one can write uniquely
\[
\mu_s=\mu_s^+-\mu_s^-
\]
where $\mu_s^+$ and $\mu_s^-$ are the positive and negative part of $\mu_s$.

In what follows we shall denote by $\mathfrak{M}_0\left( \R^N\right) $ the set of measures in $\mathfrak{M}_b\left( \R^N\right) $ that are absolutely continuous with respect to $cap_{1,p,N}$. Similarly, $\mathfrak{M}_0\left( \O\right) $ is the set of measures in $\mathfrak{M}_0\left( \R^N\right) $ which are supported in $\O$.

We remark that, whenever $\a>0$, the $N$-dimensional Lebesgue measure is absolutely continuous with respect to $cap_{\a,p,N}$ (see \cite{AdamsHedberg}).

The following result is proved in \cite{BGO96}.
\begin{thm}\label{BGO96_thm}
	Let $\O$ be a bounded domain and $\mu\in \mathfrak{M}_b\left( \O\right) $. Then $\mu\in \mathfrak{M}_0\left( \O\right) $ if and only if $\mu \in L^1\left( \O\right) + W^{-1,p'}\left( \O\right)$. Thus, if  $\mu\in \mathfrak{M}_0\left( \O\right) $ then $\mu=f -\dv g$ in the sense of distributions for some functions $f\in L^1\left( \O\right)$ and $g \in \left( L^{p'}\left( \O\right) \right) ^N$. Moreover, $\mu=f-\dv g$ also holds when acting on functions in $W_0^{1,p}\left( \O\right) \cap L^\infty\left( \O\right)$.   
\end{thm}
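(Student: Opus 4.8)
The plan is to prove the nontrivial implication, namely that $\mu \in \mathfrak{M}_0(\O)$ (absolute continuity with respect to $cap_{1,p,N}$) forces $\mu \in L^1(\O) + W^{-1,p'}(\O)$; the reverse inclusion is elementary since both $L^1$ and $W^{-1,p'}$ measures do not charge sets of zero $cap_{1,p,N}$ (for $L^1$ this uses that Lebesgue measure is absolutely continuous with respect to capacity, as noted in the excerpt; for $W^{-1,p'}$ it is the standard fact that if $cap_{1,p,N}(E)=0$ then there are $W^{1,p}$ functions approximating $\chi_E$, so a $W^{-1,p'}$ distribution pairs to zero against them). So I focus on the forward direction.

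First I would reduce to the case of a nonnegative measure by the Jordan decomposition $\mu = \mu^+ - \mu^-$, noting that each of $\mu^\pm$ is still in $\mathfrak{M}_0(\O)$ (a measure is absolutely continuous with respect to capacity iff its total variation is). So assume $\mu \geq 0$. The key structural input is a duality/Hahn--Banach argument: consider the map that sends a test function $\varphi$ to $\int_\O \varphi\, d\mu$. Because $\mu$ does not charge sets of zero capacity, one shows this functional is continuous with respect to an appropriate norm built from $W^{1,p}_0(\O)$ together with $L^\infty$, using that $cap_{1,p,N}$-quasi-continuous representatives of $W^{1,p}$ functions are $\mu$-measurable and the truncations $T_k$ interact well with both the capacity and the measure. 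Concretely, one proves an estimate of the form: for $\varphi \in W^{1,p}_0(\O)\cap L^\infty(\O)$,
\[
\left| \int_\O \varphi\, d\mu \right| \leq C\,\norm{\varphi}_{W^{1,p}_0(\O)} + (\text{a piece controlled by } \norm{\varphi}_{L^\infty}),
\]
the first term giving, via Riesz representation in $W^{-1,p'}(\O)=(W^{1,p}_0(\O))^\ast$, the $-\dv g$ part with $g\in (L^{p'}(\O))^N$, and the second giving the $L^1$ part $f$. The cleanest route is probably the one used in \cite{BGO96}: approximate $\mu$ by the truncated/regularized measures $\mu_k$ obtained by a suitable cut according to capacity level sets (or by solving auxiliary obstacle-type problems), obtain uniform $L^1 + W^{-1,p'}$ bounds on $\mu_k$, and pass to the limit. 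Then the final sentence — that the decomposition $\mu = f - \dv g$ continues to hold when tested against $W^{1,p}_0(\O)\cap L^\infty(\O)$ rather than merely $C_0^\infty(\O)$ — follows from a density/truncation argument: any such test function is an $L^\infty$-bounded $W^{1,p}_0$-limit of smooth functions (via mollification and truncation), and both sides are continuous along such approximations, the left side because $\varphi$ has a quasi-continuous representative and $\mu$ ignores capacity-null sets, the right side by the $L^1$ and $L^{p'}$ integrability together with dominated convergence.

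The main obstacle, and the technical heart of the argument, is establishing the continuity estimate above — equivalently, showing that a capacity-absolutely-continuous measure genuinely splits into a regular ($L^1$) part and a dual-Sobolev part. This requires the fine structure of $cap_{1,p,N}$: that quasi-continuous $W^{1,p}$ functions have well-defined $q.e.$ (hence $\mu$-a.e.) values, the strong subadditivity and capacitability of $cap_{1,p,N}$, and a decomposition lemma in the spirit of \cite{FST91} separating the diffuse measure into a part in the dual of $W^{1,p}_0$ and an $L^1$ remainder. Since the statement is quoted directly from \cite{BGO96}, in the write-up I would simply cite that reference for the core decomposition and only supply the details needed for our setting, namely the verification that the identity $\mu = f - \dv g$ extends to $W^{1,p}_0(\O)\cap L^\infty(\O)$, which is the part actually used later in the thesis.
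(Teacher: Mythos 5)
The paper does not prove this theorem; it states it as a result proved in \cite{BGO96} and adds only the quantitative remark (citing Lemma 3.6 of \cite{B-VHV14}) that one may take $\norm{f}_{L^1\left( \O\right) } + \norm{g}_{W^{-1,p'}\left( \O\right)}\leq 3\norm{\mu}_{\mathfrak{M}_b}$. Your deferral to \cite{BGO96} for the core decomposition therefore matches the paper's own treatment exactly, and your sketch of the forward direction (reduce to $\mu\geq 0$ via Jordan decomposition, then a duality/truncation argument) is in the right spirit, though it remains at the level of a plan --- which is appropriate, since you, like the paper, ultimately cite the reference for the heart of the argument.

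The one substantive point you address beyond citation is the final sentence, that $\mu=f-\dv g$ continues to hold when tested against $W^{1,p}_0\left(\O\right)\cap L^\infty\left(\O\right)$ rather than only against $C_0^\infty\left(\O\right)$. The paper states this without justification and relies on it repeatedly (for instance in the proofs of Lemmas \ref{general_existence_global_solution} and \ref{general_stability_in_balls}). Your density argument is the correct one, and it is worth spelling out the one care you gesture at: given $\varphi\in W^{1,p}_0\left(\O\right)\cap L^\infty\left(\O\right)$, mollify and truncate to obtain $\varphi_n\in C_0^\infty\left(\O\right)$ with $\norm{\varphi_n}_\infty\leq\norm{\varphi}_\infty+1$ and $\varphi_n\to\varphi$ strongly in $W^{1,p}_0\left(\O\right)$. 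The right side $\int_\O f\varphi_n\,dx+\int_\O g\cdot\nabla\varphi_n\,dx$ converges by dominated convergence against $f\in L^1\left(\O\right)$ and by $L^p$--$L^{p'}$ duality against $g$. For the left side, strong $W^{1,p}$ convergence gives, after passing to a subsequence, convergence $cap_{1,p,N}$-q.e.\ of the quasi-continuous representatives (Proposition 2.3.8 of \cite{AdamsHedberg}, which the paper itself invokes in Remark \ref{remark_on _convergence_of_trace}); since $\mu\in\mathfrak{M}_0\left(\O\right)$, this yields $\varphi_n\to\varphi$ $\mu$-a.e., and the uniform $L^\infty$ bound lets you apply dominated convergence with respect to $\abs{\mu}$ --- this is precisely the mechanism of Proposition \ref{LDC_mu_0}. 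So your sketch of this extension is correct and consistent with how the paper uses the result.
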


We note that in the above result one can further assume $\norm{f}_{L^1\left( \O\right) } + \norm{g}_{W^{-1,p'}\left( \O\right)}\leq 3\norm{\mu}_{\mathfrak{M}_b}$ (see Lemma 3.6 of \cite{B-VHV14}).

\section{Lizorkin-Triebel capacities} Now we consider the spaces $F_\a^{p,q}\left( \R^N\right) $ mentioned earlier. The literature concerning these spaces is very extensive. Here we only record a few facts about them and refer the reader to \cite{AdamsHedberg} and \cite{Triebel92} for details. Let us begin with their definition.

Let $\phi$ be any function in $C_0^\infty\left( \R^N\right) $ such that $\supp{\phi}\subset \left\lbrace\zeta \in \R^N \ : \ \abs{\zeta}\leq 2\right\rbrace $ and $\phi\equiv 1$ in $\left\lbrace\zeta\in \R^N\ : \ \abs{\zeta}\leq 1\right\rbrace $. For $j\in \N$ let 
\[
\phi_j(\zeta)=\phi(2^{-j}\zeta)-\phi(2^{-j+1}\zeta)
\]
so that $\supp{\phi_j}\subset \left\lbrace\zeta \in \R^N \ : \ 2^{j-1}\leq \zeta \leq 2^{j+1}\right\rbrace $ and, setting $\phi_0=\phi$,
\[
\sum_{k=0}^\infty \phi_k(\zeta)=1
\]
 in $\R^N$. Let $\Schw'\left( \R^N\right) $ be the set of tempered distributions, and for any $f\in \Schw'\left( \R^N\right)$ let 
 \[
 f_k=\F^{-1}\left[ \phi_k \F\left[ f\right] \right] 
 \]
 where $\F$ is the Fourier transform. Then $f_k$ is an entire analytic function and it can be shown that
 \[
 f=\sum_{k=0}^\infty f_k 
 \]
 in the topology of $S'\left( \R^N\right) $. For $1<p,q<\infty$ and $s\in \R$ we define
 \[
 \norm{f}_{F_s^{p,q}\left( \R^N\right) }:=\norm{\left( \sum_{k=0}^\infty 2^{ksq}\abs{f_k(x)}^q\right) ^{\frac{1}{q}}}_{L^p\left( \R^N\right) } 
 \]
 and
 \[
 F_s^{p,q}\left( \R^N\right) :=\left\lbrace f\in \Schw'\left( \R^N\right) \ :  \norm{f}_{F_s^{p,q}\left( \R^N\right) }<\infty\right\rbrace .
 \]
 It is proven in \cite{Triebel92} that this definition does not depend on the choice of $\phi$, and that $F_s^{p,q}\left( \R^N\right)$ is a Banach space.

It can be shown that the spaces $F_\a^{p,q}\left( \R^N\right)$ can be realized as potential spaces, and thus they can be used to define corresponding $F_\a^{p,q}\left( \R^N\right)$ capacities, which we denote by $cap\left( \cdot, F_\a^{p,q}\left( \R^N\right) \right) $ (see \cite{AdamsHedberg} for the details).

The connection of these spaces with the Bessel potential spaces is given by the fact that for any $\a>0$, and $1<p<\infty$, there holds $F_\a^{p,2}\left( \R^N\right) = L^{\a,p}\left( \R^N\right) $ in the sense of normed spaces. Given the above observation it is to be expected that the $F_\a^{p,2}\left( \R^N\right)$ capacities are equivalent to the corresponding $L^{\a,p}\left( \R^N\right)$ Bessel capacities. A surprising result (see Proposition 4.4.4 of \cite{AdamsHedberg}) is that in fact for all $\a>0$, $1<q<\infty$, and $1<p\leq \frac{N}{\a}$, the $F_\a^{p,q}\left( \R^N\right) $ capacity is equivalent to the corresponding Bessel $L^{\a,p}\left( \R^N\right) $ capacity; we will point this out by writing 
\[
cap\left( \cdot ; F_\a^{p,q}\left( \R^N\right) \right) \sim cap_{\a,p,N}\left( \cdot\right)  \ .
\]
An advantage of considering the more general $F_\a^{p,q}\left( \R^N\right) $ spaces is the following theorem, which can be found in Chapter 4.4 of \cite{Triebel92}.
\begin{thm}\label{Trace&Extension}
	Let $1<p,q<\infty$, and $\a p>1$. Then the map 
	\[
	Tr  :  f(x',x_N)\mapsto f(x',0)
	\]
	is a bounded linear operator from $F_\a^{p,q}\left( \R^N\right) $ onto $F_{\a-\frac{1}{p}}^{p,p}\left( \R^{N-1}\right) $. Moreover, there exists a linear bounded extension operator $Ex$ from $F_{\a-\frac{1}{p}}^{p,p}\left( \R^{N-1}\right) $ into $F_\a^{p,q}\left( \R^N\right) $ such that
	\[
	Tr\circ Ex = Id \mbox{ in } F_{\a-\frac{1}{p}}^{p,p}\left( \R^{N-1}\right) \ .
	\]
\end{thm}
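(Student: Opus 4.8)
The plan is to prove this by Fourier-analytic (Littlewood--Paley) methods, establishing boundedness of $Tr$ and existence of a bounded right inverse $Ex$ separately; surjectivity of $Tr$ is then automatic from $Tr\circ Ex=Id$. Throughout I would work with the decomposition $f=\sum_{k\geq0}f_k$ from the definition of $F_\a^{p,q}$, using that for $k\geq1$ the Fourier transform $\F[f_k]$ is supported in the annulus $\{2^{k-1}\leq\abs{\zeta}\leq2^{k+1}\}$, noting that in $F_s^{p,p}$ the $L^p$- and $\ell^p$-summations commute (so the target norm is a plain weighted $\ell^p$-sum of $L^p$-norms of Littlewood--Paley pieces), and recording that the hypothesis $\a p>1$ amounts to $\a-\tfrac1p>0$, which is exactly what the target smoothness index needs.

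\textbf{Boundedness of $Tr$.} Fix $f\in F_\a^{p,q}(\R^N)$. Since $\norm{f}_{F_\a^{p,q}}\geq 2^{k\a}\norm{f_k}_{L^p(\R^N)}$, each slice $x_N\mapsto f_k(x',x_N)$ lies in $L^p(\R)$ for a.e.\ $x'$ and is band-limited in $x_N$ (its $x_N$-Fourier transform being supported in $\{\abs{\zeta_N}\leq2^{k+1}\}$), so a one-dimensional Nikolskii-type inequality gives
\[
\abs{f_k(x',0)}^p\leq c\,2^{k}\int_{\R}\abs{f_k(x',x_N)}^p\,dx_N ,
\]
and integrating in $x'$ yields $\norm{f_k(\cdot,0)}_{L^p(\R^{N-1})}^p\leq c\,2^{k}\norm{f_k}_{L^p(\R^N)}^p$; in particular $\sum_k f_k(\cdot,0)$ converges and defines $Tr f$ independently of the choice of $\phi$. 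Since $\F[f_k(\cdot,0)]$ is supported in the ball $\{\abs{\zeta'}\leq2^{k+1}\}$ and $\a-\tfrac1p>0$, the standard estimate bounding the $F_{\a-1/p}^{p,p}(\R^{N-1})$-norm of a sum of functions with Fourier support in balls of size $\sim 2^k$ gives
\[
\norm{Tr f}_{F_{\a-1/p}^{p,p}(\R^{N-1})}^p\leq c\sum_k2^{k(\a-1/p)p}\norm{f_k(\cdot,0)}_{L^p(\R^{N-1})}^p\leq c\sum_k2^{k\a p}\norm{f_k}_{L^p(\R^N)}^p ,
\]
and it remains to dominate the last sum by $\norm{f}_{F_\a^{p,q}(\R^N)}^p$. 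For $q\leq p$ this is immediate from the pointwise inclusion $\ell^p\hookrightarrow\ell^q$ inside the $L^p$-norm; for $q>p$ the crude restriction bound above loses too much and must be replaced by one involving integration over a shrinking $x_N$-interval together with the vector-valued maximal inequality adapted to the $F$-scale.

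\textbf{The extension operator.} Fix $\psi\in\Schw(\R)$ with $\psi(0)=1$ and $\F[\psi]$ supported in a small ball about the origin, and for $g=\sum_jg_j\in F_{\a-1/p}^{p,p}(\R^{N-1})$ set
\[
(Ex\,g)(x',x_N):=\sum_{j\geq0}\psi(2^jx_N)\,g_j(x') .
\]
Then $(Ex\,g)(x',0)=\sum_jg_j(x')=g(x')$, so $Tr\circ Ex=Id$. The $j$-th summand has Fourier support in $\{2^{j-1}\leq\abs{\zeta'}\leq2^{j+1}\}\times\{\abs{\zeta_N}\lesssim2^j\}$, hence inside an annulus of size $\sim2^j$, so it is (after a fixed shift of indices) an admissible Littlewood--Paley decomposition of $Ex\,g$; evaluating $\norm{Ex\,g}_{F_\a^{p,q}(\R^N)}$ then reduces, via the decay $\abs{\psi(2^jx_N)}\leq c_M(1+2^j\abs{x_N})^{-M}$ and $\int_\R\abs{\psi(2^jx_N)}^p\,dx_N\sim2^{-j}$, to an elementary $j$-summation bounded by $\sum_j2^{j(\a-1/p)p}\norm{g_j}_{L^p(\R^{N-1})}^p\sim\norm{g}_{F_{\a-1/p}^{p,p}(\R^{N-1})}^p$; this goes through directly when $q\geq p$, and for $q<p$ it needs the same refinement as above. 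Linearity and boundedness of $Ex$ are then clear, and combined with the first step this shows that $Tr:F_\a^{p,q}(\R^N)\to F_{\a-1/p}^{p,p}(\R^{N-1})$ is bounded, $Ex$ is a bounded right inverse, and $Tr$ is onto.

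The one genuinely technical point --- the main obstacle --- is the sharp control of the fine index: showing that $Tr$ lands in exactly $F_{\a-1/p}^{p,p}$ for \emph{every} $q\in(1,\infty)$, and symmetrically that $Ex$ is bounded into $F_\a^{p,q}$ for every $q$, rather than only into $F_{\a-1/p}^{p,\max(p,q)}$. The elementary estimates above already cover $Tr$ for $q\leq p$ and $Ex$ for $q\geq p$; the remaining ranges are precisely where one needs the more careful argument --- with Peetre maximal functions and the Fefferman--Stein vector-valued maximal inequality --- carried out in Chapter 4.4 of \cite{Triebel92}.
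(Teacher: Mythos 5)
The paper does not prove this theorem; it is quoted verbatim from Chapter 4.4 of \cite{Triebel92}, so the comparison is with the standard proof there. Your outline follows exactly that route: Littlewood--Paley decomposition, a one-dimensional Nikolskii-type inequality for the restriction of each band-limited piece, and a product-type extension operator $\sum_j\psi(2^jx_N)g_j(x')$ whose summands have Fourier support in dyadic annuli. The estimates you do carry out are correct: the chain $\norm{f_k(\cdot,0)}_{L^p(\R^{N-1})}^p\lesssim 2^k\norm{f_k}_{L^p(\R^N)}^p$, the summation lemma for functions with Fourier support in balls of radius $\sim 2^k$ (valid precisely because $\a-\frac{1}{p}>0$, i.e.\ $\a p>1$), and the computation $\int_\R\abs{\psi(2^jx_N)}^p\,dx_N\sim2^{-j}$ are all as in the reference.

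That said, as written your argument only proves the theorem on the diagonal $q=p$: the trace bound is established for $q\leq p$ and the extension bound for $q\geq p$, and the complementary ranges --- which carry the actual content of the theorem, namely that the trace space of $F_\a^{p,q}$ is independent of the fine index $q$ --- are asserted with a pointer to Peetre maximal functions and the Fefferman--Stein vector-valued maximal inequality rather than proved. You identify the right tools, but this is precisely the nontrivial step (it is what distinguishes the $F$-scale from the $B$-scale, where the trace space genuinely depends on $q$), so the proposal should be regarded as a correct reduction plus an honest citation for the hard case, not a complete proof. One small slip: the pointwise inequality you need for $q\leq p$ is $\norm{\cdot}_{\ell^p}\leq\norm{\cdot}_{\ell^q}$, i.e.\ $\ell^q\hookrightarrow\ell^p$, not the inclusion as you wrote it.
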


Thanks to the above theorem, we can define the trace $Tr(w)=w(x',0) \in F_{\a-\frac{1}{p}}^{p,p}\left( \R^{N-1}\right)$ of any function $w(x',x_N)$ in $F_\a^{p,q}\left( \R^N\right)$.

Let us mention that we will also use Sobolev's embedding-type results for these spaces in the sequel. We will point this out later. 

The following proposition, which follows from Theorem \ref{Trace&Extension}, shows that the `trace' of the $cap_{1,p,N}$ capacity in $\pd\R^N_+$ is the $cap_{1-\frac{1}{p},p,N-1}$ capacity. 

\begin{prop}\label{trace&extensionE}
	There exists a constant $C(N,p)$ such that for all Borel sets $E\subset \R^N$ and $E'\subset \pd\R^N_+$
	\begin{enumerate}
		\item $C(N,p)cap_{1,p,N}(E) \geq  cap_{1-\frac{1}{p},p,N-1}(E\cap\pd\R^N_+)$, and
		\item $C(N,p)cap_{1-\frac{1}{p},p,N-1}(E') \geq cap_{1,p,N}(E')$.
	\end{enumerate}
\end{prop}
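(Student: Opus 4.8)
The plan is to deduce both inequalities from the trace and extension theory for Lizorkin–Triebel spaces, specifically Theorem~\ref{Trace&Extension} applied with $\a=1$, $q=2$, together with the equivalence $cap(\cdot;F_1^{p,2}(\R^N))\sim cap_{1,p,N}(\cdot)$ (valid since $1<p\leq N$), and the analogous equivalence $cap(\cdot;F_{1-1/p}^{p,p}(\R^{N-1}))\sim cap_{1-\frac{1}{p},p,N-1}(\cdot)$ in $\R^{N-1}$ (valid since $(1-\frac1p)p=p-1<N-1$ when $p<N$, and the endpoint $p=N$ should be handled by a direct argument or by observing that the condition $\a p\leq n$ becomes $p-1\le N-1$). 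So the first reduction is: it suffices to prove the two inequalities with the $F$-capacities in place of the Bessel/Riesz capacities, absorbing the equivalence constants into $C(N,p)$.

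For part~(1), I would start with a compact set $K\subset\R^N$ and let $w\in F_1^{p,2}(\R^N)$ be (nearly) optimal for $cap(K;F_1^{p,2}(\R^N))$, i.e. $w\geq 1$ on a neighborhood of $K$ (or $w\geq\chi_K$ in the appropriate sense) with $\norm{w}_{F_1^{p,2}(\R^N)}^p$ close to the capacity. By Theorem~\ref{Trace&Extension}, $Tr(w)=w(x',0)\in F_{1-1/p}^{p,p}(\R^{N-1})$ with $\norm{Tr(w)}_{F_{1-1/p}^{p,p}(\R^{N-1})}\leq \norm{Tr}\cdot\norm{w}_{F_1^{p,2}(\R^N)}$. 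Since $w\geq 1$ near $K$, its trace is $\geq 1$ near $K\cap\pd\R^N_+$ (identified as a subset of $\R^{N-1}$), so $Tr(w)$ is admissible for $cap(K\cap\pd\R^N_+;F_{1-1/p}^{p,p}(\R^{N-1}))$. Raising to the $p$-th power gives $cap(K\cap\pd\R^N_+;F_{1-1/p}^{p,p})\leq \norm{Tr}^p cap(K;F_1^{p,2})$. Then pass from compact sets to open sets (inner regularity) and to arbitrary Borel sets (outer regularity) by the standard monotone/sup–inf limiting arguments, noting that for a Borel $E\subset\R^N$ and open $G\supset E$ in $\R^N$ one has $G\cap\pd\R^N_+$ open in $\pd\R^N_+$ and containing $E\cap\pd\R^N_+$; this yields the inequality for all Borel $E$ with $C(N,p)=\norm{Tr}^p$ times the capacity-equivalence constants.

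For part~(2), I would run the symmetric argument with the extension operator: given $E'\subset\pd\R^N_+$ Borel and a (near-)optimal $v\in F_{1-1/p}^{p,p}(\R^{N-1})$ with $v\geq 1$ near $E'$, set $w=Ex(v)\in F_1^{p,2}(\R^N)$; since $Tr\circ Ex=Id$, we have $w(x',0)=v(x')\geq 1$ near $E'$, but to make $w$ admissible for $cap(E';F_1^{p,2}(\R^N))$ one needs $w\geq 1$ on a full $N$-dimensional neighborhood of $E'$, not merely on the trace. This is the main obstacle: the extension $Ex(v)$ need not be $\geq 1$ off the hyperplane even if $v\geq 1$. I would circumvent it by a truncation/rearrangement trick — e.g. work with $\max(w,0)$ or compose with a fixed smooth function that is $1$ on $[1,\infty)$, using that $F$-spaces are stable under such Lipschitz compositions up to constants (for $p\geq$ something) — or, more robustly, use the characterization of $cap_{1,p,N}$ via the alternative definition with the Bessel kernel ($\G_1\ast f\geq 1$ on $E$) and build the competitor directly from the one in $\R^{N-1}$ by convolving a nonnegative density against $\G_1$, controlling $\norm{f}_{L^p(\R^N)}$ by $\norm{g}_{L^p(\R^{N-1})}$ through a one-dimensional integral estimate. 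Once a valid $N$-dimensional competitor with the right norm bound is produced, the inequality for compact sets follows, and the passage to open and then Borel sets is routine as in part~(1). The delicate point throughout is keeping track of the direction of the capacity equivalences at the endpoint $p=N$, where $1-\frac1p$ and the dimension $N-1$ sit exactly at the borderline of Proposition~4.4.4 of~\cite{AdamsHedberg}.
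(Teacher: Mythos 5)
Your plan for part (1) matches the paper's proof exactly: start with a Schwartz competitor $g\geq\chi_E$ for $cap_{1,p,N}(E)$, take its trace (which is continuous, so the pointwise inequality passes to $E\cap\pd\R^N_+$), use the boundedness of $Tr$ from Theorem \ref{Trace&Extension}, and invoke the $F$-capacity equivalence. Your concern about the endpoint $p=N$ in the $\R^{N-1}$ equivalence is unnecessary, as you yourself noticed: the Adams--Hedberg condition becomes $p-1\leq N-1$, which holds throughout the range $1<p\leq N$.

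For part (2) you flag a real subtlety that the paper's proof does not spell out, but your characterization of what goes wrong is not quite right. With the paper's definition of $cap_{1,p,N}$, an admissible competitor $\phi$ only needs $\phi\geq 1$ \emph{on} the compact set $E'$, not on a full $N$-dimensional neighborhood. The actual issue is that $Ex(g)$ is a priori only a $W^{1,p}(\R^N)$-element, not a Schwartz (or even continuous) function, so the pointwise requirement "$\geq 1$ on $E'$" does not immediately make sense on a Lebesgue-null set, and trying to fix this via quasi-continuous representatives runs into a circularity because one would need part (2) to compare the relevant null sets. The clean resolution, which the paper leaves implicit, is that the standard Littlewood--Paley construction of the extension operator takes Schwartz boundary data to a \emph{continuous} function on $\R^N$, so $Ex(g)(x',0)=g(x')\geq 1$ on $E'$ holds pointwise and admissibility is immediate. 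Your Workaround 1 (truncation/composition) does not address this, since the problem is not that $w$ might be negative but that it might lack a good pointwise representative near the hyperplane. Your Workaround 2 (bypass the trace theorem and construct the $N$-dimensional competitor directly from the $(N-1)$-dimensional one at the level of Bessel kernels, using Proposition 2.3.13 of \cite{AdamsHedberg}) is a genuinely valid alternative and would produce a more self-contained proof, but it is considerably more work than the paper needs.
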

\begin{proof}
	By the definition of capacity and the capacitability of Borel sets it is enough to consider $E$ and $E'$ compact. Let $g\in \Schw\left( \R^N\right)$ be such that $g\geq \chi_E$. By Theorem \ref{Trace&Extension} $g$ has a trace $\bar{g}=Tr(g)$ such that 
	\[
	\norm{\bar{g}}_{F_{1-\frac{1}{p}}^{p,p}\left( \pd\R^N_+\right)}\leq C(N,p)\norm{g}_{F_{1}^{p,2}\left( \R^N\right)}=C(N,p) \norm{g}_{1,p,\R^N}.
	\]
	Since $\bar{g}\geq \chi_{E\cap \pd\R^N_+}$  we have $C(N,p)cap_{1,p,N}\left( E\right)\geq cap\left( E\cap\pd\R^N_+;F_{1-\frac{1}{p}}^{p,p}\left( \pd\R^N_+\right) \right)$, and since $ cap\left( E\cap\pd\R^N_+;F_{1-\frac{1}{p}}^{p,p}\left( \pd\R^N_+\right) \right) \sim cap_{1-\frac{1}{p},p,N-1}\left( E\cap\pd\R^N_+\right) $ we conclude 
	\[
	C(N,p) cap_{1,p,N}\left( E\right) \geq  cap_{1-\frac{1}{p},p,N-1}\left( E\cap\pd\R^N_+\right).
	\]
	For the second assertion we consider the extension operator. Suppose $g\in \Schw\left( \pd\R^N_+\right)$ and $g\geq \chi_{E'}$. Then its extension $\bar{g}=Ex(g)$ belongs to $F_1^{p,2}\left( \R^N\right)= L^{1,p}\left( \R^N\right) $ with 
	\[
	\norm{\bar{g}}_{1,p,\R^N}=\norm{\bar{g}}_{ F_1^{p,2}\left( \R^N\right)}\leq C(N,p)\norm{g}_{F_{1-\frac{1}{p}}^{p,p}\left( \pd\R^N_+\right)}.
	\]
	Again, since $cap\left( E';F_{1-\frac{1}{p}}^{p,p}\left( \pd\R^N_+\right) \right) \sim cap_{1-\frac{1}{p},p,N-1}\left( E'\right)$ we conclude
	\[
	C(N,p)cap_{1-\frac{1}{p},p,N-1}\left( E'\right)\geq cap_{1,p,N}(E').
	\]
\end{proof}

Thanks to the above result, we can describe the relationship between the decomposition of a measure in $\mathfrak{M}_b\left( \R^{N-1}\right)$ and its representative in $\mathfrak{M}_b\left( \pd\R^{N}_+\right)$.
\begin{prop}\label{decomposition}
	Let $\mu\in \mathfrak{M}_b\left( \R^{N-1}\right) $ and let
	\[\mu=\mu_0+\mu_s\]
	be its decomposition with respect to $cap_{1-\frac{1}{p},p,N-1}$. Let $\bar{\mu}$ denote its identification as an element of $\mathfrak{M}_b\left( \pd\R^{N}_+\right)$.
	If 
	\[\bar{\mu}=\bar{\mu}_0+\bar{\mu}_s\]
	is the decomposition of $\bar{\mu}$ with respect to $cap_{1,p,N}$ then 
	\[\overline{\mu_0}=\bar{\mu}_0 \mbox{ and } \overline{\mu_s}=\bar{\mu}_s \ . \]
	In particular 
	\[\overline{\mu_s^\pm}=\bar{\mu}_s^\pm \ . \]
\end{prop}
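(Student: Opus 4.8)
The plan is to work directly from the definitions of the two decompositions and the capacitary comparison in Proposition \ref{trace\&extensionE}, reducing everything to the statement that a Borel set $E'\subset\pd\R^N_+$ is $cap_{1-\frac{1}{p},p,N-1}$-null (after identifying $\pd\R^N_+\simeq\R^{N-1}$) if and only if its image in $\R^N$ is $cap_{1,p,N}$-null. This equivalence is exactly what parts (1) and (2) of Proposition \ref{trace\&extensionE} give: if $cap_{1,p,N}(E')=0$ then (1) forces $cap_{1-\frac{1}{p},p,N-1}(E'\cap\pd\R^N_+)=cap_{1-\frac{1}{p},p,N-1}(E')=0$, and conversely if $cap_{1-\frac{1}{p},p,N-1}(E')=0$ then (2) forces $cap_{1,p,N}(E')=0$. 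So being a capacity-null set is preserved in both directions under the identification $E'\leftrightarrow \{(x',0):x'\in E'\}$.

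Next I would recall that the decomposition $\mu=\mu_0+\mu_s$ with respect to a capacity is \emph{uniquely} characterized by the two properties: $\mu_0$ vanishes on every set of zero capacity, and $\mu_s$ is concentrated on some Borel set of zero capacity. Given that, I would simply verify that $\overline{\mu_0}$ and $\overline{\mu_s}$ satisfy the characterizing properties for the $cap_{1,p,N}$-decomposition of $\bar\mu$. First, $\overline{\mu_0}+\overline{\mu_s}=\overline{\mu_0+\mu_s}=\bar\mu$ since the identification map $\nu\mapsto\bar\nu$ is additive. Second, if $E\subset\R^N$ is Borel with $cap_{1,p,N}(E)=0$, then by monotonicity $cap_{1,p,N}(E\cap\pd\R^N_+)=0$, hence by part (1) of Proposition \ref{trace\&extensionE} the corresponding subset $E\cap\pd\R^N_+$ of $\R^{N-1}$ has zero $cap_{1-\frac{1}{p},p,N-1}$ capacity; since $\overline{\mu_0}$ is supported on $\pd\R^N_+$ and agrees with $\mu_0$ under the identification, $\overline{\mu_0}(E)=\mu_0(E\cap\pd\R^N_+)=0$ because $\mu_0$ kills $cap_{1-\frac{1}{p},p,N-1}$-null sets. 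Third, $\mu_s$ is concentrated on some Borel $F\subset\R^{N-1}$ with $cap_{1-\frac{1}{p},p,N-1}(F)=0$; then $\overline{\mu_s}$ is concentrated on $\{(x',0):x'\in F\}$, which by part (2) of Proposition \ref{trace\&extensionE} has zero $cap_{1,p,N}$ capacity. Thus $\overline{\mu_0}+\overline{\mu_s}$ is a decomposition of $\bar\mu$ of the required type, and by uniqueness of the decomposition with respect to $cap_{1,p,N}$ (cited from \cite{FST91}) we conclude $\overline{\mu_0}=\bar\mu_0$ and $\overline{\mu_s}=\bar\mu_s$. The final claim $\overline{\mu_s^\pm}=\bar\mu_s^\pm$ then follows because the Jordan decomposition commutes with the identification map (positive and negative parts are defined pointwise on Borel sets, which the identification respects), together with the uniqueness of the Jordan decomposition.

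The main obstacle, such as it is, is bookkeeping rather than depth: one must be careful that the identification $\nu\mapsto\bar\nu$ between $\mathfrak{M}_b(\R^{N-1})$ and $\mathfrak{M}_b(\pd\R^N_+)$ genuinely transports "null set", "supported on", "absolutely continuous", and "singular" in both directions, and that the comparison of capacities in Proposition \ref{trace\&extensionE} is available in exactly the form needed (it is, since it is stated for arbitrary Borel sets). A minor subtlety is that Proposition \ref{trace\&extensionE}(1) compares $cap_{1,p,N}(E)$ with $cap_{1-\frac{1}{p},p,N-1}(E\cap\pd\R^N_+)$ for $E\subset\R^N$, so when checking the absolute-continuity property of $\overline{\mu_0}$ one intersects the test set with $\pd\R^N_+$ first and uses monotonicity of $cap_{1,p,N}$; I would make that step explicit. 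Apart from that, the argument is a direct application of the uniqueness of the capacitary decomposition and is short.
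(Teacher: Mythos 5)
Your proof is correct and rests on the same key ingredient as the paper's, namely Proposition \ref{trace&extensionE} transferring capacity-null sets in both directions between $\pd\R^N_+$ and $\R^{N-1}$. The only difference is packaging: you verify that $\overline{\mu_0}+\overline{\mu_s}$ has the two characterizing properties and then invoke the uniqueness of the capacitary decomposition, whereas the paper reaches the same conclusion by an explicit evaluation of $\bar{\mu}_0(A)$ and $\bar{\mu}_s(A)$ on Borel sets; both are valid and essentially equivalent.
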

\begin{proof}
	We only prove the first assertion since the second follows easily. Let $E$ be such that $cap_{1,p,N}(E)=0$ and $\bar{\mu}_s(E^c)=0$. By Proposition \ref{trace&extensionE} we have $cap_{1-\frac{1}{p},p,N-1}\left( E\cap\pd\R^N_+\right) =0$ and thus for any Borel set $A\subset \R^N$
	\begin{align*}
	\bar{\mu}_0\left( A\right) &= \bar{\mu}_0\left( A\cap E^c\right) =\bar{\mu}\left( A\cap E^c\right) = \mu\left( A\cap E^c \cap \pd\R^N_+\right) \\
	&=\mu_0\left( A\cap \pd\R^N_+\right) +\mu_s\left( A\cap E^c\cap \pd\R^N_+\right) \\
	&=\overline{\mu_0}\left( A\right) + \overline{\mu_s}\left( A\cap E^c\right) 
	\end{align*}
	and
	\begin{align*}
	\bar{\mu}_s\left( A\right) &= \bar{\mu}_s\left( A\cap E\right) =\bar{\mu}\left( A\cap E\right) = \mu\left( A\cap E \cap \pd\R^N_+\right) \\
	&=\mu_s\left( A\cap E \cap\pd\R^N_+\right) \\
	&=\overline{\mu_s}\left( A\cap E\right) \ .
	\end{align*}
	Similarly, let $E_0$ be such that $cap_{1-\frac{1}{p},p,N-1}\left( E_0\right) =0$ and $\mu_s\left( E_0^c\right) =0$. By Proposition \ref{trace&extensionE} we have $cap_{1,p,N}(E_0)=0$ and so
	\begin{align*}
	\overline{\mu_s}\left( A\right) &=\mu_s\left( A\cap\pd\R^N_+\right) =\mu_s\left( A\cap E_0\cap \pd\R^N_+\right) = \mu\left( A\cap E_0\cap\pd\R^N_+\right) \\
	&=\bar{\mu}\left( A\cap E_0\right) \\
	&=\bar{\mu}_s\left( A\cap E_0\right) \ .
	\end{align*}
	The previous inequality implies in particular that 
	\[
	\overline{\mu_s}\left( E^c\right) =\bar{\mu}_s\left( E^c\cap E_0\right) = \overline{\mu_s}\left( E^c\cap E_0\cap E\right) =0
	\]
	from which the proposition follows since then
	\[
	\overline{\mu_s}\left( A\cap E\right)=\overline{\mu_s}\left( A\right) \ , \ \overline{\mu_s}\left( A\cap E^c\right) =0 \ . 
	\]
\end{proof}

\section{Finer properties of $W^{1,p}$ functions} 

As we noted in the previous section, Theorem \ref{Trace&Extension} guarantees the existence of a trace $w(x',0) \in F_{\a-\frac{1}{p}}^{p,p}\left( \R^{N-1}\right)$ whenever $w(x',x_N)$ belongs to $F_\a^{p,q}\left( \R^N\right)$. Since
\[
F_1^{p,2}\left( \R^N\right) = L^{1,p}\left( \R^N\right) = W^{1,p}\left( \R^N\right) 
\]
we see that every function $w(x',x_N)\in W^{1,p}\left( \R^N\right) $ has a trace $w(x',0)$ in $F_{1-\frac{1}{p}}^{p,p}\left( \R^{N-1}\right) $.

Since we want to integrate along the boundary of $\R^N_+$ we study the regularity of these traces. The following proposition shows that functions in $W^{1,p}\left( \R^N_+\right) $ also have well defined traces and that, by selecting and adequate representative, we can assume they are quasi-continuous. 
\begin{prop}\label{qcTrace}
	Let $\w\in W^{1,p}\left( \R^N_+\right)$. Then $\w$ has a $cap_{1,p,N}-$ quasi-continuous representative, defined in $\overline{\R^N_+}$, which is unique up to sets of zero $cap_{1,p,N}$ capacity. In particular, identifying $\w$ with this representative, the trace of $\w$ is $cap_{1-\frac{1}{p},p,N-1}-$ quasi-continuous and unique $cap_{1-\frac{1}{p},p,N-1}-q.e.$ in $\pd\R^N_+$.
\end{prop}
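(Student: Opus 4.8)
The plan is to reduce the statement to the classical theory of $cap_{1,p,N}$-quasi-continuous representatives in the full space $\R^N$ (available in \cite{AdamsHedberg}) and then push the conclusions down to $\pd\R^N_+$ by means of Proposition \ref{trace&extensionE}. First I would extend $\w$ to $\R^N$ by the even reflection $\bar\w(x',x_N):=\w(x',\abs{x_N})$; this is a bounded operator $W^{1,p}(\R^N_+)\goesto W^{1,p}(\R^N)$ (tangential derivatives reflect evenly, the normal one reflects oddly, and no boundary term appears because $\w$ has an $L^p$ trace). Since $W^{1,p}(\R^N)=L^{1,p}(\R^N)$, the classical theory provides a $cap_{1,p,N}$-quasi-continuous representative $v$ of $\bar\w$ on $\R^N$, unique up to sets of zero $cap_{1,p,N}$ capacity. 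Restricting $v$ to $\overline{\R^N_+}$ gives a $cap_{1,p,N}$-quasi-continuous function there (restrictions of quasi-continuous functions are quasi-continuous in the subspace topology) which coincides Lebesgue-a.e.\ with $\w$ on $\R^N_+$; this is the desired representative.

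For uniqueness on $\overline{\R^N_+}$ I would reflect back. If $v_1,v_2$ are two such representatives, put $h(x',x_N):=(v_1-v_2)(x',\abs{x_N})$. Then $h$ is $cap_{1,p,N}$-quasi-continuous on $\R^N$: a relatively open exceptional set $U\subset\overline{\R^N_+}$ pulls back, under the continuous map $(x',x_N)\mapsto(x',\abs{x_N})$, to an open set $\widetilde U\subset\R^N$ with $cap_{1,p,N}(\widetilde U)\leq 3\,cap_{1,p,N}(U)$ by subadditivity and the invariance of the Bessel capacity under the reflection isometry. Since $h=0$ Lebesgue-a.e.\ on $\R^N$, the uniqueness statement in $\R^N$ forces $h=0$ $cap_{1,p,N}$-q.e., hence $v_1=v_2$ $cap_{1,p,N}$-q.e.\ on $\overline{\R^N_+}$.

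For the trace I would take $v|_{\pd\R^N_+}$. If $U\subset\overline{\R^N_+}$ is relatively open with $cap_{1,p,N}(U)<\e$ and $v\in C(\overline{\R^N_+}\setminus U)$, then $U\cap\pd\R^N_+$ is relatively open in $\pd\R^N_+$, $v|_{\pd\R^N_+}\in C(\pd\R^N_+\setminus U)$, and by monotonicity together with Proposition \ref{trace&extensionE}(1) we get $cap_{1-\frac{1}{p},p,N-1}(U\cap\pd\R^N_+)\leq C(N,p)\,cap_{1,p,N}(U)<C(N,p)\e$; hence $v|_{\pd\R^N_+}$ is $cap_{1-\frac{1}{p},p,N-1}$-quasi-continuous. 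Likewise a $cap_{1,p,N}$-null exceptional set in $\overline{\R^N_+}$ meets $\pd\R^N_+$ in a $cap_{1,p,N}$-null set, which is $cap_{1-\frac{1}{p},p,N-1}$-null by Proposition \ref{trace&extensionE}(1), giving uniqueness $cap_{1-\frac{1}{p},p,N-1}$-q.e. To reconcile $v|_{\pd\R^N_+}$ with the Lizorkin--Triebel trace $Tr(\w)\in F^{p,p}_{1-\frac{1}{p}}(\R^{N-1})$ of Theorem \ref{Trace&Extension}, I would approximate $\bar\w$ by smooth $\w_j\goesto\bar\w$ in $W^{1,p}(\R^N)$ (for which $Tr(\w_j)$ is literally $\w_j(\cdot,0)$), pass to a subsequence along which $\w_j\goesto v$ $cap_{1,p,N}$-q.e.\ and $Tr(\w_j)\goesto Tr(\w)$ $cap_{1-\frac{1}{p},p,N-1}$-q.e.\ (the latter using $cap(\cdot;F^{p,p}_{1-\frac{1}{p}}(\R^{N-1}))\sim cap_{1-\frac{1}{p},p,N-1}$, legitimate since $1<p\leq N$), and observe that on $\pd\R^N_+$ the two limits agree $cap_{1-\frac{1}{p},p,N-1}$-q.e., once again via Proposition \ref{trace&extensionE}(1).

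I expect the main obstacle to be this final identification step: extracting quasi-everywhere convergence from $W^{1,p}$-convergence rests on a capacitary weak-type (maximal function) estimate, and one must keep track of the directions in which the capacity comparisons of Proposition \ref{trace&extensionE} and the equivalence with the Lizorkin--Triebel capacity are applied. A secondary delicate point is controlling $cap_{1,p,N}$ across the hyperplane in the reflection argument, which is precisely what upgrades uniqueness from $\R^N_+$ to all of $\overline{\R^N_+}$ and guarantees that the boundary values of the representative are themselves determined quasi-everywhere.
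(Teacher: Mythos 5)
Your proof is correct and follows essentially the same route as the paper: extend $\w$ to $W^{1,p}(\R^N)$, invoke the classical $L^{1,p}(\R^N)$ theory for a $cap_{1,p,N}$-quasi-continuous representative, restrict to $\overline{\R^N_+}$, and push the conclusions down to the hyperplane via Proposition \ref{trace&extensionE}. The paper's proof is terse (``Since $\R^N_+$ is an extension domain \dots \ In view of Proposition \ref{trace&extensionE} the rest of the proposition follows easily''); the substantive extra in your argument is the choice of the even reflection as the extension operator and the reflect-back step for uniqueness, which genuinely earns its keep: uniqueness up to $cap_{1,p,N}$-null sets on the \emph{closed} half-space cannot be obtained by naively applying Theorem 6.1.4 of \cite{AdamsHedberg} on the open set $\R^N_+$ (the boundary would be lost), and folding the difference $v_1-v_2$ back onto $\R^N$, where it vanishes Lebesgue-a.e.\ and is quasi-continuous, is precisely the device that upgrades a.e.\ equality on $\R^N_+$ to q.e.\ equality across $\pd\R^N_+$. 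Your final step reconciling the pointwise trace with the $F^{p,p}_{1-1/p}(\R^{N-1})$ trace of Theorem \ref{Trace&Extension} is a useful sanity check but is not required by the statement, which only asserts the quasi-continuity and q.e.\ uniqueness of the restriction of the representative.
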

\begin{proof}
	Since $\R^N_+$ is an extension domain we consider $\w$ as an element in $W^{1,p}\left( \R^N\right) $. Recalling that $W^{1,p}\left( \R^N\right) =L^{1,p}\left( \R^N\right)$, we obtain the existence of a $cap_{1,p,N}-$ quasi-continuous representative which is unique in $\overline{\R^N_+}$ modulo sets of zero capacity (see Theorem 6.1.4 of \cite{AdamsHedberg}). In view of Proposition \ref{trace&extensionE} the rest of the proposition follows easily.
\end{proof}

\begin{remark}\label{remark_quasincontinous_trace}
	Thanks to the above proposition from now on we identify function in $W^{1,p}\left( \R^N_+\right)$ with their $cap_{1,p,N}-$ quasi-continuous representative in $\overline{\R^N_+}$ and refer to their $cap_{1-\frac{1}{p},p,N-1}-$ quasi-continuous trace in $\pd\R^N_+$ whenever necessary. Note that this result also applies to functions in $W^{1,p}\left( \R^N\right)$, or in $W^{1,p}_{0}\left( B_M\right)$ by identifying elements in this space with their extension by zero.
\end{remark}

\begin{remark}\label{qcTrace_local}
	For a function $\w \in W^{1,p}_{loc}\left( \R^N_+\right)$ one can still define the boundary values of $\w$. Indeed, for any fixed $m$ we have $\w\in W^{1,p}\left( B_m\cap \R^N_+\right) $ and thus we can extend $\w$ to a function in $W^{1,p}\left( \R^N\right) $ first by even reflection and then using that $B_m$ is an extension domain. The resulting extension has a $cap_{1,p,N}-$ quasi-continuous representative, which we call $\w_m$, coinciding with $\w$ $a.e.$ in $B_m\cap \R^N_+$. If we take $m'>m$ then any $cap_{1,p,N}-$ quasi-continuous representative $\w_{m'}$ coincides with $\w_m$ $a.e.$ in $B_m\cap \R^N_+$ and thus, by Theorem 6.1.4 of \cite{AdamsHedberg}, $\w_{m'}=\w_m$ $cap_{1,p,N}-q.e.$ in $\overline{B_m\cap \R^N_+}$. Hence, from now on, we identify functions in $W^{1,p}_{loc}\left( \R^N_+\right)$ with this locally defined, and $cap_{1,p,N}-q.e.$ unique, $cap_{1,p,N}-$ quasi-continuous representative in $\overline{\R^N_+}$. In particular, if $x'\in\R^{N-1}$, we define the trace $\w(x',0)$ to be the value at $(x',0)$ of \textit{any} representative $\w_m$ such that $\abs{x'}<m$. By the above considerations, and Proposition \ref{trace&extensionE}, the trace is $cap_{1-\frac{1}{p},p,N-1}-$quasi-continuous and unique $cap_{1-\frac{1}{p},p,N-1}-q.e.$ in $\pd \R^N_+$.
\end{remark}

We shall make use of the following propositions regarding integrability and convergence with respect to measures in $\mathfrak{M}_0\left( \pd\R^N_+\right) $. 

\begin{prop}\label{L^1mu_0}
	Let $\mu \in \mathfrak{M}_0\left( \pd\R^N_+\right)$ and let $w\in W^{1,p}\left( \R^N\right)$. Then $w$ is measurable with respect to $\mu$. Furthermore, if the trace of $w$ belongs to $L^\infty\left( \pd\R^N_+\right)$ then it belongs to $L^\infty\left( \R^N;d\mu\right) $.
\end{prop}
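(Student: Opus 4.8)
The plan is to work throughout with the $cap_{1,p,N}$-quasi-continuous representative of $w$ in $\overline{\R^N_+}$ furnished by Remark~\ref{remark_quasincontinous_trace}, and to use repeatedly that a measure in $\mathfrak{M}_0\left( \pd\R^N_+\right)$ assigns zero mass to every set of zero $cap_{1,p,N}$ capacity. The measurability assertion is the statement that a quasi-continuous function is measurable for such a measure; the $L^\infty$ assertion is then obtained by a truncation trick that converts the ``Lebesgue-a.e.'' bound on the trace into a ``capacity-q.e.'' bound, which a measure in $\mathfrak{M}_0$ can see.

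First I would dispose of the measurability assertion. For each $n\in\N$ choose an open $E_n\subset\R^N$ with $cap_{1,p,N}(E_n)<2^{-n}$ and $w\in C\left( \R^N\setminus E_n\right)$, and put $F_k=\bigcup_{n\geq k}E_n$. By countable subadditivity $cap_{1,p,N}(F_k)\leq 2^{1-k}$, so $Z:=\bigcap_k F_k$ has zero $cap_{1,p,N}$ capacity, whence $\m(Z)=0$. On $\R^N\setminus Z=\bigcup_k\left( \R^N\setminus F_k\right)$, an increasing union of closed sets on each of which $w$ is continuous, $w$ is Borel measurable; since $\m(Z)=0$, $w$ is measurable with respect to $\m$.

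Now suppose $Tr(w)\in L^\infty\left( \pd\R^N_+\right)$ and set $M=\norm{Tr(w)}_{L^\infty\left( \pd\R^N_+\right)}$, the essential supremum being taken with respect to the $(N-1)$-dimensional Hausdorff measure $\mathcal{H}$. Let $h$ be the restriction to $\pd\R^N_+$ of the $cap_{1,p,N}$-quasi-continuous representative of $w$; by Propositions~\ref{qcTrace} and~\ref{trace&extensionE}, $h$ is $cap_{1-\frac{1}{p},p,N-1}$-quasi-continuous and represents $Tr(w)$, so $\abs{h}\leq M$ $\mathcal{H}$-a.e. Consequently $T_M\circ h=h$ $\mathcal{H}$-a.e.; but $T_M\circ h$ is again $cap_{1-\frac{1}{p},p,N-1}$-quasi-continuous, being a continuous function of a quasi-continuous function, so by the uniqueness up to sets of zero capacity of quasi-continuous representatives (Theorem~6.1.4 of~\cite{AdamsHedberg}) one has $T_M\circ h=h$ $cap_{1-\frac{1}{p},p,N-1}$-q.e. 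Since $\abs{T_M\circ h}\leq M$ everywhere, it follows that $\abs{w}\leq M$ $cap_{1-\frac{1}{p},p,N-1}$-q.e.\ on $\pd\R^N_+$.

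To finish, note that $\{x'\in\pd\R^N_+:\abs{w(x')}>M\}$ has zero $cap_{1-\frac{1}{p},p,N-1}$ capacity, hence also zero $cap_{1,p,N}$ capacity by Proposition~\ref{trace&extensionE}(2); since $\m$ is supported in $\pd\R^N_+$ and does not charge $cap_{1,p,N}$-null sets, $\abs{w}\leq M$ $\m$-a.e., i.e.\ $w\in L^\infty\left( \R^N;d\m\right)$ with $\norm{w}_{L^\infty\left( d\m\right)}\leq\norm{Tr(w)}_{L^\infty\left( \pd\R^N_+\right)}$. I do not expect any genuine obstacle; the one point requiring care is exactly the passage from the ``$\mathcal{H}$-a.e.'' notion built into $L^\infty\left( \pd\R^N_+\right)$ to the ``capacity-q.e.'' notion needed to control a measure in $\mathfrak{M}_0$. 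The truncation step combined with the q.e.-uniqueness of quasi-continuous representatives bridges this gap cleanly, so that one avoids invoking directly the (true but more delicate) fact that a quasi-open set of zero $\mathcal{H}$-measure has zero capacity; everything else is bookkeeping with the two capacities and the trace results of Chapter~\ref{Capacities}.
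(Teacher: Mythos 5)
Your proof is correct and takes essentially the same approach as the paper's: both establish measurability by noting that a quasi-continuous function agrees off a capacity-null set with a Borel function, and both convert the $\mathcal{H}$-a.e.\ bound on the trace into a $cap_{1-\frac{1}{p},p,N-1}$-q.e.\ bound via Theorem~6.1.4 of~\cite{AdamsHedberg} applied to an auxiliary quasi-continuous function (you use $T_M\circ h - h$, the paper uses $(w-k)_+$ and $(w+k)_-$, a cosmetic difference), then invoke Propositions~\ref{trace&extensionE} and~\ref{decomposition} to conclude $\mu$-a.e.
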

\begin{proof}
	Since $w\in W^{1,p}\left( \R^N_+\right) $ we have by Proposition \ref{qcTrace} that $w$ has a $cap_{1-\frac{1}{p},p,N-1}-$ quasi-continuous trace in $\pd\R^N_+$, which is the restriction of any $cap_{1,p,N}-$ quasi-continuous representative of $w$. Since every $cap_{1,p,N}-$ quasi-continuous function coincides $cap_{1,p,N}-q.e.$ with a Borel function it follows that $w$ is measurable with respect to any (Radon) measure $\mu\in \mathfrak{M}_0\left( \pd\R^N_+\right)$. If moreover $\abs{w}\leq k$ $a.e.$ on $\pd\R^N_+$ then it holds $\abs{w}\leq k$ $cap_{1-\frac{1}{p},p,N-1}-q.e.$ on $\pd\R^N_+$. That this is so follows from an application of Theorem 6.1.4 of \cite{AdamsHedberg} to the $cap_{1-\frac{1}{p},p,N-1}-$ quasi-continuous functions $(w-k)_+$ and $(w+k)_-$. Since $\mu$ is absolutely continuous with respect to $cap_{1-\frac{1}{p},p,N-1}$ we see that $\abs{w}\leq k$ $\mu-a.e.$ (see Proposition \ref{decomposition}).
\end{proof}
One can similarly obtain the following proposition
\begin{prop}\label{L^infty_mu_0}
	Let $\mu \in \mathfrak{M}_0\left( \R^N\right)$ and let $w\in W^{1,p}\left( \R^N\right)$. Then $w$ is measurable with respect to $\mu$. Furthermore, if $w$ belongs to $L^\infty\left( \R^N\right)$ then it belongs to $L^\infty\left( \R^N;d\mu\right) $.
\end{prop}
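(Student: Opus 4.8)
The plan is to mirror the proof of Proposition \ref{L^1mu_0}, but the argument is in fact simpler since no trace operator is involved: both $w$ and $\mu$ now live on all of $\R^N$. The starting point is the identification $W^{1,p}\left( \R^N\right) = L^{1,p}\left( \R^N\right)$, so that by Theorem 6.1.4 of \cite{AdamsHedberg} the function $w$ admits a $cap_{1,p,N}-$quasi-continuous representative, unique up to sets of zero $cap_{1,p,N}$ capacity; from now on identify $w$ with this representative.

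For the measurability claim I would recall that every $cap_{1,p,N}-$quasi-continuous function agrees $cap_{1,p,N}-q.e.$ with a Borel function; since $\mu\in\mathfrak{M}_0\left( \R^N\right)$ is by definition absolutely continuous with respect to $cap_{1,p,N}$, the exceptional set is $\mu$-null, and hence $w$ is measurable with respect to $\mu$.

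For the boundedness claim, suppose $\abs{w}\leq k$ $a.e.$ in $\R^N$. The functions $(w-k)_+$ and $(w+k)_-$ are again $cap_{1,p,N}-$quasi-continuous and vanish Lebesgue-$a.e.$; applying Theorem 6.1.4 of \cite{AdamsHedberg} (a quasi-continuous function vanishing $a.e.$ must vanish $cap_{1,p,N}-q.e.$) gives $\abs{w}\leq k$ $cap_{1,p,N}-q.e.$ in $\R^N$. Since $\mu$ is absolutely continuous with respect to $cap_{1,p,N}$, the inequality then holds $\mu-a.e.$, so that $\norm{w}_{L^\infty\left( \R^N;d\mu\right) }\leq \norm{w}_{L^\infty\left( \R^N\right) }$ and in particular $w\in L^\infty\left( \R^N;d\mu\right)$.

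The only point requiring a modicum of care — and the main (minor) obstacle — is the passage from "$a.e.$" to "$cap_{1,p,N}-q.e.$" for the truncated functions, which hinges precisely on the uniqueness of the quasi-continuous representative; everything else is a direct transcription of the $W^{1,p}\left( \R^N_+\right)$ case with the trace step removed. As in Proposition \ref{L^1mu_0}, one should also note that this argument applies verbatim to the absolutely continuous part $\mu_0$ of a general $\mu\in\mathfrak{M}_b\left( \R^N\right)$ through the decomposition $\mu=\mu_0+\mu_s$.
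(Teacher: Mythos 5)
Your proof is correct and is exactly what the paper intends: the paper gives no separate argument for this proposition, merely noting that it is obtained "similarly" to Proposition \ref{L^1mu_0}, and your transcription of that proof with the trace step removed (quasi-continuous representative, Borel coincidence for measurability, Theorem 6.1.4 of \cite{AdamsHedberg} applied to $(w-k)_+$ and $(w+k)_-$, then absolute continuity of $\mu$ with respect to $cap_{1,p,N}$) is precisely the intended argument.
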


\begin{remark}\label{corolary_to_T_6.1.4}
	We will use the following fact: if $u\leq v$ $a.e.$ in $\R^N$, where $u$ and $v$ are $cap_{\a,p,N}-$ quasi-continuous functions, then $u\leq v$ $cap_{\a,p,N}-q.e.$ in $\R^N$. This can be proven by applying Theorem 6.1.4 of \cite{AdamsHedberg} to the quasi-continuous function $w=\max\left\lbrace u-v,0\right\rbrace $, which satisfies $w=0$ $a.e.$ in $\R^N$.
\end{remark}

Combining the last proposition with Lebesgue's Dominated Convergence Theorem we obtain:
\begin{prop}\label{LDC_mu_0}
	Let $f_n\goesto f$ $cap_{1,p,N}-q.e.$ in $\R^N$ with $f_n$ in $W^{1,p}\left( \R^N\right)\cap L^\infty\left( \R^N\right) $ and uniformly bounded in $ L^\infty\left( \R^N\right)$. Then for any measure $\mu \in \mathfrak{M}_0\left( \R^N\right)$, $f_n\goesto f$ $\mu-$q.e. and
	\[
	\lim_{n\goesto\infty}\int_{\R^N}f_n d\mu = \int_{\R^N} fd\mu .
	\]
\end{prop}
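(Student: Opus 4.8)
The plan is to reduce the statement to Lebesgue's Dominated Convergence Theorem for the measure $\mu$, so that the work lies entirely in verifying its hypotheses: $\mu$-measurability of the functions involved, a uniform $\mu$-a.e. bound, and the upgrade of $cap_{1,p,N}$-$q.e.$ convergence to $\mu$-a.e. convergence.

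First I would use that $\mu \in \mathfrak{M}_0(\R^N)$ is, by definition, absolutely continuous with respect to $cap_{1,p,N}$; the same is then true of $\abs{\mu} = \mu^+ + \mu^-$, since any Borel subset of a set of zero $cap_{1,p,N}$-capacity again has zero capacity and hence is annihilated by both $\mu^+$ and $\mu^-$. Because $cap_{1,p,N}$ is an outer capacity, the set where $f_n$ fails to converge to $f$ is contained in a Borel (indeed $G_\delta$) set of zero $cap_{1,p,N}$-capacity, which is therefore $\abs{\mu}$-null; this already gives $f_n \goesto f$ $\mu$-a.e., which is the first assertion. By Proposition \ref{L^infty_mu_0} each $f_n$ is $\mu$-measurable, and hence so is $f$, being a $\mu$-a.e. limit of $\mu$-measurable functions.

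Next I would propagate the uniform bound. Set $M = \sup_n \norm{f_n}_{L^\infty(\R^N)} < \infty$. Since we identify each $f_n$ with its $cap_{1,p,N}$-quasi-continuous representative (Remark \ref{remark_quasincontinous_trace}), the function $\abs{f_n}$ is $cap_{1,p,N}$-quasi-continuous and satisfies $\abs{f_n} \leq M$ $a.e.$ in $\R^N$; by Remark \ref{corolary_to_T_6.1.4} this forces $\abs{f_n} \leq M$ $cap_{1,p,N}$-$q.e.$, hence $\abs{f_n} \leq M$ $\mu$-a.e., uniformly in $n$. Letting $n \goesto \infty$ along the set of full $\mu$-measure on which $f_n \goesto f$ gives $\abs{f} \leq M$ $\mu$-a.e. as well. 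Finally, since $\mu$ has bounded total variation the constant $M$ lies in $L^1(\R^N; d\abs{\mu})$, so Lebesgue's Dominated Convergence Theorem --- applied to $\mu^+$ and $\mu^-$ separately, or directly to $\abs{\mu}$ --- yields $\int_{\R^N} f_n \, d\mu \goesto \int_{\R^N} f \, d\mu$.

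I do not expect a genuine obstacle here: the proposition is essentially dominated convergence once this measure-theoretic bookkeeping is in place. The only mildly delicate points are passing the pointwise bounds from ``$a.e.$ in $\R^N$'' through ``$cap_{1,p,N}$-$q.e.$'' to ``$\mu$-a.e.'', and handling a possibly signed $\mu$; both are dispatched by Remark \ref{corolary_to_T_6.1.4} (i.e.\ Theorem 6.1.4 of \cite{AdamsHedberg}) together with the absolute continuity of $\mu$ with respect to $cap_{1,p,N}$.
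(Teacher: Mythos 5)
Your proof is correct and follows essentially the same route the paper intends: the paper states only ``Combining the last proposition [Proposition~\ref{L^infty_mu_0}] with Lebesgue's Dominated Convergence Theorem we obtain:'' and leaves the details implicit, whereas you have spelled them out --- $\mu$-measurability via Proposition~\ref{L^infty_mu_0}, propagation of the uniform bound from $a.e.$ to $cap_{1,p,N}$-$q.e.$ to $\mu$-a.e.\ via Remark~\ref{corolary_to_T_6.1.4}, absolute continuity of $\abs{\mu}$, and the fact that a constant is $\abs{\mu}$-integrable because $\mu$ has bounded total variation. No gap.
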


The following result is Proposition 2.8 in \cite{DMOP99}. It is a consequence of Egorov's Theorem.
\begin{prop}\label{prop_Egorov}
	Let $\O$ be a bounded open subset of $\R^N$. Let $\rho_\e$ be a sequence in $L^1\left( \O\right) $ that converges to $\rho$ weakly in $L^1\left( \O\right) $, and let $\sigma_\e$ be a sequence uniformly bounded in $L^\infty\left( \O\right) $ that converges to $\sigma$ $a.e.$ in $\O$. Then,
	\[
	\lim_{\e\goesto0}\int_\O \rho_\e\sigma_\e dx=\int_\O \rho\sigma dx.
	\]
\end{prop}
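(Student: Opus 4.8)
The plan is to split the difference of integrals as
\[
\int_\O \rho_\e\sigma_\e \, dx - \int_\O \rho\sigma \, dx = \int_\O \rho_\e(\sigma_\e - \sigma)\, dx + \int_\O (\rho_\e - \rho)\sigma \, dx ,
\]
and to show that each term tends to zero. The second term is the easy one: since $\O$ is bounded and $\sigma_\e$ is uniformly bounded in $L^\infty(\O)$, the $a.e.$ limit $\sigma$ also lies in $L^\infty(\O)$, hence defines an element of the dual of $L^1(\O)$; the weak convergence $\rho_\e\to\rho$ in $L^1(\O)$ then gives $\int_\O(\rho_\e-\rho)\sigma\,dx\to0$ directly.

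For the first term, the key input is that a weakly convergent sequence in $L^1(\O)$ is equi-integrable (Dunford--Pettis): given $\d>0$ there is $\eta>0$ such that $\int_E\abs{\rho_\e}\,dx<\d$ for every $\e$ and every measurable $E\subset\O$ with $\abs{E}<\eta$. I would also record that $\norm{\rho_\e}_{L^1(\O)}$ is bounded (weakly convergent sequences are norm-bounded), and set $M:=\sup_\e\norm{\sigma_\e}_{L^\infty(\O)}$, so that $\abs{\sigma}\le M$ $a.e.$ as well. Now apply Egorov's Theorem on the finite-measure set $\O$: since $\sigma_\e\to\sigma$ $a.e.$, there is a measurable set $E\subset\O$ with $\abs{E}<\eta$ such that $\sigma_\e\to\sigma$ uniformly on $\O\setminus E$. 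Estimating over $\O\setminus E$ and over $E$ separately,
\[
\abs{\int_\O \rho_\e(\sigma_\e-\sigma)\,dx} \le \norm{\sigma_\e-\sigma}_{L^\infty(\O\setminus E)}\,\norm{\rho_\e}_{L^1(\O)} + 2M\int_E\abs{\rho_\e}\,dx \le \norm{\sigma_\e-\sigma}_{L^\infty(\O\setminus E)}\,\norm{\rho_\e}_{L^1(\O)} + 2M\d .
\]
Letting $\e\to0$, uniform convergence on $\O\setminus E$ together with the bound on $\norm{\rho_\e}_{L^1(\O)}$ kills the first summand, leaving $\limsup_{\e\to0}\abs{\int_\O\rho_\e(\sigma_\e-\sigma)\,dx}\le 2M\d$; since $\d>0$ is arbitrary, this term also tends to zero, and the proposition follows.

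The only slightly delicate point is the use of equi-integrability of $\{\rho_\e\}$, i.e.\ invoking the Dunford--Pettis theorem (or, equivalently, the fact that a weakly convergent sequence in $L^1$ has uniformly absolutely continuous integrals); everything else is a routine $\e$/$\d$ argument combining Egorov's Theorem with the uniform $L^\infty$ bound on $\sigma_\e$ and the $L^1$-boundedness of $\rho_\e$.
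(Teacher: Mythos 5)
Your proof is correct. The paper does not reproduce a proof but simply cites Proposition 2.8 of \cite{DMOP99}, remarking only that the result ``is a consequence of Egorov's Theorem''; the argument you give — splitting the difference, handling $\int_\O(\rho_\e-\rho)\sigma\,dx$ by $L^1/L^\infty$ duality, and controlling $\int_\O\rho_\e(\sigma_\e-\sigma)\,dx$ by combining Egorov's Theorem with the equi-integrability of $\{\rho_\e\}$ furnished by the Dunford--Pettis theorem — is precisely the standard route that the cited reference has in mind, and every step (norm-boundedness of a weakly convergent sequence, $\abs{\sigma}\le M$ a.e., the two-term estimate) is carried out correctly.
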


\section{$p-$ superharmonic functions} Although not the focus of this work, we will use several results concerning $p-$ superharmonic functions, especially on the relationship between them and renormalized solutions. We will give proper references whenever necessary, but most of the results are classic (see \cite{HKM}). Here we record some definitions and basic properties. 

Let $\O$ be any domain. A $p-$ superharmonic function is a lower semicontinuous function $u:\O\goesto (-\infty,\infty]$, not identically infinite, such that for all open sets $\O'\subset\subset \O$ and for all $h$ $p$-harmonic in $\O'$ and continuous in $\overline{\O'}$ we have that $h\leq u$ on $\pd\O'$ implies $h\leq u$ in $\O'$. 

It is well-known that if $u$ is $p-$ superharmonic then its truncation $\min\left\lbrace u,k\right\rbrace $ belongs to $W^{1,p}_{loc}\left( \O\right)$. This allows us to define its gradient in the same generalized sense as we will do for renormalized solutions (see Chapter \ref{Renormalized solution}), and in particular, it makes sense to define $-\lap_p u$ in the sense of distributions. In particular, when we say that a $p-$ superharmonic function $u$ solves $-\lap_p u=\mu$ in $\O$ for some (not necessarily bounded) Radon measure $\mu$, we mean it precisely in the sense of distributions, where the derivative of $u$ is to be understood in the generalized sense. It is also known that if $u$ is $p-$ superharmonic function in $\O$ then $-\lap_pu$ is a nonnegative distribution, and so there exists a nonnegative Radon measure $\mu$ such that $-\lap_p u=\mu$ in $\D'\left( \O\right) $.

Finally, we remark that when we say that a $p-$ superharmonic function $u$ solves $-\lap_p u= g(u)\sigma + \mu$ in $\O$, for some Radon measures $\mu$ and $\sigma$, we imply that $g(u) \in L^1_{loc}\left( \O,d\sigma\right)$ so that the right hand side is actually a Radon measure. 

\chapter{Renormalized solutions}\label{Renormalized solution}

\section{Renormalized solutions in bounded domains}

We start with the definition of renormalized solution given in \cite{DMOP99} for bounded domains. In order to do this we first need to generalize the definition of $\nabla u$. 

Let $T_k(s)$ be truncation by $k$, i.e., $T_k(s)=\min(k,\max(-k,s))$. Then for any measurable and $a.e.$ finite $u$ such that $T_k(u)\in W^{1,p}_{0}\left( \O\right) $ for every $k>0$ there exists a measurable vector-valued function $v:\O\goesto\R^N$ such that 
\[
\nabla T_k(u)=v\chi_{\left\lbrace\abs{u}<k\right\rbrace }
\]
$a.e.$ in $\O$ for all $k>0$ (see \cite{BBGGPV95}, Lemma 2.1). This function is unique $a.e.$ and so we define $v$ as the gradient of $u$ and write $\nabla u =v$. One similarly obtains that if $T_k(u)\in W^{1,p}_{loc}\left( \R^N_+\right) $ for every $k>0$ then there exists a measurable vector-valued function $v:\R^N_+\goesto\R^N$ such that 
\[
\nabla T_k(u)=v\chi_{\left\lbrace\abs{u}<k\right\rbrace }
\]
$a.e.$ in $\R^N_+$ for all $k>0$.

\begin{remark}\label{remark_on_distributional gradient}
	 We note that, in general, $v$ is not the gradient of $u$ used in the definition of Sobolev spaces. In fact, $u$ may not even belong to $L^1_{loc}\left( \O\right) $ (see \cite{DMOP99} for details).
\end{remark}

\begin{defn}\label{definition_DMOP99}
	Let $\O$ be a bounded domain in $\R^N$. Let $\mu \in \mathfrak{M}_b\left( \O\right) $ have a decomposition $\mu = \mu_0 + \mu_s$ with respect to $cap_{1,p,N}$. Then a function $u$ is a \textit{renormalized solution} of
	\be\label{renormalized_solution_DMOP99}
	\begin{cases}
		-\lap_p u = \mu & \mbox{ in } \O \\
		u=0 & \mbox{ on } \pd\O 
	\end{cases}
	\ee
	if 
	\begin{enumerate}
		\item $u$ is measurable, finite $a.e.$, and $T_k(u)\in W^{1,p}_{0}\left( \O\right) $ for all $k>0$;
		\item $\abs{\nabla u}^{p-1}\in L^q\left( \O\right) $ for all $1\leq q<\frac{N}{N-1}$;
		\item there holds
		\[
		\int_{\O}\abs{\nabla u}^{p-2}\nabla u\cdot\nabla w dx= \int_{\O} w d\mu_0 + \int_{\O}w^{+\infty}d\mu_s^+ - \int_{\O}w^{-\infty}d \mu_s^- 
		\]
		for all $w\in W^{1,p}_0\left( \O\right) \cap L^\infty\left( \O\right) $ satisfying the following condition: there exist $k>0$, $r>N$, and functions $w^{\pm\infty}\in W^{1,r}\left( \O\right) \cap L^{\infty}\left( \O\right) $ such that
		\[
		\begin{cases}
		w=w^{+\infty} & \mbox{ $a.e.$ in } \left\lbrace x\in \O \ : \ u>k\right\rbrace \\
		w=w^{-\infty} & \mbox{ $a.e.$ in } \left\lbrace x\in \O \ : \ u<-k\right\rbrace  .
		\end{cases} 
		\]
	\end{enumerate}
\end{defn}

\begin{remark}
	Note that the set of functions $w$ for which $(3)$ holds is not empty. Indeed, it contains $C_0^\infty\left( \O\right) $ since the condition is satisfied by any $w$ in $C_0^\infty\left( \O\right) $ choosing any $k>0$ and $r>N$, and setting $w=w^{+\infty}=w^{-\infty}$. But there are more admissible functions. In particular, $T_k(u)$ is admissible with $w^{\pm\infty}=\pm k$. 
\end{remark}

\begin{remark}\label{remark_on_equivalent_definitions_DMOP99}
	Theorem 2.33 of \cite{DMOP99} shows that there are several equivalent definitions of renormalized solution. In particular, the last condition above can be replaced with the following one: for every $k>0$ there exists two nonnegative measures $\l_{k}^+$, $\l_{k}^- \in \mathfrak{M}_0\left(\O\right) $ supported in $\left\lbrace u=k\right\rbrace $ and $\left\lbrace u=-k\right\rbrace $ respectively, such that $\l_{k}^\pm \goesto \mu _s^\pm $, as $k\goesto \infty$, in the narrow topology of $\mathfrak{M}_b\left(\O\right)$, and the truncations $T_k(u)$ satisfy
	\[
	\int_{\left\lbrace\abs{u}<k\right\rbrace }\abs{\nabla T_k(u)}^{p-2}\nabla T_k(u) dx\cdot \nabla v = \int_{\left\lbrace\abs{u}<k\right\rbrace }vd\mu_0 + \int_{\O} v d\l_{k}^+ - \int_{\O} v d\l_{k}^- 
	\]	
	for every $v\in W^{1,p}_0\left( \O\right) \cap L^\infty\left( \O\right) $.
	Whenever convenient we use this equivalent formulation.
\end{remark}

\begin{remark}\label{remark_quasicontinuity_DMOP99}
	The conditions stated in definition \ref{definition_DMOP99} imply that any renormalized solution has a $cap_{1,p,N}-$ quasi-continuous representative which is in fact finite $cap_{1,p,N}-q.e.$ in $\O$ (see remark 2.18 of \cite{DMOP99}). We always identify renormalized solutions with this representative.
\end{remark}

The following theorem is proved in \cite{DMOP99} using Lemma 4.1 and 4.2 of \cite{BBGGPV95}.

\begin{thm}\label{DMMOP99_level_set_estimates}
	Let $u$ be a renormalized solution of \eqref{renormalized_solution_DMOP99}. Then
	\be\label{estimate1_DMMOP99}
	\int_{\left\lbrace n\leq\abs{u}<n+k\right\rbrace } \abs{\nabla u}^pdx\leq k\abs{\mu}\left( \O\right)  \ , \ \forall n\geq 0, \ k>0 .
	\ee
	If $p<N$ then for every $k>0$,
	\be	\label{estimate2_DMMOP99}
	\abs{\left\lbrace\abs{u}>k\right\rbrace }\leq C(N,p)\frac{\left( \abs{\mu}\left( \O\right) \right) ^{\frac{N}{N-p}}}{k^{\frac{N(p-1)}{N-p}}},
	\ee
	\be\label{estimate3_DMMOP99}
	\abs{\left\lbrace\abs{\nabla u}>k\right\rbrace }\leq C(N,p)\frac{\left( \abs{\mu}\left( \O\right) \right) ^{\frac{N}{N-1}}}{k^{\frac{N(p-1)}{N-1}}} .
	\ee
	If $p=N$ then for every $k>0$,
	\be	\label{estimate4_DMMOP99}
	\abs{\left\lbrace\abs{u}>k\right\rbrace }\leq C(r,N,p)\frac{\left( \abs{\mu}\left( \O\right) \right) ^{r}}{k^{r(p-1)}},
	\ee
	for every $r>1$, and 
	\be\label{estimate5_DMMOP99}
	\abs{\left\lbrace\abs{\nabla u}>k\right\rbrace }\leq C(s,N,p)\frac{\left( \abs{\mu}\left( \O\right) \right) ^{\frac{N}{N-1}}}{k^{s}} .
	\ee
	for every $s<N$.
\end{thm}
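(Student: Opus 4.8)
\section*{Proof proposal}

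The plan is to obtain \eqref{estimate1_DMMOP99} directly from the renormalized formulation, condition $(3)$ of Definition \ref{definition_DMOP99}, and then to derive the remaining estimates from it by the classical rearrangement arguments of Boccardo--Gallou\"et type, exactly along the lines of Lemmas 4.1 and 4.2 of \cite{BBGGPV95}.

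For \eqref{estimate1_DMMOP99}, fix $n\geq 0$ and $k>0$ and set $G_n(s):=s-T_n(s)$, so that $G_n$ is Lipschitz, vanishes on $[-n,n]$, and $T_k(G_n(u))$ depends only on $T_{n+k}(u)\in W^{1,p}_0\left(\O\right)$; hence $w:=T_k(G_n(u))\in W^{1,p}_0\left(\O\right)\cap L^\infty\left(\O\right)$ with $\abs{w}\leq k$. On $\left\lbrace u>n+k\right\rbrace$ we have $w\equiv k$ and on $\left\lbrace u<-(n+k)\right\rbrace$ we have $w\equiv -k$, so $w$ is admissible in condition $(3)$ with $w^{+\infty}=k$ and $w^{-\infty}=-k$, both constants and therefore in $W^{1,r}\left(\O\right)\cap L^\infty\left(\O\right)$ for any $r>N$. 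In the generalized gradient sense one computes $\nabla w=\chi_{\left\lbrace n\leq\abs{u}<n+k\right\rbrace}\nabla u$ a.e., since $\nabla G_n(u)=\chi_{\left\lbrace\abs{u}\geq n\right\rbrace}\nabla u$ and the truncation by $k$ cuts off where $\abs{u}\geq n+k$. Thus the left-hand side of the renormalized identity equals $\int_{\left\lbrace n\leq\abs{u}<n+k\right\rbrace}\abs{\nabla u}^p\,dx$, while the right-hand side is $\int_\O w\,d\mu_0+k\mu_s^+\left(\O\right)+k\mu_s^-\left(\O\right)\leq k\big(\abs{\mu_0}\left(\O\right)+\abs{\mu_s}\left(\O\right)\big)=k\abs{\mu}\left(\O\right)$ because $\abs{w}\leq k$. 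This proves \eqref{estimate1_DMMOP99}.

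Next I would deduce \eqref{estimate2_DMMOP99} and \eqref{estimate4_DMMOP99}. Taking $n=0$ in \eqref{estimate1_DMMOP99} gives $\int_\O\abs{\nabla T_k(u)}^p\,dx\leq k\abs{\mu}\left(\O\right)$ for all $k>0$. If $p<N$, the Sobolev inequality applied to $T_k(u)\in W^{1,p}_0\left(\O\right)$ yields $k^{p^\ast}\abs{\left\lbrace\abs{u}>k\right\rbrace}\leq\norm{T_k(u)}_{p^\ast}^{p^\ast}\leq\big(C(N,p)\,k\abs{\mu}\left(\O\right)\big)^{p^\ast/p}$ with $p^\ast=\frac{Np}{N-p}$, and solving for $\abs{\left\lbrace\abs{u}>k\right\rbrace}$ produces \eqref{estimate2_DMMOP99}, since $p^\ast/p=\frac{N}{N-p}$ and $p^\ast-p^\ast/p=\frac{N(p-1)}{N-p}$. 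If $p=N$, one uses instead that $W^{1,N}_0\left(\O\right)\hookrightarrow L^q\left(\O\right)$ for every $q<\infty$ with $\norm{v}_q\leq C(q,N,\O)\norm{\nabla v}_N$ (via $W^{1,N}_0\left(\O\right)\subset W^{1,s}_0\left(\O\right)\hookrightarrow L^{s^\ast}\left(\O\right)$ for $s<N$ and $\O$ bounded), and the same Chebyshev argument with $r=q/N$ gives \eqref{estimate4_DMMOP99}.

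Finally, for the gradient estimates \eqref{estimate3_DMMOP99} and \eqref{estimate5_DMMOP99} I would use the splitting, valid for any $\lambda,k>0$,
\[
\abs{\left\lbrace\abs{\nabla u}>\lambda\right\rbrace}\leq\abs{\left\lbrace\abs{\nabla T_k(u)}>\lambda\right\rbrace}+\abs{\left\lbrace\abs{u}\geq k\right\rbrace}\leq\lambda^{-p}\int_\O\abs{\nabla T_k(u)}^p\,dx+\abs{\left\lbrace\abs{u}\geq k\right\rbrace}\leq\lambda^{-p}k\abs{\mu}\left(\O\right)+\abs{\left\lbrace\abs{u}\geq k\right\rbrace},
\]
then bound $\abs{\left\lbrace\abs{u}\geq k\right\rbrace}$ by \eqref{estimate2_DMMOP99} (resp.\ \eqref{estimate4_DMMOP99}) and optimize in $k$; the choice $k\sim\lambda^{(N-p)/(N-1)}\big(\abs{\mu}\left(\O\right)\big)^{1/(N-1)}$ gives \eqref{estimate3_DMMOP99}, and in the case $p=N$, letting $r\uparrow\infty$ after the corresponding optimization (using also $\abs{\left\lbrace\cdot\right\rbrace}\leq\abs{\O}$ to absorb the residual $\abs{\mu}\left(\O\right)$-exponent into the constant) yields \eqref{estimate5_DMMOP99} for every $s<N$. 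The only delicate point is the first step: verifying that $T_k(G_n(u))$ is an admissible test function in Definition \ref{definition_DMOP99} and that its generalized gradient is supported in $\left\lbrace n\leq\abs{u}<n+k\right\rbrace$, which is precisely where the renormalized (rather than weak) formulation is needed, since $u$ itself need not lie in $L^1_{loc}\left(\O\right)$.
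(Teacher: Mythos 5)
Your proof follows exactly the Boccardo--Gallou\"et scheme of Lemmas 4.1 and 4.2 of \cite{BBGGPV95}, which is precisely what the paper invokes (the paper does not reprove this theorem, it only cites \cite{DMOP99} and \cite{BBGGPV95}). The choice of test function $T_k(G_n(u))$, the Sobolev--Chebyshev step, and the split-and-optimize step for the gradient are all correct, and the verification that $T_k(G_n(u))$ is admissible in Definition \ref{definition_DMOP99} is carried out properly.

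One point deserves a closer look: in the case $p=N$ of \eqref{estimate5_DMMOP99}, the optimization $k\sim\lambda^{N/(r(N-1)+1)}\abs{\mu}(\O)^{(r-1)/(r(N-1)+1)}$ gives, for the exponent $s=s_r=Nr(N-1)/(r(N-1)+1)<N$, the bound
\[
\abs{\left\lbrace\abs{\nabla u}>k\right\rbrace}\leq C(s,N)\,\frac{\abs{\mu}(\O)^{s/(N-1)}}{k^{s}},
\]
with $\abs{\mu}(\O)$-exponent $s/(N-1)$, not $N/(N-1)$. This is in fact the scaling-consistent exponent (replace $u\to cu$, $\mu\to c^{N-1}\mu$, $k\to ck$), and it is what the Marcinkiewicz norm estimate $\norm{\nabla u}_{M^s}\lesssim\abs{\mu}(\O)^{1/(N-1)}$ of \cite{BBGGPV95} translates to. Your suggestion to ``absorb the residual $\abs{\mu}(\O)$-exponent into the constant'' via $\abs{\left\lbrace\cdot\right\rbrace}\leq\abs{\O}$ does not produce a constant independent of $\abs{\mu}(\O)$ when $\abs{\mu}(\O)<1$, and introduces $\O$-dependence. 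In practice this is harmless---your form is actually the sharper and more natural one, and the exponent $N/(N-1)$ appearing in \eqref{estimate5_DMMOP99} (as well as the claim immediately after the theorem that the constants are independent of $\O$) appears to be a slight imprecision in the statement itself; note in particular that for $p=N$ the constants in \eqref{estimate4_DMMOP99}--\eqref{estimate5_DMMOP99} cannot be $\O$-independent, as one sees by taking $\mu=\delta_0$ in $B_R$ and letting $R\to\infty$. So: correct argument, same approach as the cited references, with the one caveat that the derived $\abs{\mu}(\O)$-power for $p=N$ is $s/(N-1)$, and the passage from that to the form written in \eqref{estimate5_DMMOP99} should not be glossed over as a free absorption into the constant.
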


We note explicitly that the above constants do not depend on the domain $\O$. Note also that by putting $n=0$ in \eqref{estimate1_DMMOP99} we get
\be
\int_{\left\lbrace\abs{u}<k\right\rbrace } \abs{\nabla u}^pdx\leq k\abs{\mu}\left( \O\right)   \ , \ \forall  k>0 \ .
\ee	

The following result is proven in Section 5.1 of \cite{DMOP99} as a first step in the proof of their stability result. It will be useful for us later when dealing with nonlinear terms. 

\begin{thm}\label{DMOP99_convergence_of_u_n_to_limit}
	Let $u_n$ be renormalized solutions to problem \eqref{renormalized_solution_DMOP99} with respective measures $\mu_n \in \mathfrak{M}_b\left( \O\right) $. Assume $\norm{\mu_n}_{\mathfrak{M}_b}$ are uniformly bounded. Then there exists a function $u$ such that, up to a subsequence, $u_n\goesto u$ $a.e.$ in $\O$. Moreover, $u$ satisfies $(1)$ and $(2)$ of the definition of renormalized solution, as well as all the estimates stated in Theorem \ref{DMMOP99_level_set_estimates} (with $\sup\norm{\mu_n}_{\mathfrak{M}_b}$ instead of $\norm{\mu}_{\mathfrak{M}_b}$), and  
	\begin{enumerate}
		\item $\nabla T_k(u_n)\goesto \nabla T_k(u)$ and $\nabla u_n \goesto \nabla u$ $a.e.$ in $\O$,
		\item $\abs{\nabla u_n}^{p-2}\nabla u_n \goesto \abs{\nabla u}^{p-2}\nabla u$ strongly in $\left( L^q\left( \O\right)\right) ^N$ for any $1\leq q<\frac{N}{N-1}$,
		\item $	T_k(u_n)\goesto T_k(u)$ weakly in $W_0^{1,p}\left( \O\right)$.
	\end{enumerate}
\end{thm}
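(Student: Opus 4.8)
The plan is to follow the argument in Section 5.1 of \cite{DMOP99}, which establishes exactly this kind of compactness statement, and to observe that the proof there does not use any specific feature of the domain beyond the uniform bound on the total variations of the data. I would first record the key quantitative input: by Theorem \ref{DMMOP99_level_set_estimates}, all the $u_n$ satisfy the level-set and gradient-level-set estimates \eqref{estimate2_DMMOP99}--\eqref{estimate5_DMMOP99} with $\abs{\mu_n}(\O)$ replaced by $C:=\sup_n\norm{\mu_n}_{\mathfrak{M}_b}<\infty$. Hence the $u_n$ are bounded in the Marcinkiewicz space $M^{\frac{N(p-1)}{N-p}}(\O)$ (or, if $p=N$, in every $L^r$, $r<\infty$), and the $\nabla u_n$ are bounded in $M^{\frac{N(p-1)}{N-1}}(\O)$ (resp.\ in $(L^s(\O))^N$ for every $s<N$); in either case $\abs{\nabla u_n}^{p-1}$ is bounded in $L^q(\O)$ for every $q<\frac{N}{N-1}$, and each $T_k(u_n)$ is bounded in $W_0^{1,p}(\O)$ uniformly in $n$ (using $n=0$ in \eqref{estimate1_DMMOP99}).

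Next I would extract the subsequence. From the uniform $W^{1,p}_0$-bound on $\{T_k(u_n)\}_n$, a diagonal argument over $k\in\N$ yields a single subsequence along which $T_k(u_n)\rightharpoonup v_k$ weakly in $W^{1,p}_0(\O)$ and (by Rellich) strongly in $L^p(\O)$ and $a.e.$ for every $k$. The functions $v_k$ are consistent ($T_j(v_k)=v_j$ for $j\le k$), so they define a measurable $a.e.$-finite $u$ with $T_k(u)=v_k$; then $u_n\to u$ $a.e.$ in $\O$ and $T_k(u_n)\rightharpoonup T_k(u)$ weakly in $W_0^{1,p}(\O)$, giving item (3) and property (1) of the definition. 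Fatou's lemma applied to the $a.e.$ limits, together with the uniform Marcinkiewicz bounds, propagates all the estimates of Theorem \ref{DMMOP99_level_set_estimates} to $u$ and in particular gives $\abs{\nabla u}^{p-1}\in L^q(\O)$ for $q<\frac N{N-1}$, i.e.\ property (2).

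The substantive part is the $a.e.$ convergence of the gradients, items (1) and (2). Here I would reproduce the \cite{DMOP99} argument: test the equations for $u_n$ and the ``limit'' equation for $T_k(u)$ against the truncated difference $T_\delta\bigl(T_k(u_n)-T_k(u)\bigr)$ (suitably regularized so it is an admissible test function), and use the monotonicity of $\xi\mapsto\abs{\xi}^{p-2}\xi$ to show that $\int_{\O}\bigl(\abs{\nabla T_k(u_n)}^{p-2}\nabla T_k(u_n)-\abs{\nabla T_k(u)}^{p-2}\nabla T_k(u)\bigr)\cdot\nabla\bigl(T_k(u_n)-T_k(u)\bigr)\to0$; the contributions of the measures $\mu_n$ on $\{\abs{u_n}\ge k\}$ and of the singular parts are controlled by \eqref{estimate1_DMMOP99} and the uniform mass bound, exactly as in \cite{DMOP99}. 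A standard pointwise lemma on the $p$-Laplacian structure then upgrades this to $\nabla T_k(u_n)\to\nabla T_k(u)$ $a.e.$; letting $k\to\infty$ and using that $\{\abs{u_n}>k\}$ has small measure uniformly in $n$ gives $\nabla u_n\to\nabla u$ $a.e.$, and then Vitali's convergence theorem together with the equi-integrability coming from the $L^q$-bound on $\abs{\nabla u_n}^{p-1}$ yields the strong $(L^q(\O))^N$ convergence of $\abs{\nabla u_n}^{p-2}\nabla u_n$ for every $q<\frac N{N-1}$, which is item (2). I expect this monotonicity/almost-everywhere-gradient step to be the main obstacle, since it requires care in choosing admissible test functions and in handling the singular measures; but as the statement only asks to reprove a result already established in \cite{DMOP99}, the right framing is to say explicitly that this is ``a slight modification of the argument found in \cite{DMOP99}'' — the only modification being the replacement of $\norm{\mu}_{\mathfrak{M}_b}$ by $\sup_n\norm{\mu_n}_{\mathfrak{M}_b}$ in every estimate, nothing in the proof depending on $\O$ being any particular domain.
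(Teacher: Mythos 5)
Your outline correctly assembles the preliminary ingredients: the uniform Marcinkiewicz bounds coming from Theorem \ref{DMMOP99_level_set_estimates}, the uniform $W_0^{1,p}$ bound on $T_k(u_n)$ from \eqref{estimate1_DMMOP99}, the diagonal extraction over $k\in\N$ to get $u$ with $T_k(u)=v_k$, and item (3). You are also right that the only global change needed from \cite{DMOP99} is replacing $\abs{\mu}(\O)$ by $\sup_n\norm{\mu_n}_{\mathfrak{M}_b}$. However, your description of the central step (a.e.\ convergence of $\nabla T_k(u_n)$) is circular as written: you propose to test ``the equations for $u_n$ and the `limit' equation for $T_k(u)$'' against $T_\delta(T_k(u_n)-T_k(u))$, but at this stage $u$ is only an a.e.\ limit and there is no equation for $T_k(u)$ available to test — establishing that $u$ solves an equation is precisely what the stability result that this theorem feeds is designed to prove. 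The argument in \cite{DMOP99} (and the one the paper spells out in its own analogue, Lemma \ref{general_existence_of_limit_functions}, following Theorem 4.3.8 of \cite{Veron16}) instead shows that $\left\lbrace\nabla T_k(u_n)\right\rbrace_n$ is Cauchy in measure, by testing the equations for \emph{two members} $u_m$ and $u_n$ of the sequence — both of which genuinely solve renormalized equations — against $T_\delta(T_k(u_m)-T_k(u_n))$, using the structural inequality \eqref{coercitivity}; a.e.\ convergence of the gradients to $\nabla T_k(u)$ then follows because $\nabla T_k(u_n)$ is bounded in $\left(L^p(\O)\right)^N$.

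Your one-index variant can be repaired, but you must be explicit that only the equation for $u_n$ gets tested. Using the reformulation of Remark \ref{remark_on_equivalent_definitions_DMOP99}, $T_\delta(T_k(u_n)-T_k(u))\in W_0^{1,p}(\O)\cap L^\infty(\O)$ is directly admissible for the equation satisfied by $T_k(u_n)$, and the right-hand side is bounded by $\delta$ times the uniformly bounded masses of $\mu_{n,0}$ and $\lambda_{k,n}^\pm$. The other piece of the monotonicity integrand,
\[
\int_\O\abs{\nabla T_k(u)}^{p-2}\nabla T_k(u)\cdot\nabla T_\delta\!\left(T_k(u_n)-T_k(u)\right)dx,
\]
is \emph{not} obtained from an equation; it tends to zero because $\nabla T_k(u_n)-\nabla T_k(u)\rightharpoonup 0$ weakly in $\left(L^p(\O)\right)^N$ while $\abs{\nabla T_k(u)}^{p-2}\nabla T_k(u)\,\chi_{\left\lbrace\abs{T_k(u_n)-T_k(u)}<\delta\right\rbrace}\to\abs{\nabla T_k(u)}^{p-2}\nabla T_k(u)$ strongly in $\left(L^{p'}(\O)\right)^N$ (dominated convergence using $T_k(u_n)\to T_k(u)$ a.e.). Also, a small ordering remark: to get property (2) via Fatou on $\abs{\nabla u_n}$ you first need the a.e.\ gradient convergence of item (1); alternatively, derive the estimates for $u$ directly from the weak lower semicontinuity of the $W^{1,p}$ norm applied to $T_k(u)$, as in \eqref{estimate1_global_solution}.
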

\begin{remark}\label{remark_quasicontinuity_DMOP99_limit}
	It follows from Remark 2.11 of \cite{DMOP99} that the function $u$ in the above theorem has a $cap_{1,p,N}-$ quasi-continuous representative which is in fact finite $cap_{1,p,N}-q.e.$ in $\O$. We identify $u$ with this representative.  
\end{remark}

\section{Local renormalized solutions}

A closely related concept is the one of \textit{local} renormalized solutions (see \cite{B-V03}, \cite{Veron16}) on domains which are not necessarily bounded. It is closer to our definition of renormalized solution of \eqref{equation}, and we will use it in the sequel. We remark that the derivative here is to be understood in the same generalized sense as described previously.

\begin{defn}\label{definition_local_solution}
	Let $\O$ be any domain in $\R^N$. Let $\mu \in \mathfrak{M}_b\left( \O\right) $ have a decomposition $\mu = \mu_0 + \mu_s$ with respect to $cap_{1,p,N}$. Then a function $u$ is a \textit{local renormalized solution} of
	\[
		-\lap_p u = \mu \ \mbox{ in } \O 
	\]
	if
	\begin{enumerate}
		\item $u$ is measurable, finite $a.e.$, and $T_k(u)\in W^{1,p}_{loc}\left( \O\right) $ for all $k>0$;
		\item $\abs{\nabla u}^{p-1}\in L^q_{loc}\left( \O\right) $ for all $1\leq q<\frac{N}{N-1}$;
		\item $\abs{u}^{p-1} \in L^q_{loc}\left( \O\right) $ for all $1<q<\frac{N}{N-p}$ ($1<q<\infty$ if $p=N$);
		\item there holds
		\[
		\int_{\O}\abs{\nabla u}^{p-2}\nabla u\cdot\nabla wdx = \int_{\O} w d\mu_0 + \int_{\O}w^{+\infty}d\mu_s^+ - \int_{\O}w^{-\infty}d \mu_s^-
		\]
		for all $w\in W^{1,p}\left( \O\right) \cap L^\infty\left( \O\right) $ compactly supported in $\O$ satisfying the following condition: there exist $k>0$, $r>N$, and functions $w^{\pm\infty}\in W^{1,r}\left( \O\right) \cap L^{\infty}\left( \O\right) $ such that
		\[
		\begin{cases}
		w=w^{+\infty} & \mbox{ $a.e.$ in } \left\lbrace x\in \O \ : \ u>k\right\rbrace \\
		w=w^{-\infty} & \mbox{ $a.e.$ in } \left\lbrace x\in \O \ : \ u<-k\right\rbrace  .
		\end{cases} 
		\]
	\end{enumerate}
\end{defn}

\begin{remark}
	We remark that all functions $w$ in $C_0^\infty\left( \O\right) $ are admissible functions for $(4)$. Note however that $T_k(u)$ is no longer a valid test function. On the other hand, if $w\in C_0^\infty\left( \O\right) $ then $wT_k(u)$ is admissible with $w^{\pm\infty}=\pm k w$. 
\end{remark}

\begin{remark}
	Just as in the case of Definition \ref{renormalized_solution_DMOP99}, condition $(4)$ can be replaced by some other equivalent conditions (see Theorem 2.2 of \cite{B-V03}). We will use this fact in the proof of Lemma \ref{general_existence_global_solution}.
	On the other hand, our definition of local renormalized solution is not exactly the same as the definition in \cite{B-V03} since there the author does not require that $\mu$ is bounded. We have chosen to add this extra condition since we will need it when solving problem \eqref{equation}. 
\end{remark}

\begin{remark}\label{remark_on_p_superharmonic_representative}
	A fact that we will use frequently is that if $\mu$ is nonnegative and $u$ is a local renormalized solution of $-\lap_pu=\mu$ in $\O$, then $u$ coincides $a.e.$ with a $p-$ superharmonic function solving the same equation (see Theorem 4.3.2 of \cite{Veron16}).
\end{remark}

\begin{remark}\label{remark_on_L^p_estimate}
	Note that the estimates in Theorem \ref{DMMOP99_level_set_estimates} show that if $\O$ is bounded then any renormalized solution of \eqref{renormalized_solution_DMOP99} is also a local renormalized solution of the corresponding equation. Indeed, we only need to show $(3)$. To this end, we recall the known identity
	\begin{align*}
	\int_\O \abs{u}^\a dx&= \int_\O\int_0^\infty \a t^{\a-1}\chi(t)_{[0,\abs{u}]}dtdx=\int_0^\infty\int_\O\a t^{\a-1}\chi(x)_{\left\lbrace\abs{u}\geq t\right\rbrace }dxdt \\ &= \int_0^\infty\a t^{\a-1}\abs{\left\lbrace\abs{u}\geq t \right\rbrace } dt
	\end{align*}
	which holds for \textit{any} measurable function $u$, and any $\a>0$. From this identity one obtains the estimate
	\be\label{eq_estimate_L^p_from_Level_set}
	\int_{\O} \abs{u}^\a dx \leq t_0^\a\abs{\O} + \a\int_{t_0}^\infty t^{\a-1}\abs{\left\lbrace\abs{u}\geq t \right\rbrace } dt .
	\ee
	In particular, if $u$ is a renormalized solution in a bounded domain $\O$, and $p<N$, then combining the above estimate and \eqref{estimate2_DMMOP99} we have 
	\[
	\int_{\O}\abs{u}^sdx \leq \abs{\O} + sC(p,N,\mu)\int_1^\infty t^{s-1} t^{-\frac{N(p-1)}{N-p}}dt
	\]
	which is finite if $1<s<\frac{N(p-1)}{N-p}$. If $p=N$ then we use instead estimate \eqref{estimate4_DMMOP99} obtaining, for any fixed $r>1$, the condition $1<s<r(p-1)$. Hence in this case any $1<s<\infty$ is allowed. 
\end{remark}

\section{Renormalized solutions to the Neumann problem in the half-space}

We now define a renormalized solution to \eqref{equation}. Recall that by the discussion of the previous chapter, any measure $\mu\in \mathfrak{M}_b\left( \pd\R^N_+\right) $ can be decomposed uniquely as 
\[
\mu=\mu_0+\mu_s^+-\mu_s^-
\]
where $\mu_0$ is absolutely continuous with respect to $cap_{1,p,N}$, and $\mu_s^\pm$ are singular with respect to $cap_{1,p,N}$ and nonnegative.

\begin{defn}\label{defn}
	Let $\mu\in \mathfrak{M}_b\left( \pd\R^N_+\right)$ and $g:\R\goesto\R$. A function $u$ defined in $\R^N_+$ is a \textit{renormalized solution} to \eqref{equation} provided the following holds:
	\begin{enumerate}
		\item $u$ is measurable, finite $a.e.$, and $T_k(u)\in W^{1,p}_{loc}\left( \R^N_+\right) $ for all $k>0$;
		\item $\abs{\nabla u}^{p-1}\in L_{loc}^q\left( \R^N_+\right) $ for all $1\leq q<\frac{N}{N-1}$;
		\item $\abs{u}^{p-1}\in L_{loc}^q\left( \R^N_+\right) $ for all $1<q<\frac{N}{N-p}$ ($1<q<\infty$ if $p=N$);
		\item $u$ is finite $a.e.$ in $\pd\R^N_+$, and $g(u)\in L^1\left( \pd\R^N_+\right) $;
		\item there holds
	\end{enumerate}
\end{defn}
\[
\int_{\R^N_+}\abs{\nabla u}^{p-2}\nabla u\cdot\nabla wdx + \int_{\pd\R_+^N} g(u) wdx' = \int_{\pd\R^N_+} w d\mu_0 + \int_{\pd\R^N_+}w^{+\infty}d \mu_s^+ - \int_{\pd\R^N_+}w^{-\infty}d\mu_s^-
\]
\begin{enumerate}[label={}]
	\item for all $w\in W^{1,p}\left( \R^N_+\right) $ compactly supported in $\overline{\R^N_+}$, with trace in $L^\infty\left( \pd\R^N_+\right) $, and satisfying the following condition: there exist $k>0$, $r>N$, and functions $w^{\pm\infty}\in W^{1,r}\left( \R^N_+\right) $ such that
	\[
	\begin{cases}
	w=w^{+\infty} & a.e. \mbox{ in } \left\lbrace x\in \R^N_+ \ : \ u>k\right\rbrace \\
	w=w^{-\infty} & a.e. \mbox{ in } \left\lbrace x\in \R^N_+ \ : \ u<-k\right\rbrace  .
	\end{cases} 
	\]
\end{enumerate}

\begin{remark}\label{remark_quasicontinuity}
	We remark that it makes sense to talk about the boundary values of a renormalized solution since, in fact, any $a.e.$ finite and measurable function $u$ defined in $\R^N_+$ such that $T_k(u) \in W^{1,p}_{loc}\left( \R^N_+\right)$ for all $k>0$ has a locally defined $cap_{1,p,N}-$ quasi-continuous representative in $\overline{\R^N_+}$ which, however, could be infinite on a set of positive $cap_{1,p,N}$ capacity. Indeed, by Remark \ref{qcTrace_local} we can locally identify $T_k(u)$ with a $cap_{1,p,N}-$ quasi-continuous representative in $\overline{\R^N_+}$. Then, it can be directly verified that $v=\sup_{k\in \N}T_k(u)$ defines (locally) a $cap_{1,p,N}-$ quasi-continuous function that coincides with $u$ $a.e.$ in $\R^N_+$ and which is unique $cap_{1,p,N}-q.e$ in $\overline{\R^N_+}$. Notice that, in general, $v$ may be infinite on a set of positive $cap_{1,p,N}$ capacity and so its trace could be infinite. We remark that similar considerations hold for $a.e.$ finite and measurable functions $u$ defined in $\R^N$ such that $T_k(u) \in W^{1,p}_{loc}\left( \R^N\right)$.
	From now on, we always identify renormalized solutions to \eqref{equation} with their $cap_{1,p,N}-$ quasi-continuous representative in $\overline{\R^N_+}$. In particular, under this identification, the trace of $u$ is $cap_{1-\frac{1}{p},p,N-1}-$ quasi-continuous and unique $cap_{1-\frac{1}{p},p,N-1}-q.e.$ in $\pd\R^N_+$. Since $u$ could be infinite on a set of positive capacity, we explicitly ask that the trace must be finite $a.e.$ in $\pd\R^N_+$. We will show below that in fact renormalized solutions are always finite $cap_{1,p,N}-q.e.$ in $\overline{\R^N_+}$.
\end{remark}

\begin{remark}\label{remark_test_trace}
	If we consider $cap_{1,p,N}-$ quasi-continuous representatives in $\overline{\R^N_+}$, then the condition $w=w^{+\infty}$ $a.e.$ in $\left\lbrace x\in \R^N_+ : u>k\right\rbrace $ implies that $w=w^{+\infty}$ $cap_{1,p,N}-q.e$ in $\left\lbrace x\in \overline{\R^N_+} : u>k\right\rbrace $. To see this, apply Theorem 6.1.4 of \cite{AdamsHedberg} to extend $(w-w^{+\infty})(u-k)^+=0$ from $a.e.$ in $\overline{\R^N_+}$ to $cap_{1,p,N}-q.e$ in $\overline{\R^N_+}$ (see also Remark \ref{corolary_to_T_6.1.4}). It follows that $w=w^{+\infty}$ $cap_{1-\frac{1}{p},p,N-1}-q.e$ in $\left\lbrace x\in \pd\R^N_+ : u>k\right\rbrace $, and in particular $w=w^{+\infty}$ $a.e.$ in $\left\lbrace x\in \pd\R^N_+ : u>k\right\rbrace $. Similarly, $w=w^{-\infty}$ $a.e.$ in $\left\lbrace x\in \pd\R^N_+ : u<-k\right\rbrace $.
\end{remark}

We verify that under the given assumptions all the integrals above are well defined and finite. The first integral on the left hand side can be divided into three integrals with domains of integration given by $\left\lbrace x:  \abs{u}\leq k\right\rbrace $, $\left\lbrace x :  u>k\right\rbrace $, and $\left\lbrace x  :  u<-k\right\rbrace $. In the first case $T_k(u)=u$ so $\abs{\nabla u}^{p-1}\in L^{p'}_{loc}\left( \R^N_+\right) $ and the integral is finite since $w \in W^{1,p}\left( \R^N_+\right) $ has compact support. For the second case $w=w^{+\infty} \in W^{1,r}\left( \R^N_+\right) $ and $r>N$ implies $r'<\frac{N}{N-1}$ so by assumption $\abs{\nabla u}^{p-1} \in L^{r'}_{loc}\left( \R^N_+\right) $ and the integral is also finite since we integrate over the support of $w$. The third case can be treated similarly. The second integral on the left hand side is obviously finite since $g(u(x',0))\in L^1\left( \pd\R^N_+\right) $ while $w\in L^\infty\left( \pd\R^N_+\right) $. 

As for the right hand side, observe first that since $r>N$ we have $w^{\pm\infty}\in C\left( \overline{\R^N_+}\right) $ with the supremum norm. Since $\mu_s^\pm$ are bounded we conclude that the integrals with respect to the singular measures are well defined and finite. For the remaining integral Proposition \ref{qcTrace} guarantees that $w$ has a well defined trace, while Proposition \ref{L^1mu_0} and the boundedness of $\mu_0$ gives $w\in L^1\left( \pd\R^N_+;d\mu_0\right) $.

\begin{remark}\label{renormalized_solutions_are_local}
	It follows directly from the definitions that if $u$ is a renormalized solution of \eqref{equation} then $\bar{u}$, the extension of $u$ by even reflection across $\pd\R^N_+$, is a local renormalized solution of $-\lap_p \bar{u} = \tilde{\mu}:= 2\mu-2g(u)\mathcal{H}$ in $\R^N$ (where $g(u)\mathcal{H}$ has the meaning indicated in Chapter \ref{Capacities}). 
\end{remark}

\begin{remark}\label{remark_on_existence_distributional_gradient}
	We have noted in Remark \ref{remark_on_distributional gradient} that $\nabla u$ is not, in general, the gradient of $u$ in the usual sense used in the definition of Sobolev spaces. However, it can be shown that if $\nabla u\in \left( L^q_{loc}\left( \R^N\right) \right)^N$ for some $1\leq q \leq p$ then $u\in W_{loc}^{1,q}\left( \R^N\right) $ and $\nabla u$ is the usual gradient of $u$ (see Remark 2.10 of \cite{DMOP99}). In particular, when $p=N$ the definition of renormalized solution implies that $u\in W_{loc}^{1,1}\left( \R^N\right) $ and the gradient of $u$ coincides with the usual definition. 
\end{remark}

In the definition of renormalized solution we assumed $u$ is finite $a.e.$ in $\pd\R^N_+$. In the case $g\equiv 0$ this assumption could have been dropped. Moreover, the condition could also be removed by assuming $g$ is a function defined on the extended real line. However, we now show that whenever $g(u)\in L^1\left( \pd\R^N_+\right) $ then $u$ must be finite  $a.e.$ in $\pd\R^N_+$. Indeed, by our definition of trace, and in view of Proposition \ref{trace&extensionE} and remark \ref{renormalized_solutions_are_local}, it will be enough to show that local renormalized solutions of $-\lap_pu=\mu$ in $\R^N$  are finite $cap_{1,p,N}-q.e.$ in $\R^N$. We will obtain this as a consequence of the following local version of the estimates on level sets stated in Theorem \ref{DMMOP99_level_set_estimates}.

\begin{thm}\label{local_level_set_estimates}
	Let $u$ be a local renormalized solution of $-\lap_pu=\mu$ in $\O$, and let $\O'$ be such that  $\O'\subset \subset \O$. Then
	\be\label{estimate1}
	\int_{\left\lbrace\abs{u}<k\right\rbrace \cap \O'} \abs{\nabla u}^pdx\leq C\left(p,\O,\O',\mu,u\right)k  \ , \ \forall \ k>0 ,
	\ee
	and there exists $k_0(u,\O,\O',p)$ such that: if $p<N$ then for every $k>k_0$,
	\be	\label{estimate2}
	\abs{\left\lbrace\abs{u}>k\right\rbrace \cap \O'}\leq C(N,p,\O,\O',\mu,u)k^{-\frac{N(p-1)}{N-p}},
	\ee
	\be\label{estimate3}
	\abs{\left\lbrace\abs{\nabla u}>k\right\rbrace \cap \O'}\leq C(N,p,\O,\O',\mu,u)k^{-\frac{N(p-1)}{N-1}} ;
	\ee
	if $p=N$ then for every $k>k_0$,
	\be	\label{estimate4}
	\abs{\left\lbrace\abs{u}>k\right\rbrace \cap \O'}\leq C(r,N,p,\O,\O',\mu,u)k^{-r(p-1)},
	\ee
	for every $r>1$, and 
	\be\label{estimate5}
	\abs{\left\lbrace\abs{\nabla u}>k\right\rbrace \cap \O'}\leq C(s,N,p,\O,\O',\mu,u)k^{-s} ,
	\ee
	for every $s<N$.
\end{thm}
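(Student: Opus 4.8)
The plan is to localise, by means of a cutoff function, the test-function computations that in \cite{DMOP99} and \cite{BBGGPV95} yield Theorem \ref{DMMOP99_level_set_estimates}. Fix $\xi\in C_0^\infty\left( \O\right)$ with $0\leq\xi\leq1$, $\xi\equiv1$ on $\O'$, and $\supp{\xi}\subset\subset\O$; all constants below are allowed to depend on $\xi$, hence on $\O$ and $\O'$, as well as on $p$, $N$, $\mu$, $u$ (and on $r$, $s$ where these occur), in accordance with the statement. To obtain \eqref{estimate1} I would use $w=\xi^pT_k(u)$ as a test function in condition $(4)$ of Definition \ref{definition_local_solution}: it is admissible, being bounded, compactly supported in $\O$, and equal to $k\xi^p$ (resp.\ $-k\xi^p$) on $\left\lbrace u>k\right\rbrace$ (resp.\ $\left\lbrace u<-k\right\rbrace$), so one may take $w^{\pm\infty}=\pm k\xi^p\in C_0^\infty\left( \O\right)$. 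Expanding $\nabla w=\xi^p\nabla T_k(u)+p\,\xi^{p-1}T_k(u)\nabla\xi$ and using $\nabla T_k(u)=\nabla u\,\chi_{\left\lbrace\abs{u}<k\right\rbrace}$, the left-hand side produces $\int_{\left\lbrace\abs{u}<k\right\rbrace}\xi^p\abs{\nabla u}^p\,dx$ plus a term involving $\nabla\xi$ which, since $\abs{\nabla u}^{p-1}\in L^1_{loc}\left( \O\right)$ by condition $(2)$, is $O(k)$; the right-hand side is $O\!\left( k\,\abs{\mu}\left( \supp{\xi}\right)\right)$. This yields the weighted bound
\be\label{local_weighted_bound}
\int_{\left\lbrace\abs{u}<k\right\rbrace}\xi^p\abs{\nabla u}^p\,dx\leq C\,k \qquad \forall\, k>0 ,
\ee
which is stronger than \eqref{estimate1} since $\xi\equiv1$ on $\O'$.

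For \eqref{estimate2} in the case $p<N$ I would apply the Gagliardo--Nirenberg--Sobolev inequality to the compactly supported function $\xi T_k(u)\in W^{1,p}\left( \R^N\right)$. Since $\nabla(\xi T_k(u))=\xi\nabla T_k(u)+T_k(u)\nabla\xi$, the first summand is controlled in $L^p$ by \eqref{local_weighted_bound}, and the one delicate term is $\int_{\supp{\nabla\xi}}\abs{T_k(u)}^p\abs{\nabla\xi}^p\,dx$. Here I would invoke condition $(3)$, which gives $u\in L^s_{loc}\left( \O\right)$ for every $s<\tfrac{N(p-1)}{N-p}$; fixing such an $s$ with $s>p-1$ and combining Chebyshev's inequality on $\supp{\xi}$ with the layer-cake formula bounds that term by a constant times $1+\int_1^k t^{p-1-s}\,dt$, which is $\leq C\,k$ for $k\geq1$ because $p-s<1$. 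Hence $\norm{\nabla(\xi T_k(u))}_{L^p}^p\leq C\,k$ for $k\geq1$; from the Sobolev estimate $\norm{\xi T_k(u)}_{L^{Np/(N-p)}}^p\leq C\,k$, raising to the power $\tfrac{N}{N-p}$ and comparing with
\[
\int_{\left\lbrace\abs{u}>k\right\rbrace\cap\O'}\abs{\xi T_k(u)}^{\frac{Np}{N-p}}\,dx=k^{\frac{Np}{N-p}}\,\abs{\left\lbrace\abs{u}>k\right\rbrace\cap\O'}
\]
gives \eqref{estimate2} for $k$ large. When $p=N$, condition $(3)$ gives $u\in L^s_{loc}\left( \O\right)$ for every $s<\infty$, and \eqref{estimate4} is immediate from Chebyshev's inequality with $s=r(p-1)$.

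For the gradient estimates \eqref{estimate3} and \eqref{estimate5} I would use the Boccardo--Gallou\"{e}t splitting: since $\nabla u=\nabla T_m(u)$ $a.e.$ on $\left\lbrace\abs{u}\leq m\right\rbrace$,
\[
\abs{\left\lbrace\abs{\nabla u}>\l\right\rbrace\cap\O'}\leq \l^{-p}\int_{\O'}\abs{\nabla T_m(u)}^p\,dx+\abs{\left\lbrace\abs{u}>m\right\rbrace\cap\O'} .
\]
The first term on the right is $\leq C\l^{-p}m$ by \eqref{local_weighted_bound}, and the second is controlled by \eqref{estimate2} when $p<N$ and by \eqref{estimate4} when $p=N$; optimising over $m$ (taking $m=\l^{\frac{N-p}{N-1}}$ when $p<N$, and an appropriate power of $\l$ followed by letting $r\uparrow\infty$ when $p=N$) yields \eqref{estimate3} and \eqref{estimate5}. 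The restriction to $k>k_0$ in \eqref{estimate2}--\eqref{estimate5} enters exactly because the previous two arguments require $k$ — hence $m$ and $\l$ — to be bounded below.

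I expect the main obstacle to be the term $\int_{\supp{\nabla\xi}}\abs{T_k(u)}^p\abs{\nabla\xi}^p\,dx$ in the second step: a crude bound only gives the useless growth $k^p$, so one genuinely has to exploit the sharp local integrability of $u$ supplied by condition $(3)$ in order to bring its growth below the linear rate coming from \eqref{local_weighted_bound}. A secondary point requiring care is checking that the localised functions $\xi^pT_k(u)$ and $\xi T_k(u)$ are legitimate test functions in the renormalized formulation, in particular the explicit identification of the auxiliary functions $w^{\pm\infty}$.
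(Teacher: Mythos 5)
Your proof is correct, and it coincides with the paper's for \eqref{estimate1} and for the Boccardo--Gallou\"{e}t splitting behind \eqref{estimate3} and \eqref{estimate5}, but the middle step is handled by a genuinely different device. To prove \eqref{estimate2} the paper does not pass to a compactly supported function: it keeps $T_k(u)$ as is on $\O'$, forms the average $c_k=(T_k(u))_{\O'}$, shows $\abs{c_k}\leq\frac{3}{4}k$ for $k\geq k_0$ using only that $u\in L^s(\O_0)$ for \emph{some} $s>0$ (Chebyshev gives $\abs{\{\abs{u}>k/2\}\cap\O_0}\leq\frac{1}{4}\abs{\O'}$ for large $k$), and then applies Poincar\'{e}--Wirtinger together with the Sobolev embedding to $T_k(u)-c_k$; the case $p=N$ runs the same scheme. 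You instead apply the Gagliardo--Nirenberg--Sobolev inequality to the cutoff $\xi T_k(u)$ and control the cross term $T_k(u)\nabla\xi$ via the layer-cake formula, which forces the choice $s>p-1$ and thus uses condition $(3)$ at close to full strength; for $p=N$ you read \eqref{estimate4} off directly from Chebyshev, which amounts to restating condition $(3)$. Both routes prove the theorem as stated, but the paper's weaker-hypothesis variant is what makes the corollary immediately following the theorem work: there condition $(3)$ is replaced by the mere decay $\abs{\{\abs{u}>k\}\cap\O_0}\leq Ck^{-\alpha}$ for some unspecified $\alpha>0$, and the paper's argument transfers verbatim while yours would break down whenever $\alpha\leq p-1$. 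In exchange, your version never invokes Poincar\'{e}--Wirtinger or the Sobolev embedding on the subdomain $\O'$ (so no regularity of $\pd\O'$ is ever needed), since Gagliardo--Nirenberg--Sobolev on $\R^N$ is domain-free.
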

\begin{proof}
	Choose $\phi\in C_0^\infty\left( \O\right)$ such that $0\leq \phi\leq 1$, $\phi\equiv 1$ in $\O'$, and $\supp{\phi}\subset \O_0 \subset\subset \O$ for some $\O_0$. Then, testing against $\phi T_k(u)$ we obtain
	\begin{multline*}
	\int_{\O_0}\abs{\nabla T_k(u)}^p\phi dx + \int_{\O_0}T_k(u)\abs{\nabla u}^{p-2}\nabla u \cdot \nabla \phi dx = \\ \int_{\O_0} T_k(u)\phi d\mu_0 + \int_{\O_0} k\phi d\left( \mu_s^++\mu_s^-\right) 
	\end{multline*}
	and so
	\[
	\int_{\O'}\abs{\nabla T_k(u)}^pdx \leq  k\norm{\nabla u}_{L^{p-1}\left( \O_0\right) }\norm{\nabla \phi}_\infty + k\norm{\mu}_{\mathfrak{M}_b}= C(\O_0,p,\mu,u) k
	\]
	which is estimate \eqref{estimate1}.
	
	Next, we observe that since $u\in L^{s}\left( \O_0\right)$ for some $s>0$ Chebyshev's inequality gives
	\[
	\abs{\left\lbrace\abs{u}>k\right\rbrace \cap \O_0}\leq C(u,\O_0,p)k^{-s} .
	\]
	Hence, we can choose $k_0$ such that
	\[
	\abs{\left\lbrace  \abs{u}>\frac{k}{2}\right\rbrace \cap \O_0}\leq \frac{1}{4}\abs{\O'}
	\]
	for all $k\geq k_0$. Define $c_{k}=(T_k(u))_{\O'}$: the average of $T_k(u)$ in $\O'$. Then we estimate 
	\[
	\abs{c_{k}}\leq \frac{1}{\abs{\O'}}\left( \int_{\O'\cap \left\lbrace\abs{u}\leq k/2\right\rbrace }\abs{T_k(u)}dx + \int_{\O'\cap \left\lbrace\abs{u}> k/2\right\rbrace }\abs{T_k(u)}dx\right) 
	\leq \frac{k}{2} + \frac{k}{4} = \frac{3}{4}k 
	\]
	for all $k\geq k_0$. Then, if $p<N$, by Poincar\'e-Wirtinger's inequality, Sobolev inequality, and \eqref{estimate1}, we obtain 
	\[
	\norm{T_k(u) - c_{k}}_{L^q\left( \O'\right) }\leq C(N,p,\O_0,\mu,u)k^{\frac{1}{p}} ,
	\]
	where $q=\frac{Np}{N-p}$. Since for all $k\geq k_0$ we have the inclusions
	\[
	\left\lbrace\abs{u}\geq k\right\rbrace =\left\lbrace\abs{T_k(u)\geq k}\right\rbrace \subset \left\lbrace\abs{T_k(u) - c_{k}}\geq k-\abs{c_{k}}\right\rbrace  \subset \left\lbrace\abs{T_k(u) - c_{k}}\geq \frac{k}{4}\right\rbrace 
	\]
	we deduce
	\[
	\abs{\left\lbrace\abs{u}\geq k\right\rbrace \cap \O'}\leq \left( \frac{4\norm{T_k(u) - c_{k}}_{L^q\left( B_M\right) }}{k}\right)^q \leq C(N,p,\O_0,\mu,u)k^{q\left( \frac{1-p}{p}\right) }
	\]
	which is estimate \eqref{estimate2}. In the case $p=N$, the same procedure gives \eqref{estimate4}. The remaining estimates follow from the above ones just as in the proof of Theorem \ref{DMMOP99_level_set_estimates} in \cite{DMOP99}, using the results in \cite{BBGGPV95}. 
\end{proof}

Note that unlike the estimates in Theorem \ref{DMMOP99_level_set_estimates} the above estimates are \textit{not} uniform on $u$. However, they are enough for our purposes.

\begin{prop}\label{cap_finite}
	Let $u$ be a local renormalized solution of $-\lap_p u =\mu$ in $\R^N$. Then $u$ is finite $cap_{1,p,N}-q.e.$ in $\R^N$. In particular, if $v$ is a renormalized solution of \eqref{equation} in the sense of definition \ref{defn} then the trace of $v$, as defined in Remark \ref{remark_quasicontinuity}, is finite $cap_{1-\frac{1}{p},p,N-1}-q.e.$ in $\pd\R^N_+$.
\end{prop}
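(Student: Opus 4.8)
The plan is to prove first the main claim — that a local renormalized solution $u$ of $-\lap_pu=\mu$ in $\R^N$ is finite $cap_{1,p,N}-q.e.$ in $\R^N$ — and then deduce the statement about traces via the even reflection of Remark \ref{renormalized_solutions_are_local} together with Proposition \ref{trace&extensionE}. For the main claim, fix $M\in\N$ and set $E_k:=\left\lbrace\abs{u}\geq k\right\rbrace\cap B_M$, where $u$ is identified with its locally defined $cap_{1,p,N}-$ quasi-continuous representative as in Remark \ref{remark_quasicontinuity}. Since $\left\lbrace\abs{u}=\infty\right\rbrace\cap B_M\subset E_k$ for every $k$, monotonicity of capacity reduces the task to showing $cap_{1,p,N}(E_{2k})\goesto 0$ as $k\goesto\infty$; a union over $M\in\N$ together with countable subadditivity then gives $cap_{1,p,N}\left( \left\lbrace\abs{u}=\infty\right\rbrace\right)=0$.

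To estimate $cap_{1,p,N}(E_{2k})$ I would exhibit an explicit admissible function. Choose $\phi\in C_0^\infty(B_{2M})$ with $0\leq\phi\leq1$ and $\phi\equiv1$ on $B_M$, and let $\theta_k:\R\goesto[0,1]$ be Lipschitz with $\theta_k\equiv0$ on $[-k,k]$, $\theta_k\equiv1$ outside $(-2k,2k)$, and $\abs{\theta_k'}\leq1/k$. Put $w_k:=\phi\,\theta_k(u)$. Since $\theta_k(u)=\theta_k(T_{2k}(u))$ and $T_{2k}(u)\in W^{1,p}_{loc}\left( \R^N\right)$, the product rule gives $w_k\in W^{1,p}\left( \R^N\right)\cap L^\infty\left( \R^N\right)$; moreover, identifying $w_k$ with its $cap_{1,p,N}-$ quasi-continuous representative, $w_k=1$ everywhere on $E_{2k}$ (using the quasi-continuous representative of $u$, with the convention $\theta_k(\pm\infty)=1$).

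The heart of the argument is the bound $\norm{w_k}_{W^{1,p}\left( \R^N\right)}^p\goesto0$. Write $\nabla w_k=\theta_k(u)\nabla\phi+\phi\,\theta_k'(u)\nabla u$. The term $\phi\,\theta_k'(u)\nabla u$ is supported in $\left\lbrace k<\abs{u}<2k\right\rbrace$, so applying estimate \eqref{estimate1} of Theorem \ref{local_level_set_estimates} with $\O'=B_{2M}\subset\subset B_{3M}=\O$ gives $\int_{\left\lbrace\abs{u}<2k\right\rbrace\cap B_{2M}}\abs{\nabla u}^p\,dx\leq C(p,M,\mu,u)\,k$, whence $\int_{\R^N}\abs{\phi\,\theta_k'(u)\nabla u}^p\,dx\leq k^{-p}C(p,M,\mu,u)\,k=O\!\left( k^{1-p}\right)\goesto0$ because $p>1$. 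The functions $\theta_k(u)\nabla\phi$ and $w_k$ itself are bounded pointwise and vanish on $\left\lbrace\abs{u}<k\right\rbrace$, so their $L^p$ norms are controlled by $\abs{\left\lbrace\abs{u}\geq k\right\rbrace\cap B_{2M}}$, which tends to $0$ by estimate \eqref{estimate2} (if $p<N$) or \eqref{estimate4} (if $p=N$). Hence $\norm{w_k}_{W^{1,p}}^p\goesto0$, and by the standard comparison between the $L^{1,p}=W^{1,p}$ norm and the Bessel capacity — using the equivalent formulation of $cap_{1,p,N}$ recalled in Chapter \ref{Capacities} and, if needed, Remark \ref{corolary_to_T_6.1.4} to pass from $a.e.$ to $cap_{1,p,N}-q.e.$ comparison (see \cite{AdamsHedberg}) — we obtain $cap_{1,p,N}(E_{2k})\leq C(N,p)\norm{w_k}_{W^{1,p}}^p\goesto0$, as wanted.

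For the trace statement, let $v$ be a renormalized solution of \eqref{equation}. By condition $(4)$ of Definition \ref{defn}, $g(v)\mathcal{H}$ is a bounded measure, so $\tilde{\mu}:=2\mu-2g(v)\mathcal{H}\in\mathfrak{M}_b\left( \R^N\right)$, and by Remark \ref{renormalized_solutions_are_local} the even reflection $\bar{v}$ is a local renormalized solution of $-\lap_p\bar{v}=\tilde{\mu}$ in $\R^N$; by the first part, $\bar{v}$ is finite $cap_{1,p,N}-q.e.$ in $\R^N$. Since the trace of $v$ is, up to a set of zero $cap_{1,p,N}$ capacity in $\overline{\R^N_+}$, the restriction to $\pd\R^N_+$ of the quasi-continuous representative of $\bar{v}$, and since Proposition \ref{trace&extensionE}$(1)$ sends $cap_{1,p,N}$-null subsets of $\R^N$ to $cap_{1-\frac{1}{p},p,N-1}$-null subsets of $\pd\R^N_+$, the same proposition applied to $\left\lbrace\bar{v}=\pm\infty\right\rbrace$ yields $cap_{1-\frac{1}{p},p,N-1}\!\left( \left\lbrace\bar{v}=\pm\infty\right\rbrace\cap\pd\R^N_+\right)\leq C(N,p)\,cap_{1,p,N}\!\left( \left\lbrace\bar{v}=\pm\infty\right\rbrace\right)=0$, so the trace of $v$ is finite $cap_{1-\frac{1}{p},p,N-1}-q.e.$ on $\pd\R^N_+$. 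The step I expect to be most delicate is the passage from the (non-uniform in $u$) level-set estimates of Theorem \ref{local_level_set_estimates} to a genuinely decaying bound on $cap_{1,p,N}(E_k)$ — in particular, making sure $w_k$ is compared with $\chi_{E_k}$ at the level of quasi-continuous representatives rather than merely $a.e.$, since $\left\lbrace\abs{u}=\infty\right\rbrace$ is Lebesgue-negligible; everything else is bookkeeping with the reflection and with Proposition \ref{trace&extensionE}.
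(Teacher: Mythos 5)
Your proof is correct, and it proves the same statement via the same overall strategy as the paper: use Theorem \ref{local_level_set_estimates} to build an explicit $W^{1,p}$ competitor that is $\geq 1$ on the high level set $\left\lbrace \abs{u}\geq 2k\right\rbrace\cap B_M$ and whose norm tends to zero, then apply the capacity comparison. The implementation differs in one genuine way: the paper takes $\phi=\dfrac{T_{2k}(u)-c_{2k,M}}{2k-c_{2k,M}}\in W^{1,p}(B_M)$ (mean-normalized truncation) and controls $\norm{\phi}_{W^{1,p}(B_M)}$ via the Poincar\'e--Wirtinger inequality together with estimate \eqref{estimate1}, obtaining an explicit rate $\norm{\phi}_{W^{1,p}}^p\lesssim k^{1-p}$; you instead take $w_k=\phi\,\theta_k(u)$ with a Lipschitz cutoff $\theta_k$ and a fixed bump $\phi\in C_0^\infty(B_{2M})$, control the gradient term $\phi\theta_k'(u)\nabla u$ by \eqref{estimate1} exactly as the paper does, but control the zero-order terms $w_k$ and $\theta_k(u)\nabla\phi$ by $\abs{\left\lbrace\abs{u}\geq k\right\rbrace\cap B_{2M}}\goesto 0$ (estimates \eqref{estimate2}/\eqref{estimate4}), avoiding Poincar\'e--Wirtinger entirely. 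Your construction has two small structural advantages: $w_k$ is a compactly supported $W^{1,p}(\R^N)$ function from the start, so you never need to invoke the bounded-domain/condenser-capacity equivalence that the paper handles through its remark; and the symmetric cutoff kills $\left\lbrace u\geq 2k\right\rbrace$ and $\left\lbrace u\leq -2k\right\rbrace$ at once, where the paper argues ``similarly'' for the negative side. The paper's version yields a cleaner quantitative rate, but since only decay to zero is needed, your mixed rate is harmless. The trace deduction via Remark \ref{renormalized_solutions_are_local} and Proposition \ref{trace&extensionE} matches the paper's ``as observed before'' argument, spelled out slightly more explicitly. The point you flag as delicate --- comparing $w_k$ with $\chi_{E_{2k}}$ at the quasi-continuous level rather than almost everywhere --- is indeed the right thing to be careful about, and your handling of it (evaluating $\theta_k$ on the quasi-continuous representative with the convention $\theta_k(\pm\infty)=1$, then using the standard comparison for quasi-continuous $W^{1,p}$ functions) is sound.
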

\begin{proof}
	As observed before, it is enough to show that $u$ is finite $cap_{1,p,N}-q.e.$ in $\R^N$. Fix $M \in \N$. By the previous theorem, with $\O=\R^N$ and $\O'=B_M(0)=:B_M$, we can find $k_0(u,M,p)$ such that for all $k\geq k_0$
	\[
	\abs{\left\lbrace\abs{u}\geq k\right\rbrace \cap B_M}\leq \frac{1}{4}\abs{B_M} .
	\]
	Then, we can proceed as in the proof of the previous theorem to obtain that  
	\[
	c_{2k,M}:=\frac{1}{\abs{B_M}}\int_{B_M}\abs{T_{2k}(u)}dx
	\]
	satisfies 
	\[
	c_{2k,M}\leq \frac{3}{2}k
	\]
	for any $k\geq k_0$. 
	Now consider the function $\phi=\frac{T_{2k}(u)-c_{2k,M}}{2k-c_{2k,M}}$. We have $\phi \in W^{1,p}\left( B_M\right) $ and by combining Poincar\'e-Wirtinger's inequality, estimate \eqref{estimate1}, and the above estimate we conclude
	\[
	\norm{\phi}_{W^{1,p}\left( B_M\right) }\leq \frac{C(p,N,M,u)}{\abs{2k - c_{2k,M}}}\norm{\nabla T_{2k}(u)}_{L^p\left( B_M\right) } \leq C(p,N,M,\mu,u)k^{\frac{1}{p}-1} .
	\]
	for any $k\geq k_0$. Further, we have $\phi= 1$ on the set $\left\lbrace u\geq 2k\right\rbrace \cap B_M$. Hence, by definition of $cap_{1,p,N}$ we obtain
	\[
	cap_{1,p,N}\left( \left\lbrace u\geq 2k\right\rbrace \cap B_M\right) \leq \norm{\phi}_{W^{1,p}\left( B_M\right) }^p\leq Ck^{1-p}
	\]
	for any $k\geq k_0$. Since $p>1$ we conclude that $cap_{1,p,N}\left( \left\lbrace u=+\infty\right\rbrace \cap B_M\right)=0$. In a similar way we can control the set where $u=-\infty$. Since $M\in \N$ is arbitrary, this concludes the proof.
\end{proof}

Note that to obtain the estimates of Theorem \ref{local_level_set_estimates} for a local renormalized solution $u$ in $\O$, it would have been enough to have $\abs{\nabla u}^{p-1}\in L^1_{loc}\left( \O\right)$ instead of condition $(2)$ of Definition \ref{definition_local_solution}. Similarly, instead of condition $(3)$ we only used $u\in L^s_{loc}\left( \O\right) $ for some $s>0$ as a step in obtaining the level set estimate $\abs{\left\lbrace\abs{u}>k\right\rbrace }\leq C k^{-s}$. As an interesting consequence of this, we have that conditions $(2)$ and $(3)$ in Definition \ref{definition_local_solution} could be weakened. We remark that this result has already been shown in Theorem 3.1 of \cite{B-V03}, although by a different method and with the stronger condition $\abs{u}^{q}\in L^{1}_{loc}\left( \O\right) $ for some $q>p-1$.

\begin{cor}
	Let $\O$ be any domain, $\mu\in \mathfrak{M}_b\left( \O\right)$, and let $u$ satisfy conditions $(1)$ and $(4)$ of Definition \ref{definition_local_solution}. If $u$ also satisfies
	\begin{enumerate}
		\item [(2')] $\abs{\nabla u}^{p-1}\in L^1_{loc}\left( \O\right) $,
		\item [(3')] for any $\O_0\subset\subset \O$ there exists $C>0$ and $\a>0$ such that 
		\[
		\abs{\left\lbrace\abs{u}>k\right\rbrace \cap \O_0}\leq C k^{-\a} ,
		\]
	\end{enumerate}
	then $u$ is a local renormalized solution of $-\lap_p u=\mu$ in $\O$.
\end{cor}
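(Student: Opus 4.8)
The hypotheses already supply conditions $(1)$ and $(4)$ of Definition \ref{definition_local_solution}, so the only task is to upgrade $(2')$ and $(3')$ to the stronger integrability conditions $(2)$ and $(3)$. The plan is, first, to rerun the argument of Theorem \ref{local_level_set_estimates} under the weaker hypotheses $(2')$, $(3')$ so as to recover the level-set estimates \eqref{estimate1}--\eqref{estimate5}, and then to integrate those estimates by means of the layer-cake identity behind \eqref{eq_estimate_L^p_from_Level_set}.

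For the first step, I would fix a ball $\O'\subset\subset\O$ (this suffices, since $(2)$ and $(3)$ are local statements) and a cutoff $\phi\in C_0^\infty\left(\O\right)$ with $0\leq\phi\leq1$, $\phi\equiv1$ on $\O'$ and $\supp{\phi}\subset\O_0\subset\subset\O$. As remarked after Theorem \ref{local_level_set_estimates}, its proof uses only three things: the identity in $(4)$ applied to the test function $\phi T_k(u)$; the bound $\abs{\int_{\O_0}T_k(u)\abs{\nabla u}^{p-2}\nabla u\cdot\nabla\phi\,dx}\leq k\norm{\nabla\phi}_\infty\norm{\abs{\nabla u}^{p-1}}_{L^1\left(\O_0\right)}$; and a decay bound of the form $\abs{\left\lbrace\abs{u}>k\right\rbrace\cap\O_0}\leq Ck^{-\a}$. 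The first is legitimate because $w=\phi T_k(u)$ is bounded, compactly supported, belongs to $W^{1,p}\left(\O\right)$ by $(1)$, and equals $\pm k\phi\in C_0^\infty\left(\O\right)$ on $\left\lbrace u>k\right\rbrace$ and on $\left\lbrace u<-k\right\rbrace$, hence is admissible, and all integrals occurring in $(4)$ for this particular $w$ are finite under $(2')$ alone; the second bound is immediate from $(2')$; and the third is exactly $(3')$ (equivalently, by \eqref{eq_estimate_L^p_from_Level_set}, $u\in L^s\left(\O_0\right)$ for all $0<s<\a$, which is what the Chebyshev step of the original proof produces). Therefore the proof of Theorem \ref{local_level_set_estimates} goes through without change and gives \eqref{estimate1}--\eqref{estimate5} on $\O'$, with constants now allowed to depend on $u$, which is immaterial; the estimates beyond \eqref{estimate1}, \eqref{estimate2}, \eqref{estimate4} are deduced from those three exactly as in \cite{DMOP99} and \cite{BBGGPV95}.

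For the second step, I would apply the layer-cake identity to $\abs{\nabla u}$: writing $\beta=(p-1)q$ and letting $k_0$ be the threshold of Theorem \ref{local_level_set_estimates},
\[
\int_{\O'}\abs{\nabla u}^{\beta}\,dx\leq k_0^{\beta}\abs{\O'}+\beta\int_{k_0}^{\infty}t^{\beta-1}\abs{\left\lbrace\abs{\nabla u}>t\right\rbrace\cap\O'}\,dt .
\]
Plugging in \eqref{estimate3} (or \eqref{estimate5} when $p=N$), the tail integral is finite precisely when $\beta<\frac{N(p-1)}{N-1}$, i.e.\ when $q<\frac{N}{N-1}$; together with $(2')$ for the endpoint $q=1$ this is condition $(2)$. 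Running the same computation with $\abs{u}$ and \eqref{estimate2} (or \eqref{estimate4} when $p=N$) gives $\abs{u}^{p-1}\in L^q_{loc}\left(\O\right)$ for $1<q<\frac{N}{N-p}$ ($1<q<\infty$ if $p=N$), which is condition $(3)$. Since such balls $\O'$ exhaust $\O$, conditions $(2)$ and $(3)$ hold on $\O$, and combined with the assumed $(1)$ and $(4)$ this shows $u$ is a local renormalized solution of $-\lap_p u=\mu$ in $\O$.

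I do not expect a genuine obstacle: the substantive point is simply the bookkeeping of the first step --- checking that the proof of Theorem \ref{local_level_set_estimates} nowhere invokes the full strength of $(2)$ or $(3)$, and that $\phi T_k(u)$ is an admissible test function for which every integral in $(4)$ converges under $(2')$ --- after which the conclusion is a routine integration of the level-set estimates.
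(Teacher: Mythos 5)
Your proposal is correct and follows essentially the same route as the paper: the paper explicitly notes (in the paragraph preceding the corollary) that the proof of Theorem \ref{local_level_set_estimates} uses only $(2')$ and a level-set decay of the form $(3')$, and then deduces $(2)$ and $(3)$ by integrating the resulting estimates via the layer-cake identity \eqref{eq_estimate_L^p_from_Level_set}, exactly as you do. Your additional verification that $\phi T_k(u)$ is admissible and that all integrals in $(4)$ converge under $(2')$ alone is the right bookkeeping and matches the paper's implicit reasoning.
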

\begin{proof}
	Since we have the estimates of Theorem \ref{local_level_set_estimates}, we can show $(2)$ and $(3)$ of Definition \ref{definition_local_solution} following the ideas in Remark \ref{remark_on_L^p_estimate}. Indeed, thanks to \eqref{eq_estimate_L^p_from_Level_set}, we can write
	\[
	\int_{\O_0} \abs{u}^s dx\leq k_0^s \abs{\O_0} + s\int_{k_0}^\infty k^{s-1}\abs{\left\lbrace\abs{u}\geq k\right\rbrace }dk
	\]
	which is finite when $\O_0\subset\subset\O$ and $1< s\leq\frac{N}{N-p}$ ($s<\infty$ if $p=N$). Hence, we have $(3)$. Similarly, the estimates on $\nabla u$ show that $(2)$ holds.  
\end{proof}

We now show that renormalized solutions of \eqref{equation} in fact exists.

\begin{prop}
	Let $1<p\leq N$, and let 
	\[
	u=\begin{cases}
	\frac{p-1}{N-p}\left( \frac{2}{\sigma_N}\right)^\frac{1}{p-1}\abs{x}^{\frac{p-N}{p-1}} & \mbox{ if } p<N \\
		\left( \frac{2}{\sigma_N}\right)^\frac{1}{N-1}\ln(\abs{x}) & \mbox{ if } p=N
	\end{cases}
	\]
	where 
	$\sigma_N$ is the surface area of $\pd B_1$. Then $u$ is a renormalized solution to
	\[
	\begin{cases}
	-\lap_p u =0 & \mbox{ in }\R^N_+\\
	-\abs{\nabla u}^{p-2}u_\n = \d_0 & \mbox{ on } \pd\R^N_+ .
	\end{cases}
	\]
\end{prop}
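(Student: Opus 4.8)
The plan is to verify directly that the stated $u$ satisfies conditions $(1)$--$(5)$ of Definition~\ref{defn} with $g\equiv 0$ and datum the Dirac mass $\delta_0$ at $0\in\pd\R^N_+$. The starting point is that $u$ is radial, so its even reflection $\bar u$ across $\pd\R^N_+$ is $u$ itself, and $u$ is $C^\infty$ and $p$-harmonic in $\R^N\setminus\{0\}$, hence in $\R^N_+$; the interior equation $-\lap_p u=0$ in $\R^N_+$ will be encoded in $(5)$ by the absence of any bulk contribution to the datum. A chain-rule computation gives, off the origin,
\[
|\nabla u|^{p-2}\nabla u=\pm\frac{2}{\sigma_N}\,\frac{x}{|x|^N},\qquad |\nabla u|^{p-1}=\frac{2}{\sigma_N}\,|x|^{1-N},
\]
and the normalization of $u$ in the statement is precisely what produces the constant $2/\sigma_N$ here; equivalently, it is the constant for which $\bar u=u$ solves $-\lap_p u=\pm 2\delta_0$ in $\D'(\R^N)$, in accordance with Remark~\ref{renormalized_solutions_are_local}.

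Conditions $(1)$--$(4)$ then reduce to elementary integrability near the origin. From the formulas above, $T_k(u)$ equals a constant near the origin and coincides with a truncation of the smooth function $u$ away from it, so $T_k(u)\in W^{1,p}_{loc}(\R^N_+)$ for all $k>0$, and $u$ is finite except at one point, giving $(1)$; $|\nabla u|^{p-1}\sim|x|^{1-N}\in L^q_{loc}$ for $1\le q<\tfrac{N}{N-1}$, giving $(2)$; $|u|^{p-1}\sim|x|^{p-N}$ (and $\sim|\ln|x||^{p-1}$ when $p=N$) lies in $L^q_{loc}$ for $1<q<\tfrac{N}{N-p}$ (resp.\ every $q<\infty$), giving $(3)$; and $g(u)\equiv 0\in L^1(\pd\R^N_+)$, giving $(4)$.

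The substance is the integral identity $(5)$: for every admissible test function $w$ one must show $\int_{\R^N_+}|\nabla u|^{p-2}\nabla u\cdot\nabla w\,dx$ equals $w^{+\infty}(0)$ when $p<N$ and $-w^{-\infty}(0)$ when $p=N$. Fix $w$ with data $k,r,w^{\pm\infty}$ ($r>N$). Since $u\to+\infty$ (resp.\ $-\infty$) at the origin, there is $\rho>0$ with $B_\rho\cap\R^N_+\subset\{u>k\}$ (resp.\ $\{u<-k\}$), so on $B_\rho\cap\R^N_+$ the function $w$ coincides $a.e.$ with the continuous (Morrey, $r>N$) function $w^{+\infty}$ (resp.\ $w^{-\infty}$). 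For $0<\e<\rho$ I split $\int_{\R^N_+}=\int_{\R^N_+\setminus B_\e}+\int_{B_\e\cap\R^N_+}$. On $\R^N_+\setminus B_\e$ the field $|\nabla u|^{p-2}\nabla u$ is smooth and divergence-free, so integration by parts --- licit because $w\in W^{1,p}(\R^N_+)$ is compactly supported --- reduces the integral to a surface term: the part of $\pd(\R^N_+\setminus B_\e)$ lying in $\pd\R^N_+$ contributes nothing, as $\pd u/\pd n=0$ there by radial symmetry, while on the hemisphere $\pd B_\e\cap\R^N_+$ the flux density $|\nabla u|^{p-2}\nabla u\cdot n$ is constant of absolute value $\tfrac{2}{\sigma_N}\e^{1-N}$, so --- using $|\pd B_\e\cap\R^N_+|=\tfrac{\sigma_N}{2}\e^{N-1}$ --- that term is, up to the sign matching the two cases above, the spherical average of $w^{+\infty}$ (resp.\ $w^{-\infty}$) over the hemisphere, which tends to $w^{+\infty}(0)$ (resp.\ $w^{-\infty}(0)$) as $\e\to0$ by continuity of $w^{\pm\infty}$ and the fact (Fubini) that $w=w^{\pm\infty}$ holds $\mathcal H^{N-1}$-$a.e.$ on $a.e.$ sphere. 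The remaining piece tends to $0$: on $B_\rho\cap\R^N_+$ its integrand is, up to a constant, $\tfrac{x}{|x|^N}\cdot\nabla w^{\pm\infty}$, with $|x|^{1-N}\in L^{r'}_{loc}$ ($r'<\tfrac{N}{N-1}$) and $\nabla w^{\pm\infty}\in L^r$, hence in $L^1$ near the origin. Since points have zero $cap_{1,p,N}$ capacity for $p\le N$, the datum $\pm\delta_0$ is purely singular, so the identity obtained is exactly $(5)$ with $\mu_0=0$ and $\mu_s^+=\delta_0$ (case $p<N$) or $\mu_s^-=\delta_0$ (case $p=N$).

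I expect the main obstacle to be the test-function bookkeeping rather than anything analytically deep: for $p\le N$ a function $w\in W^{1,p}(\R^N_+)$ need not be continuous, so neither ``$w|_{\pd B_\e}$'' nor ``$w(0)$'' has pointwise meaning, and the limit above works only because near the origin $w$ agrees $a.e.$ with the continuous $w^{\pm\infty}$; thus one must invoke the structure of admissible test functions at exactly the right step, and pass from ``$a.e.$ in a neighbourhood'' to ``$a.e.$ on $a.e.$ sphere'' via Fubini. A secondary, routine point is the distributional identity $\dv(x|x|^{-N})=\sigma_N\delta_0$ in $\R^N$, obtained by exhausting with $\R^N\setminus B_\e$ (or its half-space analogue). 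One could instead bypass the hands-on verification of $(5)$ by recalling that $u$ is the classical $p$-potential of $\delta_0$, hence a local renormalized solution of $-\lap_p u=\pm 2\delta_0$ in $\R^N$ (by $p$-superharmonic function theory, cf.\ Remark~\ref{remark_on_p_superharmonic_representative} and \cite{HKM}), and then descending to the half-space via the symmetry of $u$; I would nevertheless present the self-contained computation above.
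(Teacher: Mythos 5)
Your proof is correct, and your verification of condition $(5)$ takes a genuinely different computational route from the paper's. The paper keeps the domain fixed and replaces $\abs{\nabla u}^{p-2}\nabla u(x)$ by the shifted field $\abs{\nabla u}^{p-2}\nabla u(x',x_N+\e)$, which is smooth and divergence-free on $\overline{\R^N_+}$; integrating by parts over the whole half-space produces a convolution of the trace $w(x',0)$ against the half-space Poisson kernel, and letting $\e\downarrow 0$ concentrates the kernel at the origin (a classical fact cited from \cite{Evans}). You instead keep the field fixed and excise $B_\e$, reducing to the flux across the hemisphere $\pd B_\e\cap\R^N_+$, which is $\pm$ the average of $w$ there; the flat portion of $\pd(\R^N_+\setminus B_\e)$ contributes nothing since the field is tangential. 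Both routes rest on the same two observations you identify: near the origin the admissible test function coincides a.e.\ with the continuous Morrey function $w^{\pm\infty}$, and the interior remainder is $o(1)$ because $\abs{\nabla u}^{p-1}\in L^{r'}_{loc}$ pairs against $\nabla w^{\pm\infty}\in L^r$. The paper's shift has the small advantage that it only ever needs the trace of $w$ on the fixed hyperplane $\pd\R^N_+$, where the $W^{1,p}$ trace is unambiguous; your excision needs traces of $w$ on spheres $\pd B_\e$, which you correctly recover for a.e.\ $\e$ via Fubini. Your closing alternative --- identifying $u$ with the $p$-potential of $\d_0$, hence a ($p$-super- or $p$-sub-harmonic) local renormalized solution of $-\lap_p u=\pm 2\d_0$ in $\R^N$, and descending to $\R^N_+$ by symmetry --- is exactly the machinery the paper builds later as Theorem~\ref{thm_on_existence_from_symmetry}, so invoking it here would be circular in the paper's logical order; the self-contained computation is the right choice at this point. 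One discrepancy worth flagging: you conclude $\mu_s^+=\d_0$ for $p<N$ and $\mu_s^-=\d_0$ for $p=N$, which is the internally consistent reading since $-\lap_p u = 2\d_0$ in $\D'(\R^N)$ when $p<N$ but $-\lap_N u = -2\d_0$; the stated boundary condition $-\abs{\nabla u}^{p-2}u_\nu=\d_0$ (and the paper's final $-w^{+\infty}(0)$) imposes the $p=N$ sign in both cases, which appears to be a sign slip in the proposition and the paper's proof rather than in your argument.
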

\begin{proof}
	Let us first observe that $\d_0$ is positive and singular with respect to $cap_{1,p,N}$ since $cap_{1,p,N}\left( \left\lbrace0\right\rbrace \right) =0$. This can be proven, for example, by using the known relationships between capacity and Hausdorff measure (see \cite{AdamsHedberg}).
	
	We assume $p<N$ since the case $p=N$ is almost identical. We note that $u$ is finite $a.e.$, measurable, and clearly satisfies $T_k(u) \in W^{1,p}_{loc}\left( \R^N_+\right) $ and so the first requirement holds. For the second one observe that $\nabla u = -\left( \frac{2}{\sigma_N}\right) ^{\frac{1}{p-1}}\abs{x}^{\frac{2-p-N}{p-1}}x$ and so $\abs{\nabla u}^{q(p-1)}=\left( \frac{2}{\sigma_N}\right) ^q\abs{x}^{q(1-N)}$. If $q(N-1)<N$ then $-q(1-N)<N$ and so the singularity is integrable at the origin and $\abs{\nabla u}^{p-1}\in L^q_{loc}\left( \R^N_+\right) $. The third requirement is immediate. 
	
	Suppose now that $w\in W^{1,p}\left( \R^N_+\right) $ has compact support in $\overline{\R^N_+}$ and trace in $L^\infty\left( \pd\R^N_+\right) $. Let $k>0$ and suppose $w=w^{+\infty}$ $a.e.$ in the set $\left\lbrace x\in \R^N_+:u(x)>k\right\rbrace $ with $w^{+\infty}\in W^{1,r}\left( \R^N_+\right) $ and $r>N$. Note that since $r>N$ we have that $w^{+\infty}$ is continuous in $\overline{\R^N_+}$. As in the considerations following definition \ref{defn}, we see that $\abs{\nabla u}^{p-1}\abs{\nabla w}$ belongs to $L^1\left( \R^N_+\right) $. Hence, we can apply Lebesgue's Dominated Convergence Theorem to obtain
	\[
	\int_{\R^N_+}\abs{\nabla u}^{p-2}\nabla u\cdot \nabla wdx = \lim_{\e\downarrow0}\int_{\R^N_+}\abs{\nabla u}^{p-2}\nabla u(x',x_N+\e)\cdot \nabla w(x) dx \ .
	\]
	Since $\nabla u(x',x_N+\e)$ is smooth for every $\e>0$, vanishes as $\abs{x}\goesto \infty$, and $\lap_pu=0$ in $\R^N_+$ we obtain
	\begin{multline*}
	\int_{\R^N_+}\abs{\nabla u}^{p-2}\nabla u (x',x_N+\e)\cdot\nabla w(x)dx = \int_{\pd\R^N_+}\abs{\nabla u}^{p-2}u_\n(x',\e)w(x',0)dx' \\
	= -\frac{2\e}{\sigma_N}\int_{\pd\R^N_+}\frac{w(x',0)}{\abs{(x',0)-(0,\e)}^N}dx' .
	\end{multline*}
	Finally, it is well-known (cf. \cite{Evans}) that this last integral satisfies
	\[
	\lim_{\e\downarrow0} -\frac{2\e}{\sigma_N}\int_{\pd\R^N_+}\frac{w(x',0)}{\abs{(x',0)-(0,\e)}^N}dx'=-w^{+\infty}(0)
	\]
	since $w$ has bounded trace and is continuous in a neighborhood of the origin because $w=w^{+\infty}$ $cap_{1,p,N}-q.e.$ near the origin (see Remark \ref{remark_test_trace}).
\end{proof}

\begin{remark}
	The ideas above can be used to define renormalized solutions to Neumann problems in bounded domains. We do so now.
	
	Let $\O$ be a bounded extension domain, i.e., a domain such that there exists a linear bounded extension operator from $W^{1,p}\left( \O\right) $ into $W^{1,p}\left( \R^N\right)$. Assume $1<p\leq N$ and $\mu \in \mathfrak{M}_b\left( \R^N\right) $ is supported in $\pd\O$. Let $\mu=\mu_0 + \mu_s^+-\mu_s^-$ be the decomposition of $\mu$ with respect to $cap_{1,p,N}$. Then, a renormalized solution of 
	\[
	\begin{cases}
	-\lap_pu=0 & \mbox{ in } \O \\
	\abs{\nabla u}^{p-2}u_\n = \mu & \mbox{ on } \pd\O
	\end{cases}
	\]
	is a function $u$ defined in $\O$ such that
	
	\begin{enumerate}
		\item $u$ is measurable, finite $a.e.$, and $T_k(u)\in W^{1,p}\left( \O\right) $ for all $k>0$;
		\item $\abs{\nabla u}^{p-1}\in L^q\left( \O\right) $ for all $1\leq q<\frac{N}{N-1}$;
		\item there holds
	\end{enumerate}
\[
\int_{\O}\abs{\nabla u}^{p-2}\nabla u\cdot\nabla wdx  = \int_{\pd\O} w d\mu_0 + \int_{\pd\O}w^{+\infty}d \mu_s^+ - \int_{\pd\O}w^{-\infty}d\mu_s^-
\]
\begin{enumerate}[label={}]
\item for all $w\in W^{1,p}\left( \O\right) $ with trace in $L^\infty\left(\pd\O;d\mu_0\right) $, and satisfying the following condition: there exist $k>0$, $r>N$, and functions $w^{\pm\infty}\in W^{1,r}\left( \O\right)$ such that
\[
\begin{cases}
w=w^{+\infty} & a.e. \mbox{ in } \left\lbrace x\in \O \ : \ u>k\right\rbrace \\
w=w^{-\infty} & a.e. \mbox{ in } \left\lbrace x\in \O \ : \ u<-k\right\rbrace  .
\end{cases} 
\]
\end{enumerate}
	 Note that under the above conditions test functions have well defined traces on $\pd\O$. Indeed, by using that $\O$ is an extension domain, we can proceed as in Proposition \ref{qcTrace} to show that $w$ has a $cap_{1,p,N}-$ quasi-continuous representative which is unique $\mu_0-a.e.$. Hence, we let the trace of $w$ be the restriction to $\pd\O$ of this $cap_{1,p,N}-$ quasi-continuous representative. Similarly, $w^{\pm\infty}$ can be extended, uniquely, as continuous and bounded functions in $\overline{\O}$. 
	 
	 It can be shown, just as in the case of definition \ref{defn}, that all the integrals above are well defined and finite. Note that we have assumed that the trace of $w$ belongs to $L^\infty\left(\pd\O;d\mu_0\right) $. This has to be contrasted with definition \ref{defn} where, thanks to Proposition \ref{L^1mu_0}, we only assumed that the trace is in $L^\infty\left( \pd\R^N_+\right) $.
\end{remark}
	
\chapter{Local renormalized Solutions in $\R^N$}\label{Extension}

We now prove some preliminary results that will help us to obtain a renormalized solution to \eqref{equation} in the sense of definition \ref{defn}. We will mostly use ideas developed in \cite{DMOP99} for the case $g\equiv 0$. Note however that the theory developed there only applies to bounded domains and so it cannot be applied directly to our case. We circumvent this problem by working locally, that is, we first obtain a sequence of solutions on balls $B_m$ of increasing radii and then we consider the behavior of these solutions on any fixed ball $B_M$. 

As a corollary, we will prove the following theorem on the existence of local renormalized solutions in $\R^N$. 

\begin{thm}\label{existenceExtended}
	Let $\bar{\mu} \in \mathfrak{M}_b\left( \R^N\right)$ and $1<p\leq N$. Then there exists a local renormalized solution to 
	\[-\lap_pu=\bar{\mu} \mbox{ in } \R^N \ .\]
\end{thm}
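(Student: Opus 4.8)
The plan is to obtain the solution as a limit of renormalized solutions on an increasing sequence of balls. First I would fix a sequence of domains $\O_m = B_m(0)$, $m \in \N$. For each $m$ the measure $\bar\mu$ restricts (or can be truncated and smoothed) to a bounded measure $\bar\mu_m \in \mathfrak{M}_b(B_m)$, with $\norm{\bar\mu_m}_{\mathfrak{M}_b} \leq \norm{\bar\mu}_{\mathfrak{M}_b}$ uniformly in $m$; in fact one wants $\bar\mu_m \rightharpoonup \bar\mu$ in a suitable sense and the classical existence theory of \cite{DMOP99} then provides a renormalized solution $u_m$ of $-\lap_p u_m = \bar\mu_m$ in $B_m$ with zero boundary data. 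The uniform bound on $\norm{\bar\mu_m}_{\mathfrak{M}_b}$ is exactly what is needed to invoke Theorem \ref{DMOP99_convergence_of_u_n_to_limit} (applied on each fixed ball $B_M$, after $m \geq M$), which yields, along a subsequence, a limit function $u$ with $u_m \to u$ $a.e.$, $\nabla u_m \to \nabla u$ $a.e.$, $\abs{\nabla u_m}^{p-2}\nabla u_m \to \abs{\nabla u}^{p-2}\nabla u$ strongly in $L^q_{loc}$ for $q < \frac{N}{N-1}$, and such that $u$ satisfies conditions $(1)$ and $(2)$ of Definition \ref{definition_local_solution} locally on $B_M$; a diagonal argument over $M \in \N$ gives these properties on all of $\R^N$. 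Condition $(3)$ then follows from the level-set estimates of Theorem \ref{local_level_set_estimates} together with the identity \eqref{eq_estimate_L^p_from_Level_set}, exactly as explained in Remark \ref{remark_on_L^p_estimate}.

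The remaining and genuinely substantive point is to verify the renormalized equation, condition $(4)$ of Definition \ref{definition_local_solution}: for every admissible test function $w$ (compactly supported, bounded, with the $w^{\pm\infty}$ structure outside $\{|u|>k\}$) one must pass to the limit in
\[
\int_{B_m}\abs{\nabla u_m}^{p-2}\nabla u_m\cdot\nabla w\,dx = \int_{B_m} w\,d(\bar\mu_m)_0 + \int_{B_m} w^{+\infty}\,d(\bar\mu_m)_s^+ - \int_{B_m} w^{-\infty}\,d(\bar\mu_m)_s^-.
\]
The left-hand side converges by the strong $L^q_{loc}$ convergence of the gradients together with $\nabla w \in L^{p'}$ (splitting the integral over $\{|u|\le k\}$ and the two regions where $w = w^{\pm\infty} \in W^{1,r}$, $r>N$, using $r' < \frac{N}{N-1}$, just as in the well-definedness discussion after Definition \ref{defn}). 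The delicate part is the right-hand side: one needs the decomposition of $\bar\mu_m$ with respect to $cap_{1,p,N}$ to be compatible with that of $\bar\mu$, and one must control how the singular parts $\l_k^{\pm}$ associated with $u_m$ (via the equivalent formulation in Remark \ref{remark_on_equivalent_definitions_DMOP99}) behave in the limit. This is precisely the role of the stability result: the excerpt announces (in the chapter introduction) that the argument will follow \cite{M05} rather than the full machinery of \cite{DMOP99}, so I would state and prove an auxiliary stability lemma — the "second lemma" referred to in the introduction — adapted to this local setting, and then apply it to $u_m$ and $\bar\mu_m$ to conclude that the limit $u$ is a local renormalized solution.

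I expect the main obstacle to be exactly this stability step: one must ensure that the singular parts do not disappear or shift under the limit, i.e. that the measures $\l^{\pm}_{k,m}$ supported on $\{u_m = \pm k\}$ converge (after a further diagonalization in $k$ and $m$, using that $\l^{\pm}_{k,m} \to (\bar\mu_m)^{\pm}_s$ narrowly as $k\to\infty$) to measures realizing $\bar\mu^{\pm}_s$ for $u$, and that the absolutely continuous parts pass to the limit via the equi-integrability/quasi-continuity machinery (Proposition \ref{LDC_mu_0}, Proposition \ref{prop_Egorov}). Choosing the truncation approximation $\bar\mu_m$ carefully — so that $(\bar\mu_m)_0 \to \bar\mu_0$ and $(\bar\mu_m)_s^{\pm} \to \bar\mu_s^{\pm}$ separately and narrowly on each $B_M$ — is what makes the bookkeeping go through. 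Once the stability lemma is in hand, Theorem \ref{existenceExtended} follows as an immediate corollary.
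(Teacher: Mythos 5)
Your proposal follows essentially the same route as the paper: restrict $\bar{\mu}$ to $B_m$, solve in $B_m$ with the theory of \cite{DMOP99}, extract an $a.e.$-convergent diagonal subsequence using the domain-independent estimates (this is the paper's Lemma \ref{general_existence_of_limit_functions}), and then verify condition $(4)$ by a stability lemma modeled on \cite{M05} (the paper's Lemma \ref{general_existence_global_solution}, applied with $g=g_m\equiv 0$). One small correction: condition $(3)$ for the limit $u$ cannot be deduced from Theorem \ref{local_level_set_estimates}, which presupposes that $u$ is already a local renormalized solution; it is obtained instead, as in Lemma \ref{general_existence_of_limit_functions}, by passing to the limit in the estimates of Theorem \ref{DMMOP99_level_set_estimates}, which the $u_m$ satisfy uniformly with $\sup_m\norm{\bar{\mu}_m}_{\mathfrak{M}_b}$ in place of $\norm{\mu}_{\mathfrak{M}_b}$.
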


\section{Preliminary convergence result}

Consider the following restrictions of a measure $\bar{\mu}\in \mathfrak{M}_b\left( \R^N\right) $
\[
\bar{\mu}_m(A):=\bar{\mu}(A\cap B_m)
\]
where $B_m$ is the ball centered at the origin of radius $m$. It is easy to see that 
\[
\left( \bar{\mu}_m\right) _0=\left( \bar{\mu}_0\right) _m \ , \ \left( \bar{\mu}_m\right)_s^\pm=\left( \bar{\mu}_s^\pm\right) _m \ . 
\]

For each $m\in \N$ we can use the results in \cite{DMOP99} to obtain a renormalized solution $u_m$ to the problem
\be\label{eqn u_m}
\begin{cases}
	-\lap_p u_m = \bar{\mu}_m & \mbox{ in } B_m \\
	u_m = 0 & \mbox{ on } \pd B_m .
\end{cases}
\ee
Here and in the sequel we identify the functions $u_m$ as functions defined on the whole space extending them by zero outside of $B_m$. Note that since $T_k(u_m)\in W^{1,p}_0\left( B_m\right) $ the extension satisfies $T_k(u_m)\in W^{1,p}\left( \R^N\right) $. Hence, by Remarks \ref{remark_quasicontinuity} and \ref{remark_quasicontinuity_DMOP99}, the extension of $u_m$ has a $cap_{1,p,N}-$ quasi-continuous representative in $\R^N$. Clearly, up to a set of zero capacity, this representative is the extension by zero of the $cap_{1,p,N}-$ quasi-continuous representative of $u_m$ given by Remark \ref{remark_quasicontinuity_DMOP99}.

In the following lemma we show that we can extract a point-wise convergent subsequence from $\left\lbrace u_m\right\rbrace _m$. The argument follows closely the ideas used in Section 5 of \cite{DMOP99}. 

\begin{lem}\label{general_existence_of_limit_functions}
	Let $1<p\leq N$. Let $\nu_m \in \mathfrak{M}_b\left( \R^N\right)$ be a sequence of measures such that $\abs{\nu_m}\left( B_m\right)\leq C_1<\infty$ for all $m\in\N$. Let $u_m$ be renormalized solutions to \eqref{eqn u_m} with data $\nu_m$, i.e., 
	\be\nonumber
	\begin{cases}
		-\lap_p u_m = \nu_m & \mbox{ in } B_m \\
		u_m = 0 & \mbox{ on } \pd B_m .
	\end{cases} 
	\ee
	Then there exists a function $u$ such that, up to a subsequence, $u_m\goesto u$ $a.e.$ in $\R^N$. Moreover :
	\begin{enumerate}
		\item $u$ is measurable and finite $cap_{1,p,N}-q.e.$, $T_k(u_m)\goesto T_k(u)$ weakly in $W^{1,p}\left( B_M\right)$ for any fixed $k>0$ and $M\in \N$, and $\nabla T_k(u_m)\goesto \nabla T_k(u)$ $a.e.$ in $\R^N$ for any $k>0$;
		\item $\nabla u_m \goesto \nabla u$ $a.e.$ and $\abs{\nabla u_m}^{p-2}\nabla u_m\goesto \abs{\nabla u}^{p-2}\nabla u$ strongly in $\left( L^q\left( B_M\right) \right) ^N$ for any $M\in \N$ and $1\leq q<\frac{N}{N-1}$;
		\item $\abs{u}^{p-1}\in L^q_{loc}\left( \R^N\right) $ for all $1<q<\frac{N}{N-p}$ ($1<q<\infty$ if $p=N$).
	\end{enumerate}
\end{lem}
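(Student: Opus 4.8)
The plan is to mimic Section~5 of \cite{DMOP99}, but carried out locally on a fixed ball $B_M$ and then promoted to all of $\R^N$ via a diagonal argument. First I would fix $M\in\N$ and observe that for every $m\geq M$ the restriction $u_m$ (extended by zero) satisfies, by Theorem~\ref{DMMOP99_level_set_estimates} applied on $B_m$ with datum $\nu_m$, the level-set estimates with constants depending only on $C_1$ and not on $m$; in particular $\{T_k(u_m)\}_{m\geq M}$ is bounded in $W^{1,p}(B_M)$ for each fixed $k$, and $\{\abs{\nabla u_m}^{p-1}\}_{m\geq M}$ is bounded in $L^q(B_M)$ for $1\leq q<\frac{N}{N-1}$. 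Hence, after extracting a subsequence, $T_k(u_m)\rightharpoonup v_k$ weakly in $W^{1,p}(B_M)$ and strongly in $L^p(B_M)$; the estimate \eqref{estimate2_DMMOP99} forces $\abs{\{\abs{u_m}>k\}\cap B_M}$ to be small uniformly in $m$ for $k$ large, which is exactly the tightness needed to show that the $v_k$ are the truncations of a single a.e.\ finite measurable function $u$ on $B_M$ and that $u_m\to u$ a.e.\ on $B_M$ (this is the standard argument, e.g.\ Lemma~4.1 of \cite{BBGGPV95} plus the reasoning in \cite{DMOP99}). A diagonal extraction over $M=1,2,\dots$ then yields one subsequence with $u_m\to u$ a.e.\ in $\R^N$.

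Next I would upgrade the a.e.\ convergence to the almost-everywhere convergence of gradients. The key point, and the technical heart of the matter, is the proof that $\nabla T_k(u_m)\to\nabla T_k(u)$ a.e.\ in $B_M$: this is the "compactness of truncations" step, which in \cite{DMOP99} is obtained by testing the equation for $u_m$ against a carefully chosen function built from $T_k(u_m)-T_k(u)$ (suitably truncated and with the Landes-type regularization to deal with the fact that $T_k(u)$ is only an element of $W^{1,p}$, not admissible as such), proving $\int_{B_M}(\abs{\nabla T_k(u_m)}^{p-2}\nabla T_k(u_m)-\abs{\nabla T_k(u_m)}^{p-2}\nabla T_k(u))\cdot\nabla(T_k(u_m)-T_k(u))\to 0$, and then invoking the standard monotonicity/a.e.-convergence lemma. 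Since this argument is local and only uses the uniform level-set bounds, it transfers essentially verbatim; I would remark that it is "proven by a slight modification of the argument in \cite{DMOP99}" rather than reproduce it. Once $\nabla T_k(u_m)\to\nabla T_k(u)$ a.e., the identity $\nabla T_k(u_m)=\nabla u_m\,\chi_{\{\abs{u_m}<k\}}$ together with \eqref{estimate3_DMMOP99} and $u_m\to u$ a.e.\ gives $\nabla u_m\to\nabla u$ a.e.\ in $\R^N$, and then \eqref{estimate3_DMMOP99} provides the equiintegrability needed to upgrade this to strong convergence of $\abs{\nabla u_m}^{p-2}\nabla u_m$ in $(L^q(B_M))^N$ for every $1\leq q<\frac{N}{N-1}$ (Vitali's theorem).

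Finally, for $(3)$ I would combine the level-set bound \eqref{estimate2_DMMOP99} (valid uniformly in $m\geq M$ with the datum's mass replaced by $C_1$) with Fatou's lemma: $\abs{\{\abs{u}>k\}\cap B_M}\leq\liminf_m\abs{\{\abs{u_m}>k\}\cap B_M}\leq C(N,p,C_1)k^{-\frac{N(p-1)}{N-p}}$, and then the identity $\int_{B_M}\abs{u}^{s}\,dx = s\int_0^\infty t^{s-1}\abs{\{\abs{u}\geq t\}\cap B_M}\,dt$ from Remark~\ref{remark_on_L^p_estimate} shows $\abs{u}^{p-1}\in L^q(B_M)$ whenever $q(p-1)<\frac{N(p-1)}{N-p}$, i.e.\ $q<\frac{N}{N-p}$; the case $p=N$ uses \eqref{estimate4_DMMOP99} instead and allows every finite $q$. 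Since $M$ is arbitrary this gives $\abs{u}^{p-1}\in L^q_{loc}(\R^N)$, and the $cap_{1,p,N}$-finiteness of $u$ claimed in $(1)$ follows either from Remark~\ref{remark_quasicontinuity_DMOP99_limit} or, independently, from Proposition~\ref{cap_finite} once we know $u$ will be a local renormalized solution; alternatively one argues directly as in Remark~2.11 of \cite{DMOP99} using the weak $W^{1,p}(B_M)$ convergence of the truncations. The main obstacle throughout is the a.e.\ convergence of the truncated gradients: everything else is extraction of subsequences, uniform estimates, and Fatou/Vitali.
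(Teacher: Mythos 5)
Your overall architecture matches the paper's: uniform estimates from Theorem~\ref{DMMOP99_level_set_estimates}, a diagonal extraction in $M$ giving $u_m\to u$ a.e., weak $W^{1,p}(B_M)$ convergence of the truncations, a.e.\ convergence of $\nabla T_k(u_m)$ as the technical crux, Vitali for $(2)$, the layer-cake estimate for $(3)$, and a capacity argument for finiteness. Where you diverge is precisely at the crux. You propose to transfer the DMOP99 mechanism---Landes-type time regularization of $T_k(u)$, test against a regularized $T_k(u_m)-T_k(u)$, then the standard monotonicity/a.e.-convergence lemma---and to cite it rather than execute it. The paper instead gives a self-contained argument (following the scheme of Theorem~4.3.8 of \cite{Veron16}): it shows $\{\nabla T_k(u_m)\}_m$ is Cauchy in measure on $B_M$ by testing $w=\phi\,T_\delta(T_k(u_m)-T_k(u_n))$ in the level-$k$ equations satisfied by \emph{both} $T_k(u_m)$ and $T_k(u_n)$ (via the equivalent renormalized formulation in Remark~\ref{remark_on_equivalent_definitions_DMOP99}), subtracting, and invoking the coercivity inequality \eqref{coercitivity}. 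The two routes prove the same thing; the paper's two-sided Cauchy approach avoids the Landes regularization entirely, since it never needs to test against anything built from the unknown limit $u$ (for which no equation is yet available on $B_M$), whereas your one-sided approach would require that extra regularization layer. Either is sound, but note also that your suggested shortcuts for the $cap_{1,p,N}$-finiteness do not quite close: Proposition~\ref{cap_finite} cannot be invoked at this stage (it presupposes that $u$ is a local renormalized solution, which is only established afterwards in Lemma~\ref{general_existence_global_solution}), and Remark~\ref{remark_quasicontinuity_DMOP99_limit} is stated for the fixed-domain setting of Theorem~\ref{DMOP99_convergence_of_u_n_to_limit}; the direct argument you mention last---bounding averages of $T_{2k}(u)$, then estimating the $W^{1,p}(B_M)$-norm of $\phi=\frac{T_{2k}(u)-c_{2k,M}}{2k-c_{2k,M}}$ as in Proposition~\ref{cap_finite}---is the one the paper actually carries out and is the one you should rely on.
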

\begin{proof}
	To begin we note that each $u_m$ satisfies the estimates stated in Theorem \ref{DMMOP99_level_set_estimates} uniformly in the sense that they hold with $\abs{\nu_m}\left( B_m\right)$ replaced by $C_1$. Now fix any $M\in \N$, $k\in \N$, and $\sigma>0$. Observe that $\left\lbrace x\in B_M : \abs{u_m-u_n}>\sigma \right\rbrace $ is contained in
	\begin{multline}\label{eq_containments}
	\left\lbrace x\in B_M : \abs{u_m}>k\right\rbrace \cup\left\lbrace x\in B_M : \abs{u_n}>k\right\rbrace \cup \\ \left\lbrace x\in B_M : \abs{T_k(u_m)-T_k(u_n)}>\sigma \right\rbrace  .
	\end{multline}
	Thanks to \eqref{estimate2_DMMOP99} and \eqref{estimate4_DMMOP99} the measure of the first two sets is arbitrarily small, independent of $m$ and $n$, provided $k$ is large enough. 
	
	Since for each fixed $k$ estimate \eqref{estimate1_DMMOP99} gives an uniform bound for $\norm{\nabla T_k(u_m)}_{L^p}$ we conclude that the sequence $\left\lbrace T_k(u_m)\right\rbrace _m$ is uniformly bounded in $W^{1,p}\left( B_M\right) $ for any fixed $k$ and $M$. Since the injection $W^{1,p}\left( B_M\right) \hookrightarrow L^p\left( B_M\right) $ is compact, this means that $\left\lbrace T_k(u_m)\right\rbrace _m$ has a subsequence that converges strongly in $L^p\left( B_M\right) $, and hence, that it is a Cauchy subsequence in measure in $B_M$. 
	
	Now take $k=1$ and apply the above argument in $B_M$ to obtain a subsequence $\left\lbrace u_{m,1}\right\rbrace _m\subset \left\lbrace u_m\right\rbrace _m$ such that $\left\lbrace T_1(u_{m,1})\right\rbrace _m$ is a Cauchy sequence in measure in $B_M$. Since $\left\lbrace u_{m,1}\right\rbrace _m$ has the same properties as $\left\lbrace u_m\right\rbrace _m$, we fix $k=2$ and apply again the argument above to obtain a subsequence $\left\lbrace u_{m,2}\right\rbrace _m\subset \left\lbrace u_{m,1}\right\rbrace _m$ such that $\left\lbrace T_2(u_{m,2})\right\rbrace _m$ is a Cauchy sequence in measure in $B_M$. Proceeding inductively, we see that we can define a diagonal sequence $\left\lbrace u_{m,m}\right\rbrace _m$. Going back to \eqref{eq_containments}, it easy to see that this sequence, which we relabel as $\left\lbrace u_m\right\rbrace _m$, is a Cauchy sequence in measure. Hence, passing to a subsequence, there exists a measurable and $a.e.$ finite function $v_M$ such that $u_m\goesto v_M$ $a.e.$ in $B_M$. Proceeding in a similar way, but now with respect to $M\in\N$, we can obtain a subsequence $\left\lbrace u_{m,m}\right\rbrace _m\subset \left\lbrace u_m\right\rbrace _m$, such that for every $M\in \N$ $u_m\goesto v_M$ $a.e.$ in $B_M$. Relabeling this subsequence as $\left\lbrace u_m\right\rbrace _m$, we see that there exists a measurable and $a.e.$ finite function $u$ such that
	\[
	u_m\goesto u \mbox{ $a.e.$ in } \R^N 
	\]
	satisfying $u=v_M$ $a.e.$ in $B_M$.
	
	We now consider the properties of the limit function. 
	Note that since $T_k(s)$ is continuous we have $T_k(u_m)\goesto T_k(u)$ $a.e.$ in $B_M$. 
	Estimate \eqref{estimate1_DMMOP99} implies that $\left\lbrace T_k(u_m)\right\rbrace _m$ is uniformly bounded in $W^{1,p}\left( B_M\right) $ for any fixed $k>0$. Thus, for any subsequence $\left\lbrace T_k(u_{m_j})\right\rbrace _j$ a further subsequence converges weakly in $W^{1,p}\left( B_M\right) $ to a limit function $v_k$. But $T_k(u_m)\goesto T_k(u)$ $a.e.$ in $B_M$, which implies (by the boundedness of the sequence) that $v_k=T_k(u)$. Therefore 
	\[
	T_k(u_m)\goesto T_k(u) \mbox{ weakly in } W^{1,p}\left( B_M\right)  \mbox{ for any fixed } k>0.
	\]
	In particular
	\[
	T_k(u) \in W^{1,p}\left( B_M\right) ,
	\]
	and thus for any $k>0$
	\[
	T_k(u)\in W^{1,p}_{loc}\left( \R^N\right)   .
	\]
	Let us make explicit that this allows us to define $\nabla u$ in the generalized sense described earlier. 
	Also, using \eqref{estimate1_DMMOP99} and Fatou's Lemma we further conclude that
	\be\label{estimate1_global_solution}
	\frac{1}{k}\int_{\left\lbrace n\leq\abs{u}<n+k\right\rbrace \cap B_M}\abs{\nabla T_{k+n}(u)}^pdx\leq C_1 .
	\ee
	
	Now we want to show that for any fixed $k>0$ and $M\in \N$, $\{\nabla T_k(u_m)\}_m$ is a Cauchy sequence in measure in $B_M$. For this we follow the approach in the proof of Theorem 4.3.8 in \cite{Veron16}. Fix any $M\in \N$, $k>0$, and $\eta,\sigma>0$, and let $m,n\geq M+1$. Choose any $\phi\in C_0^\infty\left( B_{M+1}\right) $ such that $\phi=1$ in $B_M$ and $0\leq \phi\leq 1$. For $\d>0$ we define
	\[
	D_\d :=\left\lbrace \abs{T_k(u_m)-T_k(u_n)}>\d\right\rbrace \ , \ E_\d:=D_\d^c \cap \left\lbrace \abs{\nabla T_k(u_m) -\nabla T_k(u_n)}>\sigma\right\rbrace 
	\]
	and observe that
	\[
	\left\lbrace \abs{\nabla T_k(u_m) -\nabla T_k(u_n)}>\sigma\right\rbrace \subset D_\d \cup E_\d.
	\]
	Let $w=\phi T_\d\left( T_k(u_m)-T_k(u_n)\right) $ and test against $w$ in the equations solved by the truncates $T_k(u_m)$ and $T_k(u_n)$ (see Remark \ref{remark_on_equivalent_definitions_DMOP99}) to find that
	\begin{multline*}
	\abs{\int_{B_{M+1}}\left( \abs{\nabla T_k(u_m)}^{p-2}\nabla T_k(u_m)-\abs{\nabla T_k(u_n)}^{p-2}\nabla T_k(u_n)\right) \cdot \nabla w dx}\leq \\
	 \d \left( 2\abs{\mu_0}\left( B_{M+1}\right) + \l_{k,m}^+\left( B_{M+1}\right) +\l_{k,m}^-\left( B_{M+1}\right) +\l_{k,n}^+\left( B_{M+1}\right) +\l_{k,n}^-\left( B_{M+1}\right) \right) 
	\end{multline*}
	for some measures $\l_{k,m}^\pm$ and $\l_{k,n}^\pm$ converging in the narrow topology of measures to $\left( \mu_s^\pm\right) _m$ and $\left( \mu_s^\pm\right) _n$, respectively, as $k\goesto \infty$. By testing against $T_k(u_m)$ in the equation solved by $T_k(u_m)$, and using estimate \eqref{estimate1_DMMOP99}, we obtain that $\l_{k,m}^\pm$ are bounded independently of $m$. Hence, the right hand side in the above inequality is bounded by $\d c_1$ where $c_1=c_1(k,C_1)$ is independent of $m$ and $n$. On the other hand, 
	\begin{multline*}
	\abs{\int_{B_{M+1}}T_\d\left( T_k(u_m)-T_k(u_n)\right)\abs{\nabla T_k(u_m)}^{p-2}\nabla T_k(u_m)\cdot \nabla \phi dx}\leq \\ \d \norm{\nabla T_k(u_m)}^{p-1}_{L^p\left( B_{M+1}\right) }\norm{\nabla \phi}_{L^p\left(B_{M+1}\right) }\leq \d c_2
	\end{multline*}
	where, again by \eqref{estimate1_DMMOP99}, $c_2=c_2(k,p,\phi,C_1)$ is independent of $m$. Then, using the structural inequality \eqref{coercitivity}, we can proceed as in the proof of Theorem \ref{symmetry} to show that
	\begin{multline*}
	\int_{B_{M}\cap D_\d^c}\abs{\nabla T_k(u_m)-\nabla T_k(u_n)}^pdx\leq \\ c_3\int_{B_{M}\cap D_\d^c}\!\!\!\!\!\!\left( \abs{\nabla T_k(u_m)}^{p-2}\nabla T_k(u_m)-\abs{\nabla T_k(u_n)}^{p-2}\nabla T_k(u_n)\right) \cdot \left( \nabla T_k(u_m)-\nabla T_k(u_n)\right) dx
	\end{multline*}
	where $c_3=c_3(k,p,C_1)$ is independent of $m$ and $n$. Hence, by combining all the above estimates we see that
	\[
	\abs{E_\d\cap B_M}\leq \frac{1}{\sigma^p}\int_{B_{M}\cap D_\d^c}\abs{\nabla T_k(u_m)-\nabla T_k(u_n)}^pdx\leq \d\left( \frac{c_3(c_1+c_2)}{\sigma^p}\right)  ,
	\]
	and so we can choose $\d>0$ independent of $m$ and $n$ such that $\abs{E_\d\cap B_M}<\eta$. Since $\{T_k(u_m)\}_m$ is a Cauchy sequence in measure in $B_M$, once $\d$ is fixed we obtain that $\abs{D_\d\cap B_M}<\eta$ if $m$ and $n$ are large enough. Hence, the desired result follows. Note that we also obtain that there exists a subsequence such that $\nabla T_k(u_{m_j})\goesto v_k$ $a.e.$ in $B_M$. Since $\nabla T_k(u_m)$ is uniformly bounded in $\left( L^p\left( B_M\right) \right)^N$ we conclude that in fact $v_k=\nabla T_k(u)$.

	Now, noticing that $\left\lbrace x\in B_M : \abs{\nabla u_m-\nabla u_n}>\sigma \right\rbrace $ is contained in
	\[
	\left\lbrace x\in B_M : \abs{u_m}>k\right\rbrace \cup\left\lbrace x\in B_M : \abs{u_n}>k\right\rbrace \cup \left\lbrace x\in B_M : \abs{\nabla T_k(u_m)-\nabla T_k(u_n)}>\sigma \right\rbrace  
	\]
	we proceed as before to obtain that, passing to a subsequence, $\nabla u_m$ converges $a.e.$ to a function $v$ in $B_M$. Note that for fixed $k>0$ we can choose a subsequence to obtain 
	\begin{multline*}
	v\chi_{\{\abs{u}<k\}}=\lim_{j\goesto\infty} \nabla u_{m_j} \chi_{\{\abs{u}<k\}} =\lim_{j\goesto\infty} \nabla u_{m_j} \chi_{\{\abs{u}<k\}}\chi_{\left\lbrace  \abs{u_{m_j}}<k\right\rbrace} \\= \lim_{j\goesto\infty} \nabla T_k(u_{m_j}) \chi_{\{\abs{u}<k\}} = \nabla T_k(u)
	\end{multline*}
	$a.e.$ in $B_M$. Therefore
	\[
	\nabla u_m \goesto \nabla u \mbox{ $a.e.$ in } \R^N . 
	\]
	It then follows that in fact, for any $k>0$, $\nabla T_k(u_m) \goesto \nabla T_k(u)$ $a.e.$ in $\R^N$.
	Moreover, the identity \eqref{eq_estimate_L^p_from_Level_set} and the uniform decay estimates of Theorem \ref{DMMOP99_level_set_estimates} imply that the family $\abs{\nabla u_m}^{p-2}\nabla u_m$ is uniformly integrable over $B_M$ (see also Step 1 of Section 5 of \cite{DMOP99}). Hence, by Vitali's Theorem it follows that
	\[
	\abs{\nabla u_m}^{p-2}\nabla u_m\goesto \abs{\nabla u}^{p-2}\nabla u \mbox{ in } \left( L^q\left( B_M\right)\right)^N \mbox{ for all }  1\leq q< \frac{N}{N-1} .
	\]
	In particular
	\[
	\abs{\nabla u}^{p-1} \in L^q_{loc}\left( \R^N\right) \mbox{ for all } 1\leq q <\frac{N}{N-1} . 
	\]
	In the same spirit one can show that 
	\[
	\abs{u}^{p-1} \in L^q_{loc}\left( \R^N\right) \mbox{ for all }  1 < q <\frac{N}{N-p}  
	\]
	when $p<N$, whereas
	\[
	\abs{u}^{p-1} \in L^q_{loc}\left( \R^N\right) \mbox{ for all }  1< q <\infty  
	\]
	when $p=N$ (see Remark \ref{remark_on_L^p_estimate}).
	
	To finish the proof we show that $u$ is finite $cap_{1,p,N}-q.e.$ in $B_M$ for all $M\in \N$, and thus in the whole $\R^N$. Fix $M \in \N$. By estimate \eqref{estimate2_DMMOP99} and \eqref{estimate4_DMMOP99} we can choose $k_0>0$ such that for all $k\geq k_0$ and for all $m\in \N$
	\[
	\abs{\left\lbrace\abs{u_m}\geq k\right\rbrace }\leq \frac{1}{4}\abs{B_M} .
	\]
	Thus, we can estimate
	\begin{multline*}
	\int_{B_M}\abs{T_{2k}(u_m)}dx = \int_{B_M\cap\left\lbrace\abs{u_m}<k\right\rbrace } \abs{T_{2k}(u_m)}dx + \int_{B_M\cap\left\lbrace\abs{u_m}\geq k\right\rbrace }\abs{T_{2k}(u_m)}dx \\
	\leq k\abs{B_M} + 2k\frac{1}{4}\abs{B_M}=\frac{3}{2}k\abs{B_M}
	\end{multline*}
	for any $k\geq k_0$. Let us define the following averages:
	\[
	c_{k,m,M}:=\frac{1}{\abs{B_M}}\int_{B_M}T_k(u_m) dx \ , \ c_{k,M}:=\frac{1}{\abs{B_M}}\int_{B_M}T_k(u) .
	\] 
	Note that by Lebesgue's Dominated Convergence Theorem we have
	\[
	c_{k,M}=\lim_{m\goesto\infty}c_{k,m,M}
	\]
	and by the above estimate we get
	\[
	\abs{c_{2k,M}}\leq \frac{3}{2}k
	\]
	for any $k\geq k_0$. Now, to finish, we can proceed as in the proof of Proposition \ref{cap_finite}, by considering the function $\phi=\frac{T_{2k}(u)-c_{2k,M}}{2k-c_{2k,M}}$.
\end{proof}

\section{Stability}

We now consider the problem of showing that the limit function $u$ defined in the previous lemma is a local renormalized solution of the desired equation. Since we will deal with nonlinear terms later, it will be useful to prove a more general result. Let us recall that if $\nu \in L^1\left(B_m\cap\pd\R^N_+\right) $ then, by Proposition \ref{trace&extensionE}, $\nu\mathcal{H} \in \mathfrak{M}_0\left(B_m\right) $ (see also Proposition \ref{decomposition}). We also remark that if a function $u$ satisfies $(1)$ in Lemma \ref{general_existence_of_limit_functions} then $u$ has a $cap_{1,p,N}-$ quasi-continuous representative, which we identify with $u$ (see Remark \ref{remark_quasicontinuity}). 


\begin{lem}\label{general_existence_global_solution}
	Let $\bar{\mu} \in \mathfrak{M}_b\left( \R^N\right)$ and assume $g_m$ and $g$ are measurable functions defined in $\pd\R^N_+$ such that $\norm{g_m}_{L^1\left( B_m\cap \pd\R^N_+ \right) } + \norm{g}_{L^1\left( \pd\R^N_+\right) } \leq C_1 <\infty$ for some positive constant $C_1$. Let $u_m$ be renormalized solutions to 
	\be\nonumber
	\begin{cases}
		-\lap_p u_m = \bar{\mu}_m - g_m\mathcal{H} & \mbox{ in } B_m \\
		u_m = 0 & \mbox{ on } \pd B_m
	\end{cases} 
	\ee
	where $\bar{\mu}_m$ is the restriction of $\bar{\mu}$ to $B_m$. Assume $u_m\goesto u$ $a.e.$ in $\R^N$, where $u$ is a function satisfying properties (1), (2), and (3) in Lemma \ref{general_existence_of_limit_functions}. Suppose also that  
	\be\label{condition_general_existence_global_solution}
	\lim_{m\goesto\infty} \int_{B_{M}\cap\pd\R^N_+}\phi_m g_m dx'= \int_{B_{M}\cap\pd\R^N_+} \phi g dx'
	\ee
	for any $M\in \N$ and any sequence $\left\lbrace\phi_m\right\rbrace _m$ converging to $\phi$ both $a.e.$ in $B_M$ and weakly in $W_0^{1,p}\left( B_M\right) $ and such that $\phi_m$ is uniformly bounded in $L^\infty\left( B_M\right) $. Then $u$ is a local renormalized solution of 
	\[-\lap_p u = \bar{\mu} -g\mathcal{H} \mbox{ in } \R^N \ .\]
	Moreover, $T_k(u_m)\goesto T_k(u)$ strongly in $W^{1,p}\left( B_M\right) $ for any fixed $k>0$ and $M\in \N$.
\end{lem}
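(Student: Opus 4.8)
The plan is to follow the approach of \cite{M05}, as the excerpt indicates, adapting the stability argument to the local setting and to the extra boundary term $g_m\mathcal{H}$. The starting point is Lemma \ref{general_existence_of_limit_functions}, which already gives us (after passing to a subsequence, which we fix once and for all) all the pointwise convergences $u_m\to u$, $\nabla u_m\to\nabla u$, $\nabla T_k(u_m)\to\nabla T_k(u)$ a.e.\ in $\R^N$, the weak $W^{1,p}(B_M)$ convergence of the truncates, the strong $L^q$ convergence of $|\nabla u_m|^{p-2}\nabla u_m$ for $q<\frac{N}{N-1}$, and properties (1)--(3) of the limit. What remains is: (a) to upgrade the weak convergence of $T_k(u_m)$ to strong convergence in $W^{1,p}(B_M)$, and (b) to pass to the limit in the renormalized formulation to identify $u$ as a local renormalized solution of $-\lap_p u = \bar\mu - g\mathcal{H}$ in $\R^N$.

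First I would fix a test function $w\in W^{1,p}(\R^N)\cap L^\infty$ compactly supported in $B_M$, together with $k>0$, $r>N$, and $w^{\pm\infty}\in W^{1,r}\cap L^\infty$ as in condition (4) of Definition \ref{definition_local_solution}. Since the $u_m$ are renormalized solutions on $B_m$ with datum $\bar\mu_m - g_m\mathcal{H}$, and since $\bar\mu_m$ agrees with $\bar\mu$ on $B_M$ once $m\geq M$, and $g_m\mathcal{H}\in\mathfrak{M}_0(B_m)$ by Proposition \ref{trace&extensionE}, the absolutely continuous part of the datum (with respect to $cap_{1,p,N}$) is $(\bar\mu_0)_m - g_m\mathcal{H}$ and the singular part is $(\bar\mu_s^+)_m - (\bar\mu_s^-)_m$. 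Writing the renormalized equation for $u_m$ tested against $w$, the left-hand side is $\int |\nabla u_m|^{p-2}\nabla u_m\cdot\nabla w\,dx + \int_{\pd\R^N_+} g_m w\,dx'$. The first term converges to $\int |\nabla u|^{p-2}\nabla u\cdot\nabla w\,dx$: on $\{|u|\le k\}$ this is the strong $L^{r'}$ convergence of $|\nabla T_k(u_m)|^{p-2}\nabla T_k(u_m)$ against $\nabla w$, while on $\{u>k\}$ and $\{u<-k\}$ one uses $w=w^{\pm\infty}\in W^{1,r}$, $r'<\frac{N}{N-1}$, and the strong $L^{r'}$ convergence from Lemma \ref{general_existence_of_limit_functions}(2), together with the fact that $\chi_{\{u_m>k\}}\to\chi_{\{u>k\}}$ a.e.\ (up to a further care about the level set $\{u=k\}$, handled by replacing $k$ with a.e.\ chosen nearby values). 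For the boundary term, observe $w\to w$ trivially and apply hypothesis \eqref{condition_general_existence_global_solution} with $\phi_m=\phi=w$ — or, more precisely, since $w$ need not lie in $W^{1,p}_0(B_M)$, localize by a cutoff and use that $g_m\mathcal{H}\in\mathfrak{M}_0$ with the convergence of traces of $W^{1,p}$ functions; this gives $\int_{\pd\R^N_+} g_m w\,dx'\to\int_{\pd\R^N_+} g w\,dx'$. On the right-hand side, for $\mu_0$ one uses Proposition \ref{LDC_mu_0} (the truncates $T_j(u_m)$ converge $cap$-q.e.\ and are uniformly bounded), and for the singular parts one uses that $w^{\pm\infty}$ are continuous and bounded and the standard fact that the approximating measures $\lambda^\pm_{k,m}$ attached to $u_m$ can be controlled as in Remark \ref{remark_on_equivalent_definitions_DMOP99}, so that $\int w^{+\infty}\,d(\bar\mu_s^+)_m - \int w^{-\infty}\,d(\bar\mu_s^-)_m$ is exactly what appears. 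Collecting terms yields condition (4) of Definition \ref{definition_local_solution} for $u$, with datum $\bar\mu - g\mathcal{H}$; since $u$ already satisfies (1)--(3), it is a local renormalized solution.

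For the strong convergence claim, I would test the equations solved by $T_k(u_m)$ (in the form of Remark \ref{remark_on_equivalent_definitions_DMOP99}) against $\phi\bigl(T_k(u_m)-T_k(u)\bigr)$ for a cutoff $\phi\in C_0^\infty(B_{M+1})$ with $\phi\equiv 1$ on $B_M$, $0\le\phi\le1$. Using the monotonicity (coercivity) inequality referenced in the excerpt as \eqref{coercitivity}, one reduces control of $\int_{B_M}|\nabla T_k(u_m)-\nabla T_k(u)|^p\,dx$ to controlling
\[
\int_{B_{M+1}}\bigl(|\nabla T_k(u_m)|^{p-2}\nabla T_k(u_m)-|\nabla T_k(u)|^{p-2}\nabla T_k(u)\bigr)\cdot\nabla\bigl(\phi(T_k(u_m)-T_k(u))\bigr)\,dx
\]
plus lower-order terms involving $\nabla\phi$ which vanish by the weak $W^{1,p}$ convergence and the strong $L^q$ convergence of the vector fields. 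The term with $|\nabla T_k(u_m)|^{p-2}\nabla T_k(u_m)\cdot\nabla(\phi(T_k(u_m)-T_k(u)))$ is handled via the renormalized equation for $u_m$: it equals an integral of $\phi(T_k(u_m)-T_k(u))$ against the data $\mu_0$, $g_m\mathcal{H}$, and the concentrating measures $\lambda^\pm_{k,m}$, plus the $\nabla\phi$ terms. Since $T_k(u_m)-T_k(u)\to 0$ a.e., is uniformly bounded, the $\mu_0$- and $g_m$-contributions vanish (Proposition \ref{LDC_mu_0} and hypothesis \eqref{condition_general_existence_global_solution} with $\phi_m=\phi(T_k(u_m)-T_k(u))$, $\phi=0$), while the $\lambda^\pm_{k,m}$ contributions are $O(1/k)$ uniformly in $m$ by the usual argument (the measures concentrate on $\{|u_m|=k\}$, where $|T_k(u_m)-T_k(u)|$ is small in an averaged sense — this is exactly the subtle point of the classical stability proof). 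Letting $m\to\infty$ and then $k\to\infty$ forces the limit to be zero, giving strong convergence of $T_k(u_m)$ in $W^{1,p}(B_M)$.

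\textbf{Main obstacle.} The delicate step is the control of the contributions of the concentrating measures $\lambda^\pm_{k,m}$ — equivalently, of the singular part of $\bar\mu$ — when passing $m\to\infty$ in both the identification of the limit equation and the strong convergence argument. This is precisely where \cite{DMOP99} required their long technical analysis and where \cite{M05} streamlined it; the point is that the test functions $w^{\pm\infty}$ (resp.\ the averaged smallness of $T_k(u_m)-T_k(u)$ near $\{|u_m|=k\}$) must be used carefully, together with the $cap_{1,p,N}$-quasi-continuity of the truncates and Remark \ref{remark_test_trace}, to ensure the pairing against $\lambda^\pm_{k,m}$ behaves correctly in the limit. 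A secondary technical nuisance is that the test functions here are only in $W^{1,p}(B_M)$, not $W^{1,p}_0$, so every appeal to the boundary hypothesis \eqref{condition_general_existence_global_solution} must be preceded by a localization/cutoff step; but this is routine.
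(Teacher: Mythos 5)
Your overall architecture is the right one (work on a fixed ball, follow \cite{M05}, use hypothesis \eqref{condition_general_existence_global_solution} for the boundary term, and treat strong convergence of the truncates separately), but there are two concrete gaps at exactly the points you flag as delicate, and as written both steps would fail. First, in identifying the limit equation you propose to test the equation for $u_m$ against a function $w$ that is admissible for $u$ (i.e.\ $w=w^{+\infty}$ $a.e.$ on $\left\lbrace u>k\right\rbrace$). Such a $w$ is in general \emph{not} admissible for $u_m$, since the condition on $\left\lbrace u>k\right\rbrace$ says nothing about $\left\lbrace u_m>k\right\rbrace$; one is forced to work with the truncated identities of Remark \ref{remark_on_equivalent_definitions_DMOP99} (Lemma 3.1 of \cite{M05}), which introduce concentrated measures $\a_{m,k}^{\pm}$ on $\left\lbrace u_m=\pm k\right\rbrace$, and the entire difficulty is the double limit $m\goesto\infty$, $k\goesto\infty$ of these measures. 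You acknowledge this obstacle but give no mechanism to resolve it. The missing idea is the comparison argument with $\b_n(u)$: compute $\lim_n\frac1n\int_{\left\lbrace n<u<2n\right\rbrace}\phi\abs{\nabla u}^p dx$ once from the limiting truncated identities (yielding $\int\phi\,d\nu^+$, where $\nu^+$ is a weak-$\ast$ limit of the localized measures) and once by Fatou's Lemma from the equations for $u_m$ tested against $\b_n(u_m)\phi$ (yielding $\leq\int\phi\,d\bar{\mu}_s^+$, where \eqref{condition_general_existence_global_solution} is used with $\phi_m=\phi\b_n(u_m)$); this gives $\nu^{\pm}\leq\bar{\mu}_s^{\pm}$, and then singularity and disjointness of supports force $\nu^{\pm}=\bar{\mu}_s^{\pm}$ and identify the remaining absolutely continuous piece as $-g\mathcal{H}$. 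Without this step the singular part of the limit datum is not identified.

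Second, your strong-convergence argument relies on the claim that the contributions of $\l_{k,m}^{\pm}$ against $\phi\left(T_k(u_m)-T_k(u)\right)$ are $O(1/k)$ uniformly in $m$. This is false: $\l_{k,m}^{\pm}\left(B_{M+1}\right)$ tends to $\bar{\mu}_s^{\pm}\left(B_{M+1}\right)$ as $k\goesto\infty$ (it does not vanish), and on $\supp{\l_{k,m}^{+}}\subset\left\lbrace u_m=k\right\rbrace$ the test function equals $\phi\left(k-T_k(u)\right)$, which can be of size $2k$. The classical way around this (truncating the test function by $T_\d$) only yields convergence in measure of the gradients, not strong $L^p$ convergence. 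The correct route here is to prove strong convergence \emph{a posteriori}: once $u$ is known to solve the limit equation, test both equations against $T_k(\cdot)\phi$ and compare the two energy identities; the singular terms $k\int_{B_M}\phi\,d\bar{\mu}_s^{\pm}$ appear identically on both sides and cancel, the remaining terms converge by \eqref{condition_general_existence_global_solution}, Theorem \ref{BGO96_thm}, and the weak convergences already available, giving $\norm{\nabla T_k(u_m)}_{L^p}\goesto\norm{\nabla T_k(u)}_{L^p}$; combined with the $a.e.$ convergence of $\nabla T_k(u_m)$ and Vitali's Theorem this yields strong convergence in $\left(L^p\left(B_{M'}\right)\right)^N$ for $M'<M$.
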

\begin{proof}
	Since properties $(1)$, $(2)$, and $(3)$ of Lemma \ref{general_existence_of_limit_functions} hold, we have that $u$ solves the desired equation if we can prove the last property listed in Definition \ref{definition_local_solution}. We show this first, following the approach of \cite{M05}.

	First we note that by Theorem \ref{BGO96_thm} we have $\left( \bar{\mu}_m\right) _0=\left( \bar{\mu}_0\right) _m= f_m -\dv h_m$ in $\D'\left( B_m\right) $ for some $f_m\in L^1\left( B_m\right) $ and $h_m\in \left( L^{p'}\left( B_m\right) \right) ^N$. Note that this representation is also valid in $\D'\left( B_{m'}\right) $ for any $m'<m$ and so $\left. \left(f_m -\dv h_m\right) \right| _{B_{m'}}=\left( f_{m'}-\dv h_{m'}\right) $. Then, by Lemma 3.1 of \cite{M05} there exists a set $U\subset (0,\infty)$ with $U^c$ of zero measure such that each $u_m$ satisfies the following condition: for every $k\in U$ there exists two measures $\a_{m,k}^+$, $\a_{m,k}^- \in \mathfrak{M}_0\left(B_m\right) $ supported in $\left\lbrace u_m=k\right\rbrace $ and $\left\lbrace u_m=-k\right\rbrace $ respectively, such that up to a subsequence (possibly depending on $m$) $\a_{m,k}^\pm \goesto \left( \bar{\mu}_m\right) _s^\pm $, as $k \in U$ goes to infinity, in the weak-$\ast$ topology of $\mathfrak{M}_b\left( B_m\right)$, and the truncations $T_k(u_m)$ satisfy
\begin{multline}
 \int_{\left\lbrace\abs{u_m}<k\right\rbrace }\left( \abs{\nabla T_k(u_m)}^{p-2}\nabla T_k(u_m) - h_m\right) \cdot \nabla v dx= \\
 		\int_{\left\lbrace u_m=k\right\rbrace } v d\a_{m,k}^+ - \int_{\left\lbrace u_m=-k\right\rbrace } v d\a_{m,k}^- + \int_{\left\lbrace\abs{u_m}<k\right\rbrace }vf_mdx - \int_{\left\lbrace\abs{u_m}<k\right\rbrace \cap\pd\R^N_+}vg_mdx'
\end{multline}
for every $v\in W^{1,p}_0\left( B_m\right) \cap L^\infty\left( B_m\right) $. 

Let us consider the convergence, in $m$, of the above terms. Given $M\in \N$ let $E_M= \left\lbrace k\in \R_+ : \abs{\left\lbrace x\in B_M : \abs{u}=k\right\rbrace }>0\right\rbrace  $ and write $F_M=\left( E_M\right) ^c$. Since $\abs{B_M}<\infty$, $E_M$ is countable and thus of zero measure. Note that $\chi_{\left\lbrace\abs{u_m}<k\right\rbrace }\goesto  \chi_{\left\lbrace\abs{u}<k\right\rbrace }$ $a.e.$ in $B_M$ except possibly in $\left\lbrace x\in B_M : \abs{u}=k\right\rbrace $, thus $\chi_{\left\lbrace\abs{u_m}<k\right\rbrace }\goesto \chi_{\left\lbrace\abs{u}<k\right\rbrace }$ $a.e.$ in $B_M$ and weakly-$\ast$ in $L^\infty\left( B_M\right) $ for all $k \in F_M$. 

By hypothesis, we have that $\abs{\nabla u_m}^{p-2}\nabla u_m\goesto \abs{\nabla u}^{p-2}\nabla u$ strongly in $\left( L^1\left( B_M\right) \right) ^N$ for any $M\in \N$. It follows that 
\begin{multline*}
\int_{\left\lbrace\abs{u_m}<k\right\rbrace } \abs{\nabla T_k(u_m)}^{p-2}\nabla T_k(u_m) \cdot \nabla \phi dx \goesto \int_{\left\lbrace\abs{u}<k\right\rbrace }\abs{\nabla T_k(u)}^{p-2}\nabla T_k(u) \cdot \nabla\phi dx
\end{multline*}
for any $\phi\in C_0^\infty\left( B_M\right) $ and $k \in F_M$. Similarly, for any such $\phi$ and $k$ there holds
\[
\int_{\left\lbrace\abs{u_m}<k\right\rbrace } h_m \cdot \nabla \phi dx + \int_{\left\lbrace\abs{u_m}<k\right\rbrace }\phi f_m dx \goesto \int_{\left\lbrace\abs{u}<k\right\rbrace } h_M \cdot \nabla \phi dx + \int_{\left\lbrace\abs{u}<k\right\rbrace }\phi f_M dx.
\]
Note that since $g_m$ are uniformly bounded in $L^1\left( B_M\cap\pd\R^N_+\right)$ so are the functions $g_m\chi_{\left\lbrace\abs{u_m}<k\right\rbrace }$. Then, up to a subsequence depending on $k$, there exist a measure $\tau_k \in \mathfrak{M}_b\left( \R^N\right)$ such that 
\[
\int_{\left\lbrace\abs{u_m}<k\right\rbrace }\phi g_m dx' \goesto \int_{B_M}\phi d\tau_k
\]
for any $\phi\in C_0^\infty\left( B_M\right) $.

Now we turn our attention to the measures $\a_{m,k}^\pm$. Just as in the proof of Theorem 4.1 in \cite{M05} we can use the fact that $\bar{\mu}_m$ and $g_m$ and are uniformly bounded as measures to conclude that for every $m$
\[
\abs{\a_{m,k}^+}\left( B_m\right) + \abs{\a_{m,k}^-}\left( B_m\right) \leq C(\bar{\mu},C_1) 
\]
for any $k>0$ in some subset $V$ with $\abs{V^c}=0$, and where $C$ is independent of $k$ or $m$. Hence, for each $k\in V$, there exists nonnegative measures $\l_k^+$ and $\l_k^-$ defined in $\R^N$ such that, up to a subsequence, 
\[
\a_{m,k}^\pm\goesto \l_k^\pm \mbox{ weakly-$\ast$ in } \mathfrak{M}_b\left( \R^N\right) .
\]
In particular, given $\phi\in C_0^\infty\left( B_M\right) $ we can pass to a subsequence to conclude
\[
\int_{B_M}\phi d\a_{m,k}^+ - \int_{B_M}\phi d \a_{m,k}^- \goesto \int_{B_M}\phi d\l_k^+ - \int_{B_M}\phi d\l_k^-
\]
which implies, by the previous considerations, that for any $\phi\in C_0^\infty\left( B_M\right) $ and $k\in K_M=F_M\cap U \cap V$
\begin{multline*}
\int_{\left\lbrace\abs{u}<k\right\rbrace }\left( \abs{\nabla T_k(u)}^{p-2}\nabla T_k(u)-h_M\right) \cdot \nabla \phi dx = \\ \int_{\left\lbrace\abs{u}<k\right\rbrace }\phi f_Mdx - \int_{B_{M}}\phi d\tau_k  + \int_{B_M}\phi d \l_k^+ - \int_{B_M}\phi d\l_k^- .
\end{multline*}
Note that $\abs{\nabla T_k(u)}^{p-1} + \abs{h_M} \in L^{p'}\left( B_M\right) $ while $f_M \in \mathfrak{M}_0\left( B_M\right)\cap L^1\left( B_M\right)  $ and thus, by Theorem \ref{BGO96_thm}, $\left. -\tau_k +\l_k^+-\l_k^-\right| _{B_M}$ belongs to $\mathfrak{M}_0\left( B_M\right)$ and 
\begin{multline}\label{Test1}
\int_{\left\lbrace\abs{u}<k\right\rbrace \cap B_M}\left( \abs{\nabla T_k(u)}^{p-2}\nabla T_k(u)-h_M\right) \cdot \nabla \phi dx = \\ \int_{\left\lbrace\abs{u}<k\right\rbrace \cap B_M}\phi f_Mdx + \int_{B_M}\phi d\left( -\tau_k + \l_k^+ -\l_k^-\right)  
\end{multline}
for any $\phi \in W^{1,p}_0\left( B_M\right) \cap L^\infty\left( B_M\right) $ and $k\in K_M$.

Since $u$ is $cap_{1,p,N}-$ quasi-continuous $\left\lbrace x\in B_M : \abs{u}>k\right\rbrace $ is quasi-open, and thus there exists a sequence of functions $\w_n \in W^{1,p}\left( \R^N\right) $ such that $0\leq \w_n \leq \chi_{\left\lbrace\abs{u}>k\right\rbrace }$ and $\w_n \uparrow \chi_{\left\lbrace\abs{u}>k\right\rbrace }$ $cap_{1,p,N}-q.e.$ in $\R^N$ (see Chapter \ref{Capacities}).
For any $\phi \in C_0^\infty\left( B_M\right) $ we can put $\phi\w_n$ as test function in \eqref{Test1} and conclude
\[
\int_{B_M}\phi\w_n d \left(-\tau_k + \l_k^+-\l_k^-\right) =0
\]
for any $k\in K_M$. Since $\left. \left( -\tau_k + \l_k^+-\l_k^-\right) \right|_{B_M} \in \mathfrak{M}_0\left( \R^N\right) $ we can pass to the limit using Proposition \ref{LDC_mu_0} to conclude that for any $k\in K_M$
\[
\left. \left( -\tau_k + \l_k^+-\l_k^-\right) \right. _{\left\lbrace\abs{u}>k\right\rbrace \cap B_M} = 0 .
\]

As above, let now $\w_n$ denote a sequence in $W^{1,p}\left( \R^N\right) $ such that $0\leq \w_n \leq \chi_{\left\lbrace\abs{u}<k\right\rbrace }$ and $\w_n \uparrow \chi_{\left\lbrace\abs{u}<k\right\rbrace }$ $cap_{1,p,N}-q.e.$ in $\R^N$. Let $\phi \in C_0^\infty \left(B_M\right) $ and put $\phi \w_n$ as test function in \eqref{Test1} with both $k$ and $h>k$ in $K_M$ to conclude
\[
\int_{B_M} \phi\w_n d\left(-\tau_h + \l_h^+-\l_h^-\right) =  \int_{B_M}\phi\w_n d\left( -\tau_k + \l_k^+ - \l_k^-\right) ,
\]
and by passing to the limit
\[
\int_{\left\lbrace\abs{u}<k\right\rbrace \cap B_M}\phi d\left( -\tau_h + \l_h^+-\l_h^-\right) = \int_{\left\lbrace\abs{u}<k\right\rbrace \cap B_M} \phi d \left(-\tau_k + \l_k^+-\l_k^-\right) 
\]
which implies
\[
\left. \left( -\tau_h + \l_h^+-\l_h^-\right) \right| _{\left\lbrace\abs{u}<k\right\rbrace \cap B_M} = \left. \left( -\tau_k + \l_k^+-\l_k^-\right) \right| _{\left\lbrace\abs{u}<k\right\rbrace \cap B_M}
\]
for any $h>k$ in $K_M$. As in the proof of Theorem 4.1 in \cite{M05}, this allows us to define a measure $\nu_0 \in \mathfrak{M}_0\left( \R^N\right) $ with support in $B_M$ such that 
\[
\left. \nu_0\right| _{\left\lbrace\abs{u}<k\right\rbrace }= \left. \left( -\tau_h + \l_h^+-\l_h^-\right) \right| _{\left\lbrace\abs{u}<k\right\rbrace \cap B_M}
\]
for any $h\geq k$ in $K_M$. Hence, if we define
\[
\nu_k^+=\left. \left( -\tau_k + \l_k^+-\l_k^-\right) \right| _{\left\lbrace u=k\right\rbrace \cap B_M} \ , \ \nu_k^-=-\left. \left( -\tau_k + \l_k^+-\l_k^-\right) \right| _{\left\lbrace u=-k\right\rbrace \cap B_M}
\]
we can rewrite \eqref{Test1} as
\begin{multline}\label{Test2}
\int_{\left\lbrace\abs{u}<k\right\rbrace \cap B_M}\left( \abs{\nabla T_k(u)}^{p-2}\nabla T_k(u)-h_M\right) \cdot \nabla \phi dx = \\
\int_{\left\lbrace\abs{u}<k\right\rbrace \cap B_M}\phi f_Mdx  + \int_{B_M\cap\left\lbrace\abs{u}<k\right\rbrace }\phi d \nu_0 + \int_{\left\lbrace u=k\right\rbrace \cap B_M} \phi d \nu_k^+ - \int_{\left\lbrace u=-k\right\rbrace \cap B_M}\phi d \nu_k^-
\end{multline}
for any $\phi \in W^{1,p}_0\left( B_M\right) \cap L^\infty\left( B_M\right) $ and $k\in K_M$.

Let us now consider the measures $\nu_0$, $\nu_k^+$, and $\nu_k^-$. For any $\d>0$ let $\w_{\d,k}(s)$ be defined by
\be 
\w_{\d,k}(s)=
\begin{cases}
 0 & ,s<k-\d \\
 \frac{1}{\d}\left( s-k+\d\right) & , k-\d\leq s < k \\
 1 & , k\leq s
\end{cases}
\ee
and choose $k\in K_M$, $\phi \in C_0^\infty\left( B_M\right) $. Plugging $\phi \w_{\d,k}(u)$ as test function in \eqref{Test1} and passing to the limit as $\d\goesto 0$ we conclude
\begin{multline*}
\lim_{\d\goesto 0}\frac{1}{\d}\int_{\left\lbrace k-\d<u<k\right\rbrace \cap B_M}\left( \abs{\nabla T_k(u)}^{p-2} \nabla T_k(u) - h_M\right) \cdot \phi\nabla T_k(u) dx = \\ \int_{\left\lbrace u\geq k\right\rbrace \cap B_M}\phi d \left( -\tau_k +\l_k^+-\l_k^-\right)  = \int_{\left\lbrace u=k\right\rbrace \cap B_M}\phi d \nu_k^+
\end{multline*}
Following the argument in the proof of Theorem 4.1 of \cite{M05} we see that there exists a sequence of positive numbers $k \in K_M$ going to infinity such that $\nu_k^+ \goesto \nu^+$ weakly-$\ast$ in $\mathfrak{M}_b\left( B_M\right) $ as $k\goesto \infty$, for some nonnegative measure $\nu^+$. Choosing now
\[ 
\w_{\d,k}(s)=
\begin{cases}
1 & ,s<-k \\
\frac{1}{\d}\left( -s-k+\d\right) & , -k\leq s < -k + \d \\
0 & , -k+\d \leq s
\end{cases}
\]
we obtain 
\[
\lim_{\d\goesto 0}-\frac{1}{\d}\int_{\left\lbrace-k<u<-k+\d\right\rbrace \cap B_M}\!\!\!\!\!\!\!\!\!\left( \abs{\nabla T_k(u)}^{p-2} \nabla T_k(u) - h_M\right) \cdot \phi\nabla T_k(u) dx = -\int_{\left\lbrace u=-k\right\rbrace \cap B_M}\phi d \nu_k^-
\]
and similarly conclude that, up to a sequence $k \in K_M$, $\nu_k^- \goesto \nu^-$ weakly-$\ast$ in $\mathfrak{M}_b\left( B_M\right) $ for some nonnegative measure $\nu^-$. 

Next, let us note that by the very definition of $u_m$ we have
\[
\int_{B_M}\abs{\nabla u_m}^{p-2}\nabla u_m \cdot \nabla \phi dx = \int_{B_M}\phi d\bar{\mu} - \int_{B_{M} \cap \pd\R^N_+}\phi g_m dx'
\]
for any $\phi \in C_0^\infty\left( B_M\right) $, $m\geq M$, and thus taking limit
\[
\int_{B_M}\abs{\nabla u}^{p-2}\nabla u \cdot \nabla \phi dx = \int_{B_M}\phi d\bar{\mu} - \int_{B_{M} \cap \pd\R^N_+}\phi g dx' .
\]
(Note that we have used the assumptions on $g_m$ with $\phi_m=\phi$). On the other hand, for any such $\phi$ we can take a sequence $k\goesto \infty$, $k\in K_M$, in \eqref{Test2} to conclude
\[
\int_{B_M}\abs{\nabla u}^{p-2}\nabla u \cdot \nabla \phi dx= \int_{B_M}\phi d\bar{\mu}_0  + \int_{B_M}\phi d \nu_0 + \int_{B_M}\phi d\nu^+ - \int_{B_M}\phi d\nu^-
\]
where we have used that $\left( \bar{\mu}_0\right)_M= f_M-\dv h_M$ in the sense of distributions. Thus we get
\[
\int_{B_M}\phi d\bar{\mu}_s - \int_{B_{M} \cap \pd\R^N_+}\phi g dx' = \int_{B_M}\phi d \nu_0 + \int_{B_M}\phi d \nu^+ - \int_{B_M}\phi d \nu^-
\]
which implies $\bar{\mu}_s - g\mathcal{H}=\nu_0+\nu^+-\nu^-$ in $B_M$.

Consider now the function $\b_n(s)$ defined by
\[
\b_n(s)=
\begin{cases}
0 & , s<n\\
\frac{s-n}{n} & n\leq s <2n \\
1 & , 2n \leq s .
\end{cases}
\]
For any nonnegative $\phi\in C_0^\infty\left( B_M\right) $ we have $\phi \b_n(u) \in W_0^{1,p}\left( B_M\right) \cap L^\infty\left( B_M\right) $ and so by \eqref{Test2} we have
\begin{multline*}
\int_{\left\lbrace\abs{u}<k\right\rbrace \cap B_M} \b_n(u)\left( \abs{\nabla T_k(u)}^{p-2}\nabla T_k(u)-h_M\right) \cdot \nabla \phi dx  + \\ \frac{1}{n}\int_{\left\lbrace\abs{u}<k\right\rbrace \cap\left\lbrace n<u<2n\right\rbrace  \cap B_M}\phi\left(  \abs{\nabla T_k(u)}^{p-2}\nabla T_k(u)-h_M\right) \cdot\nabla u dx \\ 
=\int_{\left\lbrace\abs{u}<k\right\rbrace \cap B_M}\phi\b_n(u)f_Mdx + \int_{\left\lbrace\abs{u}<k\right\rbrace \cap B_M}\phi\b_n(u)d \nu_0 + \int_{\left\lbrace u=k\right\rbrace \cap B_M} \phi\b_n(u) d \nu_k^+ 
\end{multline*}
for $k \in K_M$. Using Lebesgue's Dominated Convergence Theorem, the fact that $\abs{\nabla u}^{p-1}\in L^1\left( B_M\right) $, $h_M\in \left( L^{p'}\left( B_M\right) \right) ^N$, the smoothness of $\phi$, the fact that $u$ is finite $cap_{1,p,N}-q.e.$, and that $\b_n(k)=1$ for all $k\geq 2n$ we may take $k\goesto\infty$, for some sequence of $k \in K_M$, to conclude
\begin{multline*}
\int_{B_M} \b_n(u)\abs{\nabla u}^{p-2}\nabla u\cdot\nabla \phi dx + \frac{1}{n}\int_{\left\lbrace n<u<2n\right\rbrace  \cap B_M} \phi \abs{\nabla u}^p dx\\ = \int_{B_M} \b_n(u)\phi d\left( \bar{\mu}_0 + \nu_0 \right)  + \int_{B_M} \phi d \nu^+ 
\end{multline*}
where we have used again Theorem \ref{BGO96_thm} to identify $\left( \bar{\mu}_0\right) _M=f_M-\dv h_M$ for functions in $W^{1,p}_0\left( B_M\right) \cap L^\infty\left( B_M\right)$. Since $\b_n(u)\chi_{B_M} \goesto 0$ weakly-$\ast$ in both $L^\infty\left( \R^N \right) $ and  $L^\infty\left( \R^N;d\left( \bar{\mu}_0 + \nu_0\right)  \right)$, thanks to Lebesgue's Dominated Convergence Theorem and Proposition \ref{LDC_mu_0}, we can take $n\goesto\infty$ and conclude 
\be\label{estimate_limit}
\lim_{n\goesto\infty}\frac{1}{n}\int_{\left\lbrace n<u<2n\right\rbrace  \cap B_M} \phi \abs{\nabla u}^pdx = \int_{B_M} \phi d \nu^+ .
\ee
On the other hand, if we go back to the definition of $u_m$ and put $w=\b_n(u_m)\phi$ as test function with $w^{+\infty}=\phi$, $w^{-\infty}=0$, for $k>2n$, we obtain
\begin{multline}\label{eq_identity_stability}
\frac{1}{n}\int_{\left\lbrace n<u_m<2n\right\rbrace  \cap B_M}\abs{\nabla u_m}^p\phi dx + \int_{B_M} \b_n(u_m)\abs{\nabla u_m}^{p-2}\nabla u_m\cdot \nabla \phi dx \\ = \int_{B_M}\phi \b_n(u_m)d\bar{\mu}_0-\int_{B_M\cap\pd\R^N_+}\phi \b_n(u_m)g_mdx' + \int_{B_M}\phi d\bar{\mu}_s^+ .
\end{multline}
Note that since $\b_n$ is continuous we have $\b_n(u_m)\goesto \b_n(u)$ $a.e.$ in $B_M$, and so we can pass to the limit in the second term above as $m\goesto \infty$. For the third term we use that $\phi \b_n(u_m)$ belongs to $W^{1,p}_0\left( B_M\right)\cap L^\infty\left( B_M\right) $ and Theorem \ref{BGO96_thm} to write
\begin{multline*}
\int_{B_M}\phi \b_n(u_m)d\bar{\mu}_0 = \\ \int_{B_M}\phi \b_n(u_m)f_Mdx + \int_{B_M}\b_n(u_m)\nabla \phi \cdot h_M dx + \frac{1}{n}\int_{B_M\cap\left\lbrace n<u_m<2n\right\rbrace }\phi \nabla T_{2n}(u_m)\cdot h_M dx.
\end{multline*}
Then, by Lebesgue's Dominated Convergence Theorem, and combining the fact that $T_k(u_m)\goesto T_k(u)$ weakly in $W^{1,p}\left( B_M\right) $ with Proposition \ref{prop_Egorov}, we see that we may also take limit as $m\goesto \infty$ above for almost every $n\in \R_+$. 
Similarly, by the continuity of $\b_n$ and Proposition \ref{prop_Egorov}, $\phi \b_n(u_m)\goesto \phi \b_n(u)$ weakly in $W^{1,p}_0\left( B_M\right) $ for almost every $n\in \R_+$. Hence, since $\norm{\phi \b_n(u_m)}_\infty$ is uniformly bounded in $m$, we can use condition \eqref{condition_general_existence_global_solution} to obtain
\[
-\int_{B_M\cap\pd\R^N_+}\phi \b_n(u_m)g_mdx'\goesto -\int_{B_M\cap\pd\R^N_+}\phi \b_n(u)gdx'
\]
as $m\goesto\infty$ for almost every $n\in \R_+$. 

Thus, since we can take $m\goesto \infty$ in the second, third, and fourth term in \eqref{eq_identity_stability}, we can use Fatou's Lemma to conclude
\begin{multline*}
\frac{1}{n}\int_{\left\lbrace n<u<2n\right\rbrace  \cap B_M}\abs{\nabla u}^p\phi dx \leq \liminf_{m\goesto\infty}\frac{1}{n}\int_{\left\lbrace n<u_m<2n\right\rbrace  \cap B_M}\abs{\nabla u_m}^p\phi dx \\
 \leq -\int_{B_M} \b_n(u)\abs{\nabla u }^{p-2}\nabla u \cdot \nabla \phi dx+ \int_{B_M}\b_n(u)d \bar{\mu}_0  -\int_{B_M\cap\pd\R^N_+}\phi \b_n(u)gdx' + \int_{B_M}\phi d \bar{\mu}_s^+ 
\end{multline*}
for almost every $n\in \R_+$.
Passing to the limit as $n\goesto \infty$ as before yields 
\[
\lim_{n\goesto\infty}\frac{1}{n}\int_{\left\lbrace n<u<2n\right\rbrace  \cap B_M}\abs{\nabla u}^p \phi dx \leq \int_{B_M} \phi d \bar{\mu}_s^+ ,
\]
and so comparing with \eqref{estimate_limit} we obtain
\[
\int_{B_M} \phi d\nu^+ \leq \int_{B_M}\phi d\bar{\mu}_s^+ ,
\]
which implies $\nu^+\leq \bar{\mu}_s^+$ in $B_M$. Similarly, one can conclude $\nu^-\leq \bar{\mu}_s^-$. This implies in particular that $\nu^+$ and $\nu^-$ are singular with respect to $cap_{1,p,N}$, and since $\bar{\mu}_s-g\mathcal{H}=\nu_0+\nu^+ -\nu^-$ we conclude that $\nu_0\equiv -g\mathcal{H}$. Recalling that $\bar{\mu}_s^+$ and $\bar{\mu}_s^-$ have disjoint support we further conclude $\nu^+=\bar{\mu}_s^+$ and $\nu^-=\bar{\mu}_s^-$ in $B_M$. In particular this allows us to rewrite \eqref{Test2} as 
\begin{multline}\label{Test3}
\int_{\left\lbrace\abs{u}<k\right\rbrace \cap B_M}\left( \abs{\nabla T_k(u)}^{p-2}\nabla T_k(u)-h_M\right) \cdot \nabla \phi dx = \\ \int_{\left\lbrace\abs{u}<k\right\rbrace \cap B_M}\phi f_Mdx - \int_{\left\lbrace\abs{u}<k\right\rbrace \cap B_M}\phi gdx'  + \int_{\left\lbrace u=k\right\rbrace \cap B_M} \phi d \nu_k^+ - \int_{\left\lbrace u=-k\right\rbrace \cap B_M}\phi d \nu_k^-
\end{multline}
for any $\phi\in W^{1,p}_0\left( B_M\right) \cap L^\infty\left( B_M\right)$ and $k\in K_M$. 

We are now ready to finish. Let $w\in W^{1,\infty}\left( \R\right) $ with $w'$ compactly supported, and let $\phi \in W^{1,r}\left( \R^N\right) $, for some $r>N$, be compactly supported and such that $w(u)\phi \in W^{1,p}\left( \R^N\right) $. We write $w(\pm\infty)=\lim_{s\goesto\pm\infty}w(s)$. Choosing $M \in \N$ large enough we can assume $w(u)\phi\in W^{1,p}_0\left( B_{M}\right) $ so that $w(u)\phi$ is a valid test function for \eqref{Test3} and thus
\begin{multline*}
\int_{\left\lbrace\abs{u}<k_j\right\rbrace \cap B_M}\left( \abs{\nabla u}^{p-2}\nabla u - h_M\right) \cdot \nabla \left( w(u)\phi\right)  dx =\\ \int_{\left\lbrace\abs{u}<k_j\right\rbrace \cap B_M} w(u)\phi f_Mdx -\int_{\left\lbrace\abs{u}<k_j\right\rbrace \cap B_M} w(u)\phi gdx'  \\ + \int_{\left\lbrace u=k_j\right\rbrace \cap B_M}w(u)\phi d \nu_{k_j}^+ - \int_{\left\lbrace u=-k_j\right\rbrace \cap B_M}w(u)\phi d \nu_{k_j}^- ,
\end{multline*}
where we have chosen $k_j\in K_M$ to be the sequence such that $\nu_{k_j}^\pm\goesto \nu^\pm$ weakly-$\ast$ as $k_j\goesto\infty$. 
Let $\tau$ be such that $w(s)$ is constant in $(-\tau,\tau)^c$. Then, if $k_j>\tau$, 
\begin{multline*}
\int_{\left\lbrace\abs{u}<k_j\right\rbrace \cap B_M}\left( \abs{\nabla u}^{p-2}\nabla u-h_M\right) \cdot \nabla \left( w(u)\phi\right)  dx = \\ \int_{\left\lbrace\abs{u}<\tau\right\rbrace \cap B_M}\left( \abs{\nabla u}^{p-2}\nabla u - h_M\right)  \cdot \nabla \left( w(u)\phi\right)  dx \\
+ w(+\infty)\int_{\left\lbrace\tau<u<k_j\right\rbrace \cap B_M}\left( \abs{\nabla u}^{p-2}\nabla u - h_M\right)  \cdot \nabla \phi dx \\ + w(-\infty)\int_{\left\lbrace-k_j<u<-\tau\right\rbrace \cap B_M}\left( \abs{\nabla u}^{p-2}\nabla u - h_M\right)  \cdot \nabla \phi dx . 
\end{multline*}
Since $r>N$ we have that $r'<\frac{N}{N-1}$. Hence, $\left( \abs{\nabla u}^{p-2}\nabla u - h_M\right)  \cdot \nabla \phi \in L^1\left( B_M\right) $ and since $u$ is finite $a.e.$ we take $k_j\goesto \infty$ in the last two terms above and obtain
\begin{multline*}
\int_{\left\lbrace\abs{u}<k_j\right\rbrace \cap B_M}\left( \abs{\nabla u}^{p-2}\nabla u - h_M\right) \cdot \nabla \left( w(u)\phi\right)  dx \goesto \\ \int_{B_M}\left( \abs{\nabla u}^{p-2}\nabla u - h_M\right) \cdot \nabla \left( w(u)\phi\right)  dx.
\end{multline*}
We know that $u$ is finite $cap_{1,p,N}-q.e.$ in $B_M$ and so $\chi_{\left\lbrace\abs{u}<k_j\right\rbrace }\goesto 1$ $cap_{1,p,N}-q.e.$ in $B_M$. It follows that
\begin{multline*}
\int_{\left\lbrace\abs{u}<k_j\right\rbrace \cap B_M} w(u)\phi f_Mdx -\int_{\left\lbrace\abs{u}<k_j\right\rbrace \cap B_M} w(u)\phi gdx'  \goesto \\ \int_{B_M} w(u)\phi f_Mdx -\int_{B_M\cap\pd\R^N_+} w(u)\phi gdx'
\end{multline*}
as $k_j\goesto\infty$. Recall that $\nu_{k_j}^\pm$ are concentrated in $\left\lbrace u=\pm k_j\right\rbrace \cap B_M$, respectively. Thus, assuming $k_j>\tau$, we use that $\phi \in C_0\left( B_M\right) $ to conclude
\[
\int_{B_M} w(u)\phi d\nu_{k_j}^{\pm} = w(\pm\infty)\int_{B_M} \phi d\nu_{k_j}^\pm \goesto w(\pm\infty)\int_{B_M} \phi d \nu^\pm = w(\pm\infty)\int_{B_M} \phi d\bar{\mu}_s^\pm 
\]
as $k_j\goesto \infty$. Putting together all the above we get
\begin{multline*}
\int_{B_M}\abs{\nabla u}^{p-2}\nabla u\cdot \nabla \left( w(u)\phi\right)  dx = \int_{B_M} w(u)\phi d\bar{\mu}_0 - \int_{B_M\cap\pd\R^N_+} w(u)\phi gdx'\\ + w(+\infty)\int_{B_M} \phi d\bar{\mu}_s^+ - w(-\infty)\int_{B_M} \phi d\bar{\mu}_s^- .
\end{multline*}
Hence, by the results in \cite{B-V03}, $u$ is a local renormalized solution of $-\lap_pu=\bar{\mu}- g\mathcal{H}$ in $\R^N$.

Now we show the strong convergence of the truncates. Fix $M\in \N$, $k>0$, and let $\phi \in C_0^\infty\left( B_M\right) $. By testing against $T_k(u_m)\phi$ in the definition of $u_m$ as renormalized solution, for any $m\geq M$ we have
\begin{multline*}
\int_{B_M}\phi \abs{\nabla T_k(u_m)}^pdx + \int_{B_{M}}T_k(u_m)\abs{\nabla u_m}^{p-2}\nabla u_m\cdot\nabla \phi dx =\\
-\int_{B_{M}\cap\pd\R^N_+}\phi T_k(u_m)g_mdx' + \int_{B_{M}}T_k(u_m)\phi d\bar{\mu}_0 + k\int_{B_{M}}\phi d\bar{\mu}_s^+ - k \int_{B_{M}}\phi d\bar{\mu}_s^-.
\end{multline*}
Similarly, 
\begin{multline*}
\int_{B_M}\phi \abs{\nabla T_k(u)}^pdx + \int_{B_{M}}T_k(u)\abs{\nabla u}^{p-2}\nabla u\cdot\nabla \phi dx =\\
-\int_{B_{M}\cap\pd\R^N_+}\phi T_k(u)gdx' + \int_{B_{M}}T_k(u)\phi d\bar{\mu}_0 + k\int_{B_{M}}\phi d\bar{\mu}_s^+ - k \int_{B_{M}}\phi d\bar{\mu}_s^-.
\end{multline*}
Comparing the above identities we have 
\begin{multline*}
\int_{B_M}\phi\abs{\nabla T_k(u)}^{p}dx -  \int_{B_M}\phi\abs{\nabla T_k(u_m)}^{p}dx= \\ \int_{B_M} \phi T_k(u)d\bar{\mu}_0- \int_{B_M} \phi T_k(u_m)d\bar{\mu}_0 + \int_{B_M\cap\pd\R^N_+} \phi T_k(u_m)g_mdx' - \int_{B_M\cap\pd\R^N_+}\phi T_k(u)gdx'  \\ -  \int_{B_{M}}T_k(u)\abs{\nabla u}^{p-2}\nabla u\cdot\nabla \phi dx + \int_{B_{M}}T_k(u_m)\abs{\nabla u_m}^{p-2}\nabla u_m\cdot\nabla \phi dx .
\end{multline*}
Writing again $\left( \bar{\mu}_0\right) _M= f_M -\dv h_M$ we use that $\phi\in C_0^\infty\left( B_M\right)$ and that $ T_k(u_m)\goesto T_k(u)$ weakly in $W^{1,p}\left( B_M\right)$ and weakly-$\ast$ in $L^\infty\left( B_M\right)$ to obtain
\[
\int_{B_M} \phi T_k(u_m)d\bar{\mu}_0\goesto \int_{B_M} \phi T_k(u)d\bar{\mu}_0 
\]
as $m\goesto \infty$. Note that by condition \eqref{condition_general_existence_global_solution}
\[
\int_{B_M\cap\pd\R^N_+} \phi T_k(u_m)g_mdx' \goesto \int_{B_M\cap\pd\R^N_+}\phi T_k(u)gdx'
\]
as $m\goesto\infty$. Moreover, since $\abs{\nabla u_m}^{p-2}\nabla u_m\goesto \abs{\nabla u }^{p-2}\nabla u$ strongly in $\left( L^q\left( B_M\right) \right) ^N$ for some $q>1$, while $T_k(u_m)\goesto T_k(u)$ strongly in $L^r\left( B_M\right) $ for any $1\leq r <\infty$, we get 
\[
\int_{B_{M}}T_k(u_m)\abs{\nabla u_m}^{p-2}\nabla u_m\cdot\nabla \phi dx \goesto \int_{B_{M}}T_k(u)\abs{\nabla u}^{p-2}\nabla u\cdot\nabla \phi dx
\]
as $m\goesto\infty$. Hence,  
\[
\lim_{m\goesto\infty}  \int_{B_M}\phi\abs{\nabla T_k(u_m)}^{p}dx = \int_{B_M}\phi\abs{\nabla T_k(u)}^{p}dx 
\]
for any $\phi \in C_0^\infty\left( B_M\right) $, which implies that
\[
\lim_{m\goesto\infty} \norm{\nabla T_k(u_m)}_{L^p\left( B_{M'}\right) }= \norm{\nabla T_k(u)}_{L^p\left( B_{M'}\right) }
\]
for any $M> M'\in \N$. 
Using the above, the inequality $\abs{\abs{a+b}-\abs{a}-\abs{b}}\leq 2 \abs{b}$ with $a=\abs{\nabla T_k(u_m)}^p-\abs{\nabla T_k(u)}^p$ and $b=\abs{\nabla T_k(u)}^p$, and the fact that $\nabla T_k(u_m)\goesto \nabla T_k(u)$ $a.e.$ in $B_{M'}$, we obtain that $\abs{\nabla T_k(u_m)}^p\goesto \abs{\nabla T_k(u)}^p$ strongly in $L^1\left( B_{M'}\right)$. Then, by Vitalli's Theorem, $\nabla T_k(u_m)\goesto \nabla T_k(u)$ strongly in $\left( L^p\left( B_{M'}\right)\right) ^N$, from which the claim follows.
\end{proof}

Proving Theorem \ref{existenceExtended} is now trivial:
\begin{proof}[Proof of Theorem \ref{existenceExtended}.]
	Let $\bar{\mu}_m$ be the restriction of $\bar{\mu}$ to $B_m$. Since $\abs{\bar{\mu}_m}\left( B_m\right) \leq \bar{\mu}\left( \R^N\right) <\infty$ we can apply Lemma \ref{general_existence_of_limit_functions} and Lemma \ref{general_existence_global_solution} with $g=g_m\equiv 0$.
\end{proof}

\chapter{Symmetric Solutions}\label{Symmetry}

\section{Symmetry}

In this section we show that any solution of the extended problem given by Theorem \ref{existenceExtended} must be symmetrical with respect to $\pd\R^N_+$ whenever the measure $\bar{\mu}$ is supported in $\pd\R^N_+$. This symmetry will allows us to recover a solution to the original problem, i.e., equation \eqref{equation}. 

\begin{thm}\label{symmetry}
	Let $\O$ be any bounded domain in $\R^N$ that is symmetric with respect to the hyperplane $\pd\R^N_+$. Let $\bar{\mu} \in \mathfrak{M}_b\left(\O\right)$ be supported in $\pd\R^N_+\cap \O$ and let $u$ be a renormalized solution to
	\[
	\begin{cases}
	-\lap_p u = \bar{\mu} & \mbox{ in } \O \\
	u = 0 & \mbox{ on } \pd \O .
	\end{cases}
	\]
	Then $u(x',x_N)=u(x',-x_N)$ $a.e.$ in $\O$.
\end{thm}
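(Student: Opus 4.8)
The plan is to compare $u$ with its reflection across the hyperplane. Let $\sigma(x',x_N)=(x',-x_N)$ be the reflection across $\pd\R^N_+$ and put $v:=u\circ\sigma$, that is, $v(x',x_N)=u(x',-x_N)$. Since $\O$ is symmetric with respect to $\pd\R^N_+$, $v$ is again a function defined on $\O$, and the conclusion of the theorem is exactly that $v=u$ $a.e.$ in $\O$. I would prove this by showing that $v$ is itself a renormalized solution of the same problem and then appealing to uniqueness.

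The first point is that the data is $\sigma$-invariant. The map $\sigma$ is a linear orthogonal involution carrying $\O$ onto itself and fixing $\pd\R^N_+$ pointwise; since $\bar\mu$ is carried by $\pd\R^N_+\cap\O$ this forces $\sigma_\#\bar\mu=\bar\mu$, and because $\sigma$ preserves both Lebesgue measure and $cap_{1,p,N}$ it also preserves the decomposition, so $\sigma_\#\bar\mu_0=\bar\mu_0$ and $\sigma_\#\bar\mu_s^\pm=\bar\mu_s^\pm$. With this in hand I would verify Definition \ref{definition_DMOP99} for $v$ by the change of variables $y=\sigma(x)$: conditions $(1)$ and $(2)$ follow at once since $T_k(v)=T_k(u)\circ\sigma\in W^{1,p}_0(\O)$ and $\abs{\nabla v}=\abs{\nabla u}\circ\sigma$. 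For condition $(3)$, if $w$ is admissible for $v$ with associated $k,r$ and $w^{\pm\infty}$, then $w\circ\sigma$ is admissible for $u$ with $(w\circ\sigma)^{\pm\infty}=w^{\pm\infty}\circ\sigma$, because the level sets $\{u>k\}$ and $\{u<-k\}$ are the $\sigma$-images of $\{v>k\}$ and $\{v<-k\}$; writing identity $(3)$ for $u$ against $w\circ\sigma$ and changing variables — on the left, using orthogonality of $\sigma$, the integrand $\abs{\nabla u}^{p-2}\nabla u\cdot\nabla(w\circ\sigma)$ transforms into $\abs{\nabla v}^{p-2}\nabla v\cdot\nabla w$, and on the right the $\sigma$-invariance of $\bar\mu_0$ and $\bar\mu_s^\pm$ is used — gives identity $(3)$ for $v$ against $w$. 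Thus $v$ is a renormalized solution of $-\lap_p v=\bar\mu$ in $\O$, $v=0$ on $\pd\O$.

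It then remains to conclude $u=v$ from the fact that $u$ and $v$ are renormalized solutions with the same measure and the same capacitary decomposition. I would do this with the partial uniqueness argument of \cite{DMOP99}, adapted to the present situation. One works with the equivalent level-set formulation of Remark \ref{remark_on_equivalent_definitions_DMOP99}: for $k$ in a set of full measure choose nonnegative $\l_{k}^\pm\in\mathfrak{M}_0(\O)$ carried by $\{u=\pm k\}$ with $\l_{k}^\pm\to\bar\mu_s^\pm$ narrowly; by the change of variables above the corresponding measures for $v$ are $\sigma_\#\l_{k}^\pm$, carried by $\{v=\pm k\}$. Subtracting the two level-set identities and testing the difference against $T_\d\bigl(T_k(u)-T_k(v)\bigr)$, the diffuse parts cancel; the test function is $\sigma$-odd (it is replaced by its negative under $\sigma$), so its pairings with $\l_{k}^\pm$ and with $\sigma_\#\l_{k}^\pm$ reinforce each other, and it has a definite sign on $\{u=\pm k\}$ since there $T_k(u)=\pm k$ while $\abs{T_k(v)}\le k$; combining these sign facts with the structural monotonicity inequality \eqref{coercitivity} for the $p$-Laplacian controls $\nabla\bigl(T_k(u)-T_k(v)\bigr)$ on $\{\abs{T_k(u)-T_k(v)}<\d\}$ well enough that the comparison argument of \cite{DMOP99} can be carried to its conclusion, giving $T_k(u)=T_k(v)$ $a.e.$ for every $k>0$ and hence $u=v$ $a.e.$ in $\O$.

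The main obstacle — and the reason the symmetry statement is not automatic — is precisely the possible nontrivial signed singular parts $\bar\mu_s^\pm$, for which uniqueness of renormalized solutions is delicate; what makes the comparison go through is exactly the symmetry of the configuration, namely the $\sigma$-oddness of $T_\d(T_k(u)-T_k(v))$ together with the location of the concentration sets $\{u=\pm k\}$, which lets the singular contributions be absorbed instead of only crudely estimated.
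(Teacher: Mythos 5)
Your first step — showing that the reflection $v=u\circ\sigma$ is again a renormalized solution of the same problem via a change of variables — coincides with the paper's argument, and your observation that the test function $T_\d(T_k(u)-T_k(v))$ vanishes on $\pd\R^N_+$ (so the diffuse pairings with $\bar\mu_0$ drop out) is also correct and is used in spirit by the paper.

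The gap is in the handling of the singular terms. Testing against $T_\d(T_k(u)-T_k(v))$ and exploiting $\sigma$-oddness together with the sign of the test function on $\{u=\pm k\}$ gives, after the coercivity inequality \eqref{coercitivity}, an estimate of the form
\[
\int_{\{\abs{T_k(u)-T_k(v)}<\d\}}\abs{\nabla T_k(u)-\nabla T_k(v)}^p\,dx\;\le\;C\,\d\bigl(\l_k^+(\O)+\l_k^-(\O)\bigr),
\]
with $\l_k^\pm(\O)\to\bar\mu_s^\pm(\O)$ bounded. This bound is $O(\d)$ with a constant that does \emph{not} tend to zero as $k\to\infty$, and the domain of integration shrinks to $\{T_k(u)=T_k(v)\}$ as $\d\to 0$, so no new information is obtained; in particular it does not give the hypothesis $\frac{1}{k}\int_\O\abs{\nabla T_k(u)-\nabla T_k(v)}^p\to 0$ required by Theorem 10.4 of \cite{DMOP99}. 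The phrase "the comparison argument of \cite{DMOP99} can be carried to its conclusion" is precisely the step that needs to be justified, and the sign structure alone does not do it: on $\O$ the singular mass does not disappear.

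The paper's crucial move, which your proposal misses, is to work in the half-domain. Since $T_k(u)-T_k(u^\ast)$ has zero trace on $\pd\R^N_+\cap\O$, it belongs to $W^{1,p}_0(\O^+)$ and can be extended by zero to all of $\O$; testing against \emph{this} function (not $\d$-truncated) localizes the singular pairings to $\O^+$, and because the narrow limits $\bar\mu_s^\pm$ are carried by $\pd\R^N_+$ the masses $\l_k^\pm(\O^+)$ and $(\l_k^\pm)^\ast(\O^+)$ tend to zero. This yields the $o(1)$ bound on $\frac{1}{k}\int_{\O^+}\abs{\nabla T_k(u)-\nabla T_k(u^\ast)}^p\,dx$ (and by reflection on $\O^-$) that makes Theorem 10.4 applicable. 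Without the restriction to $\O^+$ — or some substitute mechanism forcing the concentration measures out of the half-spaces — the singular contributions remain comparable to the total singular mass, and the comparison argument does not close.
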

\begin{proof}
	In what follows we write $\O^+=\O\cap \R^N_+$ and for any $f$ defined in $\O$ we denote by $f^\ast$ its reflection with respect to $\pd\R^N_+$, i.e., $f^\ast(x',x_N)= f(x',-x_N)$. 
	
	Let us first show that $u^\ast$ is also a renormalized solution of the above problem. 
	Indeed, this is clear when we observe that if $\w$ is a test function with respect to $u^\ast$ then $w^\ast$ is a valid test function with respect to $u$. Hence we conclude
	\begin{multline*}
	\int_{\O}\abs{\nabla u^\ast}^{p-2}\nabla u^\ast\cdot\nabla w dx =\int_{\O}\abs{\nabla u}^{p-2}\nabla u\cdot\nabla w^\ast dx = \\ \int_{\O} w d \bar{\mu}_0 + \int_{\O}w^{+\infty}d \bar{\mu} _s^+ - \int_{\O}w^{-\infty}d\bar{\mu} _s^- 
	\end{multline*}
	as required, since $w^\ast=w$ on $\pd\R^N_+$ and $\O$ is invariant under $x_N\mapsto -x_N$.
	
	To continue, let us note that $T_k(u)-T_k(u^\ast)\in W^{1,p}_0\left( \O^+\right)$ for any $k>0$. Indeed, since $T_k(u) \in W_0^{1,p}\left( \O\right) $ we can choose a sequence $\phi_n\in C_0^\infty\left( \O\right) $ such that $\phi_n\goesto T_k(u_m)$ in $W^{1,p}\left( \O\right) $. Then $\phi_n-\phi_n^\ast \goesto T_k(u_m)-T_k(u_m^\ast)$ in $W^{1,p}\left( \O^+\right) $ and since $\phi_n-\phi_n^\ast \in C\left( \overline{\O^+}\right) \cap W^{1,p}\left( \O^+\right)$ vanishes in $\pd \O^+$ we conclude $\phi_n-\phi_n^\ast \in W^{1,p}_0\left( \O^+\right) $ and thus our claim follows.  
	
	By the equivalence of definitions of renormalized solutions (see Remark \ref{remark_on_equivalent_definitions_DMOP99}), we have that for every $k>0$ there exists two nonnegative measures $\l_{k}^+$, $\l_{k}^- \in \mathfrak{M}_0\left(\O\right) $ supported in $\left\lbrace u=k\right\rbrace $ and $\left\lbrace u=-k\right\rbrace $ respectively, such that $\l_{k}^\pm \goesto  \bar{\mu}_s^\pm$ as $k \goesto \infty$ in the narrow topology of measures, and the truncations $T_k(u)$ satisfy
	\begin{multline}\label{Testu_v2}
	\int_{\left\lbrace\abs{u}<k\right\rbrace }\abs{\nabla T_k(u)}^{p-2}\nabla T_k(u) \cdot \nabla v dx = \\ \int_{\left\lbrace u=k\right\rbrace } v d\l_{k}^+ - \int_{\left\lbrace u=-k\right\rbrace } v d\l_{k}^- + \int_{\left\lbrace\abs{u}<k\right\rbrace }vd\bar{\mu}_0
	\end{multline}
	for every $v\in W^{1,p}_0\left( \O\right) \cap L^\infty\left( \O\right) $. In particular
	\[
		\l_{k}^\pm\left( \O^+\right) \goesto \bar{\mu}_s^\pm\left( \O^+\right)=0 
	\]
	as $k\goesto \infty$ since $\bar{\mu} _s^\pm$ is supported in $\pd\R_+^N$. 
	
	We now extend $T_k(u)-T_k(u^\ast)$ by $0$ outside $\O^+$. Since $T_k(u)-T_k(u^\ast) \in W^{1,p}_0\left( \O\right) \cap L^\infty\left( \O\right) $ is a valid test function for \eqref{Testu_v2} which vanishes in $\pd\R^N_+\cap \O$ we get
	\begin{multline*}
	\int_{\left\lbrace\abs{u}<k\right\rbrace }\abs{\nabla T_k(u)}^{p-2}\nabla T_k(u)\cdot \nabla \left[  T_k(u)-T_k(u^\ast)\right]  dx  =\\  \int_{\left\lbrace u=k\right\rbrace }  \left[  T_k(u)-T_k(u^\ast)\right]   d\l_{k}^+ - \int_{\left\lbrace u=-k\right\rbrace }  \left[  T_k(u)-T_k(u^\ast)\right]  d\l_{k}^- .
	\end{multline*}
	Arguing in the same way for $u^\ast$ we obtain sequences $(\l_{k}^\pm)^\ast$ converging to $\bar{\mu} _s^\pm$ such that \eqref{Testu_v2} holds with $u^\ast$ in place of $u$, and so testing against $T_k(u)-T_k(u^\ast)$ and subtracting it from the previous equality we get
	\begin{multline}\label{difference}
	\int_{\O^+}\left[  \abs{\nabla T_k(u)}^{p-2}\nabla T_k(u) -\abs{\nabla T_k(u^\ast)}^{p-2}\nabla T_k(u^\ast)\right]  \cdot \nabla \left[  T_k(u)-T_k(u^\ast)\right]  dx= \\  \int_{\O^+} \left[  T_k(u)-T_k(u^\ast) \right] d\l_{k}^+ 
	- \int_{\O^+} \left[  T_k(u)-T_k(u^\ast) \right] d\l_{k}^- \\ - \int_{\O^+}  \left[ T_k(u)-T_k(u^\ast) \right] d(\l_{k}^+)^\ast 
	+ \int_{\O^+} \left[  T_k(u)-T_k(u^\ast)\right]  d(\l_{k}^-)^\ast .
	\end{multline}
	Using the well-known inequality
	\begin{multline}\label{coercitivity}
	\sum_{i=1}^N \left( \abs{z}^{p-2}z_i-\abs{\zeta}^{p-2}\zeta_i\right) (z_i-\zeta_i) \geq \\ \gamma
	\begin{cases}
	\left( \frac{1}{4}\right)^{p-1}\abs{z-\zeta}^p \ &, \ \mbox{ if } p\geq 2 \\
	\left( \frac{1}{4}\right)\abs{z-\zeta}^2\left(\abs{z}+\abs{\zeta}\right)^{p-2} \ &, \ \mbox{ if } p\leq 2
	\end{cases}
	\end{multline}
	for some $\gamma>0$, it follows immediately from \eqref{difference} that
	\begin{multline*}
	\int_{\O^+}\abs{\nabla T_k(u)- \nabla T_k(u^\ast)}^p dx \leq \\ C(p)k \left[\abs{\l_{k}^+}\left( \O^+\right) + \abs{\l_{k}^-}\left( \O^+\right) + \abs{(\l_{k}^+)^\ast}\left( \O^+\right)+\abs{(\l_{k}^-)^\ast}\left( \O^+\right)\right] 
	\end{multline*}
	when $p\geq 2$. When $1<p<2$ we use Holder's inequality first to get
	\begin{multline*}
	\int_{\O^+}\abs{\nabla T_k(u)-\nabla T_k(u^\ast)}^p dx= \\ \int_{\O^+}\frac{\abs{\nabla T_k(u)-\nabla T_k(u^\ast)}^p}{\left(\abs{\nabla T_k(u)} +\abs{\nabla T_k(u^\ast)}\right)^{\frac{p}{2}(2-p)}}\left(\abs{\nabla T_k(u)} +\abs{\nabla T_k(u^\ast)}\right)^{\frac{p}{2}(2-p)}dx
	\leq \\ \left\lbrace \int_{\O^+}\frac{\abs{\nabla T_k(u)-\nabla T_k(u^\ast)}^2}{\left(\abs{\nabla T_k(u)} +\abs{\nabla T_k(u^\ast)}\right)^{2-p}}dx\right\rbrace^{\frac{p}{2}}\left\lbrace \int_{\O^+}\left(\abs{\nabla T_k(u)} +\abs{\nabla T_k(u^\ast)}\right)^pdx\right\rbrace^{\frac{2-p}{2}} 
	\end{multline*}
	which then by \eqref{coercitivity}, \eqref{difference}, and \eqref{estimate1_DMMOP99} yields
	\begin{multline*}
	\int_{\O^+}\abs{\nabla T_k(u)-\nabla T_k(u^\ast)}^p dx\leq  \\  C(p) \left\lbrace \int_{\O^+}\!\!\!\left[  \abs{\nabla T_k(u)}^{p-2}\nabla T_k(u) -\abs{\nabla T_k(u^\ast)}^{p-2}\nabla T_k(u^\ast)\right]  \cdot \nabla \left[  T_k(u)-T_k(u^\ast)\right] dx\right\rbrace^{\frac{p}{2}}\\
	\times \left\lbrace k\abs{\bar{\mu}}\left( \O\right) \right\rbrace^{\frac{2-p}{2}} \\
	\leq  C(p) k\left[\abs{\l_{k}^+}\left( \O^+\right) + \abs{\l_{k}^-}\left( \O^+\right) + \abs{(\l_{k}^+)^\ast}\left( \O^+\right)+\abs{(\l_{k}^-)^\ast}\left( \O^+\right)\right]^{\frac{p}{2}}\left\lbrace \abs{\bar{\mu}}\left( \O\right) \right\rbrace^{\frac{2-p}{2}} .
	\end{multline*}
	Thus we see that for any $1<p\leq N$ there holds
	\[
	\frac{1}{k}\int_{\O^+}\abs{\nabla T_k(u)-\nabla T_k(u^\ast)}^pdx\goesto 0
	\]
	as $k\goesto\infty$. By symmetry, the same is true in $\O\cap\R^N_-=\O\cap \{(x',x_N)\in \R^N \ : \ x_N<0\}$. Hence we can apply the partial uniqueness result stated in Theorem $10.4$ of \cite{DMOP99} to conclude that $u=u^\ast$ $a.e.$ in $\O$. 
\end{proof}

\section{Existence from symmetry}

Now we are ready to prove an existence result for problem \eqref{equation} in the case $g\equiv 0$. We will state it as a corollary to the following theorem.

\begin{thm}\label{thm_on_existence_from_symmetry}
	Let $1<p\leq N$ and $\mu\in \mathfrak{M}_b\left( \pd\R^N_+\right) $. Suppose $u$ is a local renormalized solution of $-\lap_p u = 2\mu$ in $\R^N$ that is symmetric with respect to the hyperplane $\pd\R^N_+$. Then the restriction of $u$ to $\R^N_+$ is a renormalized solution of
	\[
	\begin{cases}
		-\lap_pu=0 & \mbox{ in } \R^N_+ \\
		\abs{\nabla u}^{p-2}u_\n = \mu & \mbox{ on } \pd\R^N_+ .
	\end{cases}
	\]
\end{thm}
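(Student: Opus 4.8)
The plan is to check that $v:=u|_{\R^N_+}$ satisfies conditions $(1)$--$(5)$ of Definition \ref{defn} with $g\equiv0$. Conditions $(1)$, $(2)$, $(3)$ are inherited verbatim from the corresponding conditions of Definition \ref{definition_local_solution} for the local renormalized solution $u$ of $-\lap_p u=2\mu$ in $\R^N$, upon restricting the integrability statements to $\R^N_+$; note the decomposition of $2\mu$ with respect to $cap_{1,p,N}$ is $2\mu_0+2\mu_s^+-2\mu_s^-$, so the measures occurring are exactly those of Definition \ref{defn}. For $(4)$, since $g\equiv0$ one only needs $u$ finite $a.e.$ on $\pd\R^N_+$: by Proposition \ref{cap_finite} $u$ is finite $cap_{1,p,N}-q.e.$ in $\R^N$, hence by Proposition \ref{trace&extensionE} its trace is finite $cap_{1-\frac{1}{p},p,N-1}-q.e.$ on $\pd\R^N_+$, hence $a.e.$ there (as $1-\frac{1}{p}>0$), and $g(u)\equiv0\in L^1\left(\pd\R^N_+\right)$ trivially. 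So the whole content lies in condition $(5)$.

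Fix an admissible test function $w$ for $(5)$, with associated $k>0$, $r>N$, $w^{\pm\infty}\in W^{1,r}\left(\R^N_+\right)$. I would first reduce to the case $w\in L^\infty\left(\R^N_+\right)$: for $j\in\N$ the truncation $T_j(w)$ is again admissible, with the same $k,r$ and with $w^{\pm\infty}$ replaced by $T_j\left(w^{\pm\infty}\right)$, and it is bounded. Granting $(5)$ for bounded test functions, apply it to $T_j(w)$ and let $j\goesto\infty$: on the left, $\nabla T_j(w)=\nabla w\,\chi_{\left\lbrace\abs{w}<j\right\rbrace}\goesto\nabla w$ $a.e.$ with $\abs{\nabla T_j(w)}\leq\abs{\nabla w}$, and $\abs{\nabla u}^{p-2}\nabla u\cdot\nabla w\in L^1\left(\R^N_+\cap\supp{w}\right)$ by condition $(2)$ (as in the discussion following Definition \ref{defn}), so dominated convergence applies; on the right, Proposition \ref{L^1mu_0} shows the (bounded) trace of $w$ is also bounded $\mu_0-a.e.$, so for $j$ large $T_j(w)=w$ $\mu_0-a.e.$ on $\pd\R^N_+$, and likewise $T_j\left(w^{\pm\infty}\right)=w^{\pm\infty}$ on $\pd\R^N_+$ (continuity of $w^{\pm\infty}$), so the right-hand side is eventually constant and equal to the one for $w$. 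Hence $(5)$ for bounded test functions gives $(5)$ in general.

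So let $w\in W^{1,p}\left(\R^N_+\right)\cap L^\infty\left(\R^N_+\right)$ be admissible, and let $\tilde w(x',x_N):=w(x',\abs{x_N})$ and $\tilde w^{\pm\infty}(x',x_N):=w^{\pm\infty}(x',\abs{x_N})$ be its even reflections across $\pd\R^N_+$. Since $\R^N_+$ is an extension domain, even reflection maps $W^{1,p}\left(\R^N_+\right)$ into $W^{1,p}\left(\R^N\right)$ and $W^{1,r}\left(\R^N_+\right)$ into $W^{1,r}\left(\R^N\right)$, so $\tilde w\in W^{1,p}\left(\R^N\right)\cap L^\infty\left(\R^N\right)$ is compactly supported and $\tilde w^{\pm\infty}\in W^{1,r}\left(\R^N\right)\cap L^\infty\left(\R^N\right)$ with $r>N$. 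The key step is to verify that $\tilde w$ is an admissible test function for $u$ in the sense of Definition \ref{definition_local_solution}: because $u$ is symmetric, $\left\lbrace u>k\right\rbrace$ is symmetric across $\pd\R^N_+$; on $\left\lbrace u>k\right\rbrace\cap\R^N_+$ one has $\tilde w=w=w^{+\infty}=\tilde w^{+\infty}$ $a.e.$, and on the mirror image $\left\lbrace u>k\right\rbrace\cap\R^N_-$ the same holds after reflecting coordinates, since $(x',x_N)\mapsto(x',-x_N)$ maps it onto $\left\lbrace u>k\right\rbrace\cap\R^N_+$; hence $\tilde w=\tilde w^{+\infty}$ $a.e.$ on $\left\lbrace u>k\right\rbrace$, and similarly $\tilde w=\tilde w^{-\infty}$ $a.e.$ on $\left\lbrace u<-k\right\rbrace$.

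Now apply the defining identity of the local renormalized solution $u$ with test function $\tilde w$:
\[
\int_{\R^N}\abs{\nabla u}^{p-2}\nabla u\cdot\nabla\tilde w\,dx=2\int_{\R^N}\tilde w\,d\mu_0+2\int_{\R^N}\tilde w^{+\infty}\,d\mu_s^+-2\int_{\R^N}\tilde w^{-\infty}\,d\mu_s^- .
\]
On $\R^N_+$ the left integrand is $\abs{\nabla u}^{p-2}\nabla u\cdot\nabla w$. On $\R^N_-$, the symmetry of $u$ (transferred to the generalized gradient through the truncations $T_k(u)$, which are symmetric in $W^{1,p}_{loc}$) forces $\nabla_{x'}u$ to be even and $\pd_{x_N}u$ to be odd in $x_N$, and $\tilde w$ has exactly those parities; a direct check then shows the left integrand at $(x',x_N)$ equals $\left(\abs{\nabla u}^{p-2}\nabla u\cdot\nabla w\right)(x',-x_N)$ (in particular $\abs{\nabla u(x',x_N)}=\abs{\nabla u(x',-x_N)}$), so the change of variables $x_N\mapsto-x_N$ gives $\int_{\R^N_-}\abs{\nabla u}^{p-2}\nabla u\cdot\nabla\tilde w\,dx=\int_{\R^N_+}\abs{\nabla u}^{p-2}\nabla u\cdot\nabla w\,dx$, so the left-hand side equals $2\int_{\R^N_+}\abs{\nabla u}^{p-2}\nabla u\cdot\nabla w\,dx$. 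On the right-hand side, $\mu_0$ and $\mu_s^\pm$ are concentrated on $\pd\R^N_+$, where $\tilde w=w$ and $\tilde w^{\pm\infty}=w^{\pm\infty}$ (for the $\mu_0$-term after passing to $cap_{1,p,N}-$ quasi-continuous representatives, using that $\tilde w=w$ $a.e.$ in $\R^N_+$, hence $cap_{1,p,N}-q.e.$, so their traces agree $\mu_0-a.e.$; for the singular terms $w^{\pm\infty}$ is genuinely continuous), so the right-hand side is $2$ times the right-hand side of $(5)$. Dividing by $2$ yields $(5)$. I expect the main obstacle to be exactly this last bundle of verifications — checking that the even reflection is genuinely admissible for the $\R^N$-problem, tracking how the generalized gradient transforms under the reflection, and handling the $cap_{1,p,N}-$ quasi-continuous representatives so that the boundary integrals transform correctly — rather than the reflection idea itself.
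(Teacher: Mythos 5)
Your proof is correct and takes a genuinely different route. The paper's proof also even-reflects $w$ and $w^{\pm\infty}$ to $\R^N$, but then tests $u$ against $T_l(w)\Psi_\e$ and $T_l(w)\Phi_\e$, where $\Psi_\e$, $\Phi_\e$ are one-sided ramp cutoffs supported in $\left\lbrace x_N>-\e\right\rbrace$ and $\left\lbrace x_N>0\right\rbrace$; it adds the two resulting identities, cancels the $\frac{1}{\e}$-scale flux integrals over the slabs $\left\lbrace -\e\leq x_N\leq 0\right\rbrace$ and $\left\lbrace 0\leq x_N\leq\e\right\rbrace$ using the antisymmetry of $u_{x_N}$, then sends $\e\goesto 0$ and finally $l\goesto\infty$. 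You bypass all of this by observing that the integrand $\abs{\nabla u}^{p-2}\nabla u\cdot\nabla\tilde w$ is even in $x_N$ (even $\times$ even in the tangential directions, odd $\times$ odd in the normal one), so $\int_{\R^N}=2\int_{\R^N_+}$ immediately. That is exactly the cancellation the paper engineers, packaged without cutoffs or the $\e$-limit, so your version is simpler. It also sidesteps a point the paper glosses over: for $T_l(w)\Psi_\e$ to be an admissible test function in Definition \ref{definition_local_solution} one needs $\left(T_l(w)\Psi_\e\right)^{+\infty}=w^{+\infty}\Psi_\e\in W^{1,r}\left(\R^N\right)$, and since $w^{+\infty}\nabla\Psi_\e$ lives on an unbounded slab this requires first replacing $w^{\pm\infty}$ by a compactly supported modification (permissible because $w^{+\infty}=0$ a.e.\ on $\left\lbrace u>k\right\rbrace\setminus\supp{w}$); your direct reflection test never meets this. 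The one step you should spell out in a final write-up is the parity transfer to the generalized gradient — apply the classical parity statement to each symmetric $T_k(u)\in W^{1,p}_{loc}\left(\R^N\right)$ and read it off from $\nabla T_k(u)=\nabla u\,\chi_{\left\lbrace\abs{u}<k\right\rbrace}$ — but you flagged this correctly.
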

\begin{proof}
	It is clear from the definition of local renormalized solution that the restriction of $u$ to $\R^N_+$ satisfies conditions $(1)$, $(2)$, and $(3)$ of Definition \ref{defn}. Hence, we only need to show that $(5)$ holds.
	
	Assume that $w\in W^{1,p}\left( \R^N_+\right) $ has compact support in $\overline{\R^N_+}$ and trace in $L^\infty\left( \pd\R^N_+\right) $ and there exist $k>0$, $r>N$, and functions $w^{\pm\infty}\in W^{1,r}\left( \R^N_+\right) $ such that
	\[
	\begin{cases}
	w=w^{+\infty} & \mbox{ $a.e.$ in } \left\lbrace x\in \R^N_+ \ : \ u>k\right\rbrace \\
	w=w^{-\infty} & \mbox{ $a.e.$ in } \left\lbrace x\in \R^N_+ \ : \ u<-k\right\rbrace  .
	\end{cases} 
	\]
	Choose $L$ such that $\abs{w}\leq L$ $a.e.$ in $\pd\R^N_+$ and $\abs{w^{\pm\infty}}\leq L$ in $\R^N_+$. Let us extend $w$ and $w^{\pm\infty}$ to $\R^N$ by even reflection, i.e., $w(x',x_N)=w(x',-x_N)$ for $x_N<0$ and similarly for $w^{\pm\infty}$. Note that since $u$ is symmetric with respect to $\pd\R^N_+$ we have
	\[
	\begin{cases}
	w=w^{+\infty} & \mbox{ $a.e.$ in } \left\lbrace x\in \R^N \ : \ u>k\right\rbrace \\
	w=w^{-\infty} & \mbox{ $a.e.$ in } \left\lbrace x\in \R^N \ : \ u<-k\right\rbrace  .
	\end{cases} 
	\]
	Next, we let 
	\[
	\Phi_\e=
	\begin{cases}
	0 & , x_N\leq 0 \\
	\frac{x_N}{\e} & , 0<x_N\leq \e \\
	1 & , \e\leq x_N
	\end{cases}
	\]
	and 
	\[
	\Psi_\e=
	\begin{cases}
	0 & , x_N\leq -\e \\
	\frac{x_N+\e}{\e} & , -\e<x_N\leq 0 \\
	1 & , 0\leq x_N .
	\end{cases} 
	\]
	Then for any $l>L$ we see that $T_l(w)\Psi_\e \in W^{1,p}\left( \R^N\right) \cap L^\infty\left( \R^N\right) $ is an adequate test function and thus we get
	\begin{multline*}
	\int_{\R^N}\Psi_\e\abs{\nabla u}^{p-2}\nabla u\cdot\nabla T_l(w)dx + \frac{1}{\e}\int_{\left\lbrace-\e\leq x_N \leq 0\right\rbrace }T_l(w)\abs{\nabla u}^{p-2}u_{x_N}dx = \\ 2\left\lbrace \int_{\pd\R_+^N} w d\mu_0 + \int_{\pd\R^N_+}w^{+\infty}d \mu_s^+ - \int_{\pd\R^N_+}w^{-\infty}d\mu_s^-\right\rbrace  .
	\end{multline*}
	By now taking $T_l(w)\Phi_\e$ as test function we get
	\[
	\int_{\R^N}\Phi_\e\abs{\nabla u}^{p-2}\nabla u\cdot\nabla T_l(w)dx + \frac{1}{\e}\int_{\left\lbrace0\leq x_N \leq \e\right\rbrace }T_l(w)\abs{\nabla u}^{p-2} u_{x_N}dx = 0 .
	\]
	By the symmetry of $u$ we have that $u_{x_N}(x',x_N)=-u_{x_N}(x',-x_N)$ and so
	\[
	\frac{1}{\e}\int_{\left\lbrace-\e\leq x_N \leq 0\right\rbrace }T_l(w)\abs{\nabla u}^{p-2}u_{x_N}dx= -\frac{1}{\e}\int_{\left\lbrace0\leq x_N \leq \e\right\rbrace }T_l(w)\abs{\nabla u}^{p-2} u_{x_N} dx.
	\]
	Adding up the previous equalities we conclude
	\begin{multline*}
	\int_{\R^N}\Psi_\e\abs{\nabla u}^{p-2}\nabla u\cdot\nabla T_l(w) dx+ 	\int_{\R^N}\Phi_\e\abs{\nabla u}^{p-2}\nabla u\cdot\nabla T_l(w)dx = \\ 2\left\lbrace \int_{\pd\R_+^N} w d\mu_0 + \int_{\pd\R^N_+}w^{+\infty}d \mu_s^+ - \int_{\pd\R^N_+}w^{-\infty}d\mu_s^- \right\rbrace 
	\end{multline*}
	and by Lebesgue's Dominated Convergence Theorem we let $\e\goesto 0$ to obtain
	\[
	\int_{\R^N_+}\abs{\nabla u}^{p-2}\nabla u\cdot\nabla T_l(w)dx = \int_{\pd\R_+^N} w d\mu_0 + \int_{\pd\R^N_+}w^{+\infty}d \mu_s^+ - \int_{\pd\R^N_+}w^{-\infty}d\mu_s^- .
	\]
	Writing 
	\begin{multline*}
	\int_{\R^N_+}\abs{\nabla u}^{p-2}\nabla u\cdot\nabla T_l(w)dx = \int_{\R^N_+\cap \left\lbrace\abs{u}\leq k\right\rbrace }\abs{\nabla T_k(u)}^{p-2}\nabla T_k(u)\cdot\nabla T_l(w)dx + \\ \int_{\R^N_+\cap \left\lbrace u>k\right\rbrace }\abs{\nabla u}^{p-2}\nabla u\cdot\nabla w^{+\infty}dx + \int_{\R^N_+\cap \left\lbrace u<-k\right\rbrace }\abs{\nabla u}^{p-2}\nabla u\cdot\nabla w^{-\infty}dx
	\end{multline*}
	we use the fact that $\nabla T_l(w)\goesto \nabla w$ weakly in $\left( L^{p}\left( \R^N_+\right)\right) ^N $ to take $l\goesto \infty$ above, and so conclude
	\[
	\int_{\R^N_+}\abs{\nabla u}^{p-2}\nabla u\cdot\nabla wdx = \int_{\pd\R_+^N} w d\mu_0 + \int_{\pd\R^N_+}w^{+\infty}d \mu_s^+ - \int_{\pd\R^N_+}w^{-\infty}d\mu_s^- 
	\]
	thus completing the proof of the theorem.
\end{proof}

\begin{thm}\label{existence_half_space}
	Let $1<p\leq N$ and $\mu\in \mathfrak{M}_b\left( \pd\R^N_+\right) $. Then there exists a renormalized solution to 
	\[
	\begin{cases}
	-\lap_pu=0 & \mbox{ in } \R^N_+ \\
	\abs{\nabla u}^{p-2}u_\n = \mu & \mbox{ on } \pd\R^N_+ .
	\end{cases}
	\]
\end{thm}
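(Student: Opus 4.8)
The plan is to pass through the extended problem in $\R^N$ studied in Chapter \ref{Extension} and then exploit the symmetry results of the present chapter. Regard $\mu$ as an element of $\mathfrak{M}_b\left( \R^N\right)$ supported in $\pd\R^N_+$ and apply Theorem \ref{existenceExtended} with $\bar{\mu}=2\mu$ to produce a local renormalized solution $u$ of $-\lap_p u = 2\mu$ in $\R^N$. The key observation is that the solution furnished by that theorem is not arbitrary: by its proof (Lemma \ref{general_existence_of_limit_functions} together with Lemma \ref{general_existence_global_solution} applied with $g=g_m\equiv 0$), $u$ is obtained, along a subsequence, as the $a.e.$ limit in $\R^N$ of renormalized solutions $u_m$ of
\[
\begin{cases}
-\lap_p u_m = (2\mu)_m & \mbox{ in } B_m \\
u_m = 0 & \mbox{ on } \pd B_m,
\end{cases}
\]
where $(2\mu)_m$ is the restriction of $2\mu$ to $B_m$, and is therefore still supported in $\pd\R^N_+\cap B_m$.

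The next step is to note that each approximating problem is symmetric. Since $B_m$ is a bounded domain symmetric with respect to the hyperplane $\pd\R^N_+$ and the datum $(2\mu)_m$ is supported in $\pd\R^N_+\cap B_m$, Theorem \ref{symmetry} yields $u_m(x',x_N)=u_m(x',-x_N)$ $a.e.$ in $B_m$. Passing to the limit along the chosen subsequence --- which does not affect the symmetry of the individual $u_m$ --- and using $u_m\goesto u$ $a.e.$ in $\R^N$, we conclude $u(x',x_N)=u(x',-x_N)$ $a.e.$ in $\R^N$, i.e., $u$ is symmetric with respect to $\pd\R^N_+$. Finally, I would invoke Theorem \ref{thm_on_existence_from_symmetry}: since $u$ is a symmetric local renormalized solution of $-\lap_p u = 2\mu$ in $\R^N$, its restriction to $\R^N_+$ is a renormalized solution of \eqref{equation} with $g\equiv 0$, which is precisely the assertion.

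I do not expect any serious obstacle here, since all the substantive work has already been done in the preceding results; the proof is essentially a bookkeeping assembly of Theorems \ref{existenceExtended}, \ref{symmetry}, and \ref{thm_on_existence_from_symmetry}. The only point requiring a little care is compatibility of representatives: one must check that the symmetric $a.e.$ representative of $u$ coincides with the $cap_{1,p,N}-$ quasi-continuous representative in $\overline{\R^N_+}$ used in Definition \ref{defn}, but this is immediate because the two agree $a.e.$ and quasi-continuous representatives are unique up to sets of zero $cap_{1,p,N}$ capacity (see Remark \ref{remark_quasicontinuity}). One should also keep in mind that the approximation of $\mu$ by restrictions preserves both its support and, via Proposition \ref{decomposition}, the decomposition into $cap_{1,p,N}-$ absolutely continuous and singular parts, so that no information about $\mu_0$, $\mu_s^+$, $\mu_s^-$ is lost in the passage to the limit.
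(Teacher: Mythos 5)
Your proof is correct and follows exactly the same route as the paper: apply Theorem \ref{existenceExtended} with datum $2\mu$, observe that the approximating solutions $u_m$ in the symmetric balls $B_m$ are symmetric by Theorem \ref{symmetry}, pass this symmetry to the $a.e.$ limit $u$, and conclude via Theorem \ref{thm_on_existence_from_symmetry}. The paper states this compactly (``by the construction of $u$, and in view of Theorem \ref{symmetry}, $u$ is symmetric''), and your write-up simply unpacks that sentence into its constituent steps, which is a faithful reconstruction.
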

\begin{proof}
	Apply Theorem \ref{existenceExtended} to obtain a local renormalized solution to $-\lap_pu=2\mu$ in $\R^N$. By the construction of $u$, and in view of Theorem \ref{symmetry}, $u$ is symmetric with respect to $\pd\R^N_+$. Then the result follows from an application of the previous theorem.
\end{proof}

\chapter{Nonlinear problems with absorption}\label{absorption}

\section{The subcritical case}\label{subcritical}

We now consider the problem of finding renormalized solutions to problem \eqref{equation} with a nonlinear term $g(u)$. The fact that $g(s)$ is subcritical is expressed in the following assumption.

\begin{assumption}\label{assumption_subcritical}\mbox{ }
	\begin{enumerate}
		\item [(1)] $g:\R\goesto \R$ is a continuous function such that $g(s)s\geq0$.
		\item [(2)] Define $\tilde{g}:\R_+\goesto\R$ by $\tilde{g}(s)=\sup_{[-s,s]}\abs{g(t)}$. If $1<p<N$ we assume
		\[
		\int_1^\infty\tilde{g}(s)s^{-\frac{p(N-2)+1}{N-p}}ds<\infty.
		\]
		If $p=N$ we assume that there exists $\gamma>0$ such that
		\[
		\int_1^\infty\tilde{g}(s)e^{-\gamma N s}ds<\infty .
		\]
	\end{enumerate}
\end{assumption}

\begin{remark}\label{remark_definition_critical_exponent}
	In the special case when $g(s)=\abs{s}^{q-1}s$, $q\geq 0$, Assumption \ref{assumption_subcritical} holds whenever 
	\[
	q<
	\begin{cases}
	\frac{(N-1)(p-1)}{N-p} &, \mbox{ if } 1<p<N\\
	\infty &, \mbox{ if } p=N.
	\end{cases}
	\] 
	Hence, we say that $q_c:=(N-1)(p-1)/(N-p)$ is a \textit{critical exponent} for problem \eqref{equation}, and the problem is subcritical whenever $q<q_c$.
	
\end{remark}

We will use the tools developed in Chapters \ref{Extension} and \ref{Symmetry} to obtain a renormalized solution of \eqref{equation} as the limit of renormalized solutions to
\be\label{eq_intermediate_step}
\begin{cases}
-\lap_pu=\mu- g(u)\mathcal{H} & \mbox{ in } B_m \\
u=0 & \mbox{ on } \pd B_m.
\end{cases}
\ee
To find solutions of the above problem we use the theory developed in \cite{Veron16} for the equation
\be\label{renormalized_solution_B-VHV14}
\begin{cases}
-\lap_pu  + g(x,u)=\mu & \mbox{ in } \O \\
u= 0 & \mbox{ on } \pd\O
\end{cases}
\ee
in bounded domains. In order to pass from \eqref{renormalized_solution_B-VHV14} to \eqref{eq_intermediate_step} we apply the theory for problem \eqref{renormalized_solution_B-VHV14} to a sequence $g_n(x,u)$ obtained by multiplying $g(u)$ by an adequately chosen sequence $\zeta_n(x_N)$, and then show that the associated sequence of solutions converges to a solution of problem \eqref{eq_intermediate_step}. 

We define $\zeta \in C^\infty\left(\R \right)$ as
\[
\zeta(t)=\frac{1}{\pi}\left( \frac{1}{1+t^2}\right) .
\] 
Note that $\norm{\zeta}_{L^1}=1$. Then, for $n\in \N$ we define
\be\label{def_g_n}
\zeta_n(t)=n\zeta\left(nt\right) \ , \ g_n(x,s)=\zeta_n(x_N)g(s) .
\ee

We start by defining renormalized solutions to problems \eqref{eq_intermediate_step} (in a general bounded domain $\O$) and \eqref{renormalized_solution_B-VHV14}.

\begin{defn}
	Let $\O$ be a bounded domain, $\mu\in\mathfrak{M}_b\left( \O\right) $, and $g:\R\goesto \R$. Then a function $u$ defined in $\O$ is a \textit{renormalized solution} to problem \eqref{eq_intermediate_step} if $u$ is finite $a.e.$ in $\O \cap \pd\R^N_+$, $g(u)\in L^1\left( \O \cap \pd\R^N_+\right) $ and $u$ is a renormalized solution to problem \eqref{renormalized_solution_DMOP99} with datum $\mu- g(u)\mathcal{H}$ in the sense of Definition \ref{definition_DMOP99}.
	
	Similarly, if $g:\O\times \R\goesto \R$ then a function $u$ defined in $\O$ is a \textit{renormalized solution} to problem \eqref{renormalized_solution_B-VHV14} if $g(x,u)\in L^1\left( \O\right) $ and $u$ is a renormalized solution to problem \eqref{renormalized_solution_DMOP99} with datum $\mu- g(x,u)$ in the sense of Definition \ref{definition_DMOP99}.
\end{defn}

The following result is obtained in the proof of Theorem 5.1.2 in \cite{Veron16} by testing against $w_s=\tanh(sT_k(u))$, $s>0$, and taking $s\goesto \infty$:
\begin{prop}\label{Veron16_bound_on_g}
	Let $u$ be a renormalized solution to problem \eqref{renormalized_solution_B-VHV14}, where $g(x,\cdot)$ is continuous and satisfies $g(x,s)s\geq0$ for all $x\in \O$ and $s\in \R$. Then 
	\[
	\int_{\O}\abs{g(x,u)}dx \leq \abs{\mu}\left( \O\right)  .
	\]
\end{prop}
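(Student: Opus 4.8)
The plan is to test the renormalized formulation (Definition \ref{definition_DMOP99}) of the equation solved by $u$ --- whose datum is the bounded measure $\mu-g(x,u)\,dx$ --- against the truncated, smoothed sign function $w_s:=\tanh\!\big(sT_k(u)\big)$ for fixed $s,k>0$, and then to let $k\to\infty$ and $s\to\infty$.

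First I would verify that $w_s$ is an admissible test function: it lies in $W^{1,p}_0(\O)\cap L^\infty(\O)$ (since $\abs{w_s}\le1$, $T_k(u)\in W^{1,p}_0(\O)$, and $\tanh$ is Lipschitz), and on $\{u>k\}$ resp.\ $\{u<-k\}$ it coincides with the constants $w^{+\infty}=\tanh(sk)$ resp.\ $w^{-\infty}=-\tanh(sk)$, which belong to $W^{1,r}(\O)\cap L^\infty(\O)$ for every $r$ because $\O$ is bounded. I would also record that, since $g(x,u)\in L^1(\O)$ and the Lebesgue measure is absolutely continuous with respect to $cap_{1,p,N}$, the capacitary decomposition of the datum is $\big(\mu_0-g(x,u)\,dx\big)+\mu_s^+-\mu_s^-$. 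Using $\nabla w_s=s\big(1-\tanh^2(sT_k(u))\big)\nabla T_k(u)$ and $\nabla T_k(u)=\nabla u\,\chi_{\{\abs u<k\}}$, the left-hand side of the renormalized identity becomes $s\int_{\{\abs u<k\}}\big(1-\tanh^2(sT_k(u))\big)\abs{\nabla u}^p\,dx\ge0$, so discarding it yields
\[
\int_{\O}\tanh\!\big(sT_k(u)\big)\,g(x,u)\,dx\ \le\ \int_{\O}\tanh\!\big(sT_k(u)\big)\,d\mu_0+\tanh(sk)\big(\mu_s^+(\O)+\mu_s^-(\O)\big)\ \le\ \abs{\mu}(\O),
\]
the last bound because $\abs{\tanh}\le1$.

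The sign hypothesis $g(x,s)s\ge0$ is then used to rewrite the left-hand side: on $\{u\neq0\}$ the factors $\tanh(sT_k(u))$ and $g(x,u)$ have the same sign, hence the integrand equals $\tanh\!\big(s\abs{T_k(u)}\big)\abs{g(x,u)}\ge0$, while on $\{u=0\}$ it vanishes; thus $\int_\O \tanh\!\big(s\abs{T_k(u)}\big)\abs{g(x,u)}\,dx\le\abs\mu(\O)$ for all $s,k>0$. Letting $k\to\infty$ (dominated convergence, with dominant $\abs{g(x,u)}\in L^1(\O)$) and then $s\to\infty$ (monotone convergence, since $\tanh(s\abs u)\uparrow\chi_{\{u\neq0\}}$) gives $\int_{\{u\neq0\}}\abs{g(x,u)}\,dx\le\abs\mu(\O)$. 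To close the gap at $\{u=0\}$ I would observe that a continuous $g(x,\cdot)$ with $g(x,s)s\ge0$ must satisfy $g(x,0)=0$ (otherwise continuity forces $g(x,\cdot)$ to keep a fixed nonzero sign near $0$, contradicting $g(x,s)s\ge0$ for small $s$ of the opposite sign), so the integrand vanishes on $\{u=0\}$ and the restriction may be removed, yielding $\int_\O\abs{g(x,u)}\,dx\le\abs\mu(\O)$. The points requiring the most care are the admissibility verification for $w_s$ (including that $\abs{\nabla u}^{p-2}\nabla u\cdot\nabla w_s$ is genuinely integrable, which follows from $\nabla T_k(u)\in L^p(\O)$) and the bookkeeping of the set $\{u=0\}$; the limiting arguments themselves are routine.
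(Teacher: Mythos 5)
Your proof is correct and follows exactly the route the paper indicates for this proposition: testing the renormalized formulation against $w_s=\tanh(sT_k(u))$, discarding the nonnegative gradient term, using the sign condition $g(x,s)s\geq 0$, and passing to the limit in $k$ and $s$. The admissibility check, the observation that $g(x,0)=0$, and the limit arguments are all sound, so nothing further is needed.
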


The next lemma collects some relationships between capacities and Lebesgue measure.

\begin{lem}\label{L_trace_characteristic}
	Let $1<p\leq N$. There exists constants $C_1(M,N,p)$, $C_2(M,N,p)$, $C_3(N,p)$, and $C_4(M,N,p)$ such that for all Borel sets $E\subset B_M \subset \R^N$ there holds 
	\begin{enumerate}
		\item $\abs{E\cap\pd\R^N_+} \leq C_1cap_{1-\frac{1}{p},p,N-1}\left( E\cap \pd\R^N_+\right)^{\frac{1}{p}}$,
		\item $\abs{E}\leq C_2 cap_{1,p,N}\left( E\right)^\frac{1}{p}$,
		\item $ cap_{1-\frac{1}{p},p,N-1}\left( E\cap \pd\R^N_+\right)\leq C_3 cap_{1,p,N}\left( E\right)$,
		\item $\abs{E\cap\left\lbrace x_N=t\right\rbrace } \leq C_4cap_{1,p,N}\left( E \right)^{\frac{1}{p}}$.
	\end{enumerate} 
\end{lem}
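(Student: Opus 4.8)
The four estimates all reduce to one elementary fact together with Proposition \ref{trace&extensionE}: for every $\a>0$, every $1<p<\infty$, every ambient dimension $n$, and every bounded Borel set $E\subset\R^n$ one has
\[
\abs{E}\leq cap_{\a,p,n}(E).
\]
Granting this, the passage to the stated form with the exponent $1/p$ and a constant depending on $M$ is just the observation that, because $E\subset B_M$, the capacity of $E$ is bounded by that of $B_M$, so $cap(E)=cap(E)^{1/p}cap(E)^{1/p'}\leq cap(B_M)^{1/p'}cap(E)^{1/p}$. Thus the real content is the displayed inequality, which I would prove as follows.

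I would use the equivalent description of the Bessel capacity via $\O_E$ (Proposition 2.3.13 of \cite{AdamsHedberg}): take any $f\in\O_E$, i.e.\ $f\geq0$ on $\R^n$ and $\G_\a\ast f\geq 1$ on $E$. Since $\abs{E}<\infty$, H\"older's inequality on $E$ gives
\[
\abs{E}=\int_E 1\,dx\leq \int_E \G_\a\ast f\,dx\leq \abs{E}^{1/p'}\left(\int_E(\G_\a\ast f)^p\,dx\right)^{1/p}\leq \abs{E}^{1/p'}\norm{\G_\a\ast f}_{L^p(\R^n)},
\]
and Young's convolution inequality together with $\norm{\G_\a}_{L^1(\R^n)}=1$ (the Bessel kernel is nonnegative with total integral $1$) yields $\norm{\G_\a\ast f}_{L^p(\R^n)}\leq\norm{f}_{L^p(\R^n)}$. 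If $\abs{E}>0$ we divide by $\abs{E}^{1/p'}$ to get $\abs{E}^{1/p}\leq\norm{f}_{L^p(\R^n)}$, hence $\abs{E}\leq\norm{f}_{L^p(\R^n)}^p$; taking the infimum over $f\in\O_E$ proves the claim.

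Now for the items. Item (3) is exactly Proposition \ref{trace&extensionE}(1), so $C_3=C(N,p)$. For (2) I apply the displayed inequality with $n=N$, $\a=1$, and use $cap_{1,p,N}(E)\leq cap_{1,p,N}(B_M)=:c_M$ to obtain $\abs{E}\leq c_M^{1/p'}cap_{1,p,N}(E)^{1/p}$, i.e.\ $C_2=c_M^{1/p'}$. For (1) I apply the displayed inequality in $\R^{N-1}$ with $n=N-1$ and $\a=1-\tfrac1p>0$ (here $p>1$ is used) to the set $E\cap\pd\R^N_+\subset B_M\cap\R^{N-1}$, then convert exactly as in (2), getting $C_1$ in terms of $cap_{1-\frac1p,p,N-1}(B_M\cap\R^{N-1})$. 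Finally, for (4): the hyperplane $\{x_N=t\}$ meets $E$ only if $\abs t<M$, so assume this and set $\tau(x)=x-te_N$. Lebesgue measure and $cap_{1,p,N}$ are translation invariant, so $\abs{E\cap\{x_N=t\}}=\abs{\tau(E)\cap\pd\R^N_+}$, $cap_{1,p,N}(\tau(E))=cap_{1,p,N}(E)$, and $\tau(E)\subset B_{2M}$; applying (1) to $\tau(E)$ with $2M$ in place of $M$, then (3), gives
\[
\abs{\tau(E)\cap\pd\R^N_+}\leq C_1(2M,N,p)\,cap_{1-\frac1p,p,N-1}(\tau(E)\cap\pd\R^N_+)^{1/p}\leq C_1(2M,N,p)\,C(N,p)^{1/p}\,cap_{1,p,N}(E)^{1/p},
\]
which is (4).

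I do not expect a genuine obstacle. The only points needing care are: working from the $\O_E$-description and the $L^1$-normalization of $\G_\a$ rather than the Schwartz-class definition; noticing that the stated form with exponent $1/p$ is in fact weaker than $\abs{E}\leq cap(E)$ precisely because the restriction $E\subset B_M$ keeps the capacity bounded; and, in (4), the bookkeeping that keeps the translated set inside a fixed ball so that (1) may be invoked with an $M$-dependent constant. (One could alternatively derive (1) and (2) from the Sobolev-type embeddings for $L^{1,p}$, obtaining a better exponent, but the $L^1$ argument above suffices for the form stated here.)
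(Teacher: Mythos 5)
Your proof is correct and follows the same overall structure as the paper's: item (3) is Proposition \ref{trace&extensionE}, item (4) is deduced from (1), (3) and translation invariance, and items (1)--(2) reduce to a comparison between Lebesgue measure and Bessel capacity on bounded sets. The only divergence is that the paper obtains (1)--(2) by citing (the proof of) Proposition 2.6.1 of \cite{AdamsHedberg}, i.e.\ the isocapacitary inequality, whereas you prove the weaker but here sufficient bound $\abs{E}\leq cap_{\a,p,n}(E)$ from scratch via the $\O_E$-formulation, H\"older, and Young's inequality with $\norm{\G_\a}_{L^1}=1$ --- a clean, self-contained substitute that works precisely because $E\subset B_M$ keeps the capacity bounded and lets you trade exponents.
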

\begin{proof}
	The first two inequalities follow from (the proof of) Proposition 2.6.1 of \cite{AdamsHedberg}, while the third is just Proposition \ref{trace&extensionE}. The last one follows from $(1)$, $(3)$, and the translation invariance of both the Lebesgue measure and capacities.
\end{proof}

Now we obtain estimates similar to \eqref{estimate2_DMMOP99} and \eqref{estimate4_DMMOP99} but on hyperplanes. We note explicitly that any $cap_{1,p,N}-$ quasi-continuous function on $\R^N$ has a well defined $cap_{1-\frac{1}{p},p,N-1}-$ quasi-continuous trace in any hyperplane $\R^{N-1}\times\left\lbrace t\right\rbrace $, $t\in \R$ (see the above lemma and Remark \ref{remark_quasincontinous_trace}). 
\begin{lem}\label{L_levelset_Trace}
	Let $f$ be $cap_{1,p,N}-$ quasi-continuous in $\R^N$ and such that $T_k(f) \in W_0^{1,p}\left( B_M\right) $ satisfies 
	\[ \frac{1}{k}\int_{\left\lbrace\abs{f}<k\right\rbrace \cap B_M}\abs{\nabla T_k(f)}^p dx\leq C_1 . \]
	If $1<p<N$ then there exists a constant $C(N,p,B_M)$ such that for any $t\in\R$
	\[\abs{\left\lbrace x \in B_M \cap \left\lbrace x_N=t\right\rbrace  \ : \ \abs{f}>k\right\rbrace }\leq C(N,p,B_M)C_1^{\frac{N-1}{N-p}}k^{\frac{(N-1)(1-p)}{N-p}} . \]
	If $p=N$ then there exists constants $C(N,B_M)$ and $c(N)>0$ such that for any $t\in\R$
	\[\abs{\left\lbrace x \in B_M \cap \left\lbrace x_N=t\right\rbrace  \ : \ \abs{f}>k\right\rbrace }\leq C(N,B_M)e^{-c(N) k (C_1)^{\frac{1}{1-N}}} . \]
\end{lem}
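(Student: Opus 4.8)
The plan is to reduce the hyperplane level--set bound to a trace inequality on the slice $\{x_N=t\}$ followed by a Sobolev (for $p<N$) or Trudinger--Moser (for $p=N$) embedding of the trace space $F_{1-\frac1p}^{p,p}\!\left(\R^{N-1}\right)$. First I would record that, since $T_k(f)\in W_0^{1,p}(B_M)$, its extension by zero belongs to $W^{1,p}\!\left(\R^N\right)=F_1^{p,2}\!\left(\R^N\right)$, and Poincar\'e's inequality on $B_M$ together with the hypothesis (which gives $\norm{\nabla T_k(f)}_{L^p(B_M)}^p\le C_1k$) yields the uniform bound
\[
\norm{T_k(f)}_{W^{1,p}(\R^N)}\le C(M,N,p)\,\norm{\nabla T_k(f)}_{L^p(B_M)}\le C(M,N,p)\,(C_1 k)^{1/p}.
\]
Then, by Theorem \ref{Trace&Extension} (with $\a=1$, $q=2$) together with translation in the $x_N$ variable, the trace of $T_k(f)$ on the hyperplane $\{x_N=t\}$ lies in $F_{1-\frac1p}^{p,p}\!\left(\R^{N-1}\right)$ with norm $\le C(N,p)\,\norm{T_k(f)}_{W^{1,p}(\R^N)}$, uniformly in $t$. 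One must check here that this trace agrees, Lebesgue-a.e.\ on $\{x_N=t\}$, with $T_k$ applied to the pointwise $cap_{1-\frac1p,p,N-1}$-quasi-continuous trace of $f$ (using Lemma \ref{L_trace_characteristic}(3)--(4) to pass from capacity-null to Lebesgue-null slices); in particular $\abs{T_k(f)}=k$ a.e.\ on $\{x\in\{x_N=t\}:\abs{f}\ge k\}$, and since $\{\abs f>k\}\subset\{\abs f\ge k\}$ it suffices to bound the measure of the latter.

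For $1<p<N$ I would use the identification $F_{1-\frac1p}^{p,p}\!\left(\R^{N-1}\right)=W^{1-\frac1p,p}\!\left(\R^{N-1}\right)$ and the (fractional) Sobolev embedding into $L^{q}\!\left(\R^{N-1}\right)$ at the critical exponent $q=\frac{p(N-1)}{N-p}$, noting $q/p=\frac{N-1}{N-p}$. Chebyshev's inequality on the slice then gives
\[
k^{q}\,\abs{\{x\in B_M\cap\{x_N=t\}:\abs{f}\ge k\}}\le\int_{\R^{N-1}}\abs{T_k(f)}^{q}\,dx'\le C\,(C_1 k)^{q/p},
\]
and dividing by $k^{q}$ and substituting $q/p=\frac{N-1}{N-p}$ and $q\bigl(\tfrac1p-1\bigr)=\frac{(N-1)(1-p)}{N-p}$ gives exactly the claimed bound $C(N,p,B_M)\,C_1^{\frac{N-1}{N-p}}\,k^{\frac{(N-1)(1-p)}{N-p}}$.

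For $p=N$ the exponent $\bigl(1-\tfrac1N\bigr)N=N-1=\dim\R^{N-1}$ is borderline, so in place of an $L^q$ embedding I would invoke the fractional Trudinger--Moser inequality for $F_{(N-1)/N}^{N,N}\!\left(\R^{N-1}\right)$: on any slice $S\subset\R^{N-1}$ of finite measure,
\[
\int_{S}\exp\!\left(c(N)\Bigl(\tfrac{\abs{v(x')}}{\norm{v}_{F_{(N-1)/N}^{N,N}}}\Bigr)^{\!N/(N-1)}\right)dx'\le C(N)\,\abs{S}.
\]
Applying this with $v$ the trace of $T_k(f)$ on $\{x_N=t\}$ and $S=B_M\cap\{x_N=t\}$, using $\abs v=k$ on $\{\abs f\ge k\}$ and $\norm{v}_{F^{N,N}}\le C\,(C_1 k)^{1/N}$, one gets
\[
\exp\!\left(c(N)\bigl(k\,C^{-1}(C_1 k)^{-1/N}\bigr)^{N/(N-1)}\right)\abs{\{x\in B_M\cap\{x_N=t\}:\abs{f}\ge k\}}\le C(N,B_M);
\]
since $\bigl(k\,(C_1 k)^{-1/N}\bigr)^{N/(N-1)}=k\,C_1^{-1/(N-1)}=k\,(C_1)^{1/(1-N)}$ up to a constant, this is the asserted bound $C(N,B_M)\,e^{-c(N)\,k\,(C_1)^{1/(1-N)}}$. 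All constants are independent of $t$ because the trace norm and the Trudinger constant depend only on $p,N$ and $\abs{B_M}$.

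The main obstacle is not the exponent arithmetic but the trace step: one must cite the correct Sobolev, respectively Trudinger--Moser, embedding for the Triebel--Lizorkin trace space $F_{1-\frac1p}^{p,p}\!\left(\R^{N-1}\right)$ (these are exactly the ``Sobolev embedding-type results'' promised in the excerpt), and one must verify that the $W^{1,p}$-trace of $T_k(f)$ is the truncated $cap_{1-\frac1p,p,N-1}$-quasi-continuous trace of $f$, so that it equals $\pm k$ on the slice of $\{\abs f\ge k\}$ outside a Lebesgue-null set. Everything else is a direct consequence of Poincar\'e, Theorem \ref{Trace&Extension} and Lemma \ref{L_trace_characteristic}.
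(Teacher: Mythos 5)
Your argument is essentially the paper's: Poincar\'e plus the hypothesis gives $\norm{T_k(f)}_{W^{1,p}(B_M)}\le C(M,N,p)(C_1k)^{1/p}$, the trace theorem puts $T_k(f)$ into $F_{1-\frac1p}^{p,p}$ on the slice uniformly in $t$, and then the critical Sobolev embedding plus Chebyshev (for $p<N$), respectively an exponential embedding (for $p=N$), together with $\abs{T_k(f)}=k$ on $\{\abs f>k\}$, give the stated decay; the exponent arithmetic is correct in both cases.

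The one place your version falls short of the statement is the constant in the $p=N$ exponential. You normalize the Trudinger--Moser inequality by the full trace norm $\norm{v}_{F_{(N-1)/N}^{N,N}}\le C(M,N)(C_1k)^{1/N}$, so the Poincar\'e constant $C(M,N)$ enters the exponent raised to the power $N/(N-1)$ and your final bound is really $C(N,B_M)\,e^{-c(N,B_M)\,k\,C_1^{1/(1-N)}}$, with the rate constant depending on $B_M$. The lemma asserts $c=c(N)$, and this is not cosmetic: Remark \ref{improvement} and the proof of Theorem \ref{existence_subcritical_p=N} rely precisely on the fact that $c$ does not degenerate as $B_M$ grows. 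The paper avoids the issue by invoking Cianchi's trace inequality in the form
\[
\int_{B_M\cap\{x_N=t\}}\exp\!\left(c_1(N)\Bigl(\tfrac{\abs{T_k(f)}}{\norm{\nabla T_k(f)}_{L^N(B_M)}}\Bigr)^{\frac{N}{N-1}}\right)dx'\le c_2(N,B_M),
\]
i.e.\ normalized by the gradient norm alone, so that Poincar\'e never contaminates the exponent and only the multiplicative constant $c_2$ depends on the ball. You should restate your $p=N$ step in this gradient-normalized form. The care you take about identifying the $W^{1,p}$-trace with the truncated quasi-continuous trace of $f$ is fine (and slightly more scrupulous than the paper), but it is not where the difficulty lies.
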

\begin{proof}
	Suppose $1<p<N$. By Sobolev's embedding (see \cite{Triebel06}), trace inequality, and Poincare's inequality, we have for $q=\frac{p(N-1)}{N-p}$ that
	\begin{align*}
	\norm{T_k(f)}_{L^q\left( B_M \cap \left\lbrace x_N=t\right\rbrace \right)}&\leq C(N,p)\norm{T_k(f)}_{F^{p,p}_{1-\frac{1}{p}}\left( B_M \cap \left\lbrace x_N=t\right\rbrace \right)}\\ 
	&\leq C(N,p)\norm{T_k(f)}_{W^{1,p}\left( B_M\right) } \\
	&\leq C(N,p,B_M)(kC_1)^{\frac{1}{p}} .
	\end{align*}
	Since $\left\lbrace\abs{f}> k\right\rbrace =\left\lbrace\abs{T_k(f)}\geq k\right\rbrace $ we conclude 
	\[
	\abs{\left\lbrace x \in B_M \cap \left\lbrace x_N=t\right\rbrace  \ : \ \abs{f}>k\right\rbrace }\leq \left( \frac{\norm{T_k(f)}_q}{k}\right) ^q\leq C(N,p,B_M) C_1^{\frac{q}{p}}k^{\left( \frac{1}{p}-1\right) q}
	\]
	which finishes the proof for the case $p<N$. When $p=N$, the results in \cite{Cianchi08} show that 
	\[
	\int_{B_M\cap \left\lbrace x_N=t\right\rbrace }e^{c_1\left( \frac{\abs{T_k(f)}}{\norm{\nabla T_k(f)}_{L^N\left( B_M\right) }}\right) ^{\frac{N}{N-1}}} dx' \leq c_2(N,B_M)
	\]
	for some constants $c_1(N)$ and $c_2(N,B_M)$. Since $\abs{T_k(f)}=k$ whenever $\abs{f}>k$ and $\norm{\nabla T_k(f)}_N\leq (kC_1)^\frac{1}{N}$ we conclude
	\begin{multline*}
	\abs{\left\lbrace x \in B_M \cap \left\lbrace x_N=t\right\rbrace  \ : \ \abs{f}>k\right\rbrace } e^{c_1 k C_1^{\frac{1}{1-N}}} \leq \\ \int_{\left\lbrace x \in B_M \cap \left\lbrace x_N=t\right\rbrace  \ : \ \abs{f}>k\right\rbrace }e^{c_1\left( \frac{\abs{T_k(f)}}{\norm{\nabla T_k(f)}_{L^N\left( B_M\right) }}\right) ^{\frac{N}{N-1}}} dx' \leq c_2 
	\end{multline*}
	which gives the desired bound. 
\end{proof}
\begin{remark}\label{improvement}
	Let us note that, in exactly the same way, one can prove that for any such function $f$ there holds
	\[\abs{\left\lbrace x \in B_M : \ \abs{f}>k\right\rbrace }\leq C(N,B_M)e^{-c(N) k C_1^{\frac{1}{1-N}}} \]
	when $p=N$. The fact that $c(N)$ does not depend on $B_M$ will be important to us when proving Theorem \ref{existence_subcritical_p=N}.
\end{remark}

Next we prove a lemma that will allow us to obtain solutions to \eqref{eq_intermediate_step} from solutions to \eqref{renormalized_solution_B-VHV14} under very general conditions. 
\begin{lem}\label{general_passage_to_trace_nonlinearity}
	Fix $m>0$. Suppose $g$ satisfies part $(1)$ of Assumption \ref{assumption_subcritical}, let $\tilde{g}$  be defined as in part $(2)$ of Assumption \ref{assumption_subcritical}, and let $g_n$ be defined by \eqref{def_g_n}. Let $u_n\goesto u$ $a.e.$ in $B_m$, where $u, u_n$ are $cap_{1,p,N}-$ quasi-continuous in $\R^N$. Assume also that $T_k(u_n)\goesto T_k(u)$ weakly in $W_0^{1,p}\left( B_m\right) $ for any $k>0$. 
	Define 
	\[
	\d_t(n,h)=\int_{B_m\cap \left\lbrace\abs{u_n}\geq h\right\rbrace \cap \left\lbrace x_N=t\right\rbrace }\tilde{g}(\abs{u_n})(x',t)dx' \]
	and
	\[
	\d(h)=\int_{B_m\cap \left\lbrace\abs{u}\geq h\right\rbrace \cap \left\lbrace x_N=0\right\rbrace }\tilde{g}(\abs{u})(x',0)dx' .
	\] 
	If $\d_t(n,h)\goesto 0$ and $\d(h)\goesto 0$ as $h\goesto \infty$, uniformly in $t$ and $n$, then   
	\[
	\lim_{n\goesto\infty} \int_{B_m} \phi_{n} g_{n}(x,u_{n})dx= \int_{B_m\cap\pd\R^N_+} \phi g(u)dx'
	\]
	whenever $\left\lbrace\phi_n\right\rbrace _n$ is a bounded subset of $L^\infty\left( B_m\right)$ such that $\phi_n\goesto \phi$ both $a.e.$ in $B_m$ and weakly in $W_0^{1,p}\left( B_m\right)$.
	Here $\phi$ is identified with its $cap_{1,p,N}-$ quasi-continuous representative in $\R^N$.
\end{lem}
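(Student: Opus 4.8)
The strategy is to split the integral $\int_{B_m}\phi_n g_n(x,u_n)\,dx$ into a part near the hyperplane $\pd\R^N_+$ and a part away from it, and then, on the near part, to split further according to whether $|u_n|$ is large or small. The key mechanism is that $\zeta_n(x_N)=n\zeta(nx_N)$ is an approximate identity concentrating on $\{x_N=0\}$, so that $\int_{\R}\zeta_n(t)\,dt=1$ and, for any fixed $a>0$, $\int_{|t|>a}\zeta_n(t)\,dt\to 0$. On the region $\{|x_N|>a\}$ the factor $\zeta_n$ is $O(1/(n^2 a^2))$ uniformly, so together with the uniform $L^\infty$ bound on $\phi_n$ and the bound $\int_{B_m}\tilde g(|u_n|)\,dx$ — which is finite because $T_k(u_n)$ is bounded in $W^{1,p}_0(B_m)$, $g_n(x,u_n)$ is controlled through Proposition \ref{Veron16_bound_on_g}, and the subcritical growth of $\tilde g$ — that part is negligible. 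So everything is concentrated in a thin slab $\{|x_N|\le a\}$.

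First I would make the reduction rigorous: choose $a$ small, write $\int_{B_m}\phi_n g_n(x,u_n)\,dx = \int_{B_m\cap\{|x_N|\le a\}}\phi_n g_n(x,u_n)\,dx + o(1)$ as $a\to 0$ (uniformly in $n$). On the slab, apply Fubini to write the integral as $\int_{-a}^{a}\zeta_n(t)\Big(\int_{B_m\cap\{x_N=t\}}\phi_n(x',t)g(u_n(x',t))\,dx'\Big)dt$. Now split the inner integral over $\{|u_n|<h\}$ and $\{|u_n|\ge h\}$. On $\{|u_n|\ge h\}$ the integrand is bounded by $\|\phi_n\|_\infty\,\tilde g(|u_n|)$, and its integral over the slice is $\le\|\phi_n\|_\infty\,\delta_t(n,h)$, which by hypothesis is small uniformly in $t$ and $n$ once $h$ is large; integrating against $\zeta_n$ (total mass $\le 1$) keeps it small. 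So only the truncated piece $\int_{-a}^{a}\zeta_n(t)\int_{B_m\cap\{x_N=t\}}\phi_n(x',t)g(T_h(u_n)(x',t))\,dx'\,dt$ remains, up to errors that vanish as $h\to\infty$ uniformly in $n$, and the same decomposition applies to the target integral $\int_{B_m\cap\pd\R^N_+}\phi g(u)\,dx'$ via $\delta(h)$.

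The heart of the matter is then to pass to the limit $n\to\infty$ in the truncated slab integral. Here I would use that $\phi_n\to\phi$ and $T_h(u_n)\to T_h(u)$ weakly in $W^{1,p}_0(B_m)$, hence by the compactness of the trace operator $W^{1,p}(B_m)\hookrightarrow L^p(B_m\cap\{x_N=t\})$ (uniformly in $t$, via Lemma \ref{L_trace_characteristic} and the Lizorkin--Triebel trace theory), the traces converge strongly in $L^p$ on slices; combined with the continuity of $g$ and the uniform bound $|g(T_h(u_n))|\le \tilde g(h)$, dominated convergence gives $\int_{B_m\cap\{x_N=t\}}\phi_n(x',t)g(T_h(u_n)(x',t))\,dx' \to \int_{B_m\cap\{x_N=t\}}\phi(x',t)g(T_h(u)(x',t))\,dx'$ for a.e.\ $t$, with a bound uniform in $t$ (again by Lemma \ref{L_levelset_Trace}-type estimates the measure of $B_m\cap\{x_N=t\}$ is controlled). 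Since $\zeta_n$ is an approximate identity and the slice integrals, as functions of $t$, are equibounded and converge at $t=0$ to $\int_{B_m\cap\pd\R^N_+}\phi\,g(T_h(u))\,dx'$, a standard approximate-identity argument yields $\int_{-a}^a\zeta_n(t)(\cdots)\,dt \to \int_{B_m\cap\pd\R^N_+}\phi\,g(T_h(u))\,dx'$. Letting first $n\to\infty$, then $h\to\infty$, then $a\to 0$, and collecting the uniform error estimates, gives the claim.

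\textbf{Main obstacle.} The delicate point is the uniformity in $t$ of the slice estimates and of the convergence $\int_{B_m\cap\{x_N=t\}}\phi_n\,g(T_h(u_n))\,dx'\to\int_{B_m\cap\{x_N=t\}}\phi\,g(T_h(u))\,dx'$: the trace of a $W^{1,p}$ function on a generic hyperplane $\{x_N=t\}$ must be controlled with constants independent of $t$, and the a.e.\ convergence of $u_n$ on $B_m$ does not immediately give a.e.\ convergence on a fixed slice. I expect to handle this by working with the weak $W^{1,p}(B_m)$ convergence (which does pass to traces on every slice simultaneously by the continuity of $Tr$ and translation invariance), extracting a.e.-on-slices convergence for a.e.\ $t$ via Fubini applied to the a.e.\ convergence of $u_n\to u$ on the slab, and then invoking the approximate-identity structure of $\zeta_n$ to absorb the exceptional null set of bad $t$'s. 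The uniform smallness of $\delta_t(n,h)$ in the hypothesis is exactly what makes the truncation step legitimate uniformly in $t$ and $n$, so the argument closes.
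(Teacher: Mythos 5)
Your overall architecture (truncate via $\delta_t(n,h)$ and $\delta(h)$, localize to a slab, slice by Fubini) matches the paper's first step, but the heart of your argument — passing to the limit in $\int_{-a}^{a}\zeta_n(t)F_n(t)\,dt$ with $F_n(t)=\int_{B_m\cap\{x_N=t\}}\phi_n\,g(T_h(u_n))\,dx'$ — has a genuine gap. You are facing a double limit: $\zeta_n$ concentrates at $t=0$ at scale $1/n$ while $F_n$ itself changes with $n$, so $\int\zeta_n F_n\,dt$ effectively samples $F_n$ at $|t|\sim 1/n$. Knowing that $F_n(t)\to F(t)$ for a.e.\ fixed $t$ and that the $F_n$ are equibounded, which is all you establish, tells you nothing about $F_n$ on the shrinking window where $\zeta_n$ lives; a "standard approximate-identity argument" requires equicontinuity of the family $\{F_n\}$ at $t=0$, uniformly in $n$, and you never produce it. (It could in principle be extracted from $\abs{v(x',t)-v(x',0)}\le\int_0^{t}\abs{\partial_N v}$ applied to $v=\phi_n g_0(T_h(u_n))$ with $g_0$ a $C^1$ approximation of $g$, using the uniform $W^{1,p}$ bounds, but that is an additional argument you would have to supply.) A second, smaller issue: you justify finiteness of $\int_{B_m}\tilde g(\abs{u_n})\,dx$ by "the subcritical growth of $\tilde g$", but the lemma only assumes part $(1)$ of Assumption \ref{assumption_subcritical}; the correct source of integrability is the hypothesis on $\delta_t(n,h)$ together with $\tilde g(\abs{u_n})\le\tilde g(h)$ on $\{\abs{u_n}<h\}$.

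The paper sidesteps the double limit entirely by an integration-by-parts device: since $\zeta_n(t)=\tfrac{1}{\pi}\partial_t\arctan(nt)$, one writes $\int_{B_m}\zeta_n(x_N)\bigl(\phi_n g_0(T_k(u_n))-\phi g_0(T_k(u))\bigr)dx$ as integrals of $\tau_n(x_N)=\tfrac{1}{\pi}\arctan(nx_N)$ against $\partial_N$ of the bracket; these vanish as $n\to\infty$ because they pair strongly convergent bounded factors against weakly convergent gradients ($\partial_N\phi_n\rightharpoonup\partial_N\phi$, $\partial_N T_k(u_n)\rightharpoonup\partial_N T_k(u)$ in $L^p$). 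After this step only the \emph{fixed} function $\phi\,g(T_k(u))$ sits under $\zeta_n$, and the convergence of $\int\zeta_n(x_N)\phi\,g(T_k(u))\,dx$ to the boundary integral is then proved not by continuity of $u$ (which fails) but by quasi-continuity: one restricts to a closed set $\O_0$ of small co-capacity where $\phi\,g(T_k(u))$ is uniformly continuous, and controls the slices of $\O_0^c$ via Lemma \ref{L_trace_characteristic}. You would need both of these ingredients — or a genuine uniform-in-$n$ equicontinuity estimate for your $F_n$ at $t=0$ — to close your version of the argument.
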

\begin{proof}
	We first note that $\phi\in L^\infty\left( B_m\right)$.	Now, for any $n$ and $k$ we write $ E_n^k =\left\lbrace\abs{u_n} < k\right\rbrace $ and $E^k= \left\lbrace\abs{u} <k\right\rbrace $. We note that 
	\[
	\int_{B_m} \abs{\phi_n }\abs{g_n(x,u_n) - g_n(x,T_k(u_n))}dx = \int_{\left( E_n^k\right)^c} \abs{\phi_n }\zeta_n(x_N)\abs{g(u_n) - g(T_k(u_n))}dx
	\]
	and since $\sup_{[-k,k]}\abs{g(s)}=\tilde{g}(k)$ we estimate
	\[
	\int_{\left(  E_n^k\right)^c} \abs{\phi_n }\zeta_n(x_N)\abs{g(u_n) - g(T_k(u_n))}dx \leq 2\norm{\phi_n}_\infty\int_{\left( E_n^k\right)^c}\zeta_n(x_N)\tilde{g}(\abs{u_n})dx
	\]
	where we have used that $\abs{T_k(u_n)}=k\leq \abs{u_n}$ in $\left( E_n^k\right)^c$. For all $n$ we can estimate 
	\begin{multline*}
	\int_{\left( E_n^k\right)^c}\zeta_n(x_N)\tilde{g}(\abs{u_n})dx\leq \int_\R \zeta_n(t)\left[ \int_{(E_n^k)^c\cap \left\lbrace x_N=t\right\rbrace }\tilde{g}(\abs{u_n})(x',t)dx'\right] dt  \\ =\int_\R\zeta_n(t)\d_t(n,k) dt \leq \norm{\zeta_n}_{1}\norm{\d_t(n,k)}_\infty ,
	\end{multline*}
	and since $\d_t(n,k)\goesto 0$ uniformly we conclude
	\[
		\int_{\left( E_n^k\right)^c}\zeta_n(x_N)\tilde{g}(\abs{u_n})dx\goesto 0 
	\]
	as $k\goesto \infty$ uniformly in $n$.
	In a similar way we can write $\Gamma = B_m\cap \pd\R^N_+$ and estimate
	\[
	\int_{\Gamma}\abs{\phi} \abs{g(u)-g(T_k(u))} dx' = \int_{\Gamma \cap \left( E^k\right)^c } \abs{\phi} \abs{g(u)-g(T_k(u))} \leq
	2\norm{\phi}_\infty \d(k) \goesto 0
	\]
	as $k\goesto \infty$. Thus, collecting the above estimates we have
	\begin{multline}\label{control_tail}
	\abs{\int_{B_m}g_n(x,u_n)\phi_n dx - \int_{\Gamma}g(u)\phi dx'} \\ \leq w_n(k)+ w(k)+ \abs{\int_{B_m}g_n(x,T_k(u_n))\phi_n dx - \int_{\Gamma}g(T_k(u))\phi dx'}
	\end{multline}
	for some functions $w(k)$ and $w_n(k)$ such that $w(k)\goesto 0$ and $w_n(k)\goesto 0$ as $k\goesto \infty$ uniformly in $n$. Note that we have used that $\norm{\phi_n}_\infty$ are uniformly bounded.  
	
	Fix now any $\e>0$ and let $g_0\in C^1\left(\R\right) $ be such that $\sup_{s\in [-k,k]}\abs{g_0(s)-g(s)}\leq \e$. Then 
	\[
	\abs{\int_{B_m}\phi_n\zeta_n(x_N)g(T_k(u_n)) dx- \int_{B_m} \phi_n\zeta_n(x_N) g_0(T_k(u_n))dx}\leq \norm{\phi_n}_\infty C(m,N)\e
	\]
	and
	\[
	\abs{\int_{B_m}\phi\zeta_n(x_N)g(T_k(u)) dx- \int_{B_m}\phi \zeta_n(x_N) g_0(T_k(u))dx}\leq \norm{\phi}_\infty C(m,N)\e .
	\]
	On the other hand, since $g_0\in C^1\left(\R\right)$ has bounded derivative in $[-k,k]$ we see that $g(T_k(u))$ and $g(T_k(u_n))$ belong to $W_0^{1,p}\left( B_m\right)\cap L^\infty \left( B_m\right)$. It is easy to show, by using density of $C^\infty\left( B_m\right)$ in $W^{1,p}\left( B_m\right)\cap L^{p'} \left( B_m\right)$, that
	\[
	\int_{B_{m}}\Psi_1\partial_N \Psi_2 dx= - \int_{B_{m}} \Psi_2\partial_N\Psi_1 dx
	\] 
	for any pair of functions $\Psi_1 \in W_0^{1,p}\left( B_m\right)\cap L^\infty \left( B_m\right)$ and $\Psi_2 \in W^{1,p}\left( B_m\right)\cap L^\infty \left( B_m\right)$. We also observe that 
	\[
	\zeta_n(t)=\frac{1}{\pi}\partial_t \arctan(nt) 
	\]  
	and let 
	\[
	\tau_n(x):=\frac{1}{\pi}\arctan(nx_N).
	\]
	Then, using that $\zeta_n$ is smooth, we can write
	\[
	\int_{B_m}\zeta_n(x_N)\left( \phi_n g_0(T_k(u_n))dx - \phi g_0(T_k(u))\right)  dx= -(A) - (B) ,
	\]
	where
	\[
	(A) = \int_{B_{m}} \tau_n\left[ (\partial_N\phi_n) g_0(T_k(u_n)) -(\partial_N \phi) g_0(T_k(u))\right]dx 
	\]
	and
	\[
	(B)= \int_{B_{m}} \tau_n \left[ \phi_ng_0'(T_k(u_n))\partial_N T_k(u_n) - \phi g_0'(T_k(u))\partial_N T_k(u)\right] dx.
	\]
	Note that $\tau_n (t)\goesto \frac{1}{2}\left( \frac{t}{\abs{t}}\right) =: \tau(t)$ and $g_0(T_k(u_n))\goesto g_0(T_k(u))$ strongly in $L^r\left( B_m\right)$ for any $1\leq r <\infty$ since $\tau_n$, $g_0(T_k(u_n))$, $\tau$, and $g_0(T_k(u))$ are uniformly bounded in $L^\infty\left( B_m\right) $. Similarly $\tau_ng_0(T_k(u_n))\goesto \tau g_0(T_k(u))$ strongly in $L^{p'}\left( B_m\right) $. Thus, since $(\partial_N \phi) g_0(T_k(u)) \in L^p\left( B_m\right) $ and $\partial_N\phi_n\goesto \partial_N\phi $ weakly in $L^p\left( B_m\right) $ we conclude
	\[
	(A) \goesto 0
	\]
	as $n\goesto\infty$ for any fixed $k>0$. 
	Recall that $\phi_n\goesto\phi$ $a.e.$ in $B_m$. Since $\phi_n$, $\phi$, $g_0'(T_k(u_n))$, and $g_0'(T_k(u))$ are uniformly bounded in $L^\infty\left( B_m\right) $, we conclude as above that $\tau_n\phi_ng_0'(T_k(u_n))\goesto \tau\phi g_0'(T_k(u))$ strongly in $L^{p'}\left( B_m\right) $. Since $\partial_NT_k(u_n)\goesto \partial_NT_k(u)$ weakly in $L^{p}\left( B_m\right) $ we conclude
	\[
	(B) \goesto 0
	\]
	as $n\goesto\infty$ for any fixed $k>0$. Collecting the above we can rewrite \eqref{control_tail} as
	\begin{multline}\label{control_tail_2}
	\abs{\int_{B_m}g_n(x,u_n)\phi_n dx - \int_{\Gamma}g(u)\phi dx'}  \leq w_n(k)+ w(k)+ w_{k,\e}(n) \\ + \e C(m,N)\left( \norm{\phi_n}_\infty+\norm{\phi}_\infty\right)  +
	 \abs{\int_{B_m}g_n(x,T_k(u))\phi dx - \int_{\Gamma}g(T_k(u))\phi dx'}
	\end{multline}
	for any $\e>0$, where $w_{k,\e}(n)$ is a function such that $w_{k,\e}(n)\goesto 0$ as $n\goesto\infty$ for any $k>0$ and $\e>0$ fixed.
	
	To continue we observe that since both $\phi$ and $u$ are $cap_{1,p,N}-$ quasi-continuous in $\R^N$, given $\e>0$ we can find a closed set $\O_0$ such that $u,\phi \in C\left( \O_0\right)$ and $cap_{1,p,N}\left( \O_0^c\right) <\e$. Then, $g(T_k(u))$ and $\phi$ are uniformly continuous and bounded in $\O=\O_0\cap \overline{B_{m}}$, and we can find $t_0$ small enough so that 
	\[
	\abs{\phi(x',x_N) g(T_k(u))(x',x_N)-\phi(x',0) g(T_k(u))(x',0)}<\e 
	\]
	for any $(x',x_N)\in \O\cap \left\lbrace\abs{x_N}\leq t_0\right\rbrace $.
	We can also assume $t_0$ is such that 
	\[
	\abs{\left( \left( \Gamma \times \R \right) \setminus \overline{B_{m}}\right)  \cap \left\lbrace x_N=t\right\rbrace }<\e 
	\]
	for all $\abs{t}\leq t_0$. Note that $\norm{\chi_{\left\lbrace\abs{t}>t_0\right\rbrace }\zeta_n}_{L^1}\goesto 0$ as $n\goesto\infty$ for any $t_0>0$. Then, we write
	\begin{multline*}
	\abs{\int_{B_m}\phi g_n(x,T_k(u))dx-\int_{\Gamma}\phi g(T_k(u))dx'}= \\ \abs{\int_{B_m}\phi g_n(x,T_k(u))dx-\int_{\Gamma\times \R}\zeta_n(x_N)\left( \phi g(T_k(u))\right) (x',0)dx} \\ \leq \int_{\Gamma\times \R}\zeta_n(x_N) \abs{\phi(x',x_N) g(T_k(u))(x',x_N)-\phi(x',0) g(T_k(u))(x',0)}dx,
	\end{multline*}
	and estimate
	\begin{multline*}
	\int_{\Gamma\times \R \cap \left( \O\cap \left\lbrace\abs{x_N}\leq t_0\right\rbrace \right)}\zeta_n(x_N) \abs{\phi(x',x_N) g(T_k(u))(x',x_N)-\phi(x',0) g(T_k(u))(x',0)}dx\leq\\ \e C(m,N)
	\end{multline*}
	and 
	\begin{multline*}
	\int_{\Gamma\times \R\cap\left( \left\lbrace\abs{x_N}> t_0\right\rbrace \right) }\zeta_n(x_N) \abs{\phi(x',x_N) g(T_k(u))(x',x_N)-\phi(x',0) g(T_k(u))(x',0)}dx\leq \\ \norm{\phi}_\infty \tilde{g}(k)\norm{\chi_{\left\lbrace\abs{x_N}>t_0\right\rbrace }\zeta_n}_{L^1}.
	\end{multline*}
	In view of Lemma \ref{L_trace_characteristic} we also have
	\begin{multline*}
	\int_{\Gamma\times \R\cap \left( \O^c\cap \left\lbrace\abs{x_N}\leq t_0\right\rbrace \right) }\zeta_n(x_N) \abs{\phi(x',x_N) g(T_k(u))(x',x_N)-\phi(x',0) g(T_k(u))(x',0)}dx\\ \leq \norm{\phi}_\infty \tilde{g}(k) \int_{-t_0}^{t_0} \zeta(t)\left[  \abs{\O_0^c\cap \left\lbrace x_N=t\right\rbrace } + \abs{\left( \left( \Gamma \times \R \right) \setminus \overline{B_{m}}\right)  \cap \left\lbrace x_N=t\right\rbrace }\right] dt \\
	\leq \left( \e^\frac{1}{p}C(m,N,p) + \e\right) \norm{\phi}_\infty \tilde{g}(k) .
	\end{multline*}
	Note that we have used that $\norm{\phi(x',0)}_\infty\leq \norm{\phi}_\infty$ (see Proposition \ref{L^infty_mu_0}).
	Considering the above estimates we obtain from \eqref{control_tail_2} that
	\begin{multline}\label{control_tail_3}
	\abs{\int_{B_m}g_n(x,u_n)\phi_n dx - \int_{\Gamma}g(u)\phi dx'}  \leq w_n(k)+ w(k)+ \tilde{w}_{k,\e}(n) \\ +  \e^\frac{1}{p} C\left( m,N,p, \tilde{g}(k),\norm{\phi}_\infty, \norm{\phi_n}_\infty\right) 
	\end{multline}
	for any $0<\e<1$, where $\tilde{w}_{k,\e}(n)$ is a function such that $\tilde{w}_{k,\e}(n)\goesto 0$ as $n\goesto\infty$ for any fixed $k>0$ and $\e>0$. Thus, taking $k$ large enough and then choosing $\e$ small enough, we see that for any $\d>0$ 
	\[
		\abs{\int_{B_m}g_n(x,u_n)\phi_n dx - \int_{\Gamma}g(u)\phi dx'}  \leq \d
	\]
	for all $n$ large enough. Hence, the result follows.
\end{proof}

When $g_n(x,u_n)\goesto g(u)$ in $L^1\left( B_m\right) $, renormalized solutions to $-\lap_p u_n + g_n(x,u_n)=\bar{\mu}$ converge to a solution of $-\lap_p u + g(u)=\bar{\mu}$ by the stability result of \cite{DMOP99} or \cite{M05}. Since in our case the convergence is not quite in $L^1$ we cannot simply use the same result. To pass to the limit, we use the following stability result.

\begin{lem}\label{general_stability_in_balls}
	Fix $m>0$. Let $u_n$ be renormalized solutions of
	\[
	\begin{cases}
	-\lap_pu_n + g_n(x,u_n)=\bar{\mu} & \mbox{ in } B_m \\
	u_n=0 & \mbox{ on } \pd B_m
	\end{cases}
	\]
	where $\bar{\mu}\in \mathfrak{M}_b\left( B_m\right)$, $g_n(x,s)$ is defined by \eqref{def_g_n}, and $g$ satisfies Assumption \ref{assumption_subcritical}. 
	Suppose $u_n\goesto u$, $\nabla T_k(u_n)\goesto \nabla T_k(u)$, and $\nabla u_n \goesto \nabla u$ $a.e.$ in $B_m$ where $u$ satisfies condition $(1)$ and $(2)$ of Definition \ref{definition_DMOP99} and is $cap_{1,p,N}-q.e.$ finite. Assume also that
	\[
	\abs{\nabla u_n}^{p-2}\nabla u_n\goesto \abs{\nabla u}^{p-2}\nabla u \mbox{ strongly in } \left( L^q\left( B_m\right) \right) ^N \mbox{ for any } 1\leq q<\frac{N}{N-1},
	\]
	\[
	T_k(u_n)\goesto T_k(u) \mbox{ weakly in } W_0^{1,p}\left( B_m\right) ,
	\]
	and
	\be\label{condition_stablity_balls}
	\lim_{n\goesto\infty} \int_{B_m} \phi_n g_n(x,u_n)dx= \int_{B_m\cap\pd\R^N_+} \phi g(u)dx' 
	\ee
	whenever $\left\lbrace\phi_n\right\rbrace _n$ is a bounded subset of $L^\infty\left( B_m\right)$ such that $\phi_n\goesto \phi$ both $a.e.$ in $B_m$ and weakly in $W_0^{1,p}\left( B_m\right)$. If $g(u)\in L^1\left( B_m\cap\pd\R^N_+\right)$ then $u$ is a renormalized solution of 
	\[
	\begin{cases}
	-\lap_pu +  g(u)\mathcal{H}=\bar{\mu} & \mbox{ in } B_m \\
	u=0 & \mbox{ on } \pd B_m .
	\end{cases}
	\]
	Moreover, $T_k(u_n)\goesto T_k(u)$ strongly in $W_0^{1,p}\left( B_m\right)$ for any $k>0$.
\end{lem}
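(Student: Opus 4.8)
The plan is to follow the stability argument of \cite{M05} --- in the form already carried out in the proof of Lemma~\ref{general_existence_global_solution} --- now localized to the single ball $B_m$, with the $L^1(B_m)$ function $g_n(x,u_n)$ playing the role that $g_m\mathcal{H}$ played there and condition \eqref{condition_stablity_balls} playing the role of \eqref{condition_general_existence_global_solution}. First I would record that, since $g(u)\in L^1(B_m\cap\pd\R^N_+)$, Proposition~\ref{trace&extensionE} and Proposition~\ref{decomposition} give $g(u)\mathcal{H}\in\mathfrak{M}_0(B_m)$; hence $\bar\mu-g(u)\mathcal{H}$ is a bounded measure whose decomposition with respect to $cap_{1,p,N}$ is $(\bar\mu_0-g(u)\mathcal{H})+\bar\mu_s^+-\bar\mu_s^-$, and the hypothesis that $u$ is $cap_{1,p,N}$-q.e.\ finite makes its trace on $\pd\R^N_+$ finite $a.e.$ Since properties $(1)$ and $(2)$ of Definition~\ref{definition_DMOP99} for $u$ are among the assumptions, it remains only to verify the testing identity $(3)$ of that definition with datum $\bar\mu-g(u)\mathcal{H}$, and then to prove the strong convergence of the truncations.

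For the testing identity I would write $\bar\mu_0=f-\dv h$ in $\D'(B_m)$ with $f\in L^1(B_m)$, $h\in(L^{p'}(B_m))^N$ via Theorem~\ref{BGO96_thm}, and invoke Lemma~3.1 of \cite{M05}: for $k$ in a co-null set $U\subset(0,\infty)$ there are nonnegative $\alpha_{n,k}^\pm\in\mathfrak{M}_0(B_m)$ concentrated on $\{u_n=\pm k\}$, with $\alpha_{n,k}^\pm\to\bar\mu_s^\pm$ weakly-$*$ as $k\to\infty$, such that
\[
\int_{\{\abs{u_n}<k\}}\big(\abs{\nabla T_k(u_n)}^{p-2}\nabla T_k(u_n)-h\big)\cdot\nabla v\,dx=\int v\,d\alpha_{n,k}^+-\int v\,d\alpha_{n,k}^-+\int_{\{\abs{u_n}<k\}}v\,(f-g_n(x,u_n))\,dx
\]
for all $v\in W^{1,p}_0(B_m)\cap L^\infty(B_m)$. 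Then I let $n\to\infty$ for $k$ outside the countable set $\{t:\abs{\{x\in B_m:\abs{u}=t\}}>0\}$, so that $\chi_{\{\abs{u_n}<k\}}\to\chi_{\{\abs{u}<k\}}$ $a.e.$ and weakly-$*$ in $L^\infty(B_m)$: the gradient term passes to the limit by the assumed strong convergence $\abs{\nabla u_n}^{p-2}\nabla u_n\to\abs{\nabla u}^{p-2}\nabla u$ in $(L^q(B_m))^N$ together with Egorov's theorem (Proposition~\ref{prop_Egorov}); by Proposition~\ref{Veron16_bound_on_g} the family $g_n(x,u_n)$ is bounded in $L^1(B_m)$, so along a subsequence $g_n(x,u_n)\chi_{\{\abs{u_n}<k\}}$ converges weakly-$*$ to a measure $\tau_k$; and, exactly as in \cite{M05}, the uniform bounds on $\bar\mu$ and on $\int_{B_m}\abs{g_n(x,u_n)}\,dx$ force the $\alpha_{n,k}^\pm$ to be uniformly bounded, so a further subsequence gives $\alpha_{n,k}^\pm\to\lambda_k^\pm$ weakly-$*$. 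This yields, for $\phi\in C_0^\infty(B_m)$ and admissible $k$, a limit identity for $T_k(u)$ with right-hand measure $f\,dx-\tau_k+\lambda_k^+-\lambda_k^-$.

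From here the bookkeeping is verbatim that of Lemma~\ref{general_existence_global_solution}: testing against $\phi\,\omega_j$ with $\omega_j$ the sequences of \cite{D83} approximating $\chi_{\{\abs{u}>k\}}$ and $\chi_{\{\abs{u}<k\}}$ shows the restriction of $-\tau_k+\lambda_k^+-\lambda_k^-$ to $\{\abs{u}<k\}$ to be independent of $k$, defining $\nu_0\in\mathfrak{M}_0(B_m)$; setting $\nu_k^\pm$ to be ($\pm$) the restriction of $-\tau_k+\lambda_k^+-\lambda_k^-$ to $\{u=\pm k\}$ and extracting $\nu_k^\pm\to\nu^\pm$ weakly-$*$ (with $\nu^\pm$ nonnegative); then the truncation test function $\phi\,\beta_n(u_m)$ --- sending first $k\to\infty$, then $m\to\infty$ using \eqref{condition_stablity_balls} and Proposition~\ref{prop_Egorov}, then $n\to\infty$ --- forces $\nu^\pm\leq\bar\mu_s^\pm$. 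Comparing the limiting identities gives $\nu_0+\nu^+-\nu^-=\bar\mu_s-g(u)\mathcal{H}$; since $\nu_0$ and $g(u)\mathcal{H}$ lie in $\mathfrak{M}_0$, the $\nu^\pm$ are singular, and $\bar\mu_s^+,\bar\mu_s^-$ have disjoint supports, uniqueness of the decomposition yields $\nu_0=-g(u)\mathcal{H}$ and $\nu^\pm=\bar\mu_s^\pm$; feeding this back and letting $k\to\infty$ along the good sequence with test functions $w(u)\phi$ gives the renormalized formulation of $-\lap_p u+g(u)\mathcal{H}=\bar\mu$ in the sense of \cite{B-V03}, which on the bounded domain $B_m$ is Definition~\ref{definition_DMOP99}. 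I expect this last bookkeeping to be the main obstacle, precisely because the nonlinear term is never controlled in $L^1$: every passage to the limit touching $g_n(x,u_n)$ must be arranged so that the companion test factor converges $a.e.$ and weakly in $W^{1,p}_0(B_m)$ with a uniform $L^\infty$ bound, which is where \eqref{condition_stablity_balls} and Egorov's theorem have to be applied with care in the order of the limits in $n$, $m$, $k$.

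Finally, for the strong convergence of the truncates, once $u$ is known to solve the equation I would test against $T_k(u_n)$ (admissible with $w^{\pm\infty}=\pm k$) in the equation for $u_n$ and against $T_k(u)$ in the equation for $u$; using $\bar\mu_0=f-\dv h$ this gives
\[
\int_{B_m}\abs{\nabla T_k(u_n)}^p\,dx=\int_{B_m}T_k(u_n)f\,dx+\int_{B_m}\nabla T_k(u_n)\cdot h\,dx-\int_{B_m}T_k(u_n)g_n(x,u_n)\,dx+k\,\abs{\bar\mu_s}(B_m),
\]
and the analogous identity for $u$ with the third integral replaced by $\int_{B_m\cap\pd\R^N_+}T_k(u)g(u)\,dx'$. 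Since $T_k(u_n)\to T_k(u)$ strongly in every $L^r(B_m)$, $r<\infty$, and weakly in $W^{1,p}_0(B_m)$, and since \eqref{condition_stablity_balls} applies with $\phi_n=T_k(u_n)$, the right-hand sides converge; hence $\norm{\nabla T_k(u_n)}_{L^p(B_m)}\to\norm{\nabla T_k(u)}_{L^p(B_m)}$, and together with the weak convergence $\nabla T_k(u_n)\rightharpoonup\nabla T_k(u)$ in $(L^p(B_m))^N$ and the uniform convexity of $L^p$ (as $1<p<\infty$) this upgrades to $\nabla T_k(u_n)\to\nabla T_k(u)$ strongly in $(L^p(B_m))^N$, i.e.\ $T_k(u_n)\to T_k(u)$ strongly in $W^{1,p}_0(B_m)$.
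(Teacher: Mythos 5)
Your proposal follows essentially the same route as the paper: reduce to condition (3) of Definition~\ref{definition_DMOP99}, decompose $\bar\mu_0 = f - \dv h$ via Theorem~\ref{BGO96_thm}, invoke Lemma 3.1 of \cite{M05} for the level-set measures $\alpha_{n,k}^\pm$, extract weak-$*$ limits $\tau_k$ (using the uniform $L^1$ bound from Proposition~\ref{Veron16_bound_on_g}) and $\lambda_k^\pm$, use the quasi-open approximating sequences of \cite{D83} to build $\nu_0$ and $\nu_k^\pm$, compare the two limiting distributional identities, force $\nu^\pm\leq\bar\mu_s^\pm$ by the $\beta$-cutoff test functions together with \eqref{condition_stablity_balls} and Proposition~\ref{prop_Egorov}, and conclude $\nu_0=-g(u)\mathcal{H}$, $\nu^\pm=\bar\mu_s^\pm$ by uniqueness of the capacity decomposition. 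This matches the paper's argument in all essentials. The one genuine (but minor) divergence is the final upgrade to strong convergence: you invoke the Radon--Riesz property of the uniformly convex space $(L^p(B_m))^N$ (weak convergence plus convergence of norms implies strong convergence), whereas the paper combines the $a.e.$ convergence of $\nabla T_k(u_n)$ with the inequality $\abs{\abs{a+b}-\abs{a}-\abs{b}}\leq 2\abs{b}$ and Vitali's theorem. Both routes are valid; yours is cleaner and avoids appealing again to pointwise gradient convergence, at the cost of importing an abstract uniform-convexity fact. A small slip: to pass from the testing identity with $w(u)\phi$ to Definition~\ref{definition_DMOP99}, the correct reference is Theorem 4.1 of \cite{M05} (Dirichlet renormalized solutions on a bounded domain) rather than \cite{B-V03}, whose formulation concerns local renormalized solutions with compactly supported test functions; since $T_k(u)\in W^{1,p}_0(B_m)$ is already part of your hypotheses and the key identity is verified for all $W^{1,p}_0(B_m)\cap L^\infty(B_m)$ test functions, the passage is routine, but the citation should be adjusted.
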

\begin{proof}
	 Note that since properties $(1)$ and $(2)$ of Definition \ref{definition_DMOP99} hold, to prove that $u$ is a renormalized solution of the above equation it is enough to show that $(3)$ also holds. Since we essentially repeat, with a few modifications and simplifications, the argument used to pass to the limit in the proof of Theorem 4.1 in \cite{M05} we only point out the main ideas (see also the proof of Lemma \ref{general_existence_global_solution}). 
	 
	 Before we begin, note that by choosing $\phi_n=\phi$ we have
	 \[
	 \lim_{n\goesto\infty} \int_{B_m} \phi g_n(x,u_n)dx= \int_{B_m\cap\pd\R^N_+} \phi g(u)dx'
	 \]
	 for all $\phi \in W_0^{1,p}\left( B_m\right) \cap L^\infty\left( B_m\right) $.
	 
	 Now, by Theorem \ref{BGO96_thm} $\bar{\mu}_0= f -\dv h$ in $\D'\left( B_m\right) $ for some $f\in L^1\left( B_m\right) $ and $h\in \left( L^{p'}\left( B_m\right) \right) ^N$. Using Lemma 3.1 of \cite{M05} we obtain a set $U\subset (0,\infty)$ with $\abs{U^c}=0$ such that each $u_n$ satisfies the following: for every $k\in U$ there exists two measures $\a_{n,k}^+$, $\a_{n,k}^- \in \mathfrak{M}_0\left(B_m\right) $ supported in $\left\lbrace u_n=k\right\rbrace $ and $\left\lbrace u_n=-k\right\rbrace $ respectively, such that up to a subsequence (possibly depending on $n$) $\a_{n,k}^\pm \goesto \bar{\mu} _s^\pm $, as $k \in U$ goes to infinity, in the weak-$\ast$ topology of $\mathfrak{M}_b\left( B_m\right)$, and the truncations $T_k(u_n)$ satisfy
	 \begin{multline}
	 \int_{\left\lbrace\abs{u_n}<k\right\rbrace }\left( \abs{\nabla T_k(u_n)}^{p-2}\nabla T_k(u_n) - h\right) \cdot \nabla v dx= \\
	 \int_{\left\lbrace\abs{u_n}<k\right\rbrace }vfdx - \int_{\left\lbrace\abs{u_n}<k\right\rbrace }vg_n(x,u_n)dx + \int_{\left\lbrace u_n=k\right\rbrace } v d\a_{n,k}^+ - \int_{\left\lbrace u_n=-k\right\rbrace } v d\a_{n,k}^- 
	 \end{multline}
	 for every $v\in W^{1,p}_0\left( B_m\right) \cap L^\infty\left( B_m\right) $. 
	 
	 We consider the convergence of the above terms as $n\goesto \infty$. Let 
	 \[E= \left\lbrace k\in \R_+ : \abs{\left\lbrace x\in B_m : \abs{u}=k\right\rbrace }>0\right\rbrace  \] 
	 and write $F=\left( E\right) ^c$. Since $\abs{B_m}<\infty$, $E$ is countable and thus of null measure. Note that $\chi_{\left\lbrace\abs{u_n}<k\right\rbrace }\goesto  \chi_{\left\lbrace\abs{u}<k\right\rbrace }$ $a.e.$ in $B_m$ except possibly in $\left\lbrace x\in B_m : \abs{u}=k\right\rbrace $, thus $\chi_{\left\lbrace\abs{u_n}<k\right\rbrace }\goesto \chi_{\left\lbrace\abs{u}<k\right\rbrace }$ $a.e.$ in $B_m$ and weakly-$\ast$ in $L^\infty\left( B_m\right) $ for all $k \in F$. 
	 Then, as in Lemma \ref{general_existence_global_solution}, we can show that 
	 \begin{multline*}
	 \int_{\left\lbrace\abs{u_n}<k\right\rbrace } \abs{\nabla T_k(u_n)}^{p-2}\nabla T_k(u_n) \cdot \nabla \phi dx \goesto \int_{\left\lbrace\abs{u}<k\right\rbrace }\abs{\nabla T_k(u)}^{p-2}\nabla T_k(u) \cdot \nabla\phi dx
	 \end{multline*}
	 and
	 \[
	 \int_{\left\lbrace\abs{u_n}<k\right\rbrace } h \cdot \nabla \phi dx + \int_{\left\lbrace\abs{u_n}<k\right\rbrace }\phi f dx \goesto \int_{\left\lbrace\abs{u}<k\right\rbrace } h \cdot \nabla \phi dx + \int_{\left\lbrace\abs{u}<k\right\rbrace }\phi f dx
	 \]
	 for any $\phi\in C_0^\infty\left( B_m\right) $ and $k \in F$.
	 
	 By Proposition \ref{Veron16_bound_on_g} we see that for each $k>0$ 
	 \[
	 \int_{\left\lbrace\abs{u_n}<k\right\rbrace }\abs{g_n(x,u_n)}dx\leq \abs{\bar{\mu}}\left( B_m\right) .
	 \]
	 Hence, for each $k$ there exists a measure $\tau_k\in \mathfrak{M}_b\left( B_m\right) $ such that, up to a subsequence possibly depending on $k$, 
	 \[
	 \int_{\left\lbrace\abs{u_n}<k\right\rbrace }\phi g_n(x,u_n)dx\goesto \int_{B_{m}}\phi d\tau_k
	 \]
	 for any $\phi\in C_0^\infty\left( B_m\right) $.
	 Similarly, following the argument in the proof of Theorem 4.1 in \cite{M05}, we can conclude that for every $n$
	 \[
	 \abs{\a_{n,k}^+}\left( B_m\right) + \abs{\a_{n,k}^-}\left( B_m\right) \leq C(N,p,\bar{\mu}) 
	 \]
	 for any $k>0$ in some subset $V$ with $\abs{V^c}=0$. It follows that for each $k\in V$ there exists nonnegative measures $\l_k^+$ and $\l_k^-$ such that, up to a subsequence, 
	 \[
	 \a_{n,k}^\pm\goesto \l_k^\pm \mbox{ weakly-$\ast$ in } \mathfrak{M}_b\left( B_m\right). 
	 \]
	 In particular, given $\phi\in C_0^\infty\left( B_m\right) $ we can pass to a subsequence to conclude
	 \[
	 \int_{B_m}\phi d\a_{n,k}^+ - \int_{B_m}\phi d \a_{n,k}^- \goesto \int_{B_m}\phi d\l_k^+ - \int_{B_m}\phi d\l_k^-.
	 \]
	 Then, collecting all the above, we get that for any $\phi\in C_0^\infty\left( B_m\right) $ and $k\in K:=F\cap U \cap V$
	 \begin{multline*}
	 \int_{\left\lbrace\abs{u}<k\right\rbrace }\left( \abs{\nabla T_k(u)}^{p-2}\nabla T_k(u)-h\right) \cdot \nabla \phi dx = \\ \int_{\left\lbrace\abs{u}<k\right\rbrace }\phi f dx - \int_{B_m}\phi \tau_k  + \int_{B_m}\phi d \l_k^+ - \int_{B_m}\phi d\l_k^- .
	 \end{multline*}
	 From the above we see that $-\tau_k+ \l_k^+-\l_k^-$ belongs to $W^{-1,p'}\left( B_m\right) + L^1\left( B_m\right) $ and so
	 \begin{multline}\label{L_Test1}
	 \int_{\left\lbrace\abs{u}<k\right\rbrace }\left( \abs{\nabla T_k(u)}^{p-2}\nabla T_k(u)-h\right) \cdot \nabla \phi dx = \\ \int_{\left\lbrace\abs{u}<k\right\rbrace }\phi f dx + \int_{B_m}\phi d\left(  -\tau_k +\l_k^+-\l_k^-\right)  
	 \end{multline}
	 for any $\phi \in W^{1,p}_0\left( B_m\right) \cap L^\infty\left( B_m\right) $ and $k\in K$.
	 
	 Since $u$ is $cap_{1,p,N}-q.e.$ finite, it follows that $\left\lbrace x\in B_m : \abs{u}>k\right\rbrace $ is quasi-open, and we can find a sequence $\w_n \in W^{1,p}\left( \R^N\right) $ such that $0\leq \w_n \leq \chi_{\left\lbrace\abs{u}>k\right\rbrace }$ and $\w_n \uparrow \chi_{\left\lbrace\abs{u}>k\right\rbrace }$ $cap_{1,p,N}-q.e.$ in $\R^N$.
	 For any $\phi \in C_0^\infty\left( B_m\right) $ we can put $\phi\w_n$ as test function in \eqref{L_Test1} and conclude
	 \[
	 \int_{B_m}\phi\w_n d \left(-\tau_k+ \l_k^+-\l_k^-\right) =0
	 \]
	 for any $k\in K$. Taking $n\goesto\infty$ we conclude that for any $k\in K$
	 \[
	 \left. \left( -\tau_k+ \l_k^+-\l_k^-\right) \right|  _{\left\lbrace\abs{u}>k\right\rbrace } = 0 .
	 \]
	 
	 Letting now $\w_n$ be a sequence in $W^{1,p}\left( \R^N\right) $ such that $0\leq \w_n \leq \chi_{\left\lbrace\abs{u}<k\right\rbrace }$ and $\w_n \uparrow \chi_{\left\lbrace\abs{u}<k\right\rbrace }$ $cap_{1,p,N}-q.e.$, we put $\phi \w_n$ as test function in \eqref{L_Test1} with both $k$ and $h>k$ in $K$ and take $n\goesto\infty$ to conclude, as in the proof of Theorem 4.1 in \cite{M05}, that there exists a measure $\nu_0 \in \mathfrak{M}_0\left( B_m\right) $ such that 
	 \[
	 \left. \nu_0\right| _{\left\lbrace\abs{u}<k\right\rbrace }= \left. \left( -\tau_h + \l_h^+-\l_h^-\right) \right| _{\left\lbrace\abs{u}<k\right\rbrace }
	 \]
	 for any $h\geq k$ in $K$. Hence, defining
	 \[
	 \nu_k^+=\left. \left(-\tau_k + \l_k^+-\l_k^-\right) \right| _{\left\lbrace u=k\right\rbrace } \ , \ \nu_k^-=-\left. \left(-\tau_k + \l_k^+-\l_k^-\right) \right| _{\left\lbrace u=-k\right\rbrace }
	 \]
	 we can rewrite \eqref{L_Test1} as
	 \begin{multline}\label{L_Test2}
	 \int_{\left\lbrace\abs{u}<k\right\rbrace }\left( \abs{\nabla T_k(u)}^{p-2}\nabla T_k(u)-h\right) \cdot \nabla \phi dx = \\
	 \int_{\left\lbrace\abs{u}<k\right\rbrace }\phi f dx + \int_{\left\lbrace\abs{u}<k\right\rbrace }\phi d \nu_0 + \int_{\left\lbrace u=k\right\rbrace } \phi d \nu_k^+ - \int_{\left\lbrace u=-k\right\rbrace }\phi d \nu_k^-
	 \end{multline}
	 for any $\phi \in W^{1,p}_0\left( B_m\right) \cap L^\infty\left( B_m\right) $ and $k\in K$.

	 Proceeding as in the proof of Lemma \ref{general_existence_global_solution}, we can use \eqref{condition_stablity_balls} to show that $\nu_0\equiv -g(u)\mathcal{H}$, and that, up to a sequence $k_j \in K$ going to infinity, $\nu_{k_j}^\pm \goesto \bar{\mu}_s^\pm$ weakly-$\ast$ in $\mathfrak{M}_b\left( B_m\right) $.	 
	 In particular, we can rewrite \eqref{L_Test2} as 
	 \begin{multline}\label{L_Test3}
	 \int_{\left\lbrace\abs{u}<k\right\rbrace }\left( \abs{\nabla T_k(u)}^{p-2}\nabla T_k(u)-h\right) \cdot \nabla \phi dx = \\ \int_{\left\lbrace\abs{u}<k\right\rbrace }\phi fdx - \int_{\left\lbrace\abs{u}<k\right\rbrace \cap\pd\R^N_+}\phi g(u)dx'  + \int_{\left\lbrace u=k\right\rbrace } \phi d \nu_k^+ - \int_{\left\lbrace u=-k\right\rbrace }\phi d \nu_k^-
	 \end{multline}
	 for any $\phi\in W^{1,p}_0\left( B_m\right) \cap L^\infty\left( B_m\right)$ and $k\in K$. At this point, the fact that $u$ is a renormalized solution of the desired equation follows exactly as in the proof of Theorem 4.1 of \cite{M05}. 
	 
	 To finish the proof of our lemma, we observe that on one hand we have 
	 \[
	 \int_{B_m}\abs{\nabla T_k(u)}^{p}dx = \int_{B_m} T_k(u)d\bar{\mu}_0 - \int_{B_m\cap\pd\R^N_+} T_k(u)g(u)dx' + \int_{B_m} kd\bar{\mu}_s^+ + \int_{B_m} kd\bar{\mu}_s^-,
	 \]
	 while on the other 
	 \[
	 \int_{B_m}\abs{\nabla T_k(u_n)}^{p}dx = \int_{B_m} T_k(u_n)d\bar{\mu}_0 - \int_{B_m} T_k(u_n)g_n(x,u_n)dx + \int_{B_m} kd\bar{\mu}_s^+ + \int_{B_m} kd\bar{\mu}_s^-.
	 \]
	 Comparing the above identities we have 
	 \begin{multline*}
	 \int_{B_m}\abs{\nabla T_k(u)}^{p}dx -  \int_{B_m}\abs{\nabla T_k(u_n)}^{p}dx= \\ \int_{B_m} T_k(u)d\bar{\mu}_0- \int_{B_m} T_k(u_n)d\bar{\mu}_0 + \int_{B_m} T_k(u_n)g_n(x,u_n)dx - \int_{B_m\cap\pd\R^N_+} T_k(u)g(u)dx'.
	 \end{multline*}
	 Writing again $\bar{\mu}_0= f -\dv h$ for some $f\in L^1\left( B_m\right) $ and $h\in \left( L^{p'}\left( B_m\right) \right) ^N$, we use that $T_k(u_n)\goesto T_k(u)$ weakly in $W_0^{1,p}\left( B_m\right)$ and weakly-$\ast$ in $L^\infty\left( B_m\right)$ to obtain
	 \[
	 \int_{B_m} T_k(u)d\bar{\mu}_0\goesto \int_{B_m} T_k(u_n)d\bar{\mu}_0 
	 \]
	 as $n\goesto \infty$. Similarly, by \eqref{condition_stablity_balls}
	 \[
	 \int_{B_m} T_k(u_n)g_n(x,u_n)dx \goesto \int_{B_m\cap\pd\R^N_+}T_k(u)g(u)dx'
	 \]
	 as $n\goesto\infty$, and so  
	 \[
	  \norm{\nabla T_k(u_n)}_{L^p}\goesto \norm{\nabla T_k(u)}_{L^p}
	 \]
	 as $n\goesto \infty$. As in the proof of Lemma \ref{general_existence_global_solution}, we conclude from the above that $\abs{\nabla T_k(u_n)}^p\goesto \abs{\nabla T_k(u)}^p$ strongly in $L^1\left( B_{m}\right)$, and then, by Vitalli's Theorem, that $\nabla T_k(u_n)\goesto \nabla T_k(u)$ strongly in $\left( L^p\left( B_{m}\right)\right) ^N$. Hence the claim follows.
\end{proof}

The following lemma, similar to Lemma \ref{general_passage_to_trace_nonlinearity}, gives a useful sufficient condition under which \eqref{condition_general_existence_global_solution} holds. It will be used to obtain stability of local solutions throughout the sequel.

\begin{lem}\label{general_passage_to_trace_nonlinearity_R^N}
	Let $u_m$ and $u$ be $cap_{1,p,N}-$ quasi-continuous functions such that $u_m\goesto u$ $a.e.$ in $\R^N$ and $T_k(u_m)\goesto T_k(u)$ weakly in $W^{1,p}\left( B_M\right) $ for any fixed $k>0$ and $M\in \N$. Suppose $g$ satisfies part $(1)$ of Assumption \ref{assumption_subcritical} and let $\tilde{g}$ be defined as in part $(2)$ of Assumption \ref{assumption_subcritical}. For any fixed $M\in \N$ we define 
	\[
	\rho_m(h)=\int_{B_M\cap \left\lbrace\abs{u_m}\geq h\right\rbrace \cap \pd\R^N_+}\tilde{g}(\abs{u_m})(x')dx' \]
	and
	\[
	\rho(h)=\int_{B_M\cap \left\lbrace\abs{u}\geq h\right\rbrace \cap \pd\R^N_+}\tilde{g}(\abs{u})(x')dx' .
	\] 
	If $\rho_m(h)\goesto 0$ and $\rho(h)\goesto 0$ as $h\goesto \infty$, uniformly in $m$, then
	\[
	\lim_{m\goesto\infty} \int_{B_{M}\cap\pd\R^N_+}\phi_m g(u_m) dx'= \int_{B_{M}\cap\pd\R^N_+} \phi g(u) dx'
	\]
	for any sequence $\left\lbrace\phi_m\right\rbrace _m$ converging to $\phi$ both $a.e.$ in $B_M$ and weakly in $W_0^{1,p}\left( B_M\right) $ and such that $\phi_m$ is uniformly bounded in $L^\infty\left( B_M\right) $.
\end{lem}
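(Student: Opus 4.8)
The plan is to show that the boundary integral $\int_{B_M\cap\pd\R^N_+}\phi_m g(u_m)\,dx'$ converges to $\int_{B_M\cap\pd\R^N_+}\phi g(u)\,dx'$ by splitting $g(u_m)$ into a truncated part $g(T_k(u_m))$ and a tail $g(u_m)-g(T_k(u_m))$, exactly in the spirit of Lemma \ref{general_passage_to_trace_nonlinearity} but now working directly on the hyperplane $\pd\R^N_+$ rather than on a thickened slab. First I would observe that since $\phi_m$ is uniformly bounded in $L^\infty(B_M)$ and $\phi_m\to\phi$ weakly in $W_0^{1,p}(B_M)$, we have $\phi\in L^\infty(B_M)$; moreover, by Remark \ref{remark_quasincontinous_trace} and Lemma \ref{L_trace_characteristic}, $\phi_m$ and $\phi$ have well-defined $cap_{1-\frac1p,p,N-1}$-quasicontinuous traces on $\pd\R^N_+$, and these traces converge in an appropriate sense — I would establish that the traces of $T_k(u_m)\to T_k(u)$ and $\phi_m\to\phi$ on $\pd\R^N_+$, using the boundedness of the trace operator from $W^{1,p}(B_M)$ into $F^{p,p}_{1-1/p}(\pd\R^N_+\cap B_M)$ (Theorem \ref{Trace&Extension}) together with weak convergence and $a.e.$ convergence. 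The key uniform tail estimate is supplied by the hypothesis: for the tail term, $|g(u_m)-g(T_k(u_m))|\le 2\tilde g(|u_m|)$ on $\{|u_m|\ge k\}$ and vanishes elsewhere, so
\[
\left|\int_{B_M\cap\pd\R^N_+}\phi_m\bigl(g(u_m)-g(T_k(u_m))\bigr)\,dx'\right|\le 2\sup_m\norm{\phi_m}_\infty\,\rho_m(k)\goesto 0
\]
as $k\goesto\infty$ uniformly in $m$, and similarly for the limiting object with $\rho(k)$.

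Next I would handle the truncated term $\int_{B_M\cap\pd\R^N_+}\phi_m g(T_k(u_m))\,dx'$ for fixed $k$. Since $g$ is merely continuous, I would approximate $g$ on $[-k,k]$ uniformly by $g_0\in C^1(\R)$ within $\e$, which introduces an error controlled by $\e\cdot\norm{\phi_m}_\infty\cdot|B_M\cap\pd\R^N_+|$. For the $C^1$ nonlinearity $g_0$, $g_0(T_k(u_m))$ belongs to $W^{1,p}(B_M)\cap L^\infty(B_M)$ with traces converging; combined with the fact that $\phi_m\to\phi$ weakly in $W^{1,p}$ and strongly in $L^p$, and that traces are continuous under these modes of convergence, I would conclude $\int_{B_M\cap\pd\R^N_+}\phi_m g_0(T_k(u_m))\,dx'\to\int_{B_M\cap\pd\R^N_+}\phi g_0(T_k(u))\,dx'$. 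Here one can either pass through the trace operator directly, or — mimicking the device in Lemma \ref{general_passage_to_trace_nonlinearity} — rewrite the boundary integral as a limit of slab integrals $\int_{B_M\times\R}\zeta_n(x_N)(\cdots)\,dx$ and use the integration-by-parts identity $\zeta_n(t)=\frac1\pi\partial_t\arctan(nt)$ to move derivatives off the oscillating factor; I expect the direct trace-operator argument to be cleaner here since we are already on the hyperplane. Assembling: given $\d>0$, choose $k$ large so the two tails are below $\d/4$ uniformly in $m$, then $\e$ small so the $C^1$-approximation errors are below $\d/4$, then let $m\to\infty$ to kill the remaining $g_0$-term, yielding the claim.

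The main obstacle I anticipate is the delicate interplay between $a.e.$ convergence on $B_M$ (a bulk statement) and convergence of traces on the measure-zero set $\pd\R^N_+$: $a.e.$ convergence of $u_m$ in $B_M$ does not by itself control boundary values, so one genuinely must feed in the weak $W^{1,p}$ convergence through the continuity of the trace operator, and be careful that the quasicontinuous representatives are the ones being evaluated. A second subtlety is that the hypotheses $\rho_m(h)\to0$ and $\rho(h)\to0$ are assumed only to hold \emph{uniformly in $m$} — this uniformity is exactly what lets us interchange the $k\to\infty$ and $m\to\infty$ limits, and it must be invoked at the right moment. Finally, some care is needed in the step where we replace $g$ by $g_0\in C^1$: one must ensure the trace of $g_0(T_k(u_m))$ converges to that of $g_0(T_k(u))$, which follows since $g_0\circ T_k$ is Lipschitz, hence $g_0(T_k(u_m))\to g_0(T_k(u))$ weakly in $W^{1,p}(B_M)$ by Proposition \ref{prop_Egorov}-type arguments and boundedness, and the trace operator is weakly continuous. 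Once these points are secured, the estimate chains are routine and parallel those already carried out in Lemma \ref{general_passage_to_trace_nonlinearity}.
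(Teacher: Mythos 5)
Your proposal takes a genuinely different, and arguably more direct, route than the paper. Where the paper rewrites the boundary integral as a limit of slab integrals $\int_{B_M\times\R}\zeta_n(x_N)(\cdots)\,dx$ and then re-runs the integration-by-parts machinery of Lemma \ref{general_passage_to_trace_nonlinearity} (here somewhat artificially, since the lemma lives entirely on the hyperplane), you propose to feed the weak $W^{1,p}(B_M)$ convergence directly through the trace operator and argue on $\pd\R^N_+$. Your tail estimate using $\rho_m(k)$ and $\rho(k)$, your $C^1$ approximation $g_0$ of $g$, and your order of limits (choose $k$, then $\e$, then $m$) all match the paper's structure and are fine.

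There is, however, a gap in the crucial step. You justify the convergence $\int_{B_M\cap\pd\R^N_+}\phi_m g_0(T_k(u_m))\,dx'\to\int_{B_M\cap\pd\R^N_+}\phi g_0(T_k(u))\,dx'$ by asserting that ``the trace operator is weakly continuous'' and that ``traces are continuous under these modes of convergence.'' Weak continuity of the trace is not enough: it only yields weak convergence of the traces of $\phi_m$ and of $g_0(T_k(u_m))$, and a product of two merely weakly convergent sequences does not pass to the limit. The fact you actually need — and which makes your approach work — is \emph{compactness} of the trace operator $W^{1,p}(B_M)\to L^s(B_M\cap\pd\R^N_+)$ for $s$ below the critical trace exponent $\frac{p(N-1)}{N-p}$ (any finite $s$ when $p=N$). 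This upgrades weak $W^{1,p}(B_M)$ convergence to strong $L^s$ convergence of traces; combined with the uniform $L^\infty$ bounds, one of the two factors then converges strongly in a dual exponent and the product integral passes to the limit. Relatedly, your earlier appeal to $a.e.$ convergence in $B_M$ as an ingredient for controlling traces is vacuous, as you yourself later note: $a.e.$ convergence on a bulk set says nothing about values on a Lebesgue-null hyperplane. Once you replace ``weak continuity of the trace'' with ``compactness of the trace'' and verify $g_0(T_k(u_m))\goesto g_0(T_k(u))$ weakly in $W^{1,p}(B_M)$ (which follows, as you indicate, from the chain rule, $a.e.$ convergence of $T_k(u_m)$ plus uniform boundedness, and Proposition \ref{prop_Egorov}), your argument closes and is cleaner than the paper's slab construction for this particular lemma.
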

\begin{proof}
	We follow very closely the ideas in the proof of Lemma \ref{general_passage_to_trace_nonlinearity} and so we omit some details. 
	
	As in Lemma \ref{general_passage_to_trace_nonlinearity}, we use the assumptions on $\rho_m$ and $\rho$ to obtain that
	\begin{multline*}
	\abs{\int_{\Gamma_M} \phi_m g(u_m)dx' - \int_{\Gamma_M} \phi g(u)dx'}\leq \w_m(k) + \\\abs{\int_{\Gamma_M} \phi_m g(T_k(u_m))dx' - \int_{\Gamma_M} \phi g(T_k(u))dx'}
	\end{multline*}
	for some $\w_m(k)$ such that $\w_m(k)\goesto 0$ as $k\goesto \infty$ uniformly on $m$, and where $\Gamma_M= B_M\cap\pd\R^N_+$. For any $n$ we let $\zeta_n$ be the functions defined in \eqref{def_g_n}. Hence, 
	\begin{multline*}
	\int_{\Gamma_M} \phi_m g(T_k(u_m))dx' - \int_{\Gamma_M} \phi g(T_k(u))dx'= \\ \int_{\Gamma_M\times \R} \zeta_n(x_N)\left( \phi_m g(T_k(u_m))-\phi g(T_k(u))\right)(x',0) dx.
	\end{multline*}
	To continue we observe that since $\phi_m$, $\phi$, $g(T_k(u_m))$ and $g(T_k(u))$ are $cap_{1,p,N}-$ quasi-continuous in $\R^N$, given $\e>0$ we can find a closed set $\O_0$ such that all of them belong to $C\left( \O_0\right)$ and $cap_{1,p,N}\left( \O_0^c\right) <\e$. Then, all of them are uniformly continuous in $\O=\O_0\cap \overline{B_{M}}$, and since they are also uniformly bounded, for any fixed $m$ and $\e>0$ we can find $t_0$ small enough so that 
	\[
	\abs{\phi_m(x',x_N) g(T_k(u_m))(x',x_N)-\phi_m(x',0) g(T_k(u_m))(x',0)}<\e 
	\]
	and
	\[
	\abs{\phi(x',x_N) g(T_k(u))(x',x_N)-\phi(x',0) g(T_k(u))(x',0)}<\e 
	\]
	for any $(x',x_N)\in \O\cap \left\lbrace\abs{x_N}\leq t_0\right\rbrace $. We also assume $t_0$ is such that 
	\[
	\abs{\left( \left( \Gamma_M \times \R \right) \setminus \overline{B_{M}}\right)  \cap \left\lbrace x_N=t\right\rbrace }<\e 
	\]
	for all $\abs{t}\leq t_0$. Then, by decomposing $\Gamma_M\times \R $ as
	\begin{multline*}
	\left[ \Gamma_M\times \R \cap \left( \O\cap \left\lbrace\abs{x_N}\leq t_0\right\rbrace \right)\right] \cup \left[ \Gamma_M\times \R\cap\left( \left\lbrace\abs{x_N}> t_0\right\rbrace \right)\right] \\ \cup \left[ \Gamma_M\times \R\cap \left( \O^c\cap \left\lbrace\abs{x_N}\leq t_0\right\rbrace \right)\right] ,
	\end{multline*}
	we see that 
	\begin{multline*}
	\abs{\int_{\Gamma_M\times \R} \zeta_n(x_N)\left( \phi_m g(T_k(u_m))-\phi g(T_k(u))\right)(x',0) dx}\leq  \tilde{w}_{k,m,\e}(n) \\  +  \e^\frac{1}{p} C\left( M,N,p, \tilde{g}(k),\norm{\phi}_\infty, \norm{\phi_m}_\infty\right)  \\ + 	\abs{\int_{B_M} \zeta_n(x_N)\left( \phi_m g(T_k(u_m))-\phi g(T_k(u))\right)(x',x_N) dx},
	\end{multline*}
	for any $\e>0$, where $\tilde{w}_{k,m,\e}(n)$ is a function such that $\tilde{w}_{k,m,\e}(n)\goesto 0$ as $n\goesto\infty$ for any fixed $\e>0$, $m\in \N$ and $k>0$. Since all the functions involved are uniformly bounded, we may approximate $g$ with a $g_0 \in C^1\left( \left[ -k,k\right] \right)$ such that $\sup_{[-k,k]}\abs{g-g_0}<\e$, and upon integrating by parts, obtain 
	\begin{multline*}
	\int_{B_M} \zeta_n(x_N)\left( \phi_m g_0(T_k(u_m))-\phi g_0(T_k(u))\right)(x',x_N)dx= \\ -\int_{B_M}\tau_n(x_N) \left( \left( \partial_N\phi_m\right)  g_0(T_k(u_m))-\left( \partial_N\phi\right)  g_0(T_k(u))\right)dx \\
	-\int_{B_M}\tau_n(x_N)\left( \phi_m \partial_N\left(  g_0(T_k(u_m))\right)  -\phi\partial_N \left( g_0(T_k(u))\right) \right)dx.
	\end{multline*}
	Let us consider the first integral in the right hand side of the above identity. Recall that $\tau_n(t)\goesto \tau(t)=\frac{1}{2}\frac{t}{\abs{t}}$ strongly in $L^s\left( \R\right) $ for any $1\leq s<\infty$. Since $\partial_N\phi_m$ is uniformly bounded in $L^p\left( B_M\right) $ and $T_k(u_m)$ is uniformly bounded in $L^\infty\left( B_M\right) $ we obtain that 
	\begin{multline*}
	\int_{B_{M}}\tau_n(x_N) \left( \left( \partial_N\phi_m\right)  g_0(T_k(u_m))-\left( \partial_N\phi\right)  g_0(T_k(u))\right)dx \\
	\goesto \int_{B_{M}}\tau(x_N) \left( \left( \partial_N\phi_m\right)  g_0(T_k(u_m))-\left( \partial_N\phi\right)  g_0(T_k(u))\right)dx
	\end{multline*}
	as $n\goesto\infty$ uniformly in $m$. On the other hand, as in Lemma \ref{general_passage_to_trace_nonlinearity}, 
	\[
	\int_{B_{M}}\tau(x_N) \left( \left( \partial_N\phi_m\right)  g_0(T_k(u_m))-\left( \partial_N\phi\right)  g_0(T_k(u))\right)dx \goesto 0
	\]
	as $m\goesto\infty$. A similar reasoning applies to the second integral. Hence, we may write
	\[
	\int_{B_M} \zeta_n(x_N)\left( \phi_m g_0(T_k(u_m))-\phi g_0(T_k(u))\right)(x',x_N)dx= w_{k,m,\e}(n) + w_{k,\e}(m)
	\]
	for some functions $w_{k,m,\e}(n)$ and $w_{k,\e}(m)$ such that $w_{m,k,\e}(n)\goesto 0$ as $n\goesto\infty$, uniformly on $m$, and $w_{k,\e}(m)\goesto 0$ as $m\goesto\infty$, for any fixed $k>0$ and $1>\e>0$. Thus, collecting all the above estimates, we conclude that
	\begin{multline*}
	\abs{\int_{\Gamma_M} \phi_m g(u_m)dx' - \int_{\Gamma_M} \phi g(u)dx'}\leq \w_m(k) +\tilde{w}_{k,m,\e}(n) \\ +  \e^\frac{1}{p} C\left( M,N,p, \tilde{g}(k),\norm{\phi}_\infty, \norm{\phi_m}_\infty\right) + w_{k,m,\e}(n) + w_{k,\e}(m).
	\end{multline*}
	Hence, we obtain
	\[
	\lim_{m\goesto\infty}\int_{\Gamma_M} \phi_m g(u_m)dx'= \int_{\Gamma_M} \phi g(u)dx'
	\]
	as desired.
\end{proof}

We are now ready to prove the existence of renormalized solutions to \eqref{equation} in the subcritical case. We treat the cases $1<p<N$ and $p=N$ separately for clarity of exposition.  

\begin{thm}\label{existence_subcritical_p<N}
	Let $1<p< N$ and $\mu\in \mathfrak{M}_b\left( \pd\R^N_+\right) $. Suppose $g(s)$ satisfies Assumption \ref{assumption_subcritical}. Then there exists a renormalized solution of
	\[
	\begin{cases}
	-\lap_pu=0 & \mbox{ in } \R^N_+ \\
	\abs{\nabla u}^{p-2}u_\n + g(u) = \mu & \mbox{ on } \pd\R^N_+ .
	\end{cases}
	\]
\end{thm}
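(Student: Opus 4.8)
The plan is to realize the solution as a \emph{symmetric} local renormalized solution on all of $\R^N$ of the reflected equation $-\lap_p u = 2\mu - 2g(u)\mathcal{H}$ (the extended problem \eqref{equation_in_R_N}), recovered afterwards by restriction to $\R^N_+$; this symmetric solution is built by a double approximation — first replacing the singular layer $g(u)\mathcal{H}$ by the smooth weighted nonlinearities $g_n(x,s)=\zeta_n(x_N)g(s)$ of \eqref{def_g_n} so as to land in the framework of \cite{Veron16} for \eqref{renormalized_solution_B-VHV14}, and then exhausting $\R^N$ by the balls $B_m$, which is the strategy already used for $g\equiv0$ in Chapters \ref{Extension} and \ref{Symmetry}.

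\textbf{Step 1 (solving on $B_m$ and removing the smoothing).} Identify $\mu$ with its representative in $\mathfrak{M}_b(\R^N)$ and let $\mu_m$ be the restriction of $\mu$ to $B_m$. For fixed $n$, the nonlinearity $2g_n(x,s)$ is continuous in $s$, satisfies $2g_n(x,s)s\ge0$, and inherits Assumption \ref{assumption_subcritical} (with constants depending on $n$, since $\widetilde{2g_n}(s)\le\tfrac{2n}{\pi}\tilde g(s)$), so the theory of \cite{Veron16} for \eqref{renormalized_solution_B-VHV14} gives a renormalized solution $u_{n,m}$ of $-\lap_p u_{n,m}+2g_n(x,u_{n,m})=2\mu_m$ in $B_m$, vanishing on $\pd B_m$. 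By Proposition \ref{Veron16_bound_on_g}, $\int_{B_m}2\abs{g_n(x,u_{n,m})}dx\le 2\norm{\mu}_{\mathfrak{M}_b}$, so the data $2\mu_m-2g_n(\cdot,u_{n,m})\mathcal{H}$ are bounded in $\mathfrak{M}_b$ uniformly in $n$, and Theorem \ref{DMOP99_convergence_of_u_n_to_limit} yields, along a subsequence, $u_{n,m}\to u_m$ a.e.\ in $B_m$ together with the usual truncate and gradient convergences. The key point is then Lemma \ref{general_passage_to_trace_nonlinearity}: its tail hypotheses $\delta_t(n,h)\to0$, $\delta(h)\to0$ uniformly are verified by feeding the hyperplane level-set bound of Lemma \ref{L_levelset_Trace} (applicable because estimate \eqref{estimate1_DMMOP99} provides a uniform bound for $\tfrac1k\int_{\{\abs{u_{n,m}}<k\}}\abs{\nabla T_k(u_{n,m})}^pdx$) into a layer-cake computation against the subcritical growth condition Assumption \ref{assumption_subcritical}(2). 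Lemma \ref{general_stability_in_balls} then applies and shows that $u_m$ is a renormalized solution of $-\lap_p u_m+2g(u_m)\mathcal{H}=2\mu_m$ in $B_m$ with $g(u_m)\in L^1(B_m\cap\pd\R^N_+)$; and by lower semicontinuity of the total variation under the weak-$*$ convergence $2g_n(\cdot,u_{n,m})\,dx\rightharpoonup 2g(u_m)\,d\mathcal{H}$ one also gets the bound $\int_{B_m\cap\pd\R^N_+}\abs{g(u_m)}dx'\le\norm{\mu}_{\mathfrak{M}_b}$, uniform in $m$.

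\textbf{Step 2 (symmetry and passage $m\to\infty$).} The datum $\nu_m:=2\mu_m-2g(u_m)\mathcal{H}$ is supported in $\pd\R^N_+\cap B_m$ and $B_m$ is symmetric, so Theorem \ref{symmetry} forces $u_m(x',x_N)=u_m(x',-x_N)$ a.e.\ in $B_m$. Since $\abs{\nu_m}(B_m)\le 4\norm{\mu}_{\mathfrak{M}_b}$ uniformly, Lemma \ref{general_existence_of_limit_functions} gives, along a subsequence, $u_m\to u$ a.e.\ in $\R^N$ with $u$ satisfying (1)--(3) there and the attendant convergences; in particular $u$ is symmetric. Applying Lemma \ref{general_existence_global_solution} with $g_m:=2g(u_m)$ and $g:=2g(u)$ — condition \eqref{condition_general_existence_global_solution} being supplied by Lemma \ref{general_passage_to_trace_nonlinearity_R^N}, whose tail hypotheses follow exactly as in Step 1 from Lemma \ref{L_levelset_Trace} on $\pd\R^N_+$ and Assumption \ref{assumption_subcritical}(2) — we conclude that $u$ is a local renormalized solution of $-\lap_p u=2\mu-2g(u)\mathcal{H}$ in $\R^N$. (That $g(u)\in L^1(\pd\R^N_+)$, needed for this to make sense, comes from Fatou's lemma and the uniform bound of Step 1, using that the traces of the $u_m$ converge $cap_{1-\frac1p,p,N-1}$-q.e.\ on $\pd\R^N_+$.)

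\textbf{Step 3 (recovering the half-space problem).} Finally, $u$ is symmetric, and the extra layer term $2g(u)\mathcal{H}$ belongs to $\mathfrak{M}_0(\R^N)$ — it is absolutely continuous with respect to $cap_{1,p,N}$ by Lemma \ref{L_trace_characteristic}(3) — so it enters the $\nu_0$-part of the decomposition of $2\mu-2g(u)\mathcal{H}$. Rerunning the argument of Theorem \ref{thm_on_existence_from_symmetry} essentially verbatim (test against $T_l(w)\Phi_\e$ and $T_l(w)\Psi_\e$, use $u_{x_N}(x',x_N)=-u_{x_N}(x',-x_N)$ to cancel the two boundary-layer integrals, let $\e\downarrow0$ and then $l\to\infty$) now also carries the additional term $\int_{\pd\R^N_+}g(u)w\,dx'$, and shows that the restriction of $u$ to $\R^N_+$ satisfies condition (5) of Definition \ref{defn}; conditions (1)--(4) are immediate from those of the local solution together with $g(u)\in L^1(\pd\R^N_+)$. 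I expect the main obstacle to be the uniform tail control demanded by Lemmas \ref{general_passage_to_trace_nonlinearity} and \ref{general_passage_to_trace_nonlinearity_R^N}: this is precisely where the subcritical exponent $q_c=(N-1)(p-1)/(N-p)$ intervenes, through the sharp hyperplane estimate of Lemma \ref{L_levelset_Trace}, and the remaining labour is the bookkeeping of the two successive subsequential limits and the transfer of the $L^1$ bound on $g(u_m)$ across them.
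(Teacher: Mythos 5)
Your proposal follows the same chain of ideas as the paper's proof: approximate $g(u)\mathcal{H}$ by $g_n(x,u)=\zeta_n(x_N)g(u)$, solve on $B_m$ via \cite{Veron16} and Proposition \ref{Veron16_bound_on_g}, let $n\to\infty$ using Theorem \ref{DMOP99_convergence_of_u_n_to_limit}, Lemma \ref{general_passage_to_trace_nonlinearity} and Lemma \ref{general_stability_in_balls}, then let $m\to\infty$ via Lemma \ref{general_existence_of_limit_functions}, Lemma \ref{general_passage_to_trace_nonlinearity_R^N} and Lemma \ref{general_existence_global_solution}, and finally invoke Theorems \ref{symmetry} and \ref{thm_on_existence_from_symmetry}. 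That is exactly the structure of the paper.

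The one place where your argument is too quick is the claim in Step 2 that the uniform tail hypotheses of Lemma \ref{general_passage_to_trace_nonlinearity_R^N} ``follow exactly as in Step 1 from Lemma \ref{L_levelset_Trace}.'' Lemma \ref{L_levelset_Trace} as stated requires $T_k(f)\in W^{1,p}_0(B_M)$ and uses Poincar\'e's inequality to pass from $\norm{\nabla T_k(f)}_{L^p(B_M)}$ to $\norm{T_k(f)}_{W^{1,p}(B_M)}$. Since $T_k(u_m)\in W_0^{1,p}(B_m)$ but not $W_0^{1,p}(B_M)$ when $m>M$, a naive application produces a Poincar\'e constant tied to $B_m$ that blows up as $m\to\infty$, so the resulting tail estimate on $B_M\cap\pd\R^N_+$ is \emph{not} uniform in $m$. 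The paper fixes this by replacing Poincar\'e with Poincar\'e--Wirtinger: subtract the average $c_{k,m,M}$ of $T_k(u_m)$ over $B_M$, bound $\norm{T_k(u_m)-c_{k,m,M}}_{L^{q}(B_M\cap\pd\R^N_+)}$ (with $q=\tfrac{p(N-1)}{N-p}$) by $C(N,p,B_M)(k\norm{\mu}_{\mathfrak{M}_b})^{1/p}$, and show $\abs{c_{k,m,M}}\le\tfrac34 k$ for $k$ large by the uniform volume estimate \eqref{estimate2_DMMOP99}; the inclusion $\left\lbrace\abs{u_m}\ge k\right\rbrace\subset\left\lbrace\abs{T_k(u_m)-c_{k,m,M}}\ge k/4\right\rbrace$ then gives the uniform hyperplane decay $\abs{\left\lbrace\abs{u_m}\ge k\right\rbrace\cap B_M\cap\pd\R^N_+}\lesssim k^{-(p-1)(N-1)/(N-p)}$ needed for the layer-cake argument against Assumption \ref{assumption_subcritical}(2) (and the same for $u$ by Fatou/Lebesgue dominated convergence on the averages). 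With that patch, your plan coincides with the paper's proof.
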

\begin{proof}
	Let $g_n(x,s)$ be defined by \eqref{def_g_n} and fix $m\in \N$. By Theorem 5.1.2 of \cite{Veron16} for any $n \in \N$ there exists a renormalized solution of
	 \[
	 \begin{cases}
	 -\lap_pu_m^n + 2g_n(x,u_m^n)=2\mu_m & \mbox{ in } B_m \\
	 u_m^n=0 & \mbox{ on } \pd B_m ,
	 \end{cases}
	 \]
	where $\mu_m$ is the restriction of $\mu$ to $B_m$.
	By Proposition \ref{Veron16_bound_on_g} we have
	\be\label{estimate_g_n}
	\int_{B_m}\abs{g_n(x,u_m^n)} dx \leq \abs{\mu_m}\left( B_m\right)  .
	\ee
	By writing
	\[
	\tilde{\mu}_m^n = \mu_m-g_n(x,u_m^n)
	\]
	we see that $u_m^n$ is a renormalized solution to $-\lap_p u_m^n = 2\tilde{\mu}_m^n$ in $B_m$. Since 
	\[\abs{\tilde{\mu}_m^n}\left( B_m\right) \leq 2\abs{\mu}\left( \R^N\right) <\infty \]
	we can apply Theorem \ref{DMOP99_convergence_of_u_n_to_limit} to obtain that, passing to subsequences, $u_m^n\goesto u_m$ $a.e.$ in $B_m$ as $n\goesto \infty$ for suitable behaved functions $u_m$. Note that each $u_m^n$, and also $u_m$, have $cap_{1,p,N}-$ quasi-continuous representatives that are finite $cap_{1,p,N}-q.e.$ in $B_m$ (see Remark \ref{remark_quasicontinuity_DMOP99} and \ref{remark_quasicontinuity_DMOP99_limit}) which implies that they have well defined $cap_{1-\frac{1}{p},p,N-1}-$ quasi-continuous traces. By the same theorem, since each $u_m^n$ satisfies the estimate 
	\[
	\int_{B_m} \abs{\nabla T_k (u_m^n)} dx \leq 2k \abs{\tilde{\mu}_m^n}\left( B_m\right) \leq 4k\abs{\mu}\left( \R^N\right)
	\]
	so does the functions $u_m$. 
	
	Now we consider the convergence of $g_n(x,u_m^n)$ for fixed $m\in \N$. We fix $\abs{t}<m$ and for any $n$ and $h$ we write $ E_n^h =\left\lbrace\abs{u_m^n} < h\right\rbrace $ and $E^h= \left\lbrace\abs{u_m} <h\right\rbrace $.	Define $\sigma(s)= \abs{\left\lbrace x \in B_m \cap \left\lbrace x_N=t\right\rbrace  \ : \ \abs{u_m^n}>s\right\rbrace  } $. Proceeding as in Remark \ref{remark_on_L^p_estimate} we see that
	\begin{align*}
	\int_{B_m\cap(E_n^h)^c\cap \left\lbrace x_N=t\right\rbrace }\abs{g(u_m^n)}(x',t)dx' &\leq \int_0^\infty \abs{B_m\cap(E_n^h)^c\cap \left\lbrace x_N=t\right\rbrace \cap (E_n^s)^c}d\tilde{g}(s) \\ &= \sigma(h)\tilde{g}(h) + \int_h^\infty \sigma(s)d\tilde{g}(s).
	\end{align*}
	By Lemma \ref{L_levelset_Trace} 
	\begin{multline*}
	\sigma(h)\tilde{g}(h) + \int_h^\infty \sigma(s)d\tilde{g}(s) \leq \\ C(N,p,m)\norm{\mu_m}_{\mathfrak{M}_b}^{\frac{N-1}{N-p}} \left[  h^{-\frac{(N-1)(p-1)}{N-p}}\tilde{g}(h) + \int_h^\infty s^{-\frac{(N-1)(p-1)}{N-p}} d\tilde{g}(s)\right] 
	\end{multline*}
	while integration by parts gives
	\begin{multline*}
	h^{-\frac{(N-1)(p-1)}{N-p}}\tilde{g}(h) + \int_h^\infty s^{-\frac{(N-1)(p-1)}{N-p}} d\tilde{g}(s) = \\ \lim_{s\goesto\infty}\left[ s^{-\frac{(N-1)(p-1)}{N-p}}\tilde{g}(s) \right] + \frac{(N-1)(p-1)}{N-p} \int_h^\infty \tilde{g}(s)s^{-\frac{p(N-2)+1}{N-p}} ds.
	\end{multline*}
	Note that by Assumption \ref{assumption_subcritical}
	\[
	\lim_{s\goesto\infty}s^{-\frac{(N-1)(p-1)}{N-p}}\tilde{g}(s)\leq\frac{(N-1)(p-1)}{N-p}\lim_{s\goesto\infty}\int_s^\infty \tilde{g}(t)t^ {-\frac{p(N-2)+1}{N-p}} dt = 0
	\]
	and so we obtain
	\begin{align*}
	\int_{B_m\cap(E_n^h)^c\cap \left\lbrace x_N=t\right\rbrace }\abs{g(u_m^n)}(x',t)dx' & \leq C(N,p,m)\norm{\mu_m}_{\mathfrak{M}_b}^{\frac{N-1}{N-p}} \int_h^\infty\tilde{g}(s) s^{-\frac{p(N-2)+1}{N-p}} ds  \\
	 &\goesto 0 
	\end{align*}
 	as $h\goesto \infty$, uniformly in $t$ and $n$. Using the same argument we can show 
	\begin{align*}
	\int_{\pd\R^N_+\cap B_m\cap (E^h)^c} \abs{g(u_m)}dx' &\leq  C(N,m,p)\norm{\mu_m}_{\mathfrak{M}_b}^{\frac{N-1}{N-p}} \int_h^\infty\tilde{g}(s) s^{-\frac{p(N-2)+1}{N-p}} ds \\
	& \goesto 0
	\end{align*}
	as $h\goesto \infty$. Note also that
	\[
	\int_{B_{m}\cap\pd\R^N_+}\abs{g(u_m)}dx'\leq \tilde{g}(1)\abs{B_m\cap\pd\R^N_+} + C(N,p,m,\mu_m)\int_1^\infty \tilde{g}(s)s^{-\frac{p(N-2)+1}{N-p}}ds < \infty
	\]
	so in particular $g(u_m)\in L^1\left( B_m\cap\pd\R^N_+\right) $. 
	
	By the above considerations, we see that we can combine Lemma \ref{general_passage_to_trace_nonlinearity} and Lemma \ref{general_stability_in_balls} to obtain that $u_m$ is a renormalized solution of 
	\[
	\begin{cases}
	-\lap_pu_m =2\mu_m - 2g(u_m)\mathcal{H} & \mbox{ in } B_m \\
	u_m=0 & \mbox{ on } \pd B_m .
	\end{cases}
	\] 
	Moreover, since we have \eqref{estimate_g_n} and 
	\[
	\lim_{n\goesto\infty}\int_{B_m}\phi g_n(x,u_m^n)dx= \int_{B_m\cap\pd\R^N_+} \phi  g(u_m)dx'
	\]
	for any $\phi\in C_0^\infty\left( B_m\right) $ we get 
	\[
	\int_{B_m\cap \pd\R^N_+}\abs{g(u_m)}dx'\leq \abs{\mu_m}\left( B_m\right) \leq \abs{\mu}\left( \R^N_+\right)<\infty .
	\]
	Thus, we can apply Lemma \ref{general_existence_of_limit_functions} with data $2\mu_m - 2g(u_m)\mathcal{H}$ to obtain a suitable limit function $u$ such that $u_m\goesto u$ $a.e.$ in $\R^N$. Note that the above estimate says that $\norm{g(u_m)}_{L^1\left( B_m\cap\pd\R^N_+\right) }$ is uniformly bounded.
	
	Now we obtain estimates on the level sets of $u_m(x',0)$ and $u(x',0)$. Fix any $M>0$. Since $u_m$ satisfies estimate \eqref{estimate2_DMMOP99} and $\norm{\mu_m-g(u_m)\mathcal{H}}_{\mathfrak{M}_b}$ is uniformly bounded, we can find $k_0(M,N,p,\mu)$ independent of $m$ such that 
	\[
	 \abs{\left\lbrace  \abs{u_m}>\frac{k}{2}\right\rbrace }\leq \frac{1}{4}\abs{B_M}
	\]
	for all $k\geq k_0$. Let $c_{k,m,M}=(T_k(u_m))_M$ be the average of $T_k(u_m)$ in $B_M$. Then we estimate 
	\begin{multline*}
	\abs{c_{k,m,M}}\leq \frac{1}{\abs{B_M}}\left( \int_{B_M\cap \left\lbrace\abs{u_m}\leq k/2\right\rbrace }\abs{T_k(u_m)} dx+ \int_{B_M\cap \left\lbrace\abs{u_m}> k/2\right\rbrace }\abs{T_k(u_m)}dx\right) 
	\leq \\ \frac{k}{2} + \frac{k}{4} = \frac{3}{4}k 
	\end{multline*}
	for all $k\geq k_0$. As in the proof of Lemma \ref{L_levelset_Trace}, replacing Poincar\'e's inequality with Poincar\'e-Wirtinger's inequality, we obtain 
	\[
	\norm{T_k(u_m) - c_{k,m,M}}_{L^q\left( B_M\cap \pd\R^N_+\right) }\leq C(N,p,B_M)\left( k\norm{\mu}_{\mathfrak{M}_b}\right) ^{\frac{1}{p}} 
	\]
	with $q=\frac{p(N-1)}{N-p}$. Since for all $k\geq k_0$ we have the inclusions
	\begin{multline*}
	\left\lbrace \abs{u_m}\geq k\right\rbrace =\left\lbrace \abs{T_k(u_m)}\geq k\right\rbrace \subset \left\lbrace \abs{T_k(u_m) - c_{k,m,M}}\geq k-\abs{c_{k,m,M}}\right\rbrace  \\ \subset \left\lbrace \abs{T_k(u_m) - c_{k,m,M}}\geq \frac{k}{4}\right\rbrace , 
	\end{multline*}
	we similarly deduce
	\[
	\abs{\left\lbrace\abs{u_m}\geq k\right\rbrace \cap B_M\cap \pd\R^N_+}\leq C(N,p,B_M,\norm{\mu}_{\mathfrak{M}_b})k^{-\frac{(p-1)(N-1)}{N-p}}.
	\]
	In a similar way, by Fatou's Lemma, $u$ satisfies estimate \eqref{estimate1_DMMOP99} in $B_M$, while if $c_{k,M}=(T_k(u))_M$ is the average of $T_k(u)$ in $B_M$ then, by Lebesgue's Dominated Convergence Theorem, $\abs{c_{k,M}}\leq \frac{3}{4}k$ (see also the proof of Lemma \ref{general_existence_of_limit_functions}). Thus, we also have 
	\[
	\abs{\left\lbrace\abs{u}\geq k\right\rbrace \cap B_M\cap \pd\R^N_+}\leq C(N,p,B_M,\norm{\mu}_{\mathfrak{M}_b})k^{-\frac{(p-1)(N-1)}{N-p}}.
	\]
	
	Now we are ready to finish. As above, using the assumptions on $\tilde{g}$ we have
	\[
	\int_{\left\lbrace\abs{u_m}\geq k \right\rbrace \cap \pd\R^N_+\cap B_M}\tilde{g}(\abs{u_m})dx'\goesto 0
	\]
	as $k\goesto \infty$, uniformly on $m$. Similarly
	\[
	\int_{\left\lbrace\abs{u}\geq k \right\rbrace \cap \pd\R^N_+\cap B_M}\tilde{g}(\abs{u})dx'\goesto 0
	\]
	as $k\goesto \infty$. Then, we can apply Lemma \ref{general_passage_to_trace_nonlinearity_R^N} to obtain condition \eqref{condition_general_existence_global_solution} (with $g_m=g(u_m)$ and $g=g(u)$). Note that from the estimate $\norm{g(u_m)}_{L^1\left( B_M\cap\pd\R^N_+\right) }\leq \norm{\mu}_{\mathfrak{M}_b}$ and \eqref{condition_general_existence_global_solution} we conclude that $\norm{g(u)}_{L^1\left(\pd\R^N_+\right) }\leq \norm{\mu}_{\mathfrak{M}_b}$. Then, Lemma \ref{general_existence_global_solution} implies that $u$ is a local renormalized solution of 
	\[-\lap_p u = 2\mu - 2 g(u)\mathcal{H} \mbox{ in } \R^N . \]
	Since the measures $2\mu_m-2 g(u_m)\mathcal{H}$ are supported in $\pd\R^N_+$, we apply Theorem \ref{symmetry} to obtain that $u_m$, and thus $u$, are symmetric with respect to $\pd\R^N_+$. Hence, by Theorem \ref{thm_on_existence_from_symmetry} the restriction of $u$ to $\R^N_+$ is a solution to the problem.
\end{proof}

Now we consider the case $p=N$.

\begin{thm}\label{existence_subcritical_p=N}
	Let $p= N$ and $\mu\in \mathfrak{M}_b\left( \pd\R^N_+\right) $. Suppose $g(s)$ satisfies Assumption \ref{assumption_subcritical} with some $\gamma>0$. There exists a constant $C(N)$ such that if $\norm{\mu}_{\mathfrak{M}_b}\leq C(N)\gamma^{1-N}$ then there exists a renormalized solution of
	\[
	\begin{cases}
	-\lap_Nu=0 & \mbox{ in } \R^N_+ \\
	\abs{\nabla u}^{N-2}u_\n + g(u) = \mu & \mbox{ on } \pd\R^N_+ .
	\end{cases}
	\]
\end{thm}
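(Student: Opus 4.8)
The plan is to follow the architecture of the proof of Theorem \ref{existence_subcritical_p<N} almost verbatim, replacing the polynomial level-set estimates on hyperplanes (Lemma \ref{L_levelset_Trace}, case $p<N$) by the exponential ones valid when $p=N$, and using the smallness of $\norm{\mu}_{\mathfrak{M}_b}$ to compensate for the weaker decay rate in those exponential estimates.

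First I would fix $m\in\N$, set $g_n(x,s)=\zeta_n(x_N)g(s)$ as in \eqref{def_g_n}, and invoke Theorem $5.1.2$ of \cite{Veron16} to obtain renormalized solutions $u_m^n$ of $-\lap_N u_m^n + 2g_n(x,u_m^n)=2\mu_m$ in $B_m$ with zero boundary data, where $\mu_m=\mu|_{B_m}$. Proposition \ref{Veron16_bound_on_g} gives $\int_{B_m}\abs{g_n(x,u_m^n)}\,dx\leq\abs{\mu_m}(B_m)\leq\norm{\mu}_{\mathfrak{M}_b}$, so writing $\tilde{\mu}_m^n=\mu_m-g_n(x,u_m^n)$ we have $\abs{\tilde{\mu}_m^n}(B_m)\leq 2\norm{\mu}_{\mathfrak{M}_b}$ and $u_m^n$ solves $-\lap_N u_m^n=2\tilde{\mu}_m^n$. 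Theorem \ref{DMOP99_convergence_of_u_n_to_limit} then yields, along a subsequence, $u_m^n\goesto u_m$ $a.e.$ in $B_m$, with $u_m$ satisfying $(1),(2)$ and the estimates \eqref{estimate1_DMMOP99}, \eqref{estimate4_DMMOP99} with $C_1:=4\norm{\mu}_{\mathfrak{M}_b}$ in place of the total mass.

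The crucial step is controlling the tails $\d_t(n,h)=\int_{B_m\cap\{\abs{u_m^n}\geq h\}\cap\{x_N=t\}}\tilde{g}(\abs{u_m^n})(x',t)\,dx'$ and the analogous $\d(h)$. Here I would use the case $p=N$ of Lemma \ref{L_levelset_Trace}: since each $u_m^n$ satisfies $\tfrac1k\int_{\{\abs{u_m^n}<k\}\cap B_m}\abs{\nabla T_k(u_m^n)}^N\,dx\leq C_1$, we get $\sigma(s):=\abs{\{x\in B_m\cap\{x_N=t\}:\abs{u_m^n}>s\}}\leq C(N,B_m)e^{-c(N)sC_1^{\frac{1}{1-N}}}$. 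Proceeding as in Remark \ref{remark_on_L^p_estimate}, $\int_{B_m\cap\{\abs{u_m^n}\geq h\}\cap\{x_N=t\}}\tilde{g}(\abs{u_m^n})\,dx'\leq\sigma(h)\tilde{g}(h)+\int_h^\infty\sigma(s)\,d\tilde{g}(s)$, and after an integration by parts (the $p=N$ subcriticality assumption forces $\lim_{s\goesto\infty}\tilde{g}(s)e^{-\gamma N s}=0$) this is bounded by a constant times $\int_h^\infty\tilde{g}(s)e^{-c(N)sC_1^{\frac{1}{1-N}}}\,ds$. Choosing $C(N)$ small enough that $\norm{\mu}_{\mathfrak{M}_b}\leq C(N)\gamma^{1-N}$ guarantees $c(N)C_1^{\frac{1}{1-N}}=c(N)(4\norm{\mu}_{\mathfrak{M}_b})^{\frac{1}{1-N}}\geq\gamma N$ (the map $x\mapsto x^{\frac{1}{1-N}}$ is decreasing), so the tail is dominated by $\int_h^\infty\tilde{g}(s)e^{-\gamma N s}\,ds\goesto 0$ as $h\goesto\infty$, uniformly in $n$ and $t$; the same bound handles $\d(h)$ and shows $g(u_m)\in L^1(B_m\cap\pd\R^N_+)$. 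Then Lemmas \ref{general_passage_to_trace_nonlinearity} and \ref{general_stability_in_balls} give that $u_m$ is a renormalized solution of $-\lap_N u_m=2\mu_m-2g(u_m)\mathcal{H}$ in $B_m$, and passing to the limit in $\int_{B_m}\phi g_n(x,u_m^n)\,dx\goesto\int_{B_m\cap\pd\R^N_+}\phi g(u_m)\,dx'$ yields $\norm{g(u_m)}_{L^1(B_m\cap\pd\R^N_+)}\leq\norm{\mu}_{\mathfrak{M}_b}$ uniformly in $m$.

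Next I would let $m\goesto\infty$. Since $\norm{2\mu_m-2g(u_m)\mathcal{H}}_{\mathfrak{M}_b}\leq 4\norm{\mu}_{\mathfrak{M}_b}$ is uniformly bounded, Lemma \ref{general_existence_of_limit_functions} produces $u$ with $u_m\goesto u$ $a.e.$ in $\R^N$. To handle the nonlinear term I would bound the level sets of the traces $u_m(x',0)$ uniformly in $m$: using \eqref{estimate4_DMMOP99} pick $k_0(N,M,\norm{\mu}_{\mathfrak{M}_b})$ independent of $m$ with $\abs{\{\abs{u_m}>k/2\}}\leq\tfrac14\abs{B_M}$ for $k\geq k_0$, estimate the average of $T_k(u_m)$ over $B_M$ by $\tfrac34 k$, and apply the Moser--Trudinger trace inequality of \cite{Cianchi08} to $T_k(u_m)-c_{k,m,M}$ as in Lemma \ref{L_levelset_Trace} (Poincar\'e replaced by Poincar\'e--Wirtinger); this gives $\abs{\{\abs{u_m}\geq k\}\cap B_M\cap\pd\R^N_+}\leq C(N,B_M)e^{-c(N)kC_1^{\frac{1}{1-N}}}$ with $c(N)$ independent of $B_M$ (Remark \ref{improvement}), and the same for $u$ via Fatou and dominated convergence. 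The smallness of $\norm{\mu}_{\mathfrak{M}_b}$ again gives $c(N)C_1^{\frac{1}{1-N}}\geq\gamma N$, so the tails $\int_{B_M\cap\{\abs{u_m}\geq k\}\cap\pd\R^N_+}\tilde{g}(\abs{u_m})\,dx'$ tend to zero uniformly. Lemma \ref{general_passage_to_trace_nonlinearity_R^N} then gives \eqref{condition_general_existence_global_solution} with $g_m=g(u_m)$, $g=g(u)$, whence $\norm{g(u)}_{L^1(\pd\R^N_+)}\leq\norm{\mu}_{\mathfrak{M}_b}$, and Lemma \ref{general_existence_global_solution} shows $u$ is a local renormalized solution of $-\lap_N u=2\mu-2g(u)\mathcal{H}$ in $\R^N$. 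Since the data $2\mu_m-2g(u_m)\mathcal{H}$ are supported in $\pd\R^N_+$, Theorem \ref{symmetry} makes each $u_m$, hence $u$, symmetric about $\pd\R^N_+$, and Theorem \ref{thm_on_existence_from_symmetry} identifies the restriction of $u$ to $\R^N_+$ as the desired solution. The main obstacle is bookkeeping of constants: a single threshold $\norm{\mu}_{\mathfrak{M}_b}\leq C(N)\gamma^{1-N}$ must make the exponential rate $c(N)C_1^{\frac{1}{1-N}}$ dominate $\gamma N$ simultaneously in both limit passages, and this works precisely because the Moser--Trudinger exponent $c(N)$ is independent of the ball, as highlighted in Remark \ref{improvement}.
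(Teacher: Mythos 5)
Your proposal reproduces the overall architecture of the paper's proof, but it has a genuine gap at the very first step, which the paper's own proof devotes a full paragraph to addressing: you invoke Theorem $5.1.2$ of \cite{Veron16} to produce the solutions $u_m^n$ in $B_m$, but that theorem guarantees existence only under a smallness condition of the form $\norm{2\mu_m}_{\mathfrak{M}_b}\leq\left(\frac{C_0}{N\gamma}\right)^{N-1}$, where $C_0=C_0(B_m)$ \emph{a priori depends on the domain} $B_m$. Since you must let $m\to\infty$, if $C_0(B_m)$ degenerates as $m$ grows, then no single threshold $C(N)\gamma^{1-N}$ can satisfy the hypothesis for every $m$ unless $\mu\equiv 0$. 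You do cite Remark \ref{improvement} for the domain-independence of the Moser--Trudinger constant, but you only use it to control the tails $\d_t(n,h)$ and the boundary level sets; you never use it to fix the \emph{existence} step. The paper's proof closes this gap by re-examining the proof of Theorem $5.1.2$, observing that the constant $C_0$ is exactly the decay rate in the level-set estimate $\abs{\{\abs{u_\e}>k\}}\leq C(N,B_m)e^{-C_0 k\norm{\mu}_{\mathfrak{M}_b}^{1/(1-N)}}$ for the regularized solutions, and then replacing this estimate (via Remark \ref{improvement} and \cite{Cianchi08}) with one whose rate constant $c_1(N)$ is domain-free; only then does the existence of $u_m^n$ hold under a single $m$-independent threshold. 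Your write-up treats existence as a black box and would have to be supplemented by precisely this argument.

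A secondary but noteworthy difference: for the level-set estimates on $B_M\cap\pd\R^N_+$ in the $m\to\infty$ passage, you propose to repeat the $p<N$ strategy (apply the Moser--Trudinger trace inequality to $T_k(u_m)-c_{k,m,M}$ with the average bounded by $\tfrac34 k$ and the $W^{1,N}$ bound $\norm{\nabla T_k(u_m)}_N\leq(kC_1)^{1/N}$), whereas the paper takes a different route: it invokes the Lorentz--Sobolev regularity $u_m\in W^1L^{N,\infty}(B_m)$ from \cite{DHM00}, bounds $\norm{u_m}_{BMO(B_M)}$ through it, and then applies Theorem $2.5$ of \cite{Cianchi08} directly to $u_m-(u_m)_M$ with the $L^{N,\infty}$ gradient norm. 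Your route yields the same exponential rate $e^{-c(N)kC_1^{1/(1-N)}}$ after the algebra $\bigl(\frac{k/4}{(kC_1)^{1/N}}\bigr)^{N/(N-1)}=4^{-N/(N-1)}kC_1^{-1/(N-1)}$, so it is plausible, but you would need to verify that the Moser--Trudinger trace inequality of \cite{Cianchi08} applies to $W^{1,N}(B_M)$ functions under the mean-zero normalization with a dimension-only constant, which is exactly what the $W^1L^{N,\infty}$/BMO machinery the paper uses is designed to deliver cleanly; the paper's route also avoids dealing with the truncation index $k$ appearing in both numerator and denominator. In short: your method for the level-set bounds is a legitimate alternative, but the missing treatment of the domain-dependence of the existence constant in Theorem $5.1.2$ is a real hole.
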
 
\begin{proof}
	We repeat the ideas used in the proof of Theorem \ref{existence_subcritical_p<N}, so we only point out the main differences. 
	
	As before, the first step is obtaining solutions $u_m^n$ to 
	\[
	\begin{cases}
	-\lap_Nu_m^n + 2g_n(x,u_m^n)=2\mu_m & \mbox{ in } B_m \\
	u_m^n=0 & \mbox{ on } \pd B_m .
	\end{cases}
	\]
	This could be achieved by using Theorem 5.1.2 of \cite{Veron16}, which guarantees the existence of a solution to
	\be\label{eq_veron_auxiliary}
	\begin{cases}
	-\lap_Nu + g_n(x,u)=\mu& \mbox{ in } B_m \\
	u=0 & \mbox{ on } \pd B_m
	\end{cases}
	\ee
	provided that $\norm{\mu}_{\mathfrak{M}_b}$ is bounded by $\left( \frac{C_0}{N\gamma}\right) ^{N-1}$, where $C_0=C_0(B_m)$ is a constant that may depend on the domain $B_m$. Note that, since we intend to take $m$ to infinity, this could be troublesome for us. Indeed, if $C_0(B_m)$ happens to vanish as $m\goesto \infty$ then requiring that the bound holds for all $m$ would lead to the conclusion that $\mu\equiv0$. Let us see that we can work around this problem.
	
	A look at the proof of Theorem 5.1.2 of \cite{Veron16} shows that the constant $C_0$ is exactly the constant in the estimate 
	\[\abs{\left\lbrace x \in B_m : \ \abs{u_\e}>k\right\rbrace }\leq C(N,B_m)e^{-C_0 k \norm{\mu}_{\mathfrak{M}_b}^{\frac{1}{1-N}}} \]
	which holds for solutions $u_\e$ to problem \eqref{eq_veron_auxiliary} with $\mu$ replaced by a regularized $\mu_\e$. Since any such solution satisfies $T_k(u_\e)\in W^{1,N}_0\left( B_m\right) $ and $\norm{\nabla T_k(u_\e)}_N\leq \left( Ck\norm{\mu}_{\mathfrak{M}_b}\right) ^\frac{1}{N}$ we have, as noted in Remark \ref{improvement}, that we can use the results of \cite{Cianchi08} to replace the above estimate with
	\[\abs{\left\lbrace x \in B_m : \ \abs{u_\e}>k\right\rbrace }\leq C(N,B_m)e^{-c_1(N) k \norm{\mu}_{\mathfrak{M}_b}^{\frac{1}{1-N}}} \]
	for some $c_1(N)$ independent of $B_m$. Hence, by applying the same argument as in \cite{Veron16}, but with the above estimate, it is easy to see that in fact a solution $u_m^n$ exists if we assume $\norm{2\mu_m}_{\mathfrak{M}_b}\leq \left( \frac{c_1}{N\gamma}\right) ^{N-1}$.
	
	Next, as in the case $1<p<N$, we obtain a limit function $u_m$ which we claim is a renormalized solution of
	\[
	\begin{cases}
	-\lap_Nu_m + 2 g(u_m)\mathcal{H}=2\mu_m & \mbox{ in } B_m \\
	u_m=0 & \mbox{ on } \pd B_m .
	\end{cases} 
	\]
	The proof of this claim is as before: by using that
	\[
	\int_h^\infty \tilde{g}(s)e^{-\gamma N s}ds \goesto 0 
	\]
	as $h\goesto\infty$, we can apply Lemmas \ref{L_levelset_Trace}, \ref{general_passage_to_trace_nonlinearity} and \ref{general_stability_in_balls} to show that $u_m$ solves the above equation.
	
	In the final step, we similarly obtain a limit function $u$ which yields a renormalized solution to 
	\[
	\begin{cases}
	-\lap_Nu = 0 \mbox{ in } \R^N_+ \\
	\abs{\nabla u}^{N-2}u_\n+g(u)=\mu & \mbox{ on } \pd \R^N_+
	\end{cases} 
	\]
	provided we can show \eqref{condition_general_existence_global_solution} holds. By Lemma \ref{general_passage_to_trace_nonlinearity_R^N}, it is enough to obtain 
	\[
	\int_{\left\lbrace\abs{u_m}\geq k \right\rbrace \cap \pd\R^N_+\cap B_M}\tilde{g}(\abs{u_m})dx'\goesto 0
	\]
	and
	\[
	\int_{\left\lbrace\abs{u}\geq k \right\rbrace \cap \pd\R^N_+\cap B_M}\tilde{g}(\abs{u})dx'\goesto 0
	\]
	as $k\goesto \infty$, uniformly on $m$.
	
	As in the case $1<p<N$ we want to estimate the averages of the solutions and proceed as in the proof of Lemma \ref{L_levelset_Trace}. Let us first observe that by the results of \cite{DHM00} the solutions $u_m$ belong the Lorentz-Sobolev space $W^1L^{N,\infty}\left( B_m\right) $, i.e., the space of weakly differentiable functions in $B_m$ such that (the absolute value of) their derivative belongs to the Lorentz space $L^{N,\infty}\left( B_m\right)$ (note that in the case $p=N$ renormalized solutions have well defined $L^1_{loc}$ derivatives; see Remark \ref{remark_on_existence_distributional_gradient}). 
	Moreover, one has 
	\[
	\norm{\nabla u_ m}_{L^{N,\infty}\left( B_m\right) }\leq C(N) \norm{2\mu_m}_{\mathfrak{M}_b}^\frac{1}{N-1} \leq C(N) \norm{2\mu}_{\mathfrak{M}_b}^\frac{1}{N-1} 
	\] 
	and so 
	\[
	\norm{u_m}_{BMO\left( B_M\right) }\leq  C(N,B_M)\norm{2\mu}_{\mathfrak{M}_b}^\frac{1}{N-1} 
	\] 
	for any $m\geq M$ (see also \cite{CP98}). Here $BMO\left( B_M\right) $ is the space of $L^N\left( B_M\right) $ functions of bounded mean oscillation (see \cite{DHM00} for a definition of $BMO$). On the other hand, by Theorem 2.5 of \cite{Cianchi08}, we can assert
	\[
	\int_{B_M\cap \pd\R^N_+}e^{C_1\frac{\abs{u_m-(u_m)_M}}{\norm{\nabla u_ m}_{L^{N,\infty}\left( B_m\right)}}}dx' \leq C(B_M,N)
	\]
	for some constant $C_1(N)$ and where $(u_m)_M$ is the average of $u_m$ in $B_M$. Hence, just as in Lemma \ref{L_levelset_Trace}, we obtain
	\begin{multline*}
	\abs{\left\lbrace x \in B_M \cap \pd\R^N_+ \ : \ \abs{u_m-(u_m)_M}>k\right\rbrace }\leq C(N,B_M)e^{-C_1 k \norm{\nabla u_ m}_{L^{N,\infty}\left( B_M\right) }^{-1}} \\ \leq C(N,B_M)e^{-c_2 k \norm{2\mu}_{\mathfrak{M}_b}^{\frac{1}{1-N}}} 
	\end{multline*}
	with $c_2=c_2(N)$. Since
	\[
	\left\lbrace\abs{u_m}\geq k\right\rbrace  \subset \left\lbrace\abs{u_m-(u_m)_M}\geq k-\abs{(u_m)_M}\right\rbrace ,
	\]
	we then get
	\begin{multline*}
	\abs{\left\lbrace x \in B_M \cap \pd\R^N_+ \ : \ \abs{u_m}>k\right\rbrace }  \leq C(N,B_M)e^{-c_2 (k-\abs{(u_m)_M}) \norm{2\mu}_{\mathfrak{M}_b}^{\frac{1}{1-N}}} \\ \leq  C(N,B_M,\mu)e^{-c_2 k\norm{2\mu}_{\mathfrak{M}_b}^{\frac{1}{1-N}}}
	\end{multline*}
	where we have used that $\abs{(u_m)_M}\leq C(N)\norm{u_m}_{BMO\left( B_M\right) }$. Thus we have
	\[
	\int_{B_M\cap\left\lbrace\abs{u_m}\geq k\right\rbrace \cap\pd\R^N_+}\tilde{g}(\abs{u_m})dx'\leq C(N,B_M,\mu)\int_k^\infty \tilde{g}(s)e^{-c_2 s\norm{2\mu}_{\mathfrak{M}_b}^{\frac{1}{1-N}}} ds
	\]
	which vanishes as $k\goesto \infty$ provided $\norm{2\mu}_{\mathfrak{M}_b}\leq \left( \frac{c_2}{N\gamma}\right) ^{N-1}$ (note that the above inequality can be obtained by the same argument as the one used in the case $1<p<N$).
	
	We now obtain the same estimate for $u$. We note that $\norm{\nabla u_m}_{L^{N,\infty}\left( B_M\right) }\leq C$ implies, by definition, that
	\[
	\abs{\left\lbrace\abs{\nabla u_m}>\l\right\rbrace }\leq C\l^{-N}
	\]
	for all $\l>0$ (see \cite{T98}). Since $\nabla u_m\goesto \nabla u$ $a.e.$ in $B_M$, by Fatou's Lemma we obtain that
	\[
	\abs{\left\lbrace\abs{\nabla u}>\l\right\rbrace }\leq C\l^{-N}
	\]
	for all $\l>0$ such that $\abs{\left\lbrace\abs{\nabla u}=\l\right\rbrace }=0$ and so, in particular, for $a.e.$ $\l>0$. Hence, by density, the bound can be seen to hold for all $\l>0$ and, again by definition, we obtain $\norm{\nabla u}_{L^{N,\infty}\left( B_M\right) }\leq C$. Thus, all the above computations remain true for $u$ and the desired estimate holds. 
	 
	Hence, we obtain a solution to the problem provided $\norm{\mu}_{\mathfrak{M}_b}\leq\frac{1}{2} \left( \frac{c}{N\gamma}\right) ^{N-1}$ where
	\[
	c=c(N)=\min\left( c_1(N),c_2(N)\right) .
	\]
\end{proof}

\begin{remark}\label{remark_on _convergence_of_trace}
	Note that we have proven stability of solutions without using any type of convergence of $u_m^n$ to $u_m$ (or of $u_m$ to $u$) in $\pd\R^N_+$. On the other hand, it is rather natural to expect that $g(u_m^n)\goesto g(u_m) $ strongly in $L^1\left( B_m\cap\pd\R^N_+\right) $ and $g(u_m)\goesto g(u)$ strongly in $L^1\left( \pd\R^N_+\cap B_M\right) $ for any $M\in \N$. Let us see that in fact we can assume convergence in $\pd\R^N_+$.
	
	Indeed, by Lemma \ref{general_stability_in_balls} we have $T_k(u_m^n)\goesto T_k(u_m)$ strongly in $W_0^{1,p}\left( B_m\right) $. Then, by Proposition 2.3.8 of \cite{AdamsHedberg}, up to a subsequence, $T_k(u_m^n)\goesto T_k(u_m)$ $cap_{1,p,N}-q.e.$ in $B_m$. By taking $k\in \N$ we may extract a diagonal subsequence $\{u_m^{n_j}\}_j$ from $\{u_m^{n}\}_n$ such that $T_k(u_m^{n_j})\goesto T_k(u_m)$ $cap_{1,p,N}-q.e.$ in $B_m$ as $j\goesto\infty$ for any $k\in \N$. We relabel this subsequence as $\{u_m^{n}\}_n$. Then, since $u_m$ is $cap_{1,p,N}-q.e.$ finite, we conclude that $u_m^n\goesto u_m$ $cap_{1,p,N}-q.e.$ in $B_m$. Hence, we may assume $u_m^n\goesto u_m$ $a.e.$ in $B_m\cap \pd\R^N_+$. Moreover, using the same estimates as in the above proofs, it is easy to show using Vitali's Theorem that this implies $g(u_m^n)\goesto g(u_m)$ strongly in $L^1\left( B_m\cap \pd\R^N_+\right) $. Similarly, one can use the strong convergence of $T_k(u_m)\goesto T_k(u)$ in $W^{1,p}\left( B_M\right) $ guaranteed by Lemma \ref{general_existence_global_solution} to show that, up to a subsequence, $u_m\goesto u$ $a.e.$ in $\pd\R^N_+$ and $g(u_m)\goesto g(u)$ strongly in $L^1\left( \pd\R^N_+\cap B_M\right) $ for any $M\in \N$.
\end{remark}

\section{The supercritical case}\label{supercritical}

We now obtain renormalized solutions to equation \eqref{equation} when $1<p<N$ and the absorption term $g(s)$ does not necessarily satisfy the growth estimates of Assumption \ref{assumption_subcritical}. In this case we can only guarantee existence of solutions if $\mu$ belongs to a subset of $\mathfrak{M}_b\left( \pd\R^N_+\right) $ which, in general, is strictly smaller. 

Throughout this section we will assume that
\[
g:\R\goesto \R \mbox{ is a continuous nondecreasing odd function.}
\]
Note that $g$ satisfies part $(1)$ of Assumption \ref{assumption_subcritical}, and that if $\tilde{g}$ is the function defined in part $(2)$ of Assumption \ref{assumption_subcritical} then $\tilde{g}(s)=g(s)$ ($s\geq 0$). Note also that $g(\abs{s})=\abs{g(s)}$ for any $s\in \R$.

As in the subcritical case, our starting point is the existence results for the problem 
\be \label{equation_B-VHV}
\begin{cases}
-\lap_pu + g(x,u)=\mu  & \mbox{ in } \O \\
u= 0 & \mbox{ on } \pd\O .
\end{cases}
\ee
We will use the existence results obtained in \cite{B-VHV14} (see also \cite{Veron16}), which rest mainly on the study of the Wolff potential of the measure $\mu$. As it turns out, the estimates involved are well suited to study trace problems such as ours. 

We begin by defining the $R$-truncated Wolff potential of a nonnegative measure $\mu\in \mathfrak{M}_b\left( \R^N\right) $ by
\[
W_{\a,s,N}^R\left[ \mu\right] (x)=\int_0^R\left( \frac{\mu\left( B_t(x)\right) }{t^{N-\a s}}\right) ^\frac{1}{s-1}\frac{dt}{t}
\]
where $\a>0$, $1<s<\a^{-1}N$, $0<R\leq \infty$, and $B_t(x)$ is the $N$-dimensional ball of radius $t$ centered at $x\in \R^N$. If $R=\infty$ we just write $W_{\a,s,N}\left[ \mu\right] $.

It follows immediately from the definition that if $\mu$ is supported in $\pd\R^N_+$ then 
\begin{align}\label{identity_trace_potential}
W_{1-\frac{1}{p},p,N-1}^R\left[ \mu\right] (x')&=\int_0^R\left( \frac{\mu\left( B_t(x')\times \left\lbrace x_N=0\right\rbrace \right) }{t^{N-1-(p-1)}}\right) ^\frac{1}{p-1}\frac{dt}{t}\\ \nonumber
&=\int_0^R\left( \frac{\mu\left( B_t(x',0)\cap \pd\R^N_+\right) }{t^{N-p}}\right) ^\frac{1}{p-1}\frac{dt}{t}\\ \nonumber
&=W_{1,p,N}^R\left[ \mu\right] (x',0)
\end{align}
for any $1<p<N$, $x'\in \R^{N-1}$. 
Moreover, we clearly have $B_t(x',x_N)\cap\pd\R^N_+\subset B_t(x',0)\cap\pd\R^N_+$ and so
\be\label{bound_trace_potential}
W_{1,p,N}^R\left[ \mu\right] (x',x_N)\leq W_{1-\frac{1}{p},p,N-1}^R\left[ \mu\right] (x')
\ee
for any $(x',x_N)\in \R^N$. 

\begin{remark}\label{remark_on_wolff_potential_estimate}
	Let us record the following important relationship between Wolff potential and $p-$ superharmonic functions: if $u$ is a nonnegative $p-$ superharmonic function in $\O$, $1<p\leq N$, and $-\lap_pu=\mu$ in $\O$ then there exists positive constants $c_1$, $c_2$, $c_3$, depending only on $p$ and $N$, such that for any $x\in\O$ and $B_{3r}(x)\subset\subset \O$ there holds
	\[
	c_1 W_{1,p,N}^r\left[\mu\right] (x)\leq u(x) \leq c_2\inf_{B_r(x)}u + c_3  W_{1,p,N}^r\left[\mu\right] (x)
	\]
	(see \cite{HKM}, \cite{PV08} or \cite{Veron16}). 
\end{remark}

The following existence result is Theorem 4.1 of \cite{B-VHV14}.
\begin{thm}\label{B-VHV14}
	Let $\O$ be a bounded domain and let $g:\O\times \R\goesto \R$  be a Caratheodory function such that $s\mapsto g(x,s)$ is nondecreasing and odd for $a.e.$ $x\in \O$. Then there exists a constant $c=c(N,p)$ such that the following is true: if $\mu_i \in \mathfrak{M}_b\left( \O\right) $, $i=1,2$, are nonnegative and there exists nondecreasing sequences $\left\lbrace\mu_{i,n}\right\rbrace _n$ of nonnegative measures in $\mathfrak{M}_b\left( \O\right) $ with compact support in $\O$ converging to $\mu_i$ weakly and such that $g\left( cW_{1,p,N}^{2\diam{\O}}\left[ \mu_{i,n}\right] \right) \in L^1\left( \O\right) $ then there exists a renormalized solution of
 		\[
		\begin{cases}
		-\lap_pu + g(x,u)=\mu_1-\mu_2  & \mbox{ in } \O \\
		u= 0 & \mbox{ on } \pd\O .
		\end{cases}
		\]
	Moreover,
	\be\label{eq_estimate_B-VHV14}-cW_{1,p,N}^{2\diam{\O}}\left[ \mu_2\right](x)\leq u(x) \leq cW_{1,p,N}^{2\diam{\O}}\left[ \mu_1\right](x) \mbox{ $a.e.$ in } \O . \ee
\end{thm}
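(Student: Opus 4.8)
The statement coincides with Theorem 4.1 of \cite{B-VHV14}, so the plan is to reproduce that argument; I outline the main steps. Fix the nondecreasing sequences $\left\lbrace\mu_{i,n}\right\rbrace_n$ provided by the hypothesis. First I would work with one $n$ at a time: since $\mu_{i,n}$ has compact support in $\O$, mollify it to get nonnegative $\mu_{i,n}^\e\in L^\infty\left( \O\right) $ with $\mu_{i,n}^\e\leq \mu_{i,n}$ as measures, $\mu_{i,n}^\e\goesto \mu_{i,n}$ weakly, and $\norm{\mu_{i,n}^\e}_{L^1\left( \O\right) }\leq \norm{\mu_{i,n}}_{\mathfrak{M}_b}$. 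Since $\mu_{1,n}^\e-\mu_{2,n}^\e\in L^\infty\left( \O\right) $ lies in $W^{-1,p'}\left( \O\right) $ and $s\mapsto g(x,s)$ is nondecreasing (the absorption structure), the operator $u\mapsto -\lap_p u + g(x,u)$ is bounded, coercive and pseudomonotone from $W^{1,p}_0\left( \O\right) $ into its dual, so Leray--Lions theory produces $u_n^\e\in W^{1,p}_0\left( \O\right) $ solving $-\lap_p u_n^\e + g(x,u_n^\e)=\mu_{1,n}^\e-\mu_{2,n}^\e$ in $\O$.

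The next step is the two-sided Wolff estimate. Let $w_{i,n}^\e$ be the nonnegative $p$-superharmonic solution of $-\lap_p w=\mu_{i,n}^\e$ in $\O$ with zero boundary values. Since $w_{1,n}^\e\geq 0$ we have $-\lap_p w_{1,n}^\e + g(x,w_{1,n}^\e)=\mu_{1,n}^\e + g(x,w_{1,n}^\e)\geq \mu_{1,n}^\e\geq \mu_{1,n}^\e-\mu_{2,n}^\e$, so $w_{1,n}^\e$ is a supersolution of the equation for $u_n^\e$ with the same (zero) boundary data, whence $u_n^\e\leq w_{1,n}^\e$ by comparison; applying the same to $-u_n^\e$ (which, as $g$ is odd, solves the problem with datum $\mu_{2,n}^\e-\mu_{1,n}^\e$) gives $u_n^\e\geq -w_{2,n}^\e$. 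Now feed each $w_{i,n}^\e$ into the pointwise estimate of Remark \ref{remark_on_wolff_potential_estimate}, on balls $B_{3r}(x)$ with $r\sim\diam{\O}$ and absorbing the $\inf$ term because $\O$ is bounded, to obtain $w_{i,n}^\e(x)\leq c\,W_{1,p,N}^{2\diam{\O}}\left[ \mu_{i,n}^\e\right] (x)\leq c\,W_{1,p,N}^{2\diam{\O}}\left[ \mu_{i,n}\right] (x)$ for a.e. $x$, with $c=c(N,p)$. Hence $-c\,W_{1,p,N}^{2\diam{\O}}\left[ \mu_{2,n}\right] \leq u_n^\e\leq c\,W_{1,p,N}^{2\diam{\O}}\left[ \mu_{1,n}\right] $ a.e., and by monotonicity and oddness of $g$, $\abs{g(x,u_n^\e)}\leq g\!\left( c\,W_{1,p,N}^{2\diam{\O}}\left[ \mu_{1,n}\right] \right) + g\!\left( c\,W_{1,p,N}^{2\diam{\O}}\left[ \mu_{2,n}\right] \right) $, which by hypothesis is a fixed $L^1\left( \O\right) $ function independent of $\e$.

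With this domination (hence equi-integrability, by Vitali) and the uniform $\mathfrak{M}_b$-bound on the data, the stability theorem for renormalized solutions (Theorem \ref{DMOP99_convergence_of_u_n_to_limit} together with the stability results of \cite{DMOP99}, \cite{M05}) gives, along a subsequence, $u_n^\e\goesto u_n$ a.e. with $u_n$ a renormalized solution of $-\lap_p u_n + g(x,u_n)=\mu_{1,n}-\mu_{2,n}$ in $\O$ satisfying the same two-sided bound; moreover $\int_\O\abs{g(x,u_n)}\,dx\leq \abs{\mu_{1,n}-\mu_{2,n}}\left( \O\right) $ is uniformly bounded (the testing inequality, cf. Proposition \ref{Veron16_bound_on_g}). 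Because $\left\lbrace\mu_{i,n}\right\rbrace_n$ is nondecreasing with weak limit $\mu_i$, one has $\mu_{i,n}\leq\mu_i$ setwise, so $W_{1,p,N}^{2\diam{\O}}\left[ \mu_{i,n}\right] \uparrow W_{1,p,N}^{2\diam{\O}}\left[ \mu_i\right] $ pointwise, the latter being finite a.e. since $\mu_i$ is a bounded measure; thus the $u_n$ are squeezed between two $n$-independent a.e.-finite functions. Running the stability theorem once more, now along $n$, yields $u_n\goesto u$ a.e., $g(x,u_n)\goesto g(x,u)$ in $L^1\left( \O\right) $, and $u$ a renormalized solution of \eqref{equation_B-VHV}; passing to the a.e. limit in the two-sided bound and using $W_{1,p,N}^{2\diam{\O}}\left[ \mu_{i,n}\right] \leq W_{1,p,N}^{2\diam{\O}}\left[ \mu_i\right] $ gives \eqref{eq_estimate_B-VHV14}.

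The \emph{main obstacle} is the equi-integrability of $\left\lbrace g(x,u_n)\right\rbrace_n$ across the approximation parameter: one must upgrade the per-$n$ hypothesis $g\!\left( c\,W_{1,p,N}^{2\diam{\O}}\left[ \mu_{i,n}\right] \right) \in L^1\left( \O\right) $ to a uniform domination, i.e. to $g\!\left( c\,W_{1,p,N}^{2\diam{\O}}\left[ \mu_i\right] \right) \in L^1\left( \O\right) $, which by monotone convergence requires control of $\int_\O g\!\left( c\,W_{1,p,N}^{2\diam{\O}}\left[ \mu_{i,n}\right] \right) dx$; this is precisely where the careful construction of the approximating sequences (and, if needed, a diagonal extraction) in \cite{B-VHV14} enters. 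Everything else is the by now standard interplay of comparison with $p$-potentials, the Wolff-potential pointwise estimate, and the stability theorem of \cite{DMOP99}.
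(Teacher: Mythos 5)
First, a point of reference: the paper does not prove this statement at all — it is imported verbatim as ``Theorem 4.1 of \cite{B-VHV14}'' and used as a black box, so there is no internal proof to compare against. Your sketch is therefore judged on its own merits as a reconstruction of the external argument, and it does follow the right overall route (approximation, comparison with $p$-potentials, the Wolff upper bound, stability). Two steps, however, are genuinely gapped. The first is the very first existence claim: for an unbounded Caratheodory $g$ the map $u\mapsto -\lap_p u + g(x,u)$ is \emph{not} in general a bounded operator from $W^{1,p}_0\left( \O\right) $ to its dual, so Leray--Lions theory does not apply directly. The actual argument (and the one this thesis itself reproduces in Lemma \ref{comparison_pple}) first replaces $g$ by its truncations $T_k(g)$, solves with bounded nonlinearity, and removes the truncation at the end using the $L^1$ domination; your sketch skips this layer entirely.

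The second and more serious gap is the one you flag yourself but do not close: equi-integrability of $\left\lbrace g(x,u_n)\right\rbrace _n$ across $n$. The hypothesis gives $g\left( cW_{1,p,N}^{2\diam{\O}}\left[ \mu_{i,n}\right] \right) \in L^1\left( \O\right) $ for \emph{each} $n$, not a single dominating $L^1$ function, and $g\left( cW_{1,p,N}^{2\diam{\O}}\left[ \mu_{i}\right] \right) $ need not be integrable — if it were, the hypothesis would have been stated that way. The mechanism that actually closes the argument in \cite{B-VHV14} (and which this thesis mimics in the proof of Theorem \ref{B-VHV14_trace}) is different: because the $\mu_{i,n}$ are nondecreasing, a comparison lemma lets one choose the approximate solutions $u_n$ (or rather the barriers $u_{n,i}$ solving the problems with data $\mu_{i,n}$) nondecreasing in $n$; combined with the uniform testing-inequality bound $\int_\O\abs{g(x,u_n)}\,dx\leq \abs{\mu_{1,n}-\mu_{2,n}}\left( \O\right) \leq \norm{\mu_1}_{\mathfrak{M}_b}+\norm{\mu_2}_{\mathfrak{M}_b}$, monotone convergence yields $g(x,u_{n,i})\uparrow$ an $L^1$ limit, which then dominates $\abs{g(x,u_n)}$ and gives the needed equi-integrability. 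Without this monotone-selection step the passage $n\to\infty$ does not go through. A smaller imprecision: the two-sided Wolff bound cannot be obtained from Remark \ref{remark_on_wolff_potential_estimate} with $r\sim\diam{\O}$, since that estimate requires $B_{3r}(x)\subset\subset\O$, which fails near $\pd\O$; one needs the global upper estimate for $p$-superharmonic functions vanishing on $\pd\O$ (Theorem 2.1 of \cite{PV08}), exactly as invoked in Lemma \ref{trace_potential_estimate}.
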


Our first goal is to improve estimate \eqref{eq_estimate_B-VHV14} from $a.e.$ in $\O$ to $cap_{1,p,N}-q.e.$ in $\O$.

\begin{lem}\label{trace_potential_estimate}
	Let $\O$ be a bounded domain and let $g:\O\times \R\goesto \R$ be a Caratheodory function such that $s\mapsto g(x,s)$ is nondecreasing and $sg(x,s)\geq 0$ for $a.e.$ $x\in \O$ and all $s\in \R$. Let $\mu \in \mathfrak{M}_b\left( \O\right)$ and let $u$ be a renormalized solution to
	\[
	\begin{cases}
	-\lap_pu + g(x,u)=\mu  & \mbox{ in } \O \\
	u= 0 & \mbox{ on } \pd\O.
	\end{cases}
	\]
	Then
	\[-cW_{1,p,N}^{2\diam{\O}}\left[ \mu^-\right](x)\leq u(x) \leq cW_{1,p,N}^{2\diam{\O}}\left[ \mu^+\right](x)  \]
	$cap_{1,p,N}-q.e.$ in $\O$ with $c=c(N,p)$ as in the statement of Theorem \ref{B-VHV14}.
\end{lem}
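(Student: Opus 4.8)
The plan is to upgrade the almost-everywhere estimate \eqref{eq_estimate_B-VHV14} (valid for the specific solution constructed in Theorem \ref{B-VHV14}) to a quasi-everywhere estimate valid for \emph{any} renormalized solution $u$. The natural route is to compare $u$ with the local renormalized solutions of the two pure $p$-Laplace problems with the positive and negative parts of $\mu$, and to use the known pointwise Wolff-potential bounds for $p$-superharmonic functions recorded in Remark \ref{remark_on_wolff_potential_estimate}, together with the fact that a renormalized solution and a $p$-superharmonic solution of the same equation coincide $cap_{1,p,N}$-q.e., not just a.e.

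First I would reduce to the upper bound $u(x)\le cW_{1,p,N}^{2\diam{\O}}[\mu^+](x)$ $cap_{1,p,N}$-q.e., since the lower bound follows by applying the upper bound to $-u$ (which is a renormalized solution of $-\lap_p(-u)+\tilde g(x,-u)=-\mu$ with $\tilde g(x,s)=-g(x,-s)$, still nondecreasing with $s\tilde g\ge0$, so the sign assumptions are preserved). Next, since $g(x,u)\in L^1(\O)$ and $g(x,u)u\ge0$, write $\mu-g(x,u)=\nu$ and split: let $v$ be a renormalized solution (equivalently the $p$-superharmonic solution, by Remark \ref{remark_on_p_superharmonic_representative}) of $-\lap_p v=\mu^+$ in $\O$, $v=0$ on $\pd\O$. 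I would invoke the comparison principle for renormalized solutions (available in \cite{DMOP99}, and used implicitly throughout): since $\mu-g(x,u)\le\mu\le\mu^+$ as measures and the right-hand data are ordered, one gets $u\le v$ a.e. in $\O$. Both $u$ and $v$ are $cap_{1,p,N}$-quasi-continuous (Remark \ref{remark_quasicontinuity_DMOP99}), so by Remark \ref{corolary_to_T_6.1.4} the inequality $u\le v$ improves to $cap_{1,p,N}$-q.e. in $\O$.

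It then remains to bound $v$ $cap_{1,p,N}$-q.e. by $cW_{1,p,N}^{2\diam{\O}}[\mu^+]$. Here $v$ is nonnegative and $p$-superharmonic in $\O$ with $-\lap_p v=\mu^+$, so Remark \ref{remark_on_wolff_potential_estimate} gives, for each $x\in\O$ and each ball $B_{3r}(x)\subset\subset\O$, the estimate $v(x)\le c_2\inf_{B_r(x)}v+c_3W_{1,p,N}^r[\mu^+](x)$; choosing $r$ comparable to $\dist{x}{\pd\O}$ and controlling $\inf_{B_r(x)}v$ via, e.g., the a.e. bound for the particular solution of Theorem \ref{B-VHV14} applied to the pure $p$-Laplacian (where $g\equiv0$, so \eqref{eq_estimate_B-VHV14} reads $v\le cW_{1,p,N}^{2\diam{\O}}[\mu^+]$ a.e.), one gets $v(x)\le cW_{1,p,N}^{2\diam{\O}}[\mu^+](x)$ a.e. Since $v$ is quasi-continuous and the Wolff potential of a measure is $cap_{1,p,N}$-quasi-continuous (it is lower semicontinuous and $cap_{1,p,N}$-q.e. finite when $\mu^+\in\mathfrak{M}_b$, see \cite{AdamsHedberg}), Remark \ref{corolary_to_T_6.1.4} again promotes the a.e. inequality to $cap_{1,p,N}$-q.e. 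Chaining this with $u\le v$ $cap_{1,p,N}$-q.e. finishes the upper bound, and the lower bound follows symmetrically.

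The main obstacle I anticipate is the comparison step $u\le v$: one must make sure the comparison principle for renormalized solutions is applicable with the measure data ordered only as measures (the singular parts are nonnegative on both sides, which is exactly what is needed), and that $g(x,u)\ge0$ where $u\ge0$ is enough to absorb the nonlinear term in the right direction. A careful argument may instead go through the $p$-superharmonic representatives directly: $u$ is bounded above by the $p$-superharmonic solution of $-\lap_p w=\mu^+$ because $-\lap_p u=\mu-g(x,u)$ has a right-hand side $\le\mu^+$, and the comparison principle for $p$-superharmonic functions with measure data (\cite{HKM}) applies once we know $u$ has a $p$-superharmonic majorant, which is standard. The rest is bookkeeping with the quasi-continuity upgrades, which are routine given Remark \ref{corolary_to_T_6.1.4} and Proposition \ref{decomposition}.
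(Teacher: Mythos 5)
The overall architecture of your argument matches the paper's (compare $u$ with a pure $p$-Laplace solution $v$ with datum $\mu^+$, bound $v$ by its Wolff potential via the $p$-superharmonic representative, then promote the a.e. inequality to $cap_{1,p,N}$-q.e. via Remark \ref{corolary_to_T_6.1.4}), but the step you flag as the main obstacle — the comparison $u\le v$ — is exactly where the proposal has a gap, and the route you suggest to close it does not work as stated. There is no general comparison principle for renormalized solutions of $-\lap_p$ with ordered measure data $\mu_1\le\mu_2$ in \cite{DMOP99}: that paper gives stability and only a \emph{partial} uniqueness result (Theorem 10.4), and since uniqueness is open, one cannot simply invoke ``the'' comparison principle. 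Your alternative, applying the comparison principle for $p$-superharmonic functions directly to $u$, also fails, because $u$ itself is not $p$-superharmonic (its right-hand side $\mu-g(x,u)$ is sign-changing), so the \cite{HKM} machinery does not apply to $u$ without some preliminary reduction.

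The paper closes this gap by working at the level of truncations, where the data become $\mathfrak{M}_0$-measures: for each $k$, $T_k(u)$ solves a truncated equation with datum $\mu_0\chi_{\{|u|<k\}}+\l_k^+-\l_k^-$, all of which are absolutely continuous with respect to $cap_{1,p,N}$. One then introduces $v_k$ solving $-\lap_p v_k=\mu_0^+\chi_{\{|u|<k\}}+\l_k^+$ and invokes an adaptation of Lemma 6.8 of \cite{PV08}, which \emph{does} provide a comparison for equations with $\mathfrak{M}_0$-data and an absorption term (this is where the sign condition $sg(x,s)\ge 0$ and the monotonicity of $g(x,\cdot)$ in $s$ are actually used, to discard the nonlinear term). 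Passing to the limit in $k$ via the stability theorem of \cite{DMOP99} produces the desired majorant $v$ with $-\lap_p v=\mu^+$ and $u\le v$ a.e. This truncation-and-pass-to-the-limit step is the essential missing ingredient in your write-up; without it, the proposal does not constitute a proof. Incidentally, your argument for the Wolff bound on $v$ is also more circuitous than needed: the pointwise estimate $\tilde v\le cW_{1,p,N}^{2\diam\O}[\mu^+]$ for the $p$-superharmonic representative $\tilde v$ is cited directly from Theorem 2.1 of \cite{PV08}, so there is no need to combine Remark \ref{remark_on_wolff_potential_estimate} with the a.e. estimate of Theorem \ref{B-VHV14}.
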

\begin{proof}
	We know that for every $k\geq 0$ the functions $T_k(u)=u_k$ are renormalized solutions to 
	\[
	\begin{cases}
	-\lap_p u_k + g(x,u_k)\chi_{\left\lbrace\abs{u}<k\right\rbrace }=\mu_0\chi_{\left\lbrace\abs{u}<k\right\rbrace } + \l_k^+ - \l_k^-  & \mbox{ in } \O \\
	u= 0 & \mbox{ on } \pd\O
	\end{cases}
	\]
	for some nonnegative measures $\l_k^\pm\in \mathfrak{M}_b\left( \O\right) $ that converge to $\mu_s^\pm$ in the narrow topology of measures. 
	Let $v_k$ be a renormalized solution to  
	\[
	\begin{cases}
	-\lap_p v_k=\mu_0^+\chi_{\left\lbrace\abs{u}<k\right\rbrace } + \l_k^+  & \mbox{ in } \O \\
	v_k= 0 & \mbox{ on } \pd\O .
	\end{cases}
	\]
	Since $\mu_0^+\chi_{\left\lbrace\abs{u}<k\right\rbrace } + \l_k^+$ is nonnegative we have $v_k\geq 0$ (see Remark 6.5 of \cite{PV08}), and so $g(x,v_k)\geq 0$ $a.e.$ in $\O$. Since $u_k$ is bounded we have $g(x,v_k)\chi_{\left\lbrace u_k>v_k\right\rbrace }\in L^1\left( \O\right) $ and so we can use that $g(x,s)$ is $a.e.$ nondecreasing on $s$ and that all the measures involved are in $\mathfrak{M}_0\left( \O\right)$ to obtain, by an easy adaptation of the proof of Lemma 6.8 of \cite{PV08}, that $u_k\leq v_k$ $a.e.$ in $\O$. By Theorem 3.4 of \cite{DMOP99}, passing to a subsequence we have $v_k\goesto v$ $a.e.$ in $\O$ where $v$ is a renormalized solution to 
	\[
	\begin{cases}
	-\lap_p v=\mu_0^+ + \mu_s^+=\mu^+  & \mbox{ in } \O \\
	v= 0 & \mbox{ on } \pd\O .
	\end{cases}
	\]
	In particular, since $u$ is $a.e.$ finite, $u\leq v$ $a.e.$ in $\O$.
	
	Since $\mu^+$ is nonnegative, by Theorem 2.1 of \cite{PV08} $v$ coincides $a.e.$ in $\O$ with a $p-$ superharmonic function $\tilde{v}$ satisfying 
	\[\tilde{v}(x)\leq cW_{1,p,N}^{2\diam{\O}}\left[ \mu^+\right](x) \]
	in $\O$ where $c=c(N,p)$ is the same constant as in Theorem \ref{B-VHV14} (see the proof of Theorem 3.8 in \cite{B-VHV14}).  
	Moreover, by Theorem 10.9 of \cite{HKM} $\tilde{v}$ is $cap_{1,p,N}-$ quasi-continuous in $\O$. Considering the $cap_{1,p,N}-$ quasi-continuous representative of $u$, and since $u\leq \tilde{v}$ $a.e.$ in $\O$, we can conclude $u\leq \tilde{v}$ $cap_{1,p,N}-q.e.$ in $\O$ (see Remark \ref{corolary_to_T_6.1.4}).
	Hence, 
	\[
	u\leq cW_{1,p,N}^{2\diam{\O}}\left[ \mu^+\right](x) 
	\]
	$cap_{1,p,N}-q.e.$ in $\O$. The lower estimate can be obtained similarly.
\end{proof}

\begin{remark}\label{remark_on_cap_equality_with_psuperharmonic}
	We note that in the second part of the above proof we have used that the $p-$ superharmonic representative of a nonnegative renormalized solutions $u$, mentioned in Remark \ref{remark_on_p_superharmonic_representative}, is a $cap_{1,p,N}-$ quasi-continuous representative of $u$. We will use this fact in the sequel. 
	
	Let us also mention the following: if $u\leq v$ $a.e.$ in $\O$, where $u$ and $v$ are $p-$ superharmonic, then $u\leq v$ everywhere in $\O$. Indeed, this follows from applying Corollary 7.23 of \cite{HKM} to the $p-$ superharmonic function $\min(u,v)$.
\end{remark}

The above estimate is sufficient to obtain local solutions to \eqref{equation}. To obtain global solutions we need to compare solutions defined in  nondecreasing sequences of domains. The following lemma asserts that for a nonnegative measure it is possible to obtain nondecreasing solutions defined on nondecreasing domains.

\begin{lem}\label{comparison_pple}
	Let $\O$ and $\O'$ be bounded domains such that $\O\subset \O'$. Let $\mu \in \mathfrak{M}_b\left( \O\right) $ be nonnegative, compactly supported in $\O$, and assume $g\left( cW_{1,p,N}^{2\diam{\O'}}\left[ \mu\right]\right) \in L^1\left( \O'\right) $ where $g(x,s)$ and $c=c(N,p)$ are as in Theorem \ref{B-VHV14}. Then there exists renormalized solutions $u$ and $v$ to \\
	\be\label{comparison_pple_eq_u}
	\begin{cases}
	-\lap_p u + g(x,u)=\mu  & \mbox{ in } \O \\
	u= 0 & \mbox{ on } \pd\O
	\end{cases}
	\ee
	and
	\be\label{comparison_pple_eq_v}
	\begin{cases}
	-\lap_p v + g(x,v)=\mu & \mbox{ in } \O' \\
	v= 0 & \mbox{ on } \pd\O' ,
	\end{cases}
	\ee
	respectively, such that $u\leq v$ $a.e.$ in $\O$. 
\end{lem}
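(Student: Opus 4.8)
The plan is to build the two solutions simultaneously by the same approximation scheme, so that a comparison principle can be applied at the level of the approximating problems. First I would approximate $\mu$ by a nondecreasing sequence $\{\mu_n\}_n$ of nonnegative measures, compactly supported in $\O$ (hence also in $\O'$), converging to $\mu$ weakly, and with $\mu_n \in L^1(\O) \cap W^{-1,p'}(\O)$ (or even smooth). Since $\mu_n \leq \mu$ we have $W_{1,p,N}^{2\diam{\O'}}[\mu_n] \leq W_{1,p,N}^{2\diam{\O'}}[\mu]$, so the hypothesis $g\left( cW_{1,p,N}^{2\diam{\O'}}[\mu]\right) \in L^1(\O')$ and the monotonicity of $g$ give $g\left( cW_{1,p,N}^{2\diam{\O'}}[\mu_n]\right) \in L^1(\O')$ for every $n$, uniformly dominated. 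For each $n$, the regularity of $\mu_n$ places it in the dual of $W^{1,p}$ restricted to the relevant space, so by monotone operator theory there exist (unique) weak solutions $u_n \in W^{1,p}_0(\O)$ and $v_n \in W^{1,p}_0(\O')$ of \eqref{comparison_pple_eq_u} and \eqref{comparison_pple_eq_v} with datum $\mu_n$.

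Second, I would establish the comparison $u_n \leq v_n$ $a.e.$ in $\O$ at this regularized level. Extend $u_n$ by zero to $\O'$; then on $\O' \setminus \O$ one has $u_n = 0 \leq v_n$ (the latter because $v_n \geq 0$, as the datum $\mu_n$ is nonnegative, cf. Remark 6.5 of \cite{PV08} or the maximum principle for $-\lap_p \cdot + g$). Testing the (weak formulations of the) equations for $u_n$ and $v_n$ against $w = (u_n - v_n)^+ \in W^{1,p}_0(\O')$, subtracting, and using that $\mu_n$ is the common datum, yields
\[
\int_{\O'}\left( \abs{\nabla u_n}^{p-2}\nabla u_n - \abs{\nabla v_n}^{p-2}\nabla v_n\right)\cdot \nabla (u_n - v_n)^+ dx + \int_{\O'}\left( g(x,u_n) - g(x,v_n)\right)(u_n - v_n)^+ dx = 0 .
\]
The monotonicity inequality \eqref{coercitivity} makes the first integrand nonnegative, and since $g(x,\cdot)$ is nondecreasing the second integrand is nonnegative on $\{u_n > v_n\}$; hence both integrals vanish, forcing $\nabla (u_n - v_n)^+ = 0$ $a.e.$, so $(u_n - v_n)^+$ is constant, and being in $W^{1,p}_0(\O')$ it is zero. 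Thus $u_n \leq v_n$ $a.e.$ in $\O'$, in particular in $\O$.

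Third, I would pass to the limit $n \goesto \infty$. The sequence $\{\mu_n\}_n$ is exactly of the form required by Theorem \ref{B-VHV14} (applied with $\mu_1 = \mu$, $\mu_2 = 0$, on $\O$ and on $\O'$ separately), so — after extracting subsequences — $u_n \goesto u$ $a.e.$ in $\O$ and $v_n \goesto v$ $a.e.$ in $\O'$, where $u$ and $v$ are renormalized solutions of \eqref{comparison_pple_eq_u} and \eqref{comparison_pple_eq_v} respectively; here one uses the uniform $L^1$ domination of $g\left( cW_{1,p,N}^{2\diam{\O'}}[\mu_n]\right)$ to guarantee the $L^1$ equi-integrability of $g(x,v_n)$ needed in the stability argument of \cite{B-VHV14}, and likewise for $g(x,u_n)$ on $\O$. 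Passing $a.e.$ convergence through the inequality $u_n \leq v_n$ gives $u \leq v$ $a.e.$ in $\O$, which is the claim.

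The main obstacle is the second step: a clean comparison principle $u_n \leq v_n$ must be run at a level of regularity where test functions like $(u_n - v_n)^+$ are admissible and the identity above is rigorous. Working with the regularized data $\mu_n$ (so that $u_n, v_n \in W^{1,p}_0$ genuinely) is what makes this routine; the delicate points are verifying that the solutions to the regularized problems can indeed be chosen monotone and nonnegative, that $v_n \geq 0$ on $\O' \setminus \O$, and that the approximation $\mu_n \uparrow \mu$ is compatible with both the comparison step and the hypotheses of Theorem \ref{B-VHV14}. A secondary technical nuisance is that $\O, \O'$ are merely bounded domains, so one should be slightly careful that all invocations (e.g. of the structural inequality and of \cite{PV08}) only use properties valid in that generality; since in the intended applications $\O = B_m$ and $\O' = B_{m'}$ this causes no real difficulty.
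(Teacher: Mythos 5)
Your overall plan — approximate the measure, prove the comparison at the level of $W^{1,p}_0$ weak solutions, then pass to the limit — is exactly the paper's strategy, and your explicit derivation of $u_n\leq v_n$ by testing with $(u_n-v_n)^+$ and invoking the structural inequality \eqref{coercitivity} together with the monotonicity of $g(x,\cdot)$ is a clean way to carry out what the paper summarizes as ``the maximum principle''. (To test the $\O$-equation against $(u_n-v_n)^+$ you do need to note that $(u_n-v_n)^+=0$ a.e.\ on $\O'\setminus\O$, since $u_n\equiv 0$ there and $v_n\geq0$, which is precisely what puts $(u_n-v_n)^+|_\O$ in $W^{1,p}_0(\O)$; you do make this observation.)

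The gap is in the claim that ``by monotone operator theory there exist (unique) weak solutions $u_n\in W^{1,p}_0(\O)$, $v_n\in W^{1,p}_0(\O')$''. No growth restriction is placed on $g$ beyond continuity and monotonicity, so the operator $w\mapsto -\lap_p w + g(\cdot,w)$ is in general \emph{not} bounded from $W^{1,p}_0$ to $W^{-1,p'}$ (take $g$ exponential, or $|s|^{q-1}s$ with $q$ large: $g(w)$ need not even be in $L^1$ for $w\in W^{1,p}_0$). Regularizing $\mu$ alone does not remove this obstruction; one must also truncate the nonlinearity. This is precisely what the paper does: it first treats bounded $g$ (where Lemma 4.2 of \cite{B-VHV14} provides the weak solutions $u_n,v_n$ and the renormalized limits $u,v$, and your maximum-principle argument goes through verbatim), and then replaces $g$ by $T_n(g)$ and passes to the limit $n\to\infty$ using the uniform bound $\norm{g\left(cW^{2\diam{\O}}_{1,p,N}[\mu]\right)}_{L^1(\O)}\leq \norm{g\left(cW^{2\diam{\O'}}_{1,p,N}[\mu]\right)}_{L^1(\O')}$ as in Lemma 4.3 of \cite{B-VHV14}. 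Note that the hypothesis $g\left(cW^{2\diam{\O'}}_{1,p,N}[\mu]\right)\in L^1(\O')$ — which you do invoke, but only to bound $g(cW[\mu_n])$ — is really there to make this last truncation-limit step work; without the intermediate truncation of $g$, the $L^1$ bound is of no help for the \emph{existence} of the approximating weak solutions. Once you insert that second layer of approximation, your argument is complete and matches the paper's proof.
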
  
\begin{proof}
	Suppose first that $g$ is bounded. Then Lemma 4.2 of \cite{B-VHV14} shows that the desired solutions $u$ and $v$ exists and that they can be defined as the $a.e.$ limit of sequences $\left\lbrace u_n\right\rbrace _n$ and $\left\lbrace v_n\right\rbrace _n$ of weak solutions to \eqref{comparison_pple_eq_u} and \eqref{comparison_pple_eq_v}, respectively, with data $\mu_n$ converging to $\mu$ in a weak sense. Since the solutions are nonnegative, $u_n \in W_0^{1,p}\left( \O\right) $, and $v_n\in W^{1,p}\left( \O\right) $, the maximum principle shows that $u_n\leq v_n$ $a.e.$ in $\O$. Hence, $u\leq v$ $a.e.$ in $\O$.
	If $g$ is not bounded then one can proceed as in the proof of Lemma 4.3 of \cite{B-VHV14} and consider the truncations $T_n(g)$. The fact that $\norm{g\left(c W_{1,p,N}^{2\diam{\O}}\left[ \mu\right]\right) }_{L^1\left( \O\right)} \leq \norm{g\left(c W_{1,p,N}^{2\diam{\O'}}\left[ \mu\right]\right) }_{L^1\left( \O'\right)} $ shows that one can pass to the limit as $n\goesto \infty$ to obtain solutions that conserve the desired property. 
\end{proof}

Next we use Lemma \ref{trace_potential_estimate} to show that we can obtain solutions with absorption term $ g(u)\mathcal{H}$ from solutions to problem \eqref{equation_B-VHV}.

\begin{lem}\label{passage_to_trace}
	Let $g$ be a continuous nondecreasing odd function, and let $c=c(N,p)$ be the constant in Theorem \ref{B-VHV14}. Let $\mu \in \mathfrak{M}_b\left( \pd\R^N_+ \right)$ be such that $g\circ cW_{1-\frac{1}{p},p,N-1}^{4m}\left[ \mu^\pm\right]$ is in $L^1\left( \pd\R^N_+\cap B_m\right) $. Let $g_n(x,s)$ be defined as in \eqref{def_g_n} and let $u_n$ be renormalized solutions to
	\[
	\begin{cases}
	-\lap_pu_n + g_n(x,u_n)=\mu  & \mbox{ in } B_m \\
	u_n=0 & \mbox{ on } \pd B_m .
	\end{cases}
	\]
	Then there exists a function $u$ and a subsequence of $\left\lbrace u_n\right\rbrace _n$, which we relabel as $\left\lbrace u_n\right\rbrace _n$, such that $u_n\goesto u$ $a.e.$ in $B_m$ and $u$ is a renormalized solution to 
	\be
	\begin{cases}
	-\lap_pu +  g(u)\mathcal{H}=\mu  & \mbox{ in } B_m \\
	u=0 & \mbox{ on } \pd B_m
	\end{cases}
	\ee
	that satisfies
	\be
	\label{trace_control}-cW_{1-\frac{1}{p},p,N-1}^{4m}\left[ \mu^-\right](x')\leq u(x',x_N) \leq cW_{1-\frac{1}{p},p,N-1}^{4m}\left[ \mu^+\right](x')  
	\ee
	$cap_{1,p,N}-$q.e in $B_m$.
\end{lem}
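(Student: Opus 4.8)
The plan is to pass to the limit $n\to\infty$ and identify $u$ as the desired renormalized solution, using four ingredients already available: the $L^1$ bound of Proposition~\ref{Veron16_bound_on_g}, the convergence theorem~\ref{DMOP99_convergence_of_u_n_to_limit}, the two stability lemmas~\ref{general_passage_to_trace_nonlinearity} and~\ref{general_stability_in_balls}, and the Wolff potential estimate of Lemma~\ref{trace_potential_estimate} (applied first to the $u_n$, then to $u$).

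\emph{Uniform estimates and extraction of a limit.} Since $g_n(x,\cdot)$ is nondecreasing and $sg_n(x,s)=\zeta_n(x_N)sg(s)\ge 0$, Proposition~\ref{Veron16_bound_on_g} gives $\|g_n(\cdot,u_n)\|_{L^1(B_m)}\le\abs{\mu}(B_m)$ for every $n$. Hence $u_n$ is a renormalized solution of $-\lap_p u_n=\tilde\mu_n$ in $B_m$ with zero boundary data, where $\tilde\mu_n:=\mu-g_n(\cdot,u_n)$ and $\abs{\tilde\mu_n}(B_m)\le 2\abs{\mu}(B_m)$ uniformly in $n$. Theorem~\ref{DMOP99_convergence_of_u_n_to_limit} then yields, up to a subsequence, a function $u$ satisfying $(1)$ and $(2)$ of Definition~\ref{definition_DMOP99}, finite $cap_{1,p,N}$-q.e., with $u_n\to u$, $\nabla T_k(u_n)\to\nabla T_k(u)$ and $\nabla u_n\to\nabla u$ a.e.\ in $B_m$, $\abs{\nabla u_n}^{p-2}\nabla u_n\to\abs{\nabla u}^{p-2}\nabla u$ strongly in $(L^q(B_m))^N$ for $q<\frac{N}{N-1}$, and $T_k(u_n)\to T_k(u)$ weakly in $W^{1,p}_0(B_m)$. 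Applying Lemma~\ref{trace_potential_estimate} to each $u_n$ (with $\O=B_m$, so $2\diam{B_m}=4m$, and nonlinearity $g_n$) gives
\[
-cW_{1,p,N}^{4m}[\mu^-](x)\le u_n(x)\le cW_{1,p,N}^{4m}[\mu^+](x)\quad cap_{1,p,N}\text{-q.e.\ in }B_m,
\]
with $c=c(N,p)$. Restricting to a hyperplane $\{x_N=t\}$, $\abs{t}<m$, and using~\eqref{identity_trace_potential} and~\eqref{bound_trace_potential}, we obtain $u_n(x',t)\le cW_{1-\frac{1}{p},p,N-1}^{4m}[\mu^+](x')$ on $\{u_n(\cdot,t)>0\}$ and $-u_n(x',t)\le cW_{1-\frac{1}{p},p,N-1}^{4m}[\mu^-](x')$ on $\{u_n(\cdot,t)<0\}$, a.e.\ in the hyperplane, uniformly in $n$ and $t$. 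Since $\tilde g=g$ on $\R_+$ (the continuous nondecreasing odd $g$ satisfies part $(1)$ of Assumption~\ref{assumption_subcritical}), it follows that $\tilde g(\abs{u_n(\cdot,t)})\le\Psi$ with $\Psi:=g\circ cW_{1-\frac{1}{p},p,N-1}^{4m}[\mu^+]+g\circ cW_{1-\frac{1}{p},p,N-1}^{4m}[\mu^-]\in L^1(B_m\cap\pd\R^N_+)$ by hypothesis, uniformly in $n$ and $t$.

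\emph{Trace convergence and the stability condition.} From $T_k(u_n)\to T_k(u)$ weakly in $W^{1,p}_0(B_m)$ and the compactness of the trace operator $W^{1,p}(B_m)\to W^{1-\frac{1}{p},p}(B_m\cap\pd\R^N_+)\hookrightarrow\hookrightarrow L^p(B_m\cap\pd\R^N_+)$, a diagonal argument over $k\in\N$ gives, after passing to a further subsequence, $u_n(\cdot,0)\to u(\cdot,0)$ a.e.\ in $B_m\cap\pd\R^N_+$. Together with $\tilde g(\abs{u_n(\cdot,0)})\le\Psi$, dominated convergence shows $g(u)\in L^1(B_m\cap\pd\R^N_+)$ and also yields the sign-split bound $\tilde g(\abs{u(\cdot,0)})\le\Psi$ a.e. Consequently the quantities of Lemma~\ref{general_passage_to_trace_nonlinearity} satisfy
\[
\delta_t(n,h)+\delta(h)\le 2\int_{\{cW_{1-\frac{1}{p},p,N-1}^{4m}[\mu^+]\ge h\}\cup\{cW_{1-\frac{1}{p},p,N-1}^{4m}[\mu^-]\ge h\}}\Psi\,dx',
\]
and since the Wolff potentials are finite a.e., the right-hand side tends to $0$ as $h\to\infty$, uniformly in $t$ and $n$. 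Hence Lemma~\ref{general_passage_to_trace_nonlinearity} applies and produces condition~\eqref{condition_stablity_balls}.

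\emph{Conclusion.} All hypotheses of Lemma~\ref{general_stability_in_balls} are now in force (its proof uses only part $(1)$ of Assumption~\ref{assumption_subcritical}), so $u$ is a renormalized solution of $-\lap_p u+g(u)\mathcal{H}=\mu$ in $B_m$ with $u=0$ on $\pd B_m$. For~\eqref{trace_control}, I would repeat the comparison argument in the proof of Lemma~\ref{trace_potential_estimate} for this equation: its only structural input is that the absorption term $g(T_k(u))\chi_{\{\abs{u}<k\}}\mathcal{H}$ is nonnegative on $\{T_k(u)>0\}$, which holds; comparing $T_k(u)$ with the solution $v_k$ of $-\lap_p v_k=\mu_0^+\chi_{\{\abs{u}<k\}}+\l_k^+$ (zero boundary data, $v_k\ge 0$), letting $k\to\infty$ to reach the $p$-superharmonic solution $\tilde v$ of $-\lap_p\tilde v=\mu^+$, which satisfies $\tilde v\le cW_{1,p,N}^{4m}[\mu^+]$ and is $cap_{1,p,N}$-quasi-continuous, and using Remark~\ref{corolary_to_T_6.1.4} to upgrade $u\le\tilde v$ from a.e.\ to $cap_{1,p,N}$-q.e., we get $u\le cW_{1,p,N}^{4m}[\mu^+]$ $cap_{1,p,N}$-q.e.\ in $B_m$; the lower bound is symmetric, and~\eqref{bound_trace_potential} converts these into~\eqref{trace_control}. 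The main obstacle is the uniform-in-$(t,n)$ smallness of $\delta_t(n,h)$ and $\delta(h)$: this is where the geometry of the trace problem enters, and it relies both on the uniform Wolff-potential majorant $\Psi$ and on the a.e.\ convergence of the boundary traces $u_n(\cdot,0)\to u(\cdot,0)$, which must be extracted from weak Sobolev convergence and trace compactness rather than from the strong convergence that Lemma~\ref{general_stability_in_balls} provides only a posteriori.
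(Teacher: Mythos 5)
Your proposal is correct and follows the same skeleton as the paper's proof --- the ingredients (Proposition~\ref{Veron16_bound_on_g}, Theorem~\ref{DMOP99_convergence_of_u_n_to_limit}, Lemma~\ref{trace_potential_estimate}, and the two stability lemmas) and the order of application match. The one step where you take a genuinely different route is in passing the trace bound from $u_n$ to the limit $u$, which is precisely what is needed to verify the $\delta(h)\to 0$ hypothesis of Lemma~\ref{general_passage_to_trace_nonlinearity}. You obtain a.e.\ convergence of traces $u_n(\cdot,0)\to u(\cdot,0)$ on $B_m\cap\pd\R^N_+$ from weak $W^{1,p}_0$ convergence of $T_k(u_n)$, compactness of the trace map $W^{1,p}(B_m)\to L^p(B_m\cap\pd\R^N_+)$, and a diagonal extraction over $k$; then dominated convergence against the fixed majorant $\Psi$ gives the a.e.-on-hyperplane bound and the uniform smallness of the tail. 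The paper instead revisits the proof of Lemma~\ref{trace_potential_estimate}: it extracts the intermediate inequality $u_n\le v$ with $v$ a renormalized solution of $-\lap_p v=\mu^+$, passes to the limit $u\le v$ a.e.\ in $B_m$, and upgrades to $cap_{1,p,N}$-q.e.\ via Remark~\ref{corolary_to_T_6.1.4}, so that $u$ inherits~\eqref{trace_control} directly and nothing about trace convergence is needed a priori. Your approach has the advantage of avoiding any worry about whether the comparison function $v$ can be chosen independently of $n$ (it is built from $\chi_{\{|u_n|<k\}}$ in the proof of Lemma~\ref{trace_potential_estimate}, so this is not automatic), since it only uses the uniform Wolff-potential upper bound $u_n\le cW^{4m}_{1,p,N}[\mu^+]$. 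The cost is at the very end: to obtain the $cap_{1,p,N}$-q.e.\ statement~\eqref{trace_control} (rather than only a.e.\ on hyperplanes), you propose to re-run the comparison of Lemma~\ref{trace_potential_estimate} for the limit equation $-\lap_p u+g(u)\mathcal{H}=\mu$, which requires a mild extension of the comparison step (Lemma~6.8 of \cite{PV08}) to a measure-valued absorption in $\mathfrak{M}_0$; the adaptation is unproblematic because the test function used there is bounded and in $W^{1,p}_0$, but it is an extra check. Alternatively, once Lemma~\ref{general_stability_in_balls} gives strong $W^{1,p}_0$ convergence of the truncates, one can get $u_n\to u$ $cap_{1,p,N}$-q.e.\ via Proposition~2.3.8 of \cite{AdamsHedberg} and a diagonal argument, and pass the q.e.\ bound to the limit without repeating any comparison; this is what the paper's Remark~\ref{remark_on _convergence_of_trace} does for a related purpose, and it would streamline your final step.
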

\begin{proof}
	By Lemma \ref{trace_potential_estimate} the functions $u_n$ satisfy
	\[-cW_{1,p,N}^{4m}\left[ \mu^-\right](x)\leq u_n(x) \leq cW_{1,p,N}^{4m}\left[ \mu^+\right](x) \ \mbox{ $cap_{1,p,N}-q.e.$ in $B_m$ }  \]
	and so, by \eqref{bound_trace_potential}, they satisfy estimate \eqref{trace_control}. 
	
	Since the measure $\mu$ is bounded independent of $n$, Proposition \ref{Veron16_bound_on_g} implies that the same is true for the measures $\tilde{\mu}_n = \mu-g_n(x,u_n)$. Hence, we can apply Theorem \ref{DMOP99_convergence_of_u_n_to_limit} and obtain that, up to a subsequence there exists, a suitable behaved function $u$ defined in $B_m$ such that $u_n\goesto u$ $a.e.$ in $B_m$ as $n\goesto \infty$. 
	
 	Using than $u_n$ satisfies \eqref{trace_control}, which holds $a.e.$ in the intersection of $B_m$ with any hyperplane, and that $g$ is nondecreasing we conclude
	\begin{multline*}
	\int_{\left\lbrace u_n\geq h\right\rbrace \cap \left\lbrace x_N=t\right\rbrace }g\left( u_n\right) (x',t)dx' \\ \leq \int_{\left\lbrace cW_{1-\frac{1}{p},p,N-1}^{4m}\left[\mu^+\right]\geq h\right\rbrace \cap \pd\R^N_+\cap B_m}g\left( cW_{1-\frac{1}{p},p,N-1}^{4m}\left[ \mu^+\right]\right) (x')dx'	\goesto 0 
	\end{multline*}
	as $h\goesto\infty$ and similarly
	\begin{multline*}
	\int_{\left\lbrace u_n\leq -h\right\rbrace \cap \left\lbrace x_N=t\right\rbrace }-g\left( u_n\right) (x',t)dx' \\ \leq \int_{\left\lbrace cW_{1-\frac{1}{p},p,N-1}^{4m}\left[ \mu^-\right]\geq h\right\rbrace \cap \pd\R^N_+ \cap B_m}g\left( cW_{1-\frac{1}{p},p,N-1}^{4m}\left[\mu^-\right]\right) (x')dx' \goesto 0 
	\end{multline*}
	as $h\goesto\infty$, since $g\left( cW_{1-\frac{1}{p},p,N-1}^{4m}\left[\mu^\pm\right]\right) \in L^1\left( B_m\cap \pd\R^N_+\right) $. 
	
	The above estimates can be seen to hold also for the limit function $u$.  Indeed, it is enough to show that $u$ also satisfies \eqref{trace_control}. Looking at the proof of Lemma \ref{trace_potential_estimate}, we see that we obtain the right hand side of estimate \eqref{trace_control} for $u_n$ from the inequality $u_n\leq v$ for some particular renormalized solution $v$. Using that $u$ is the $a.e.$ limit of the $u_n$ we get $u\leq v$ $a.e.$ in $B_m$. Then, considering $cap_{1,p,N}-$ quasi-continuous representatives, we conclude $u\leq v$ $cap_{1,p,N}-q.e.$ in $B_m$ (see Remark \ref{corolary_to_T_6.1.4}), and so, proceeding as in the proof of the lemma, we obtain that the right hand side of estimate \eqref{trace_control} also holds for $u$. The left hand side estimate follows in the same way, and so \eqref{trace_control} holds for $u$. With this estimate we obtain 
	\[\int_{\left\lbrace\abs{u}\geq h\right\rbrace \cap \pd\R^N_+}\abs{g\left( u\right)} dx'\goesto 0\]
	as $h\goesto \infty$ and $g(u)\in L^1\left( B_m\cap\pd\R^N_+\right) $. Hence we apply Lemma \ref{general_passage_to_trace_nonlinearity} together with Lemma \ref{general_stability_in_balls} to finish the proof.	
\end{proof}

We are now ready to show the following trace version of Theorem \ref{B-VHV14}. Recall that we assume $1<p<N$. 

\begin{thm}\label{B-VHV14_trace}
	Let $g$ be a continuous nondecreasing odd function, and let $c_1=2^{\frac{1}{p-1}}c(N,p)$ where $c(N,p)$ is the constant in Theorem \ref{B-VHV14}. Assume $\mu_i \in \mathfrak{M}_b\left( \pd\R^N_+\right) $, $i=1,2$, are nonnegative and for every $m\in \N$ there exists nondecreasing sequences $\left\lbrace\mu^m_{i,k}\right\rbrace _k$ of nonnegative measures in $\mathfrak{M}_b\left( \pd\R^N_+\right) $ with compact support in $B_m\cap\pd\R^N_+$ converging to $\mu_i^m=\left. \mu_i\right|_{B_m\cap\pd\R^N_+}$ weakly-$\ast$ in $\mathfrak{M}_b\left( B_m\cap\pd\R^N_+\right) $ such that $g\circ c_1W_{1-\frac{1}{p},p,N-1}^{4(m+1)}\left[ \mu^m_{i,k}\right]$ is in $L^1\left( \pd\R^N_+\cap B_m\right) $ and $\mu^m_{i,k}\leq \mu^{m+1}_{i,k}$ for each $k\in \N$. Then there exists a renormalized solution of
	\[
	\begin{cases}
	-\lap_pu=0  & \mbox{ in } \R^N_+ \\
	\abs{\nabla u}^{p-2}u_\n + g(u)= \mu_1 - \mu_2 & \mbox{ on } \pd\R^N_+ .
	\end{cases}
	\]
	Moreover,
	\be\label{eq_control_solution}
	-c_1W_{1-\frac{1}{p},p,N-1}\left[ \mu_2\right](x')\leq u(x',x_N) \leq c_1W_{1-\frac{1}{p},p,N-1}\left[ \mu_1\right](x')  
	\ee
	$cap_{1,p,N}-q.e.$ in $\overline{\R^N_+}$.
\end{thm}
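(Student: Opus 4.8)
\emph{Plan of proof.} As in the proofs of Theorems~\ref{existence_half_space} and~\ref{existence_subcritical_p<N}, the strategy is to pass through the associated problem in the whole space. Concretely, I would construct a local renormalized solution $u$ of
\[
-\lap_p u = 2(\mu_1-\mu_2) - 2g(u)\mathcal{H} \quad\text{in }\R^N,
\]
symmetric with respect to $\pd\R^N_+$, with $g(u)\in L^1\left(\pd\R^N_+\right)$ and satisfying $-c_1W_{1-\frac{1}{p},p,N-1}[\mu_2](x')\le u(x',x_N)\le c_1W_{1-\frac{1}{p},p,N-1}[\mu_1](x')$ $cap_{1,p,N}$-q.e.\ in $\R^N$. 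Theorem~\ref{thm_on_existence_from_symmetry}, applied with the bounded measure $\mu_1-\mu_2-g(u)\mathcal{H}\in\mathfrak{M}_b\left(\pd\R^N_+\right)$ (and using that $g(u)\mathcal{H}\in\mathfrak{M}_0\left(\R^N\right)$, that $g(u)\in L^1$, and that $u$ is finite a.e.\ on $\pd\R^N_+$ by Proposition~\ref{cap_finite}), then shows that the restriction of $u$ to $\R^N_+$ is a renormalized solution of the half-space problem; the bound~\eqref{eq_control_solution} follows at once since $\overline{\R^N_+}\subset\R^N$.

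\emph{Solutions in balls.} Fix $m,k\in\N$. For each $n$, I would apply Theorem~\ref{B-VHV14} in $\O=B_m$ to the nonlinearity $(x,s)\mapsto 2g_n(x,s)=2\zeta_n(x_N)g(s)$ from~\eqref{def_g_n} (which is nondecreasing and odd in $s$) and to the nonnegative data $2\mu^m_{1,k}$, $2\mu^m_{2,k}$ taken as their own constant approximating sequences: the admissibility requirement reduces, via~\eqref{bound_trace_potential}, Fubini (recall $\norm{\zeta_n}_{L^1}=1$), the homogeneity $W^{R}_{1,p,N}[2\nu]=2^{\frac{1}{p-1}}W^{R}_{1,p,N}[\nu]$, and the hypothesis $g\circ c_1W^{4(m+1)}_{1-\frac1p,p,N-1}[\mu^m_{i,k}]\in L^1\left(\pd\R^N_+\cap B_m\right)$ together with $W^{4m}\le W^{4(m+1)}$, to the stated $L^1$ condition; this is also where the constant $c_1=2^{\frac1{p-1}}c(N,p)$ enters. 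Lemma~\ref{passage_to_trace}, applied with nonlinearity $2g$ and datum $2\mu^m_{1,k}-2\mu^m_{2,k}$, then produces (along a subsequence in $n$) a renormalized solution $u_{m,k}$ of $-\lap_p u + 2g(u)\mathcal{H} = 2\mu^m_{1,k}-2\mu^m_{2,k}$ in $B_m$ with zero boundary values, obeying
\[
-c_1W^{4m}_{1-\frac1p,p,N-1}[\mu^m_{2,k}](x')\le u_{m,k}(x',x_N)\le c_1W^{4m}_{1-\frac1p,p,N-1}[\mu^m_{1,k}](x')
\]
$cap_{1,p,N}$-q.e.\ in $B_m$, and $\norm{g(u_{m,k})}_{L^1\left(B_m\cap\pd\R^N_+\right)}\le\norm{\mu_1}_{\mathfrak{M}_b}+\norm{\mu_2}_{\mathfrak{M}_b}$ by Proposition~\ref{Veron16_bound_on_g}.

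\emph{The two limits.} Using the comparison principle of Lemma~\ref{comparison_pple} — applied to the auxiliary problems carrying only $\mu^m_{1,k}$ and, through the oddness of $g$, only $\mu^m_{2,k}$, and exploiting the monotonicity of $\{\mu^m_{i,k}\}_k$ and of $\{\mu^m_{i,k}\}_m$ — one can choose the $u_{m,k}$ so that $\{g(u_{m,k})\}_k$, and afterwards $\{g(u_m)\}_m$, are dominated in $L^1$ on the relevant pieces of $\pd\R^N_+$ by fixed integrable functions. Letting $k\to\infty$ with $m$ fixed, Lemma~\ref{general_existence_of_limit_functions} gives $u_{m,k}\to u_m$ a.e.\ in $B_m$, dominated convergence gives $g(u_{m,k})\to g(u_m)$ in $L^1\left(B_m\cap\pd\R^N_+\right)$ with $g(u_m)\in L^1$, and a fixed-ball stability argument modeled on Lemmas~\ref{general_stability_in_balls} and~\ref{general_existence_global_solution} shows $u_m$ solves $-\lap_p u_m + 2g(u_m)\mathcal{H}=2\mu_1^m-2\mu_2^m$ in $B_m$; passing the pointwise bounds through the limit (Remark~\ref{corolary_to_T_6.1.4} and the $cap_{1,p,N}$-quasi-continuity of the Wolff potentials) gives $-c_1W^{4m}_{1-\frac1p,p,N-1}[\mu_2]\le u_m\le c_1W^{4m}_{1-\frac1p,p,N-1}[\mu_1]$ $cap_{1,p,N}$-q.e., hence the $m$-independent bound $\abs{u_m(x',x_N)}\le c_1\big(W_{1-\frac1p,p,N-1}[\mu_1]+W_{1-\frac1p,p,N-1}[\mu_2]\big)(x')$. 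Letting now $m\to\infty$: since $2\mu_1^m-2\mu_2^m-2g(u_m)\mathcal{H}$ is the restriction to $B_m$ of the fixed bounded measure $2\mu_1-2\mu_2$ minus the uniformly $L^1$-bounded term $2g(u_m)\mathcal{H}$, Lemma~\ref{general_existence_of_limit_functions} yields $u_m\to u$ a.e.\ in $\R^N$; the uniform Wolff bound and the $m$-monotonicity provide condition~\eqref{condition_general_existence_global_solution} (via Lemma~\ref{general_passage_to_trace_nonlinearity_R^N}, or directly from $g(u_m)\to g(u)$ in $L^1_{loc}\left(\pd\R^N_+\right)$), so Lemma~\ref{general_existence_global_solution} makes $u$ a local renormalized solution of $-\lap_p u=2(\mu_1-\mu_2)-2g(u)\mathcal{H}$ in $\R^N$. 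Finally, every $u_m$ solves an equation whose right-hand side is concentrated on $\pd\R^N_+$, so Theorem~\ref{symmetry} makes each $u_m$, and therefore $u$, symmetric with respect to $\pd\R^N_+$; the two-sided Wolff bound descends to $u$ (a.e., hence $cap_{1,p,N}$-q.e.\ by Remark~\ref{corolary_to_T_6.1.4}) and restricts to $\pd\R^N_+$ through~\eqref{identity_trace_potential} and~\eqref{bound_trace_potential}, which is~\eqref{eq_control_solution}. Together with the reduction step this completes the proof.

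\emph{Main difficulty.} The crux is the control of the absorption term in the two passages to the limit. Since $g$ obeys no growth restriction, $g\big(W_{1-\frac1p,p,N-1}[\mu_i]\big)$ need not be integrable, so the pointwise Wolff bounds by themselves do not dominate the families $\{g(u_{m,k})\}$ or $\{g(u_m)\}$ by a fixed $L^1$ function; the device for getting around this is to use the comparison principle (Lemma~\ref{comparison_pple}) to build monotone approximating families carrying one of the two measures at a time, for which monotone/dominated convergence simultaneously passes to the limit and forces $g(u)\in L^1\left(\pd\R^N_+\right)$. A secondary point is ensuring that the \emph{two-sided, $cap_{1,p,N}$-quasi-everywhere} Wolff estimate (not merely an a.e.\ one) survives the limits, which rests on Remark~\ref{corolary_to_T_6.1.4}, the quasi-continuity of Wolff potentials, and the hyperplane inequality~\eqref{bound_trace_potential}.
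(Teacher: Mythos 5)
Your proposal follows essentially the same route as the paper's proof: approximate the boundary nonlinearity by $g_n=\zeta_n g$, invoke Theorem~\ref{B-VHV14} / Lemma 4.3 of the cited reference on $B_m$, pass $n\to\infty$ via Lemma~\ref{passage_to_trace}, build the auxiliary one-sided solutions carrying only $\mu^m_{i,k}$ and use Lemma~\ref{comparison_pple} plus the monotonicity in $k$ and $m$ to produce an $L^1$ dominating family, take $k\to\infty$ and then $m\to\infty$ via Lemmas~\ref{general_existence_of_limit_functions}, \ref{general_passage_to_trace_nonlinearity_R^N}, \ref{general_existence_global_solution}, and finish with Theorems~\ref{symmetry} and \ref{thm_on_existence_from_symmetry}. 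One step you gloss over and should spell out: a.e.\ convergence $u_{m,k}\to u_m$ in the $N$-dimensional ball does not by itself give a.e.\ convergence on the hyperplane $B_m\cap\pd\R^N_+$; the paper closes this by passing to the $p$-superharmonic representatives of the monotone auxiliary solutions (Remark~\ref{remark_on_cap_equality_with_psuperharmonic}, Lemma 7.3 and Theorem 10.9 of \cite{HKM}) to upgrade monotone a.e.\ convergence to $cap_{1,p,N}$-q.e.\ convergence, which via Proposition~\ref{trace&extensionE} gives the needed a.e.\ convergence of traces before one can run the Monotone/Vitali argument for $g$.
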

\begin{proof}
	Let $g_n(x,s)$ be defined as in \eqref{def_g_n}. It is easy to see that $g_n(x,s)=\zeta_n(x_N)g(s)$ satisfies the assumptions of Theorem \ref{B-VHV14}. Note that $\mu^m_{i,k}$ has compact support in $B_m$. Since 
	\[W_{1-\frac{1}{p},p,N-1}^{4(m+1)}\left[ \mu^m_{i,k}\right](x')\geq W_{1,p,N}^{4m}\left[ \mu^m_{i,k}\right] (x',x_N)\]
	and $g$ is nondecreasing we obtain
	\[
	\norm{g_n\left( cW_{1,p,N}^{4m}\left[ 2\mu^m_{i,k}\right] \right)}_{L^1\left( B_m\right) } \leq C(n,m)\norm{g\left(c_1W_{1-\frac{1}{p},p,N-1}^{4m}\left[ \mu^m_{i,k}\right] \right) }_{L^1\left( B_m\cap\pd\R^N_+\right) }<\infty.
	\]
	Hence, we may apply Lemma 4.3 of \cite{B-VHV14} to obtain renormalized solutions $u_m^{n,k}$ and $u_{m,i}^{n,k}$, with $i=1,2$, of 
	\be\label{general_eqn_g_n}
	\begin{cases}
	-\lap_p u + 2g_n(x,u)=2 \mu & \mbox{ in } B_m \\
	u= 0 & \mbox{ on } \pd B_m
	\end{cases}
	\ee
	with data $\mu=\mu^m_{1,k}-\mu^m_{2,k}$, and $\mu=\mu^m_{i,k}$, respectively, satisfying  
	\[-u_{m,2}^{n,k}\leq u_m^{n,k}\leq u_{m,1}^{n,k} \]
	$a.e.$ in $B_m$. Let us remark that by the same lemma we can assume $u_{m,i}^{n,k}\leq u_{m,i}^{n,k+1}$ $a.e.$ in $B_m$. Note also that the functions $u_{m,i}^{n,k}$ are nonnegative (proceed as in Remark 6.5 of \cite{PV08}, testing against $T_k(\min(u_{m,i}^{n,k},0))$ and using the hypothesis on $g$). 
	
	For any fixed $n$, $k$, and $m$, the measures $\mu^m_{i,k}$ satisfy all the necessary conditions to guarantee, again by Lemma 4.3 of \cite{B-VHV14}, the existence of renormalized solutions $w_{m,i}^{n,k}$, $i=1,2$, to problem \eqref{general_eqn_g_n} with data $\mu=\mu^m_{i,k}$ in $B_{m+1}$. Since $\mu^m_{i,k}\leq \mu^{m+1}_{i,k}$ we can combine the results of Lemma 4.3 of \cite{B-VHV14} with Lemma \ref{comparison_pple} above to further assume $u_{m,i}^{n,k}\leq w_{m,i}^{n,k} \leq u_{m+1,i}^{n,k}$ $a.e.$ in $B_m$. That is, we may assume the solutions $u_{m,i}^{n,k}$ are nondecreasing in $m$. 
	
	Now, applying Lemma \ref{passage_to_trace} we take $n\goesto \infty$ to obtain renormalized solutions $u_m^k$, and $u_{m,i}^k$ to 
	\be\label{general_eqn}
	\begin{cases}
		-\lap_p u + 2 g(u)\mathcal{H}=2 \mu & \mbox{ in } B_m \\
		u= 0 & \mbox{ on } \pd B_m
	\end{cases}
	\ee
	with data $\mu=\mu^m_{1,k}-\mu^m_{2,k}$, and $\mu=\mu^m_{i,k}$, respectively. By Lemma \ref{general_stability_in_balls} (which is used in the proof of Lemma \ref{passage_to_trace}), we have $T_h(u_{m,i}^{n,k})\goesto T_h(u_{m,i}^k)$ and $T_h(u_{m}^{n,k})\goesto T_h(u_m^k)$ strongly in $W_0^{1,p}\left( B_m\right) $ for any $h>0$. Since renormalized solutions are $cap_{1,p,N}-q.e.$ finite, we can use Proposition 2.3.8 of \cite{AdamsHedberg} to obtain, passing to a diagonal subsequence, that $u_{m,i}^{n,k}\goesto u_{m,i}^k$ and $u_m^{n,k}\goesto u_m^k$ $cap_{1,p,N}-q.e.$ in $B_m$ (see Remark \ref{remark_on _convergence_of_trace}). Hence, we can assume that
	\begin{multline}\label{LDC_control_2}
	-cW_{1-\frac{1}{p},p,N-1}^{4m}\left[ 2\mu^m_{2,k}\right](x')\leq 
	-u_{m,2}^{k}(x',t)\leq u_m^k(x',t) \\ \leq u_{m,1}^k(x',t) 
	\leq cW_{1-\frac{1}{p},p,N-1}^{4m}\left[ 2\mu^m_{1,k}\right](x')
	\end{multline}
	$cap_{1,p,N}-q.e.$ in $\R^N$. Here we have used again that we are considering $cap_{1,p,N}-$ quasi-continuous representatives, so that we can extend the inequalities from $a.e.$ in $B_m$ to $cap_{1,p,N}-q.e.$ in $B_m$ (see Remark \ref{corolary_to_T_6.1.4}). Note that we also can assume
	\[0\leq u_{m,i}^k(x)\leq u_{m,i}^{k+1}(x) \mbox{ and } u_{m,i}^k(x)\leq u_{m+1,i}^k(x)\]
	$cap_{1,p,N}-q.e.$ in $B_m$, and so in particular $a.e.$ in $B_m\cap \pd\R^N_+$. 
	
	Next we fix $m\in \N$. Since the measures $\mu^m_{i,k}$ are uniformly bounded in norm by $ \mu_i$ we obtain by Proposition \ref{Veron16_bound_on_g} that
	\[
	\norm{g(u_{m,i}^{n,k})}_{L^1\left( B_m\right) } \leq \norm{ \mu_i}_{\mathfrak{M}_b}.
	\]
	By Lemma \ref{general_passage_to_trace_nonlinearity} we have
	\[
	\lim_{n\goesto\infty}\int_{B_m}\phi g_n(x,u_{m,i}^{n,k})dx= \int_{B_m\cap\pd\R^N_+} \phi  g(u_{m,i}^k)dx'
	\]
	for any $\phi\in C_0^\infty\left( B_m\right) $. Thus,  
	\[
	\norm{g(u_{m,i}^{k})}_{L^1\left( B_m\cap \pd\R^N_+\right) } \leq \norm{ \mu_i}_{\mathfrak{M}_b}
	\]
	and by \eqref{LDC_control_2}
	\[
	\norm{g(u_{m}^{k})}_{L^1\left( B_m\cap \pd\R^N_+\right) } \leq \norm{ \mu_1}_{\mathfrak{M}_b} + \norm{ \mu_2}_{\mathfrak{M}_b}.
	\]
	
	With the above estimates we can apply Theorem \ref{DMOP99_convergence_of_u_n_to_limit} to obtain the existence of subsequences such that $u_m^k\goesto u_m$ and $u_{m,i}^k\goesto u_{m,i}$ $a.e.$ in $B_m$ as $k\goesto \infty$ for some suitable behaved functions $u_m$ and $u_{m,i}$. 
	Note that there is no loss of generality in assuming $u^k_{m,i}$ coincides with its $p-$ superharmonic representative mentioned in Remark \ref{remark_on_cap_equality_with_psuperharmonic}, so that in particular we can assume $u_{m,i}^k$ are nondecreasing in $k$ everywhere in $B_m$. Then, Lemma 7.3 of \cite{HKM} shows that $\sup_k u_{m,i}^k$ is a $p-$ superharmonic function in $B_m$ and so, by Theorem 10.9 of \cite{HKM}, also $cap_{1,p,N}-$ quasi-continuous in $B_m$. Hence, $\sup_k u_{m,i}^k$ is a $cap_{1,p,N}-$ quasi-continuous representative of $u_{m,i}$, and we can assume $u_{m,i}^k\goesto u_{m,i}$ $cap_{1,p,N}-q.e.$ in $B_m$. Thus, considering $cap_{1,p,N}-$ quasi-continuous representatives, we conclude from \eqref{LDC_control_2} that
	\begin{multline}\label{LDC_control_3}
	-cW_{1-\frac{1}{p},p,N-1}^{4m}\left[ 2\mu^m_{2}\right](x')\leq -u_{m,2}(x',t)\leq u_m(x',t) \\ \leq u_{m,1}(x',t) \leq cW_{1-\frac{1}{p},p,N-1}^{4m}\left[ 2\mu^m_{1}\right](x')
	\end{multline}
	$cap_{1,p,N}-q.e.$ in $\R^N$.
	
	Since we have the estimate 
	\[\norm{g(u_{m,i}^k)}_{L^1\left(B_m\cap\pd\R^N_+\right) }\leq \norm{\mu_i}_{\mathfrak{M}_b}<\infty\]
	and $u_{m,i}^k$ are nondecreasing in $k$ and nonnegative, we obtain by Monotone Convergence that $g(u_{m,i}^k)\goesto g(u_{m,i})\in L^1\left(B_m\cap\pd\R^N_+\right)$ and moreover
	\[
	\norm{g(u_{m,i})}_{L^1\left(B_m\cap\pd\R^N_+\right) }\leq \norm{\mu_i}_{\mathfrak{M}_b} . 
	\]
	Then, by slightly modifying the arguments leading to Corollary 3.5 of \cite{B-VHV14} we obtain that $u_{m,i}$ is a renormalized solution to \eqref{general_eqn} with data $\mu=\mu^m_i$. Indeed, to obtain the same stability result we only need to consider the terms $ g(u_{m,i}^k)\mathcal{H}$ and $ g(u_{m,i})\mathcal{H}$, since the focus of the corollary is the handling of the measures $\mu_{m,i}^k$ in order to apply the stability result of \cite{DMOP99}. But, replacing this stability result by the one in \cite{M05}, we see by the proof of Lemma \ref{general_stability_in_balls} (or Lemma \ref{general_existence_global_solution}) that we can prove stability provided
	\[
	\lim_{k\goesto\infty}\int_{B_{m} \cap \pd\R^N_+} \phi_kg(u_{m,i}^k)dx'= \int_{B_{m} \cap \pd\R^N_+}\phi g(u_{m,i})dx'
	\]
	for any $\left\lbrace\phi_k\right\rbrace _k$ converging to $\phi$ both $a.e.$ in $B_m$ and weakly in $W_0^{1,p}\left( B_m\right) $ and such that $\phi_k$ is uniformly bounded in $L^\infty\left( B_m\right) $. By Lemma \ref{general_passage_to_trace_nonlinearity_R^N}, it is enough to show that 
	\[
	\int_{\left\lbrace\abs{u_{m,i}^k}\geq h \right\rbrace \cap \pd\R^N_+\cap B_m}\abs{g(u_{m,i}^k)}dx' + \int_{\left\lbrace\abs{u_{m,i}}\geq h \right\rbrace \cap \pd\R^N_+\cap B_m}\abs{g(u_{m,i})}dx'  \goesto 0
	\]
	as $h\goesto\infty$ uniformly in $k$. But this is clearly true since $u_{m,i}^k$ are nonnegative and $g(u_{m,i}^k)\uparrow g(u_{m,i}) \in L^1\left(B_m\cap\pd\R^N_+\right)$.
	
	Similarly, we can show that $u_m$ is a renormalized solution to \eqref{general_eqn} with data $\mu=\mu^m_1-\mu^m_2$ provided we show that 
	\[
	\int_{\left\lbrace\abs{u_m}\geq h \right\rbrace \cap \pd\R^N_+\cap B_m}\abs{g(u_m)}dx'\goesto 0
	\]
	and
	\[
	\int_{\left\lbrace \abs{u_m^k}\geq h \right\rbrace \cap \pd\R^N_+\cap B_m}\abs{g(u_m^k)}dx'\goesto 0
	\]
	as $h\goesto \infty$, uniformly in $k$. Now, by the monotonicity of $g$ and the fact that $g$ is odd, we conclude from estimate \eqref{LDC_control_2} that
	\[\abs{g(u_m^k)}\leq g(u_{m,1}^k) + g(u_{m,2}^k)\leq g(u_{m,1}) + g(u_{m,2}) \]
	while from \eqref{LDC_control_3} we have
	\[\abs{g(u_m)}\leq g(u_{m,1}) + g(u_{m,2}) .\]
	Moreover 
	\[\left( \left\lbrace\abs{u_m}\geq h \right\rbrace\cup\left\lbrace\abs{u_m^k}\geq h \right\rbrace\right)  \subset \left\lbrace u_{m,1}\geq h\right\rbrace  \cup \left\lbrace u_{m_2}\geq h\right\rbrace  \]
	outside a set of zero measure in $B_m\cap\pd\R^N_+$. Hence, since $g(u_{m,1}) + g(u_{m,2})\in L^1\left( B_m\cap\pd\R^N_+\right) $, the desired estimates hold. 
	
	To finish we can proceed exactly as in the proof of Theorem \ref{existence_subcritical_p<N}. Indeed, putting $\mu_m=\mu^m_1-\mu^m_{2}$ and using the uniform boundedness of $\norm{g(u_m)}_{L^1\left(B_m\cap\pd\R^N_+\right)}$ we can apply Lemma \ref{general_existence_of_limit_functions} to obtain a suitable behaved function $u$ as the limit of the $u_m$. Note that we can also take the limit of the $u_{m,i}$ to obtain suitable functions $u_i$. 
	
	As we argued above, using that $u_{m,i}$ are nondecreasing in $m$ and passing to $cap_{1,p,N}-$ quasi-continuous representatives, we obtain from \eqref{LDC_control_3} that
	\begin{multline}\label{LDC_control_4}
	-cW_{1-\frac{1}{p},p,N-1}\left[ 2\mu_{2}\right](x')\leq -u_{2}(x',t)\leq u(x',t) \\ \leq u_{1}(x',t) \leq cW_{1-\frac{1}{p},p,N-1}\left[2 \mu_{1}\right](x')
	\end{multline}
	$cap_{1,p,N}-q.e.$ in $\R^N$. 
	
	Next, we want to show that for any given $M\in \N$
	\[
	\lim_{m\goesto\infty}\int_{B_{M} \cap \pd\R^N_+} \phi_mg(u_m)dx'= \int_{B_{M} \cap \pd\R^N_+}\phi g(u)dx'
	\]
	for any $\left\lbrace\phi_m\right\rbrace _m$ converging to $\phi$ both $a.e.$ in $B_M$ and weakly in $W_0^{1,p}\left( B_M\right) $ and such that $\phi_m$ are uniformly bounded in $L^\infty\left( B_M\right)$. By Lemma \ref{general_passage_to_trace_nonlinearity_R^N}, it is enough to show that 
	\[
	\int_{\left\lbrace\abs{u}\geq h \right\rbrace \cap \pd\R^N_+\cap B_M}\abs{g(u)}dx'\goesto 0
	\]
	and
	\[
	\int_{\left\lbrace\abs{u_m}\geq h \right\rbrace \cap \pd\R^N_+\cap B_M}\abs{g(u_m)}dx'\goesto 0
	\]
	as $h\goesto \infty$, uniformly on $m$. From \eqref{LDC_control_3}, \eqref{LDC_control_4}, and the hypothesis on $g$ we conclude
	\[\abs{g(u_m)}\leq g(u_{m,1}) + g(u_{m,2})\leq g(u_{1}) + g(u_{2})  \mbox{ and } \abs{g(u)}\leq g(u_{1}) + g(u_{2}) \]
	in $B_M\cap\pd\R^N_+$. By the uniform bounds 
	\[
	\norm{g(u_{m,i})}_{L^1\left(B_m\cap\pd\R^N_+\right) }\leq \norm{\mu_i}_{\mathfrak{M}_b} 
	\]
	we use that $u_{m,i}$ are nondecreasing in $m$ to conclude that 
	\[
	\norm{g(u_{i})}_{L^1\left(\pd\R^N_+\right) }\leq \norm{\mu_i}_{\mathfrak{M}_b} .
	\]
	Then, the desired estimates follow and $g(u)\in L^1\left( \pd\R^N_+\right) $. Thus, by Lemma \ref{general_existence_global_solution} we obtain that $-\lap_p u +2 g(u)\mathcal{H}=2\mu$ in $\R^N$. Applying Theorem \ref{symmetry} and Theorem \ref{thm_on_existence_from_symmetry} we obtain that the restriction of $u$ to $\R^N_+$ is a solution of the desired problem satisfying \eqref{LDC_control_4} (which gives \eqref{eq_control_solution}).
\end{proof}

\begin{remark}
		As in Remark \ref{remark_on _convergence_of_trace}, we observe that it can be shown that, passing to subsequences if necessary, $g(u_m^{n,k})\goesto g(u_m^k)$ and $g(u_m^k)\goesto g(u_m)$ strongly in $L^1\left( B_m\cap\pd\R^N_+\right) $, and $g(u_m)\goesto g(u)$ strongly in $L^1\left( \pd\R^N_+\cap B_M\right)$ for any $M\in \N$.
\end{remark}

Theorem \ref{B-VHV14_trace} can be used to obtain existence of renormalized solutions when $g$ satisfies more explicit conditions. For example, we have the following application to the case when $g(s)$ is dominated by a power function. 

\begin{thm}\label{cor1_supercritical}
	Assume $1<p<N$ and let $g:\R\goesto \R$ be a continuous nondecreasing odd function such that
	\[\abs{g(s)}\leq C\abs{s}^{q} \mbox{ for all } \abs{s}\geq \abs{s_0}\]
	for some $C>0$, $q>p-1$, and $s_0\in \R$. If $\mu \in \mathfrak{M}_b\left( \pd\R^N_+\right)$ is absolutely continuous with respect to $cap_{p-1, \frac{q}{q-p+1},N-1}$ then there exists a renormalized solution to \eqref{equation} with datum $\mu$. 
\end{thm}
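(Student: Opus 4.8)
The plan is to deduce the statement directly from Theorem \ref{B-VHV14_trace}, so that everything reduces to checking its hypotheses. First I would write $\mu=\mu^+-\mu^-$ for the Jordan decomposition and set $\mu_1=\mu^+$, $\mu_2=\mu^-$; these are nonnegative, and since capacity-null sets are hereditary, the absolute continuity of $\mu$ with respect to $cap_{p-1,\frac{q}{q-p+1},N-1}$ passes to $\mu^+$ and $\mu^-$. It then remains to construct, for every $m\in\N$ and $i\in\{1,2\}$, a nondecreasing sequence $\left\lbrace\mu^m_{i,k}\right\rbrace_k$ of nonnegative measures with compact support in $B_m\cap\pd\R^N_+$, converging weakly-$\ast$ to $\mu^m_i:=\left.\mu_i\right|_{B_m\cap\pd\R^N_+}$, satisfying $\mu^m_{i,k}\le\mu^{m+1}_{i,k}$ for each $k$, and such that $g\circ c_1W^{4(m+1)}_{1-\frac1p,p,N-1}\left[\mu^m_{i,k}\right]\in L^1\left(\pd\R^N_+\cap B_m\right)$, where $c_1=2^{\frac1{p-1}}c(N,p)$.

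The key analytic input, and the only genuinely non-routine step, is a Hedberg--Wolff-type fact from nonlinear potential theory: since each $\mu_i$ is a nonnegative measure on $\R^{N-1}$ absolutely continuous with respect to the Bessel capacity $cap_{p-1,\frac{q}{q-p+1},N-1}$, one can find a nondecreasing sequence $\left\lbrace\nu_{i,k}\right\rbrace_k$ of nonnegative compactly supported measures on $\R^{N-1}$ with $\nu_{i,k}\uparrow\mu_i$ and $W_{1-\frac1p,p,N-1}\left[\nu_{i,k}\right]\in L^q\left(\R^{N-1}\right)$ for all $k$. This is exactly the approximation lemma underlying the power-nonlinearity application of Theorem \ref{B-VHV14} in \cite{B-VHV14}, which rests on \cite{PV08}: the point is that $W_{1-\frac1p,p,N-1}$ is a genuine Wolff potential, of order $\a=1-\frac1p<1$ and summation exponent $s=p$, that $\a s=p-1<N-1$ (here $1<p<N$ is used), and that its ``dual exponent'' $\frac{q}{q-s+1}=\frac{q}{q-p+1}$ is precisely the one appearing in the capacity of the hypothesis (and $q>p-1=s-1$), so that the arguments of \cite{PV08} transfer to $\R^{N-1}$ verbatim.

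Granting this, I would set $\mu^m_{i,k}:=\left.\nu_{i,k}\right|_{B_{m-1/k}\cap\pd\R^N_+}$, identified with a measure on $\pd\R^N_+$ through the bijection of Chapter \ref{Capacities}. Each such measure is nonnegative with compact support inside $\overline{B_{m-1/k}}\subset B_m$; monotonicity in $k$ follows from $\nu_{i,k}\le\nu_{i,k+1}$ and $B_{m-1/k}\subset B_{m-1/(k+1)}$, monotonicity in $m$ from $B_{m-1/k}\subset B_{m+1-1/k}$, and for any $\phi\in C_0\left(B_m\cap\pd\R^N_+\right)$ one has $\supp{\phi}\subset B_{m-1/k}$ for $k$ large, so $\int\phi\,d\mu^m_{i,k}=\int\phi\,d\nu_{i,k}\to\int\phi\,d\mu_i=\int\phi\,d\mu^m_i$, which is the required weak-$\ast$ convergence. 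Finally, since $\mu^m_{i,k}\le\nu_{i,k}$ and the truncated potential is dominated by the full one, monotonicity of the Wolff potential and of $g$, together with $g\ge0$ on $[0,\infty)$ ($g$ being odd and nondecreasing), give $0\le g\left(c_1W^{4(m+1)}_{1-\frac1p,p,N-1}\left[\mu^m_{i,k}\right]\right)\le g\left(c_1W_{1-\frac1p,p,N-1}\left[\nu_{i,k}\right]\right)\le C\left(1+\left(c_1W_{1-\frac1p,p,N-1}\left[\nu_{i,k}\right]\right)^q\right)$, using $\abs{g(s)}\le C\abs{s}^q$ for $\abs{s}\ge\abs{s_0}$ and the boundedness of $g$ on $\left[-\abs{s_0},\abs{s_0}\right]$; as $B_m\cap\pd\R^N_+$ has finite $(N-1)$-dimensional measure and $W_{1-\frac1p,p,N-1}\left[\nu_{i,k}\right]\in L^q\left(\R^{N-1}\right)$, the right-hand side lies in $L^1\left(\pd\R^N_+\cap B_m\right)$. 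Theorem \ref{B-VHV14_trace} then produces a renormalized solution $u$ with $\abs{\nabla u}^{p-2}u_\n+g(u)=\mu_1-\mu_2=\mu$ on $\pd\R^N_+$. The hard part is the potential-theoretic fact of the second paragraph --- transferring the Phuc--Verbitsky characterization of capacitary absolute continuity to the hyperplane Wolff potential $W_{1-\frac1p,p,N-1}$ and matching all the indices correctly; the rest is bookkeeping of the kind already carried out in the proof of Theorem \ref{B-VHV14_trace}.
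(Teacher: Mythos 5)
Your proof follows the paper's approach --- reduce to Theorem \ref{B-VHV14_trace} via the Wolff-potential approximation lemma (Theorem 2.6 of \cite{B-VHV14}, applied in dimension $N-1$) --- and your variant of producing a single global nondecreasing sequence $\nu_{i,k}\uparrow\mu_i$ and setting $\mu^m_{i,k}=\left.\nu_{i,k}\right|_{B_{m-1/k}}$ is a cleaner way of securing the monotonicity $\mu^m_{i,k}\leq\mu^{m+1}_{i,k}$ than what the paper does, namely applying the approximation lemma separately for each $m$ (with $R=4(m+1)$) and then modifying the resulting sequences via a supremum construction to restore monotonicity in $m$.

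However, the key approximation claim is misstated, and in a way that makes it false for part of the allowed range of $q$. You assert that the $\nu_{i,k}$ may be chosen compactly supported with the \emph{untruncated} Wolff potential $W_{1-\frac{1}{p},p,N-1}\left[\nu_{i,k}\right]\in L^q\left(\R^{N-1}\right)$. But for any nonzero compactly supported nonnegative measure $\nu$ on $\R^{N-1}$ one has $W_{1-\frac{1}{p},p,N-1}\left[\nu\right](x)\geq c\,\nu\left(\R^{N-1}\right)^{\frac{1}{p-1}}\abs{x}^{-\frac{N-p}{p-1}}$ for $\abs{x}$ large, and $\abs{x}^{-\frac{q(N-p)}{p-1}}$ fails to be integrable near infinity in $\R^{N-1}$ unless $q>\frac{(N-1)(p-1)}{N-p}$; so the claimed membership is false throughout the subcritical range $p-1<q\leq\frac{(N-1)(p-1)}{N-p}$, which the theorem does allow. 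What Theorem 2.6 of \cite{B-VHV14} actually furnishes --- and what the paper uses --- is that the \emph{truncated} potential $W^{R}_{1-\frac{1}{p},p,N-1}\left[\nu_{i,k}\right]$ lies in $L^q\left(\R^{N-1}\right)$ for the chosen $R$. That is what you should invoke: the truncated potential of a compactly supported measure is itself compactly supported (so global $L^q$ membership is the same as local), and for a fixed bounded measure the truncations at two finite radii differ by a quantity controlled by the total mass, so a single application of the lemma controls $W^{4(m+1)}_{1-\frac{1}{p},p,N-1}\left[\mu^m_{i,k}\right]$ for every $m$. Replacing the untruncated potential by the truncated one in your final chain of inequalities (and noting explicitly that the lemma is being applied to the global bounded measure $\mu_i$, not merely to its restrictions as in the paper) repairs the argument; the rest is sound.
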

\begin{proof}
 Since $\mu$ is absolutely continuous with respect to $cap_{p-1, \frac{q}{q-p+1},N-1}$ so are $\mu_1=\mu^+$, $\mu_2=\mu^-$, and $\mu_i^m=\left. \mu_i\right| _{B_m}$, $i=1,2$. Then for every $m$ we can apply Theorem 2.6 of \cite{B-VHV14} in dimension $N-1$ with $s_1=s_2=q$, $\a=1-\frac{1}{p}$, and $R=4(m+1)$ to obtain nondecreasing sequences $\left\lbrace\mu^m_{i,k}\right\rbrace _k$ of nonnegative measures in $\mathfrak{M}_b\left( \pd\R^N_+\right) $ with compact support in $B_m\cap\pd\R^N_+$ converging to $\mu_i^m$ weakly-$\ast$ in $\mathfrak{M}_b\left( B_m\cap \pd\R^N_+\right)$ and such that $W_{1-\frac{1}{p},p,N-1}^{4(m+1)}\left[ \mu^m_{i,k}\right] \in L^q\left( \R^{N-1}\right)$. It follows immediately that $g\left( c_1W_{1-\frac{1}{p},p,N-1}^{4(m+1)}\left[ \mu^m_{i,k}\right] \right)$ belongs to $L^1\left( \pd\R^N_+\cap B_m\right) $. 
 
 To apply Theorem \ref{B-VHV14_trace} it only remains to show that we can assume $\mu^m_{i,k}\leq \mu^{m+1}_{i,k}$ for each $k\in \N$. For this we note that the approximating sequences $\mu^m_{i,k}$ given by Theorem 2.6 of \cite{B-VHV14}, which are defined in the proof of Theorem 2.5 of \cite{B-VHV14}, can be taken equal to $\sup\left\lbrace \sigma_1,\cdots, \sigma_k\right\rbrace $ for some $\sigma_i$ that approximate, and are bounded by, $\mu^m_i\phi_k$, where $\phi_k$ is a smooth function supported in a neighborhood of $B_{m-\frac{1}{k}}$. Since $\mu^m_i$ coincides with $\mu^{m-1}_i$ in $B_{m-1}$ one can check directly that by redefining $\mu^m_{i,k}$ as $\sup\left\lbrace \mu^m_{i,k}, \mu^{m-1}_{i,k}\right\rbrace $ one obtains approximating sequences with the same properties listed above and that moreover satisfy the desired condition $\mu^m_{i,k}\leq \mu^{m+1}_{i,k}$.
\end{proof}

\begin{remark}
It must be noted that Theorem \ref{cor1_supercritical} agrees with Theorem \ref{existence_subcritical_p<N} in the sense that if $q<q_c$ (see Remark \ref{remark_definition_critical_exponent}) then $cap_{p-1, \frac{q}{q-p+1},N-1}\left( \left\lbrace0\right\rbrace \right) >0$ (see Proposition 2.6.1 of \cite{AdamsHedberg}), and so any bounded Radon measure is admissible according to Theorem \ref{cor1_supercritical}. 

On the other hand, Theorem \ref{existence_subcritical_p<N} gives that any bounded Radon measure is admissible for a wider range of nonlinearities than Theorem \ref{cor1_supercritical}. For example  
\[
g(s)=\frac{\abs{s}^{q_c-1}s}{\left( \ln \left( \abs{s}+C\right) \right)^{1+\e} }
\]
is subcritical if $\e>0$ (and $C$ is chosen large enough) since it satisfies Assumption \ref{assumption_subcritical}, but there is no $q<q_c$ such that $\abs{g(s)}\leq \abs{s}^{q}$ for large values of $s$. Hence, in this case, Theorem \ref{cor1_supercritical} can only be applied with $q\geq q_c$, and so it no longer guarantees that every bounded Radon measure is admissible since, for example, the Dirac measure $\d_0$ is singular with respect to  $cap_{p-1, \frac{q}{q-p+1},N-1}$ precisely when $q\geq q_c$. 

Let us also mention that, since $p-1>0$, the $(N-1)$-Lebesgue measure is absolutely continuous with respect to $cap_{p-1, \frac{q}{q-p+1},N-1}$ (see \cite{AdamsHedberg}), and so every measure in $L^1\left( \pd\R^N_+\right) $ is admissible according to Theorem \ref{cor1_supercritical} (even when $q\geq q_c$). 
\end{remark}

To obtain similar conditions for other nonlinearities we need to introduce some terminology. First we define the Bessel-Lorentz capacities, which can be viewed as a generalization of the Bessel capacities. 

For $1\leq s_1 <\infty$ and $1<s_2\leq \infty$ we denote by $L^{s_1,s_2}\left( \R^N\right) $ the standard Lorentz space (see for example \cite{T98}). Then for $\a>0$ one can define the Lorentz-Bessel capacities 
\[ cap_{\a,s_1,s_2,N}\left( E\right) = \inf\left\lbrace  \norm{f}_{L^{s_1,s_2}\left(\R^N \right) } \ : \ f\geq 0, \ \G_\a \ast f\geq 1 \mbox{ on } E \right\rbrace  \]
where $\G_\a$ is the Bessel kernel of order $\a$ in $\R^N$ (see \cite{AdamsHedberg} or \cite{B-VHV14}). The identification $L^{p,p}\left( \R^N\right) =L^p\left( \R^N\right) $, which holds for $1<p<\infty$, shows that indeed these capacities generalize the standard Bessel capacities.

\begin{thm}\label{cor2_supercritical}
	Let $1<p<N$ and let $g$ be a continuous nondecreasing odd function such that
	\[ \int_1^\infty g(s)s^{-(q+1)}ds<\infty \]
	for some $q>p-1$. If $\mu \in \mathfrak{M}_b\left( \pd\R^N_+\right)$ is absolutely continuous with respect to $cap_{p-1, \frac{q}{q-p+1},1,N-1}$ then there exists a renormalized solution to \eqref{equation} with datum $\mu$.
\end{thm}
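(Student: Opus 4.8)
The plan is to follow the proof of Theorem \ref{cor1_supercritical} almost verbatim, replacing the power bound on $g$ by the integral growth condition and the Bessel capacity $cap_{p-1,\frac{q}{q-p+1},N-1}$ by its Lorentz analogue $cap_{p-1,\frac{q}{q-p+1},1,N-1}$. Write $\mu=\mu^+-\mu^-$ and set $\mu_1=\mu^+$, $\mu_2=\mu^-$; since $\mu$ is absolutely continuous with respect to $cap_{p-1,\frac{q}{q-p+1},1,N-1}$, so are $\mu_1$, $\mu_2$, and their restrictions $\mu_i^m=\left.\mu_i\right|_{B_m\cap\pd\R^N_+}$. For each fixed $m$ I would apply Theorem 2.6 of \cite{B-VHV14} in dimension $N-1$, with $\a=1-\frac{1}{p}$, $R=4(m+1)$, and the Lorentz parameters chosen so that its hypothesis reads ``$\mu_i^m$ is absolutely continuous with respect to $cap_{p-1,\frac{q}{q-p+1},1,N-1}$'', to obtain nondecreasing sequences $\{\mu_{i,k}^m\}_k$ of nonnegative measures with compact support in $B_m\cap\pd\R^N_+$, converging to $\mu_i^m$ weakly-$\ast$ in $\mathfrak{M}_b\left(B_m\cap\pd\R^N_+\right)$, and such that
\[
W_{1-\frac{1}{p},p,N-1}^{4(m+1)}\left[\mu_{i,k}^m\right]\in L^{q,\infty}\left(\R^{N-1}\right).
\]
Here $L^{q,\infty}$ is the Marcinkiewicz (weak-$L^q$) space; lowering the second index of the capacity from $\frac{q}{q-p+1}$ to $1$ is exactly what downgrades the $L^q$ conclusion used in Theorem \ref{cor1_supercritical} to this weak-$L^q$ conclusion.

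The next step is to verify the integrability hypothesis of Theorem \ref{B-VHV14_trace}, namely that $g\circ c_1W_{1-\frac{1}{p},p,N-1}^{4(m+1)}\left[\mu_{i,k}^m\right]\in L^1\left(\pd\R^N_+\cap B_m\right)$, with $c_1=2^{\frac{1}{p-1}}c(N,p)$. Put $h=c_1W_{1-\frac{1}{p},p,N-1}^{4(m+1)}\left[\mu_{i,k}^m\right]\geq0$ and $\Gamma_m=B_m\cap\pd\R^N_+$; then $\left|\{h>s\}\cap\Gamma_m\right|\leq\min\left(\left|\Gamma_m\right|,Cs^{-q}\right)$ for all $s>0$ and some $C=C(m,i,k)$. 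Since $g$ is continuous, nondecreasing and odd (hence $g\geq0$ and $g(0)=0$ on $[0,\infty)$), the layer-cake formula gives
\[
\int_{\Gamma_m}g(h)\,dx'=\int_{[0,\infty)}\left|\{h>s\}\cap\Gamma_m\right|\,dg(s)\leq g(1)\left|\Gamma_m\right|+C\int_1^\infty s^{-q}\,dg(s),
\]
and a Stieltjes integration by parts rewrites the last integral as $-g(1)+q\int_1^\infty g(s)s^{-q-1}\,ds$, the boundary term at infinity vanishing because $g$ nondecreasing together with $\int_1^\infty g(s)s^{-q-1}\,ds<\infty$ forces $g(s)s^{-q}\goesto0$ (compare $\int_s^{2s}g(t)t^{-q-1}\,dt\geq c\,g(s)s^{-q}$ for a constant $c>0$). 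Hence the right-hand side is finite and $g(h)\in L^1\left(\Gamma_m\right)$, as needed.

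It remains to secure the monotonicity $\mu_{i,k}^m\leq\mu_{i,k}^{m+1}$ for each $k$, required to invoke Theorem \ref{B-VHV14_trace}. This is done exactly as in the proof of Theorem \ref{cor1_supercritical}: the approximating measures produced by Theorem 2.6 (hence Theorem 2.5) of \cite{B-VHV14} can be taken of the form $\sup\{\sigma_1,\dots,\sigma_k\}$ with each $\sigma_j$ bounded by $\mu_i^m\phi_k$ for a cutoff $\phi_k$ supported near $B_{m-1/k}$; since $\mu_i^m$ agrees with $\mu_i^{m-1}$ on $B_{m-1}$, replacing $\mu_{i,k}^m$ by $\sup\{\mu_{i,k}^m,\mu_{i,k}^{m-1}\}$ preserves every property above and yields the desired inequality. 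With all the hypotheses of Theorem \ref{B-VHV14_trace} now verified for $\mu_1$, $\mu_2$, that theorem produces a renormalized solution of \eqref{equation} with datum $\mu=\mu_1-\mu_2$. The genuinely technical point is the second step --- passing from the weak-$L^q$ bound on the Wolff potential and the integral growth condition on a merely continuous $g$ to $g(h)\in L^1$ on bounded subsets of $\pd\R^N_+$ --- together with pinning down the precise Lorentz-capacity correspondence; the remainder is a routine transcription of the proof of Theorem \ref{cor1_supercritical}.
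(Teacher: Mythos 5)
Your proposal is correct and follows essentially the same approach as the paper: decompose $\mu$ into positive and negative parts, invoke Theorem 2.6 of \cite{B-VHV14} with Lorentz parameters $s_1=q$, $s_2=\infty$ to get approximating sequences whose truncated Wolff potentials lie in $L^{q,\infty}(\R^{N-1})$, use the weak-$L^q$ level-set bound together with a layer-cake / Stieltjes integration by parts to verify $g\circ c_1W_{1-\frac{1}{p},p,N-1}^{4(m+1)}[\mu^m_{i,k}]\in L^1(B_m\cap\pd\R^N_+)$, transfer the monotonicity-in-$m$ fix from Theorem \ref{cor1_supercritical}, and conclude via Theorem \ref{B-VHV14_trace}. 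The one place you spell out more detail than the paper — showing $s^{-q}g(s)\to 0$ via $\int_s^{2s}g(t)t^{-q-1}\,dt\geq c\,g(s)s^{-q}$, which is needed to kill the boundary term in the integration by parts — is precisely the calculation the paper cites from the proof of Theorem \ref{existence_subcritical_p<N}, so the arguments coincide.
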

\begin{proof}
	 If $\mu$ is absolutely continuous with respect to $cap_{p-1,\frac{q}{q-p+1},1,N-1}$ then so are $\mu_1=\mu^+$, $\mu_2=\mu^-$, and $\mu_i^m=\left. \mu_i\right| _{B_m}$. For every $m$ apply Theorem 2.6 of \cite{B-VHV14} in dimension $N-1$ with $s_1=q$, $s_2=\infty$, $\a=1-\frac{1}{p}$, and $R=4(m+1)$ to obtain nondecreasing sequences $\left\lbrace\mu^m_{i,k}\right\rbrace _k$ of nonnegative measures in $\mathfrak{M}_b\left( \pd\R^N_+\right) $ compactly supported in $B_m\cap\pd\R^N_+$, converging to $\mu_i^m$ weakly-$\ast$ in $\mathfrak{M}_b\left( B_m\cap \pd\R^N_+\right)$ and such that $W_{1-\frac{1}{p},p,N-1}^{4(m+1)}\left[ \mu^m_{i,k}\right] \in L^{q,\infty}\left( \R^{N-1}\right)$. Just as in the proof of Theorem \ref{cor1_supercritical} we may also assume that $\mu^m_{i,k}\leq \mu^{m+1}_{i,k}$. 
	 
	 We observe that $f \in L^{q,\infty}\left( \R^{N-1}\right)$ implies $ \abs{\left\lbrace x' \in B_m \cap \R^{N-1} \ : \ \abs{f(x')}>t\right\rbrace }\leq Ct^{-q}$ for every $t>0$ and where $C$ depends on $\norm{f}_{L^{q,\infty}\left( \R^{N-1}\right) }$ (see \cite{T98}). Then, as in the proof of Theorem \ref{existence_subcritical_p<N}, we can obtain the inequality
	 \begin{multline*}
	 \int_{B_m\cap\pd\R^N_+} g\left( c_1W_{1-\frac{1}{p},p,N-1}^{4(m+1)}\left[ \mu^m_{i,k}\right]\right)dx'\leq g(1)\abs{B_m\cap \pd\R^N_+} + \\ C(q,c_1,c_{k,p,N,m,i})\int_1^\infty g(t)t^{-(q+1)} dt< \infty ,
	 \end{multline*}
	 where $c_{k,p,N,m,i}$ is the $L^{q,\infty}\left( \R^{N-1}\right)$ norm of $W_{1-\frac{1}{p},p,N-1}^{4(m+1)}\left[ \mu^m_{i,k}\right]$. Hence, we finish by applying Theorem \ref{B-VHV14_trace}.
\end{proof}

\begin{remark}
Let us make a few observations regarding this result. It is well-known that $L^{\frac{q}{p-1}}=L^{\frac{q}{p-1},\frac{q}{p-1}}\hookrightarrow L^{\frac{q}{p-1},\infty}$ and that $ L^{\frac{q}{p-1},\infty}$ is the dual of $L^{\frac{q}{q-(p-1)},1}$. Thus 
\[
\G_{p-1} \ast f (0)\geq 1 \Longrightarrow 1 \leq \norm{\G_{p-1}}_{\frac{q}{p-1},\infty}\norm{f}_{\frac{q}{q-(p-1)},1} \leq \norm{\G_{p-1}}_{\frac{q}{p-1}}\norm{f}_{\frac{q}{q-(p-1)},1} ,
\]
which implies that if $\G_{p-1}\in L^{\frac{q}{p-1}}\left( \R^{N-1}\right) $ then $cap_{p-1,\frac{q}{q-p+1},1,N-1}\left( \left\lbrace0\right\rbrace \right) >0$. Since this happens precisely when $q<q_c$, we conclude that if $q<q_c$ then any bounded Radon measure is admissible under the above theorem. In particular, this shows that Theorem \ref{cor2_supercritical} coincides with Theorem \ref{cor1_supercritical} when $\abs{g(s)}\leq C \abs{s}^q$ and $q<q_c$.

Proceeding in a similar way, one can use the fact that $\chi_{B_1^c}\G_{p-1} \in L^{\frac{q}{p-1}}\left( \R^{N-1}\right) $ whenever $0<p-1<q$ to prove (as in Proposition 2.6.1 of \cite{AdamsHedberg}) that the $(N-1)$-Lebesgue measure is absolutely continuous with respect to $cap_{p-1, \frac{q}{q-p+1},1,N-1}$. Thus, every measure in $L^1\left( \pd\R^N_+\right) $ is admissible according to Theorem \ref{cor2_supercritical}. 

Note that, in general, if $\abs{g(s)}\leq C \abs{s}^q$ then Theorem \ref{cor1_supercritical} guarantees existence of a solution to \eqref{equation} provided $\mu$ is absolutely continuous with respect to $cap_{p-1,\frac{q}{q-p+1},N-1}$, while Theorem \ref{cor2_supercritical} guarantees existence if $\mu$ is absolutely continuous with respect to $cap_{p-1,\frac{q'}{q'-p+1},1,N-1}$ for any $q'>q$. 

On the other hand, under the hypotheses on $g$, if the growth condition of the above theorem is satisfied with $q=q_c$ then $g$ satisfies Assumption \ref{assumption_subcritical}, and so it is subcritical. Hence, we expect that the above estimate $cap_{p-1,\frac{q}{q-p+1},1,N-1}\left( \left\lbrace0\right\rbrace \right) >0$, which implies existence for any bounded Radon measure, can be improved to the case $q=q_c$. It can be proven directly that this is true. Indeed, following the ideas above, it is enough to show that $\G_{p-1} \in L^{q_c/(p-1),\infty}\left( \R^{N-1}\right) $. This can be shown by definition using that $\G_{p-1}$ has exponential decay at infinity and that it is controlled by the Riesz kernel of the same order.
\end{remark}

As a final application we consider nonlinearities of exponential type. To this end we define the truncated $\eta$-fractional maximal operator as
\[\boldsymbol{M}^\eta_{s,R,N}\left[ \mu\right](x)= \sup_{0<t<R} \frac{\mu\left(B_t(x)\right)}{t^{N-s} h_\eta(t)}\]
where $0<s<N$, $0<R\leq\infty$, $\eta\geq 0$ and 
\[
h_\eta(t)=\begin{cases}
(-\ln t)^{-\eta} & , 0<t<1/2 \\
(\ln 2)^{-\eta} & , 1/2\leq t .
\end{cases} 
\]
Then we have the following result. 

\begin{thm}
	Let $1<p<N$ and let $g$ be a continuous nondecreasing odd function such that
	\[g(\abs{s})\leq e^{\tau\abs{s}^\l}-1 \mbox{ for all } s\geq s_0\]
	for some $\tau>0$, $\l\geq 1$, and $s_0\in \R$. Let $\mu \in \mathfrak{M}_b\left( \pd\R^N_+\right)$ be such that
	$\mu=f + \nu_1 - \nu_2$, where $f \in L^1\left( \pd\R^N\right)$ and $\nu_i \in \mathfrak{M}_b\left( \pd\R^N_+\right)$, $i=1,2$,  are nonnegative. There exists $M(N,p,\tau,\l)>0$ such that if 
	 \[ \norm{\boldsymbol{M}^{\frac{(p-1)(\l-1)}{\l}}_{p-1,\infty,N-1}\left[ \nu_i\right]}_{L^\infty\left( \R^{N-1}\right)} \leq M \]
	then there exists a renormalized solution to \eqref{equation} with datum $\mu$.
\end{thm}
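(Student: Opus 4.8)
The plan is to reduce the statement to Theorem \ref{B-VHV14_trace}. Write $f=f^+-f^-$ and set $\mu_1=f^++\nu_1$, $\mu_2=f^-+\nu_2$, which are nonnegative measures in $\mathfrak{M}_b\left( \pd\R^N_+\right)$ with $\mu_1-\mu_2=\mu$; put $f_1=f^+$, $f_2=f^-$. For $m,k\in\N$ define
\[
\mu^m_{i,k}=\nu_i\big|_{\overline{B}_{m-\frac{1}{k}}}+T_k(f_i)\,\chi_{\overline{B}_{m-\frac{1}{k}}}\,\mathcal{H},
\]
with balls taken in $\R^{N-1}\simeq\pd\R^N_+$. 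Then each $\mu^m_{i,k}$ is nonnegative, compactly supported in $B_m\cap\pd\R^N_+$, satisfies $\mu^m_{i,k}\le\mu^m_{i,k+1}$ and $\mu^m_{i,k}\le\mu^{m+1}_{i,k}$ (restriction to a larger ball and $T_k\le T_{k+1}$ are both monotone), and converges weakly-$\ast$ in $\mathfrak{M}_b\left( B_m\cap\pd\R^N_+\right)$ to $\mu_i|_{B_m\cap\pd\R^N_+}$ as $k\to\infty$ (every $C_0$ test function is supported inside $\overline{B}_{m-\frac{1}{k}}$ for $k$ large, and $T_k(f_i)\to f_i$ by dominated convergence). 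Note that, in contrast with Theorems \ref{cor1_supercritical} and \ref{cor2_supercritical}, no ``$\sup$'' redefinition is required to obtain monotonicity in $m$; this is essential here, since such a redefinition would inflate the relevant maximal-function bound by a factor of $m$ and thereby spoil the exponential integrability used below. It remains to verify that $g\circ c_1W_{1-\frac{1}{p},p,N-1}^{4(m+1)}\left[ \mu^m_{i,k}\right]\in L^1\left( \pd\R^N_+\cap B_m\right)$, where $c_1=2^{\frac{1}{p-1}}c(N,p)$ is the constant of Theorem \ref{B-VHV14_trace}.

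By the quasi-subadditivity of the Wolff potential, $W_{1-\frac{1}{p},p,N-1}^{R}\left[ \sigma+\rho\right]\le C(p)\left( W_{1-\frac{1}{p},p,N-1}^{R}\left[ \sigma\right]+W_{1-\frac{1}{p},p,N-1}^{R}\left[ \rho\right]\right)$. Applying this with $\sigma=\nu_i|_{\overline{B}_{m-\frac{1}{k}}}$ and $\rho=T_k(f_i)\,\chi_{\overline{B}_{m-\frac{1}{k}}}\,\mathcal{H}$, and writing $W_1=W_{1-\frac{1}{p},p,N-1}^{4(m+1)}\left[ \sigma\right]$, $W_2=W_{1-\frac{1}{p},p,N-1}^{4(m+1)}\left[ \rho\right]$, the growth bound on $g$ and the boundedness of $g$ on compact sets yield a constant $\beta=\beta(N,p,\tau,\l)$ such that
\[
g\left( c_1W_{1-\frac{1}{p},p,N-1}^{4(m+1)}\left[ \mu^m_{i,k}\right]\right)\le C(g,s_0)+e^{\beta W_1^\l}\,e^{\beta W_2^\l}.
\]
Since $\rho$ has an $L^\infty$ density of norm at most $k$ with respect to $\mathcal{H}$, one has $\rho\left( B_t(x')\right)\le C(N)\,k\,t^{N-1}$, hence $\big(\rho(B_t(x'))/t^{N-p}\big)^{1/(p-1)}\le C(N,p)\,k^{1/(p-1)}\,t$ and therefore $W_2\le C(N,p,m)\,k^{1/(p-1)}$ on all of $\pd\R^N_+$, so that $e^{\beta W_2^\l}\in L^\infty\left( \pd\R^N_+\right)$. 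Thus it suffices to prove $e^{\beta W_1^\l}\in L^1\left( \pd\R^N_+\cap B_m\right)$.

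For the latter I would appeal to the estimate of \cite{B-VHV14} (the analysis behind Theorem 2.6 there; see also \cite{Veron16}) relating the truncated Wolff potential to the truncated fractional maximal operator with the exponent $\eta=\frac{(p-1)(\l-1)}{\l}$ matched to $\l$: if $\sigma\ge0$ and $\norm{\boldsymbol{M}^{\eta}_{p-1,\infty,N-1}\left[ \sigma\right]}_{L^\infty(\R^{N-1})}\le A$, then
\[
\int_{B_m\cap\pd\R^N_+}\exp\!\Big(\kappa\,\big(W_{1-\frac{1}{p},p,N-1}^{4(m+1)}\left[ \sigma\right]\big)^{\l}\Big)\,d\mathcal{H}\le C(N,p,m)\qquad\text{whenever}\qquad\kappa\le c(N,p)\,A^{-\l/(p-1)}.
\]
Because restriction only decreases the maximal operator, $\boldsymbol{M}^{\eta}_{p-1,\infty,N-1}\left[ \sigma\right]\le\boldsymbol{M}^{\eta}_{p-1,\infty,N-1}\left[ \nu_i\right]$ pointwise, so the hypothesis $\norm{\boldsymbol{M}^{\eta}_{p-1,\infty,N-1}\left[ \nu_i\right]}_{L^\infty}\le M$ allows us to take $A=M$. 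Consequently $e^{\beta W_1^\l}\in L^1\left( \pd\R^N_+\cap B_m\right)$ provided $\beta\le c(N,p)\,M^{-\l/(p-1)}$, i.e.\ provided $M\le\big(c(N,p)/\beta\big)^{(p-1)/\l}=:M(N,p,\tau,\l)$. With this value of $M$ the hypotheses of Theorem \ref{B-VHV14_trace} are met for $\mu_1,\mu_2$, and it produces a renormalized solution of \eqref{equation} with datum $\mu_1-\mu_2=\mu$ (together with the two-sided Wolff estimate \eqref{eq_control_solution}).

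I expect the only genuine difficulty to be the exponential-integrability estimate for $W_1$ in terms of $\boldsymbol{M}^{\eta}_{p-1,\infty,N-1}$: this is the delicate potential-theoretic input from \cite{B-VHV14}, it is what dictates the choice $\eta=\frac{(p-1)(\l-1)}{\l}$ in terms of the growth rate $\l$ of $g(\abs{s})\le e^{\tau\abs{s}^\l}-1$, and it is the sole origin of the smallness constant $M$. The remaining ingredients --- the splitting of $\mu$, the monotone compactly supported approximations obtained by restriction to an exhaustion plus truncation of $f^{\pm}$, the boundedness of the Wolff potential of a bounded density, and the quasi-subadditivity bookkeeping --- run exactly parallel to the proofs of Theorems \ref{cor1_supercritical} and \ref{cor2_supercritical}.
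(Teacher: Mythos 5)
Your proposal is correct and follows essentially the same route as the paper: the same decomposition $\mu_i=f_i^{\pm}+\nu_i$ with the truncated-and-restricted approximations, the boundedness of the Wolff potential of the bounded-density part, a (quasi-)subadditivity splitting of the exponential, and the exponential-integrability estimate of Theorem 2.4 of \cite{B-VHV14} with $\eta=\frac{(p-1)(\l-1)}{\l}$ as the source of the smallness constant $M$. The only difference is bookkeeping: the paper uses the sharper inequality $(a+b)^s\le C(\e,s)a^s+(1+\e)b^s$ to keep the coefficient of $\bigl(W\left[\nu_i\right]\bigr)^\l$ close to $\tau c_1^\l$, whereas your cruder product bound merely yields a smaller (but still admissible) value of $M(N,p,\tau,\l)$.
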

\begin{proof}
	Let $f=f_1-f_2$ with $f_i\geq 0$, define $\mu_i=f_i+\nu_i$, and let $\mu_i^m=f_i^m + \nu_i^m$ be its restriction to $B_m$. Define $\mu^m_{i,k}=\left( T_k(f_i^m)+\nu_i^m\right) \chi_{B_{m-\frac{1}{k}}}$. Then $\mu^m_{i,k}$ are nonnegative, nondecreasing on $k$, compactly supported in $B_m\cap\pd\R^N_+$, and moreover $\mu^m_{i,k}\leq \mu^{m+1}_{i,k}$. It is also clear that $\mu^m_{i,k}\goesto  \mu^m_i$ weakly-$\ast$ in $\mathfrak{M}_b\left( B_m\cap\pd\R^N_+\right) $. Hence, to apply Theorem \ref{B-VHV14_trace} it remains to show $g\left( c_1W_{1-\frac{1}{p},p,N-1}^{4(m+1)}\left[ \mu^m_{i,k}\right] \right) \in L^1\left( B_m\right) $. 
	
	Let us first note that since $p>1$ and $p(1-\frac{1}{p})>0$ we have
	\[
	W^{4(m+1)}_{1-\frac{1}{p},p,N-1}\left[ T_k(f_i^m)\chi_{B_{m-\frac{1}{k}}}\right](x')\leq C(k,N,p,m) .
	\]
	On the other hand, it holds that for every $s\geq 1$, $\e>0$ there exists $C=C(\e,s)$ such that if $a,b\geq0$ then $(a+b)^s\leq a^sC+(1+\e)b^s$. Using this twice we conclude
	\[
	\left( W^{4(m+1)}_{1-\frac{1}{p},p,N-1}\left[ \mu^m_{i,k}\right] (x')\right)^\l \leq C(k,N,p,m,\l,\e) + (1+\e) \left(  W^{4(m+1)}_{1-\frac{1}{p},p,N-1}\left[ \nu^m_{i}\right] (x')\right) ^\l ,
	\]
	for some $\e>0$ to be fixed later, and so we have
	\begin{multline*}
	\exp\left(\tau\left(  c_1W^{4(m+1)}_{1-\frac{1}{p},p,N-1}\left[ \mu^m_{i,k}\right] (x')\right)^\l\right)\\ \leq C(\tau,k,N,p,m,\l,\e)\exp\left(\tau(1+\e)\left(  c_1W^{4(m+1)}_{1-\frac{1}{p},p,N-1}\left[ \nu^m_{i}\right] (x')\right)^\l\right)
	\end{multline*}
	since $c_1=c_1(N,p)$. 
	Now, an application of Theorem 2.4 of \cite{B-VHV14} in dimension $N-1$ with $\a=1-\frac{1}{p}$, $\eta=\frac{(p-1)(\l-1)}{\l}$, $r=m$, and $R=4(m+1)$, shows that there exists $0<\d_0(p,\l)$ such that 
	\[
	\int_{B_{m}\cap\pd\R^N_+}\exp\left( \d \left( W^{4(m+1)}_{1-\frac{1}{p},p,N-1}\left[ \nu^m_{i}\right]\right)^\l\norm{\boldsymbol{M}_{p-1,4(m+1),N-1}^{\frac{(p-1)(\l-1)}{\l}}\left[ \nu^m_i\right] }^\frac{\l}{1-p}_{L^\infty\left( B_m\cap\pd\R^N_+\right)}\right)dx' <\infty
	\] 
	for any $\d\in (0,\d_0)$. Hence, if we choose any $M(p,\l,N,\tau)$ such that 
	\[
	 M <\left( \frac{\d_0}{\tau c_1^\l}\right) ^\frac{p-1}{\l} 
	\]
	then by hypothesis and the fact that 
	\[
	\norm{\boldsymbol{M}_{p-1,4(m+1),N-1}^{\frac{(p-1)(\l-1)}{\l}}\left[ \nu^m_i\right] }_{L^\infty\left( B_m\cap\pd\R^N_+\right)}\leq 	\norm{\boldsymbol{M}_{p-1,\infty,N-1}^{\frac{(p-1)(\l-1)}{\l}}\left[ \nu_i\right] }_{L^\infty\left( \pd\R^N_+\right)}
	\]
	we conclude that there exist $\e>0$ such that
	\[
	\tau(1+\e)c_1^\l \leq \d\norm{\boldsymbol{M}_{p-1,4(m+1),N-1}^{\frac{(p-1)(\l-1)}{\l}}\left[ \nu^m_i\right] }_{L^\infty\left( B_m\cap\pd\R^N_+\right)}^{\frac{\l}{1-p}}
	\]
	for some $\d\in (0,\d_0)$ and so 
	\[
	\int_{B_{m}\cap\pd\R^N_+}\exp\left( \tau(1+\e)\left( c_1W^{4(m+1)}_{1-\frac{1}{p},p,N-1}\left[ \nu^m_{i}\right]\right)^\l\right) dx'<\infty
	\]
	which concludes the proof.
\end{proof}

\begin{remark}
	It is immediate that the above theorem guarantees existence for data in $L^1\left( \pd\R^N_+\right) $.
	
 	When $\l=1$, i.e. $g(\abs{s})\leq e^{\tau\abs{s}}-1$, the condition imposed on $\nu_i$ reads
	\[
	\sup_{x\in \pd\R^N_+}\sup_{t>0} \frac{\nu_i\left( B_t(x)\right) }{t^{N-p}}\leq M .
	\]
	This condition can be expressed in terms of the Riesz capacities (See Chapter \ref{Capacities}). Indeed, it is known that in the case $1<p<N$ it holds $t^{N-p}= C cap_{I_{1-\frac{1}{p}},p,N-1}\left( B_t(x)\right) $ for some $C>0$ independent of $t$ (see Chapter 5 of \cite{AdamsHedberg}), and so the above condition is equivalent to
	\[
	\nu_i\left( B_t(x)\right) \leq CM cap_{I_{1-\frac{1}{p}},p,N-1}\left( B_t(x)\right)
	\]
	for every $x\in \pd\R^N_+$ and $t>0$. 
\end{remark}

\chapter{Nonlinear problems with source}\label{source}

In this chapter we consider nonnegative solutions to the following problem with source
\be
\begin{cases}\label{eq_source}
	-\lap_pu=0 & \mbox{ in } \R^N_+ \\
	\abs{\nabla u}^{p-2}u_\n = \mu + u^q & \mbox{ on } \pd\R^N_+, 
\end{cases} 
\ee
where $1<p<N$, $p-1<q$, and $\mu \in \mathfrak{M}_b\left(\pd\R^N_+\right)$ is nonnegative.

We begin obtaining necessary conditions for existence of solutions. Then, we show that under a smallness assumption on the constants involved, these conditions imply existence of solutions. Lastly, we use these conditions to show nonexistence results and also to characterize removable sets.

\section{Necessary conditions for existence} 

To obtain necessary conditions for existence of solutions to \eqref{eq_source} we follow the ideas in \cite{PV08}. 

In order to state our first result we need to introduce the \textit{Riesz potential} $I_{\a,N}$ of order $\a$, $0<\a<N$, on $\R^N$, of a nonnegative Radon measure $\mu$ by 
\[
I_{\a,N}[\mu](x)=c(N,\a)\int_{\R^N}\abs{x-y}^{\a-N}d\mu(y)
\]
where $c(N,\a)$ is a normalized constant. We recall that the Riesz capacities were defined in Chapter \ref{Capacities}, while the Wolff potential $W_{\a,s,N}\left[ \cdot \right]$ was defined in Section \ref{supercritical}.

\begin{thm}\label{Necessity_supercritical_source_I}
	Let $1<p<N$ and $p-1<q$. Let $\mu$ in $\mathfrak{M}_b\left( \pd\R^N_+\right)$ be nonnegative and suppose there exists a nonnegative renormalized solution to \eqref{eq_source}. Then
	\be \label{testing_inequality_I}
	\int_B \left( I_{p-1,N-1}\left[ \mu_B\right] \right)^{\frac{q}{p-1}} dx'\leq C(N,p,q)  \mu\left( B\right) 
	\ee
	holds for all balls $B\subset \pd\R^N_+\simeq \R^{N-1}$ (where $\mu_B$ is the restriction of $\mu$ to $B$). 
\end{thm}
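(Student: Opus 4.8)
The plan is to follow the strategy of \cite{PV08} for the analogous problem in $\R^N$, exploiting the reflection $\bar u$ of a solution $u$ to a local renormalized solution of $-\lap_p \bar u = 2\mu + 2u^q\mathcal{H}$ in $\R^N$, as recorded in Remark \ref{renormalized_solutions_are_local}. Since $\mu$ and $u^q\mathcal{H}$ are nonnegative, Remark \ref{remark_on_p_superharmonic_representative} says $\bar u$ coincides $a.e.$ with a $p$-superharmonic function solving the same equation, and then the Wolff potential lower bound of Remark \ref{remark_on_wolff_potential_estimate} applies: for every ball $B_{3r}(x)\subset\subset \R^N$,
\[
\bar u(x)\geq c_1 W_{1,p,N}^r\left[2\mu + 2u^q\mathcal{H}\right](x)\geq c_1 W_{1,p,N}^r\left[2\mu\right](x).
\]
Restricting to $x=(x',0)$ and using the identity \eqref{identity_trace_potential}, this gives a pointwise bound of $u(x',0)$ from below by a truncated Wolff potential $W_{1-\frac1p,p,N-1}^r[2\mu](x')$ of the boundary measure. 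The first step is therefore to make this precise and, by letting $r\goesto\infty$ (the supports can be taken in fixed balls when testing, so truncation at scale comparable to $\diam{B}$ suffices), to obtain $u(x',0)\geq c\, W_{1-\frac1p,p,N-1}[2\mu_B](x')$ on a ball $B$, where $\mu_B$ is the restriction of $\mu$ to $B$.

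The second step is to convert the Wolff potential estimate into a Riesz potential estimate. Raising to the power $q/(p-1)$ and integrating over $B$,
\[
\int_B \left(W_{1-\frac1p,p,N-1}[\mu_B]\right)^{\frac{q}{p-1}}dx' \leq C\int_B u(x',0)^q\,dx'.
\]
Now $u^q\mathcal{H}$ is part of the datum of the original Neumann problem, so testing the renormalized formulation of \eqref{eq_source} against a suitable cutoff (a function $w$ equal to $1$ on $B$ and supported in a neighborhood, handled via the truncations $T_k$ to stay within the admissible test class of Definition \ref{defn}) yields
\[
\int_B u^q\,dx' \leq \int_{\pd\R^N_+} u^q w\,dx' \leq C\left(\mu(\supp w) + \text{boundary terms}\right),
\]
and the right side is controlled by $\mu$ on a slightly enlarged ball plus a term absorbed by the gradient bounds of Theorem \ref{DMMOP99_level_set_estimates}; a standard covering/scaling argument then replaces the enlarged ball by $B$ itself at the cost of the constant. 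Finally, to pass from the Wolff potential of $\mu_B$ to the Riesz potential $I_{p-1,N-1}[\mu_B]$, I would invoke the well-known pointwise comparison $I_{\a,N}[\nu]\lesssim \left(W_{\a/2,2,N}[\nu]\right)$-type inequalities — more precisely, in the correct normalization $W_{1-\frac1p,p,N-1}[\nu] \gtrsim \left(I_{p-1,N-1}[\nu]\right)^{?}$ is \emph{not} what is needed; rather one uses Wolff's inequality / the Havin--Maz'ya inequality which gives $\int (I_{p-1,N-1}[\nu])^{q/(p-1)} \lesssim \int (W_{1-\frac1p,p,N-1}[\nu])^{q/(p-1)}\,d\text{(something)}$, or more directly the equivalence (up to constants) between $\|I_{p-1,N-1}[\mu_B]\|_{L^{q/(p-1)}}^{q/(p-1)}$ and $\int_B W_{1-\frac1p,p,N-1}[\mu_B]\,d\mu_B$, which by the Wolff potential lower bound already obtained is $\leq \int_B u(x',0)\,d\mu_B \leq \|u\|_{L^\infty}(\ldots)$ — here it is cleaner to pair $u$ against $d\mu_B$ directly. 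Concretely: test the equation against (an approximation of) a function comparable to $u$ truncated, or pair the superharmonic inequality $\bar u \geq c W_{1,p,N}[2\mu_B]$ with $d\mu_B$ over $B$ to get $\int_B W_{1-\frac1p,p,N-1}[\mu_B]\,d\mu_B \leq C\int_B u\,d\mu_B$, then bound $\int_B u\,d\mu_B$ using the renormalized formulation once more (taking $w$ built from $u$ and a cutoff), and finally use the standard identity $\int (I_{p-1,N-1}[\mu_B])^{q/(p-1)}dx' \sim \int_B W_{1-\frac1p,p,N-1}[\mu_B]\,d\mu_B$ valid since $q>p-1$.

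I expect the main obstacle to be the rigorous justification of the testing step: the natural test function needed to extract $\int_B u^q\,dx'$ or $\int_B u\,d\mu_B$ from the renormalized formulation of \eqref{eq_source} is built out of $u$ itself (or a truncation), which is exactly the borderline of the admissible class in Definition \ref{defn}, and one must control the singular boundary measures $\mu_s^\pm$ and the behavior near $\{u=\pm\infty\}$ carefully — this is the place where \cite{PV08} does real work. A secondary technical point is that the Wolff-potential inequality of Remark \ref{remark_on_wolff_potential_estimate} is stated for the $p$-superharmonic representative in a bounded domain with $B_{3r}(x)\subset\subset\O$; since our $\bar u$ lives in all of $\R^N$ one must argue by exhaustion (using that $\bar u$ restricted to large balls solves the equation with the restricted measure, and the constants in the estimate are dimensional, independent of the domain). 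Once these two points are handled, assembling \eqref{testing_inequality_I} is a matter of combining the three inequalities above and a routine covering argument to localize to an arbitrary ball $B$.
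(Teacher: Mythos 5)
Your starting point is correct and matches the paper's: reflect to get a local renormalized solution of $-\lap_p\bar u=2\bar u^q\mathcal{H}+2\mu$ in $\R^N$, pass to the $p$-superharmonic representative, and invoke the Wolff potential lower bound $c\,W_{1,p,N}[\omega]\leq\bar u$ with $\omega=2\bar u^q\mathcal{H}+2\mu$. After that, however, your proposed route diverges from the paper's and contains genuine gaps.

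The most serious problem is the testing step you flag as the "main obstacle." Your plan is to bound $\int_B u^q\,dx'$, or $\int_B u\,d\mu_B$, by $\mu$ of a slightly larger ball using the renormalized formulation. But this is a \emph{source} problem: in Definition \ref{defn} the boundary term $\int_{\pd\R^N_+}g(u)w\,dx'$ appears on the left with $g(u)=-u^q$, so it moves to the right of the identity and stands \emph{alongside} $\mu$, not opposite it. There is no sign structure allowing you to absorb $u^q$; indeed the gradient estimates of Theorem \ref{DMMOP99_level_set_estimates} applied to the extended problem depend on $\norm{\mu+u^q\mathcal{H}}_{\mathfrak{M}_b}$, i.e.\ on $\int u^q$ itself, so the argument is circular. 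The absorption-type bound $\int |g(u)|\leq|\mu|$ of Proposition \ref{Veron16_bound_on_g} does not hold here.

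The second problem is the claimed "standard identity"
\[
\int\bigl(I_{p-1,N-1}[\mu_B]\bigr)^{\frac{q}{p-1}}dx'\ \sim\ \int W_{1-\frac1p,p,N-1}[\mu_B]\,d\mu_B.
\]
This is not the Hedberg--Wolff inequality. With exponent $q/(p-1)$ on the Riesz potential $I_{p-1,N-1}$, the Wolff energy on the right would involve $W_{p-1,\frac{q}{q-p+1},N-1}$ (with the inner exponent $\frac{q-p+1}{p-1}$), not $W_{1-\frac1p,p,N-1}$ (inner exponent $\frac{1}{p-1}$); these are different potentials. The correct comparison the paper actually uses (Remark \ref{remark_on_necessary_conditions}, citing Prop.\ 5.1 of \cite{PV08}) is between two $L$-norms over $\R^{N-1}$, namely $\int(I_{p-1,N-1}[\nu])^{q/(p-1)}\sim\int(W_{1-\frac1p,p,N-1}[\nu])^{q}\,dx'$. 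Even with this corrected identity your route fails, because the lower bound $u(\cdot,0)\geq cW_{1-\frac1p,p,N-1}[\mu_B]$ holds on all of $\R^{N-1}$, so integrating the $q$-th power over $\R^{N-1}$ is only controlled by $\norm{u(\cdot,0)}_{L^q(\pd\R^N_+)}^q$, which has no a priori bound in terms of $\mu(B)$.

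What the paper does instead is a genuinely different chain: starting from the trivial inequality $\omega(P)\geq\int_P 2\bar u^q\,dx'$ on dyadic cubes $P$, it combines this with the Wolff lower bound to obtain the localized estimate $\int_P W_{1-\frac1p,p,N-1}[\omega]^q\,dx'\leq C\,\omega(P)$, rewrites it in dyadic form, invokes Proposition 3.1 of \cite{PV08} to convert it into the discrete testing condition $\sum_{Q\subset P}(\omega(Q)/|Q|^{1-\frac{p-1}{N-1}})^{q/(p-1)}|Q|\leq C\,\omega(P)$, and then applies Theorem 3 of \cite{V96} (Verbitsky) to obtain boundedness of $I_{p-1,N-1}:L^{\frac{q}{q-(p-1)}}\to L^{\frac{q}{q-(p-1)}}(d\omega)$. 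Since $\omega\geq 2\mu$, the same holds with $d\mu$; dualizing and testing with $g=\chi_B$ then produces \eqref{testing_inequality_I}. This combination of PV08's Prop.\ 3.1 and Verbitsky's theorem is essential and cannot be replaced by the elementary Wolff-energy identity and a testing argument; the present theorem is where the heavy potential-theoretic machinery enters.
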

\begin{proof}
	We know by Remark \ref{renormalized_solutions_are_local} that if $u$ solves \eqref{eq_source} then $\bar{u}$, the extension of $u$ to $\R^N$ by even reflection across $\pd\R^N_+$, is a local renormalized solution to 
	\[
	-\lap_p \bar{u} = 2 \bar{u}^q\mathcal{H} + 2 \mu \mbox{ in } \R^N .
	\]
	Let $\w=2 \bar{u}^q\mathcal{H} + 2 \mu$. Combining Theorems 4.3.2 and 4.2.5 of \cite{Veron16}, we obtain that $\bar{u}$ coincides $a.e.$ with a $p-$ superharmonic function $\tilde{u}$ satisfying 
	\[
	W_{1,p,N}\left[ \w\right] \leq C(N,p)\tilde{u}.
	\]
	By Remark \ref{remark_on_cap_equality_with_psuperharmonic} we can conclude that
	\[
	W_{1,p,N}\left[ \w\right] \leq C(N,p)\bar{u}
	\]
	$cap_{1,p,N}-q.e.$ in $\R^N$ and so, by Proposition \ref{trace&extensionE}, $\mathcal{H}-$ $a.e.$ Thus, for any dyadic cube $P\subset \pd\R^N_+$ (i.e., $P=2^j\left( k + \left[ 0,1\right)^{N-1}\right) $ for some $j\in \mathbb{Z}$ and $k\in \mathbb{Z}^{N-1}$) we have
	\[
	\w(P)\geq \int_P2\bar{u}^q dx'\geq C(N,p)\int_PW_{1,p,N}\left[ \w\right]^q dx' = C(N,p)\int_PW_{1-\frac{1}{p},p,N-1}\left[ \w\right]^q dx'. 
	\]
	Using that, for any $\a>0$, $p>1$, and any $N$,
	\[
	W_{\a,s,N}\left[ \w\right] \sim \sum_{Q\subset P} \left( \frac{\w(Q)}{\abs{Q}^{1-\frac{\a s}{N}}}\right) ^{\frac{1}{s-1}}\chi_Q
	\]
	with $\abs{Q}$ the $N-$ dimensional measure of $Q$, and where the sum is taken over all dyadic cubes $Q$ contained in $P$ (see \cite{AdamsHedberg}), we conclude
	\[
	\int_P \left( \sum_{Q\subset P} \left( \frac{\w(Q)}{\abs{Q}^{1-\frac{p-1}{N-1}}}\right) ^{\frac{1}{p-1}}\chi_Q\right) ^q dx'\leq C(N,p) \w(P) .
	\]
	By Proposition 3.1 of \cite{PV08} the above implies
	\[
	\sum_{Q\subset P} \left( \frac{\w(Q)}{\abs{Q}^{1-\frac{p-1}{N-1}}}\right) ^{\frac{q}{p-1}}\abs{Q} \leq C(N,p) \w(P) 
	\]
	which, by an application of Theorem 3 of \cite{V96}, yields 
	\[
	\norm{\sum_{Q}\frac{f(Q)}{\abs{Q}^{1-\frac{p-1}{N-1}}}\chi_Q}_{L^{\frac{q}{q-(p-1)}}(d\w)}\leq C(N,p,q)\norm{f}_{L^{\frac{q}{q-(p-1)}}}
	\]
	for any nonnegative $f \in L^{\frac{q}{q-(p-1)}}\left( \R^{N-1}\right) $, where $f(Q)=\int_Q fdx'$ and the sum is taken over all dyadic cubes $Q$. Since 
	\[
	I_{\a,N}\left[ f\right] \sim \sum_{Q}\frac{f(Q)}{\abs{Q}^{1-\frac{\a}{N}}}\chi_Q
	\]
	and $\w\geq 2\mu$ we obtain
	\[
	\norm{I_{p-1,N-1}(f)}_{L^{\frac{q}{q-(p-1)}}(d\mu)}\leq C(q,p)\norm{I_{p-1,N-1}(f)}_{L^{\frac{q}{q-(p-1)}}(d\w)}\leq C(N,p,q)\norm{f}_{L^{\frac{q}{q-(p-1)}}}
	\]
	for any $f \in L^{\frac{q}{q-(p-1)}}\left( \R^{N-1}\right) $. Hence, $I_{p-1,N-1} :L^{\frac{q}{q-(p-1)}}\goesto L^{\frac{q}{q-(p-1)}}(d\mu)$ is a bounded linear operator and so its dual satisfies
	\[
	\norm{\left\langle I_{p-1,N-1}^\ast,g\right\rangle }_{L^{\frac{q}{p-1}}}\leq C(N,p,q)\norm{g}_{L^{\frac{q}{p-1}}(d\mu)} 		
	\]
	for any $g\in L^{\frac{q}{p-1}}(d\mu)$. Taking $g=\chi_B$ we obtain \eqref{testing_inequality_I}.
\end{proof}

\begin{remark}\label{remark_on_necessary_conditions}
	It is known that \eqref{testing_inequality_I} is equivalent with the condition
		\be \label{testing_inequality_cap_I}
		\mu\left( K\right) \leq C(N,p,q)cap_{I_{p-1},\frac{q}{q-(p-1)},N-1}\left( K\right) 
		\ee
		for all compact sets $K\subset \pd\R^N_+\simeq \R^{N-1}$.
	The proof of this equivalence, which we will use in the following sections, can be found in \cite{V99}. On the other hand, it is known that \eqref{testing_inequality_I} implies
	\[\int_{\R^{N-1}} \left( I_{p-1,N-1}\left[ \mu_B\right] \right)^{\frac{q}{p-1}} dx'\leq C(N,p,q)  \mu\left( B\right) \]
	(see \cite{VW98} or \cite{PV08}). By Proposition 5.1 of \cite{PV08} 
	 \[
	 	\int_{\R^{N-1}} \left( I_{p-1,N-1}\left[ \mu_B\right] \right)^{\frac{q}{p-1}} dx' \sim \int_{\R^{N-1}} \left( W_{1-\frac{1}{p},p,N-1}\left[ \mu_B\right] \right)^{q}dx' 
	 \]
	 so we see that \eqref{testing_inequality_I} implies
		\be \label{testing_inequality_W}
		\int_{\R^{N-1}} \left( W_{1-\frac{1}{p},p,N-1}\left[ \mu_B\right] \right)^{q}dx' \leq C(N,p,q)  \mu\left( B\right) 
		\ee
	for all balls $B\subset \pd\R^N_+\simeq \R^{N-1}$. Note that, by Monotone Convergence, the above condition implies that if $\mu\in\mathfrak{M}_b\left( \pd\R^N_+\right) $ then $W_{1-\frac{1}{p},p,N-1}\left[ \mu\right] \in L^q\left( \pd\R^N_+\right) $.
\end{remark}

As we will see, condition \eqref{testing_inequality_I} is `almost' sufficient to obtain existence of a solution. However, because of the method we use to show existence, it is convenient to work with another necessary condition which is actually a consequence of \eqref{testing_inequality_I}. 

\begin{thm}\label{Necessity_supercritical_source_W}
	Let $1<p<N$, $p-1<q$, and let $\mu$ in $\mathfrak{M}_b\left( \pd\R^N_+\right)$ be nonnegative. Then condition \eqref{testing_inequality_I} implies $W_{1-\frac{1}{p},p,N-1}\left[ \mu\right] \in L^q\left( \pd\R^N_+\right) $ and
	\be\label{condition_potential}
	W_{1-\frac{1}{p},p,N-1}\left[ \left( W_{1-\frac{1}{p},p,N-1}\left[ \mu\right] \right) ^q\right] \leq C_1 W_{1-\frac{1}{p},p,N-1}\left[ \mu\right] \ \mbox{ $a.e.$ in } \pd\R^N_+ ,
	\ee
	for some nonnegative constant $C_1$ depending on $p$, $q$, and $N$.
\end{thm}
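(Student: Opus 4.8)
The plan is to derive \eqref{condition_potential} from \eqref{testing_inequality_I} by first passing to the equivalent capacitary formulation \eqref{testing_inequality_cap_I} and the global integral bound \eqref{testing_inequality_W}, and then invoking the machinery of \cite{PV08} connecting Wolff potentials, Riesz potentials, and the $(p-1,\frac{q}{q-(p-1)})$-Riesz capacity in dimension $N-1$. First I would record, exactly as in Remark \ref{remark_on_necessary_conditions}, that \eqref{testing_inequality_I} is equivalent to
\[
\mu(K)\leq C(N,p,q)\,cap_{I_{p-1},\frac{q}{q-(p-1)},N-1}(K)
\]
for all compact $K\subset\R^{N-1}$, and that it implies the global estimate
\[
\int_{\R^{N-1}}\left(W_{1-\frac{1}{p},p,N-1}[\mu_B]\right)^q dx'\leq C(N,p,q)\,\mu(B)
\]
for all balls $B$; by Monotone Convergence (letting $B\uparrow\R^{N-1}$) this already gives $W_{1-\frac{1}{p},p,N-1}[\mu]\in L^q(\pd\R^N_+)$, settling the first assertion.

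For \eqref{condition_potential} itself, the key is that the measure $\sigma:=\left(W_{1-\frac{1}{p},p,N-1}[\mu]\right)^q\,dx'$ on $\R^{N-1}$ inherits the same capacitary bound as $\mu$. This is precisely the content of the `iteration' results in \cite{PV08}: once \eqref{testing_inequality_W} holds, one has
\[
\int_{\R^{N-1}}\left(W_{1-\frac{1}{p},p,N-1}[\sigma_B]\right)^q dx'\leq C(N,p,q)\,\sigma(B)
\]
for all balls $B$ (equivalently $\sigma(K)\leq C\,cap_{I_{p-1},\frac{q}{q-(p-1)},N-1}(K)$ for all compact $K$), using Proposition 3.1 and Proposition 5.1 of \cite{PV08} together with the dyadic characterization of the Wolff potential and Theorem 3 of \cite{V96} as in the proof of Theorem \ref{Necessity_supercritical_source_I}. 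From such a self-improving bound one obtains the pointwise inequality
\[
W_{1-\frac{1}{p},p,N-1}\left[\left(W_{1-\frac{1}{p},p,N-1}[\mu]\right)^q\right](x')\leq C_1\,W_{1-\frac{1}{p},p,N-1}[\mu](x')\quad\text{a.e.}
\]
by the standard argument (present in \cite{PV08}, Theorem 2.3 and its corollaries, and also available via Theorem 2.3 of \cite{B-VHV14}): the measure $\sigma$ satisfies a capacitary upper bound with the same exponent pair, hence its Wolff potential is pointwise comparable to that of $\mu$ up to a constant depending only on $p$, $q$, $N$. I would state this as a direct citation rather than reproving it.

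The main obstacle is bookkeeping rather than conceptual: one must be careful that all the capacities, Riesz potentials, and Wolff potentials are taken in $\R^{N-1}$ (not $\R^N$), exploiting the identity $W_{1-\frac{1}{p},p,N-1}=W^{\,\cdot}_{1,p,N}\big|_{\pd\R^N_+}$ from \eqref{identity_trace_potential} and the equivalences $F_\a^{p,2}\sim L^{\a,p}$, $cap_{1-\frac{1}{p},p,N-1}\sim cap(\cdot;F_{1-1/p}^{p,p})$ recorded in Chapter \ref{Capacities}, so that the cited one-dimensional-lower results of \cite{PV08}, \cite{V96}, \cite{V99}, \cite{VW98} apply verbatim with $N$ replaced by $N-1$ and $\a=1-\frac1p$, which is legitimate since $1<p<N$ forces $0<1-\frac1p<1<\frac{N-1}{p}$ is not needed — only $\alpha s = (1-\frac1p)\frac{q}{q-p+1}$ and $p-1<q$ enter — so the dyadic formula $W_{\a,s,N}[\w]\sim\sum_{Q\subset P}(\w(Q)/|Q|^{1-\a s/N})^{1/(s-1)}\chi_Q$ and Theorem 3 of \cite{V96} are available. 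With these identifications in place, \eqref{condition_potential} follows, and one takes $C_1=C_1(N,p,q)$ to be the resulting constant.
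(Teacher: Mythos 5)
Your treatment of the $L^q$ membership via Remark~\ref{remark_on_necessary_conditions} and Monotone Convergence matches the paper and is correct. For \eqref{condition_potential}, however, the route you take through the auxiliary measure $\sigma:=\left(W_{1-\frac{1}{p},p,N-1}[\mu]\right)^q\,dx'$ introduces a genuine gap. You claim that $\sigma$ inherits the testing/capacitary bound of $\mu$ and then conclude ``hence its Wolff potential is pointwise comparable to that of $\mu$.'' That inference is not valid: two measures satisfying the same type of capacitary upper bound need not have comparable Wolff potentials (the bound is one-sided on each measure and says nothing about a pointwise relationship between them). What makes the conclusion true is the specific structure $\sigma=\left(W_{1-\frac{1}{p},p,N-1}[\mu]\right)^q$, and extracting the pointwise inequality $W_{1-\frac{1}{p},p,N-1}[\sigma]\leq C_1 W_{1-\frac{1}{p},p,N-1}[\mu]$ from that structure is precisely the nontrivial content of the theorem. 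Moreover, your preliminary assertion that $\sigma$ satisfies the testing inequality is not a direct consequence of Propositions~3.1 and~5.1 of \cite{PV08} together with the dyadic formula and Theorem~3 of \cite{V96}; in the standard development (and in \cite{PV08}) one first establishes the pointwise Wolff inequality \eqref{condition_potential} and only afterward obtains bounds on $\sigma$, so your chain runs in a circle.

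For contrast, the paper proves $\eqref{testing_inequality_W}\Rightarrow\eqref{condition_potential}$ directly by decomposing $W_{1-\frac{1}{p},p,N-1}[\mu]=U_r\mu+L_r\mu$ into a truncated piece and a tail. It bounds $W_{1-\frac{1}{p},p,N-1}[(U_r\mu)^q]$ using the geometric fact that $U_r\mu=U_r\mu_{B_{2r}}$ on $B_r$ together with \eqref{testing_inequality_W}, and it bounds $W_{1-\frac{1}{p},p,N-1}[(L_r\mu)^q]$ using the decay estimate $\mu(B_t)\lesssim t^{N-1-\frac{q(p-1)}{q-(p-1)}}$ (itself extracted from \eqref{testing_inequality_W}) and an integration by parts; combining the two pieces yields \eqref{condition_potential}. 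A clean direct citation of the corresponding implication in Theorem~2.3 of \cite{PV08} or \cite{B-VHV14}, with the observation that its purely potential-theoretic proof transfers to $\R^{N-1}$ with $\alpha=1-\frac{1}{p}$, would be a legitimate alternative to the paper's self-contained argument. But as written, the detour through a capacitary bound on $\sigma$ is both unnecessary and not a valid step.
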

\begin{proof}
 By Remark \ref{remark_on_necessary_conditions}, it is enough to check that \eqref{testing_inequality_W} implies \eqref{condition_potential}.
 To this end we decompose the Wolff potential as $W_{1-\frac{1}{p},p,N-1}[\mu]= U_r\mu + L_r \mu$, where
\begin{align*}
U_r\mu(x) &= \int_0^r\left( \frac{\mu\left( B_t(x)\right) }{t^{N-p}}\right) ^\frac{1}{p-1}\frac{dt}{t} \\
L_r\mu(x) &= \int_r^\infty\left( \frac{\mu\left( B_t(x)\right) }{t^{N-p}}\right) ^\frac{1}{p-1}\frac{dt}{t} \\			
\end{align*}
for any $r>0$. Setting 
\begin{align*}
\nu&=\left( W_{1-\frac{1}{p},p,N-1}\left[ \mu\right] \right) ^q \\
\tau_r&=\left( U_r\mu \right) ^q\\
\l_r&=\left( L_r\mu \right) ^q\\
\end{align*}
we see that $\nu\leq C(q)\left( \tau_r + \l_r\right) $ for any $r>0$. Note that these are $L^1\left( \pd\R^N_+\right) $ measures.

Now fix any $x\in \pd\R^N_+\simeq \R^{N-1}$ and write for simplicity $B_r=B_r(x)\subset \R^{N-1}$. If $y\in B_r$ and $0 < t\leq r$ then $B_t(y)\subset B_{2r}$, and so $U_r\mu=U_r\mu_{B_{2r}}$ in $B_r$. Hence, by \eqref{testing_inequality_W} we have
\begin{align*}
\tau_r\left( B_r\right) &=\int_{B_r}\left( U_r\mu\right) ^qdx'=\int_{B_r} \left( U_r\mu_{B_{2r}}\right)^qdx' \\ &\leq \int_{B_r} \left( W_{1-\frac{1}{p},p,N-1}\left[ \mu_{B_{2r}}\right] \right)^qdx'\leq C\mu\left( B_{2r}\right) 
\end{align*}
and so
\be\label{estimate_tau}
W_{1-\frac{1}{p},p,N-1}\left[ \tau_r\right] (x)\leq \int_0^\infty\left( \frac{\mu\left( B_{2r}\right) }{r^{N-p}}\right)^\frac{1}{p-1} \frac{dr}{r}=CW_{1-\frac{1}{p},p,N-1}\left[ \mu\right] (x) .
\ee
Next, we study the rate of decay of $\l_r$ as function of $r$. If $y\in B_t$ and $s\geq 2t$ then $B_t\subset B_s(y)$ and so
\begin{align*}
W_{1-\frac{1}{p},p,N-1}\left[ \mu_{B_t}\right] (y)&\geq \int_{2t}^\infty \left( \frac{\mu\left( B_t\cap B_s(y)\right) }{s^{N-p}}\right)^\frac{1}{p-1}\frac{ds}{s} \\
&\geq C\mu\left( B_t\right)^\frac{1}{p-1}t^\frac{p-N}{p-1} .
\end{align*}
Comparing the above with \eqref{testing_inequality_W} it follows that
\[
\mu\left( B_t\right) \leq Ct^{N-1-\frac{q(p-1)}{q-(p-1)}} 
\]
and then 
\[
L_r\mu \leq Cr^\frac{1-p}{q-(p-1)} .
\]
If $y\in B_r$ and $t\geq r$ then $B_t(y)\subset B_{2t}$ and so
\[
L_r\mu(y)\leq\int_r^\infty\left( \frac{\mu\left( B_{2t}\right) }{r^{N-p}}\right) ^\frac{1}{p-1}\frac{dr}{r} \leq L_r\mu(x) 
\]
which gives 
\[
\l_r\left( B_r\right) =\int_{B_r} \left( L_r\mu\right) ^qdx'\leq C\left( L_r\mu(x)\right) ^q r^{N-1} .
\]
Combining the above estimates and using integration by parts we get
\begin{align*}
\int_0^\infty \left( \frac{\l_r\left( B_r\right) }{r^{N-p}}\right) ^\frac{1}{p-1}\frac{dr}{r} & \leq C \int_0^\infty \left( L_r\mu(x)\right) ^\frac{q}{p-1}dr \\
&=C\int_0^\infty \left( L_r\mu(x)\right)^{\frac{q}{p-1}-1}\left( \frac{\mu\left( B_r\right) }{r^{N-p}}\right) ^\frac{1}{p-1}dr \\
&\leq  C\int_0^\infty \left( \frac{\mu\left( B_r\right) }{r^{N-p}}\right) ^\frac{1}{p-1}\frac{dr}{r} \\
\end{align*}
that is, 
\be\label{estimate_lambda}
W_{1-\frac{1}{p},p,N-1}\left[ \l_r\right](x)\leq C W_{1-\frac{1}{p},p,N-1}\left[ \mu\right](x) .
\ee
By combining \eqref{estimate_tau} and \eqref{estimate_lambda} we conclude
\[
W_{1-\frac{1}{p},p,N-1}\left[ \nu\right](x)\leq C W_{1-\frac{1}{p},p,N-1}\left[ \mu\right](x)
\]
which is the desired estimate \eqref{condition_potential}.
\end{proof}

\section{Sufficient conditions for existence} 

Our strategy for solving problem \eqref{eq_source} is to combine the techniques developed in \cite{PV08}, where the authors study the existence of $p-$ superharmonic solutions to 
\[
	-\lap_pu= u^q + \mu  \mbox{ in } \R^N, \\
\]
with our symmetry and existence results of Chapter \ref{Symmetry}. The results of \cite{PV08} are based on a careful study of the Wolff potential, and the existence of solutions is guaranteed under any one of some equivalent conditions, among which is that the measure $\mu$ satisfies 
\be\label{condition_PV09}
W_{1,p,N}\left[ \left( W_{1,p,N}\left[ \mu\right] \right) ^q\right] \leq C_0 W_{1,p,N}\left[ \mu\right]<\infty  \mbox{ $a.e.$ in } \R^N
\ee
for some small enough constant $C_0=C_0(N,p,q)$.

Unlike the problem with absorption, we do not use directly the existence result of \cite{PV08} to construct a global solution. Instead, we define a recursive sequence of solutions to 
\be
\begin{cases}
	-\lap_pu_m=  u_{m-1}^q\mathcal{H} +  \mu_{m} & \mbox{ in } B_m \\
	u_m=0 & \mbox{ on } \pd B_m, 
\end{cases}
\ee
where $u_0=0$ and $ \mu_m(E)=\mu\left( E\cap B_m \right) $, and then take limit as $m\goesto \infty$. In this way we dispense with the need to define a sequence of nonlinearities $g_n$ converging to $u^q$ (as was done in the previous chapter). As we show in the next theorem, this method gives a solution to \eqref{eq_source} under a natural adaptation of condition \eqref{condition_PV09}. But before, we need the following lemma.

\begin{lem}\label{modification_L.6.9_PV08}
	Let $\mu$, $\nu\in \mathfrak{M}_b\left( \O\right)$ be nonnegative measures and suppose $\mu\leq \nu$. Let $\O'\subset \subset \O$ and let $u$ be a renormalized solution to 
	\[
	\begin{cases}
		-\lap_pu=  \mu & \mbox{ in } \O' \\
		u=0 & \mbox{ on } \pd \O'. 
	\end{cases}
	\]
	then there exists a renormalized solution $v$ to 
	\[
	\begin{cases}
		-\lap_pv=  \nu & \mbox{ in } \O \\
		v=0 & \mbox{ on } \pd \O
	\end{cases}
	\]
	such that $u\leq v$ $a.e.$ in $\O'$. 
\end{lem}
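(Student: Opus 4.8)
## Proof proposal for Lemma \ref{modification_L.6.9_PV08}

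The plan is to adapt the argument of Lemma 6.9 of \cite{PV08} to the current setting of an increasing pair of domains, working with renormalized solutions in bounded domains and exploiting the comparison/stability machinery already set up in this chapter. First I would reduce to the case of bounded data: approximate $\nu$ from below by a nondecreasing sequence of measures $\nu_k \in \mathfrak{M}_0(\O)$ (for instance $\nu_k = \nu \chi_{\{W_{1,p,N}[\nu]\le k\}}$ or a standard capacitary truncation) so that $\nu_k \uparrow \nu$ and each $\nu_k$ is diffuse; simultaneously set $\mu_k = \mu \chi_{\{\cdots\}}$ using the same truncation, so $\mu_k \le \nu_k$ and $\mu_k \uparrow \mu$. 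For diffuse data one has genuine weak solutions in $W_0^{1,p}$, and then the classical weak comparison principle gives $u_k' \le v_k'$ a.e. in $\O'$, where $u_k'$ solves $-\lap_p u_k' = \mu_k$ in $\O'$ and $v_k'$ solves $-\lap_p v_k' = \nu_k$ in $\O$, both extended by zero.

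Next I would pass to the limit in $k$. Since the data are nondecreasing and nonnegative, the solutions $u_k'$ and $v_k'$ can be taken nondecreasing in $k$ (monotonicity of solutions with respect to nonnegative data for diffuse measures, as in \cite{PV08} or Remark \ref{remark_on_p_superharmonic_representative}), and by Theorem \ref{DMOP99_convergence_of_u_n_to_limit} (or the monotone-limit results of \cite{DMOP99}/\cite{PV08}) they converge a.e. to renormalized solutions $\tilde u$ of $-\lap_p \tilde u = \mu$ in $\O'$ and $\tilde v$ of $-\lap_p \tilde v = \nu$ in $\O$. The pointwise inequality $u_k' \le v_k'$ a.e. in $\O'$ is preserved in the limit, so $\tilde u \le \tilde v$ a.e. in $\O'$. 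The only remaining point is that the statement asks for the comparison against \emph{the given} solution $u$: by the partial uniqueness result of \cite{DMOP99} (Theorem 10.4, as already invoked in the proof of Theorem \ref{symmetry}) nonnegative renormalized solutions to $-\lap_p w = \mu$ in $\O'$ with zero boundary data are unique when $\mu$ is nonnegative — more precisely, $\mu$ nonnegative forces $\mu_s^- = 0$, so the relevant uniqueness applies — hence $u = \tilde u$ a.e. in $\O'$, and $v := \tilde v$ is the desired solution.

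I expect the main obstacle to be the bookkeeping in the two limit passages, specifically making sure the comparison principle survives: the weak comparison principle is only directly available for the diffuse/$W_0^{1,p}$ approximations $u_k', v_k'$, so one must be careful that (i) the monotone limits actually are renormalized solutions of the limiting equations (this is exactly where one quotes the stability/monotone-limit theory), and (ii) the a.e. inequality is stable under a.e. convergence, which is immediate once (i) is in hand. A secondary subtlety is that $u$ is only assumed to be \emph{a} renormalized solution in $\O'$, not the monotone-limit one, so the appeal to uniqueness of nonnegative renormalized solutions (valid because $\mu\geq 0$) is essential to conclude $u\le v$ rather than merely $\tilde u \le v$. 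If one prefers to avoid uniqueness, an alternative is to quote directly the statement of Lemma 6.9 of \cite{PV08} applied on $\O'$ together with a comparison between $\O'$ and $\O$ analogous to Lemma \ref{comparison_pple}; either route works, but the uniqueness route is cleaner given what has already been established in the excerpt.
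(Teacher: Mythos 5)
Your argument has a genuine gap at the final step: the appeal to uniqueness of nonnegative renormalized solutions is not available. Theorem 10.4 of \cite{DMOP99} is only a \emph{conditional} (partial) uniqueness result: it requires the two solutions to satisfy $\frac{1}{k}\int_{\O'}\abs{\nabla T_k(u)-\nabla T_k(\tilde u)}^p\,dx\goesto 0$, a hypothesis you have no way to verify here. Unconditional uniqueness of renormalized solutions for a general nonnegative bounded measure is an open problem (it is known for $\mu\in\mathfrak{M}_0$, for $p=2$, and in a few other special cases, but not in the generality of this lemma). Consequently your construction only yields \emph{some} solution $\tilde u$ of the problem in $\O'$ with $\tilde u\le \tilde v$; you cannot conclude $u=\tilde u$, and hence not $u\le v$ for the \emph{given} solution $u$, which is exactly what the lemma demands. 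A secondary slip: measures in $\mathfrak{M}_0$ are in $L^1+W^{-1,p'}$ but do not in general produce weak solutions in $W_0^{1,p}$, so the ``classical weak comparison principle'' is not directly applicable to your approximants; one needs the comparison argument for entropy/renormalized solutions with diffuse data (Lemma 6.8 of \cite{PV08}), which is available but is a different tool.

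The paper's proof circumvents the uniqueness issue entirely by anchoring the construction to the given $u$: one uses the equivalent formulation of Definition \ref{definition_DMOP99} (Remark \ref{remark_on_equivalent_definitions_DMOP99}) to write $-\lap_p T_k(u)=\mu_0\chi_{\{\abs{u}<k\}}+\l_k^+$ in $\O'$ with $\l_k^+\in\mathfrak{M}_0(\O')$ nonnegative and $\l_k^+\goesto\mu_s^+$ narrowly, then solves $-\lap_p v_k=\mu_0+\l_k^+ + \nu-\mu$ in the larger domain $\O$. Since $\mu_0+\l_k^++\nu-\mu\geq \mu_0\chi_{\{\abs{u}<k\}}+\l_k^+$ and $T_k(u)\in W_0^{1,p}(\O')$ is bounded, the comparison of Lemma 6.8 of \cite{PV08} (with the test function $\min((T_k(u)-T_{h+M}(v_k))^+,h)$) gives $T_k(u)\le v_k$ a.e.\ in $\O'$; the data of $v_k$ converge to $\nu$, so the stability theorem of \cite{DMOP99} produces a limit $v$ solving the problem in $\O$, and $u\le v$ follows by letting $k\goesto\infty$. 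If you want to salvage your route, you must replace the uniqueness appeal by some device of this kind that starts from the truncations of the prescribed $u$.
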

\begin{proof}
	The lemma (and its proof) is a slight modification of Lemma 6.9 in \cite{PV08}, so we omit some details. Let $u_k=T_k(u)$. Then $u_k$ solves $-\lap_pu_k=  \mu_0\chi_{\{\abs{u}<k\}} + \l_k^+$ in $\O'$, $u_k=0$ on $\pd \O'$, where $\l_k^+$ is a nonnegative measure (see Remark \ref{remark_on_equivalent_definitions_DMOP99}). Let $v_k$ solve $-\lap_pv_k=  \mu_0 + \l_k^+ + \nu-\mu$ in $\O$, $v_k=0$ on $\pd \O$. By the stability results of \cite{DMOP99}, passing to a subsequence, $v_k$ converges $a.e.$ to a function $v$ solving the desired equation. Since $u$ is $a.e.$ finite, the result follows if we can show that $u_k\leq v_k$ $a.e.$ in $\O'$. For this we can proceed exactly as in the proof of Lemma 6.8 in \cite{PV08}. We only remark that $\min\left( \left( u_k-T_{h+M}(v_k)\right) ^+,h\right) $, where $h>0$ and $M=\sup_{\O'}u_k$, belongs to $W_0^{1,p}\left( \O'\right) $ because $v_k\in W^{1,p}\left( \O'\right) $ is nonnegative and $u_k\in W_0^{1,p}\left( \O'\right) $.
\end{proof}

\begin{thm}\label{Sufficiency_supercritical_source}
	Let $1<p<N$ and $p-1<q$. Let $\mu$ be a nonnegative measure in $\mathfrak{M}_b\left( \pd\R^N_+\right)$ satisfying $W_{1-\frac{1}{p},p,N-1}\left[ \mu\right] \in L^q\left( \pd\R^N_+\right) $ and condition \eqref{condition_potential} with 
	\[
	C_1\leq \left( \frac{q-p+1}{qc(N,p)C(p)2^\frac{1}{p-1}}\right) ^{\frac{q}{p-1}}\left( \frac{p-1}{q-p+1}\right) 
	\]
	where $C(p)=\max\left\lbrace 1,2^{\frac{2-p}{p-1}}\right\rbrace $ and $c(N,p)$ is the constant in Theorem \ref{B-VHV14}. Then there exists a nonnegative renormalized solution to \eqref{eq_source} satisfying 
	\be\label{estimate_target}
	u(x',x_N)\leq \left( \frac{qc(N,p)C(p)2^\frac{1}{p-1}}{q-p+1} \right) W_{1-\frac{1}{p},p,N-1}\left[\mu\right](x') \mbox{ in } \O\cap \overline{\R^N_+},
	\ee
	where $\O$ is a set of the form $\O=\O_1\cap \left( \O_2\times \R\right) $, $\O_2\subset \R^{N-1}$, with $cap_{1,p,N}\left( \O_1^c\right) =0$ and $\abs{\O_2^c}=0$. In particular, the above estimate holds $a.e.$ in any hyperplane $\R^{N-1}\times \{t\}$. 
\end{thm}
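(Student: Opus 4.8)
The plan is to realize the solution as the increasing limit of renormalized solutions in balls, following the recursive scheme indicated above. Set $u_0\equiv0$ and, having built $u_{m-1}$, let $u_m$ be a renormalized solution of
\[
\begin{cases}
-\lap_p u_m = 2u_{m-1}^q\mathcal H + 2\mu_m & \mbox{ in } B_m,\\
u_m = 0 & \mbox{ on } \pd B_m,
\end{cases}
\]
with $\mu_m(E)=\mu(E\cap B_m)$. One checks inductively (using the bound below) that $u_{m-1}^q\in L^1\left( B_m\cap\pd\R^N_+\right)$, so the datum is a nonnegative bounded Radon measure supported in $\pd\R^N_+$ and $u_m$ exists; I take its $p-$ superharmonic representative (Remark \ref{remark_on_p_superharmonic_representative}). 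Using Lemma \ref{modification_L.6.9_PV08} together with $\mu_m\le\mu_{m+1}$ and the fact that, for quasicontinuous ($p-$ superharmonic) representatives, $u_{m-1}\le u_m$ forces $u_{m-1}^q\mathcal H\le u_m^q\mathcal H$ (Remark \ref{corolary_to_T_6.1.4} and $\mathcal H\ll cap_{1-\frac1p,p,N-1}$), I arrange $0\le u_m\le u_{m+1}$.

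The heart of the argument is the a priori estimate
\[
u_m(x',x_N)\le\Lambda\,W_{1-\frac1p,p,N-1}[\mu](x')\quad cap_{1,p,N}\mbox{-q.e. in }B_m,\qquad \Lambda:=\frac{q\,c(N,p)C(p)2^{\frac1{p-1}}}{q-p+1},
\]
proved by induction, the case $m=0$ being trivial. Assuming it for $u_{m-1}$, Lemma \ref{trace_potential_estimate} (with $g\equiv0$) applied to $u_m$ gives $u_m\le c(N,p)W^{4m}_{1,p,N}[2\omega_m]$ q.e., where $\omega_m=u_{m-1}^q\mathcal H+\mu_m$; since $W^{4m}_{1,p,N}[2\omega_m]=2^{1/(p-1)}W^{4m}_{1,p,N}[\omega_m]\le 2^{1/(p-1)}W_{1,p,N}[\omega_m]$, the pointwise bound \eqref{bound_trace_potential} gives $u_m(x',x_N)\le c(N,p)2^{1/(p-1)}W_{1-\frac1p,p,N-1}[\omega_m](x')$. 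The inequality $(a+b)^{1/(p-1)}\le C(p)(a^{1/(p-1)}+b^{1/(p-1)})$ makes the Wolff potential subadditive with constant $C(p)$; combining this with $\omega_m\le\Lambda^q\big(W_{1-\frac1p,p,N-1}[\mu]\big)^q\mathcal H+\mu$ (induction hypothesis and $\mu_m\le\mu$), with $W_{1-\frac1p,p,N-1}[c\,\sigma]=c^{1/(p-1)}W_{1-\frac1p,p,N-1}[\sigma]$, and with condition \eqref{condition_potential} yields
\[
W_{1-\frac1p,p,N-1}[\omega_m]\le C(p)\Big(\Lambda^{q/(p-1)}C_1+1\Big)W_{1-\frac1p,p,N-1}[\mu].
\]
Hence $u_m\le c(N,p)2^{1/(p-1)}C(p)(\Lambda^{q/(p-1)}C_1+1)W_{1-\frac1p,p,N-1}[\mu]$, and with $A:=c(N,p)C(p)2^{1/(p-1)}$ the hypothesis on $C_1$ is precisely the equality case of $AC_1\Lambda^{q/(p-1)}+A\le\Lambda$, so the induction closes. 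In particular $u_m^q\le\Lambda^q\big(W_{1-\frac1p,p,N-1}[\mu]\big)^q\in L^1\left( \pd\R^N_+\right)$, so $\|2\omega_m\|_{\mathfrak M_b}$ is bounded uniformly in $m$.

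With this uniform bound I pass to the limit. The $p-$ superharmonic representatives of the $u_m$ form an increasing sequence, bounded above a.e. by $\Lambda W_{1-\frac1p,p,N-1}[\mu]<\infty$; on each ball it converges to a $p-$ superharmonic function, and these patch to a $p-$ superharmonic (hence $cap_{1,p,N}$-quasicontinuous) function $u$ on $\R^N$ with $u_{m-1}(\cdot,0)\uparrow u(\cdot,0)$ and $u^q\in L^1\left( \pd\R^N_+\right)$ (Lemma 7.3 and Theorem 10.9 of \cite{HKM}, cf. Remark \ref{remark_on_cap_equality_with_psuperharmonic}). By Lemma \ref{general_existence_of_limit_functions} applied to the $u_m$ with data $2\omega_m$, $u$ satisfies (1)–(3) there; writing the equation for $u_m$ as $-\lap_p u_m=2\mu_m-g_m\mathcal H$ with $g_m=-2u_{m-1}^q$, dominated convergence (domination by $2\sup_m\|\phi_m\|_\infty\Lambda^q\big(W_{1-\frac1p,p,N-1}[\mu]\big)^q$, using $\phi_m\to\phi$ and $u_{m-1}(\cdot,0)\to u(\cdot,0)$ a.e. on $\pd\R^N_+$) gives \eqref{condition_general_existence_global_solution} with $g=-2u^q$. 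Lemma \ref{general_existence_global_solution} then shows $u$ is a local renormalized solution of $-\lap_p u=2\mu+2u^q\mathcal H$ in $\R^N$. Since each $2\omega_m$ is supported in $\pd\R^N_+$ and $B_m$ is symmetric about $\pd\R^N_+$, Theorem \ref{symmetry} makes each $u_m$, hence $u$, symmetric; Theorem \ref{thm_on_existence_from_symmetry} (applied with the bounded measure $\mu+u^q\mathcal H\in\mathfrak M_b\left( \pd\R^N_+\right)$) then gives that the restriction of $u$ to $\R^N_+$ is the desired nonnegative renormalized solution of \eqref{eq_source}. Taking the supremum in the a priori estimate yields $u\le\Lambda W_{1-\frac1p,p,N-1}[\mu]$ $cap_{1,p,N}$-q.e., which is exactly \eqref{estimate_target}: $\Omega_2$ absorbs the Lebesgue-null set where $W_{1-\frac1p,p,N-1}[\mu]$ fails to be finite, and a $cap_{1,p,N}$-null set meets each hyperplane in a Lebesgue-null subset of $\R^{N-1}$ by Proposition \ref{trace&extensionE} and Lemma \ref{L_trace_characteristic}.

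I expect the main obstacle to be the bookkeeping that keeps every object well defined along the recursion — in particular that $u_{m-1}^q\mathcal H$ is an unambiguous bounded measure, which forces consistent use of quasicontinuous/$p-$ superharmonic representatives and of $\mathcal H\ll cap_{1-\frac1p,p,N-1}$ — together with the exact constant tracking that makes the induction close with the stated $\Lambda$; once these are secured, the convergence, stability and symmetry steps essentially re-use the machinery of Chapters \ref{Extension}–\ref{absorption}.
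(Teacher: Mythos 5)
Your overall strategy — recursive construction of $u_m$ via $-\lap_p u_m = 2u_{m-1}^q\mathcal{H} + 2\mu_m$ in $B_m$, a priori control $u_m \leq \Lambda\, W_{1-\frac{1}{p},p,N-1}[\mu]$ established by induction through Lemma \ref{trace_potential_estimate}, the identity \eqref{identity_trace_potential}, the bound \eqref{bound_trace_potential}, subadditivity with constant $C(p)$, and condition \eqref{condition_potential}, monotone passage to the limit on $p$-superharmonic representatives via Lemma \ref{modification_L.6.9_PV08} and Lemma 7.3 of \cite{HKM}, and recovery of the half-space solution via Theorems \ref{symmetry} and \ref{thm_on_existence_from_symmetry} — is exactly the paper's. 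Your constant bookkeeping (a single induction keeping the fixed constant $\Lambda$, rather than running a recursion $\a_m = C(p)c\big((2^{\frac{1}{p-1}}\a_{m-1})^{\frac{q}{p-1}}C_1+1\big)$ and then showing $\a_m \leq M$ by a sub-induction, as the paper does) is a clean, equivalent reorganization, and the closing of the induction from the stated bound on $C_1$ checks out.

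There is, however, one genuine gap in the limiting step, precisely at the point where you verify \eqref{condition_general_existence_global_solution}. You argue by dominated convergence on $B_M\cap\pd\R^N_+$, "using $\phi_m\to\phi$ \ldots a.e. on $\pd\R^N_+$." But the admissible $\phi_m$ in \eqref{condition_general_existence_global_solution} are only known to converge a.e. in $B_M$ (an $N$-dimensional statement), weakly in $W_0^{1,p}(B_M)$, and to be uniformly bounded in $L^\infty$; none of this gives a.e. convergence of their traces on the $(N-1)$-dimensional slice $\pd\R^N_+$. This is exactly the difficulty Lemma \ref{general_passage_to_trace_nonlinearity_R^N} is built to circumvent: its proof rewrites the boundary integral as a volume integral weighted by $\zeta_n(x_N)$ and integrates by parts against $\tau_n$, so that only the weak $W^{1,p}$-convergence and a.e. convergence in $B_M$ of the $\phi_m$ are ever used. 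The paper invokes that lemma here, reducing the matter to the tail conditions $\rho_m(h),\rho(h)\to 0$ uniformly, which are then immediate from $u_m\uparrow u$ a.e. on $\pd\R^N_+$ and $u^q\in L^1(\pd\R^N_+)$. Your shortcut can be repaired without the lemma — weak convergence in $W_0^{1,p}(B_M)$ together with compactness of the trace embedding into $L^p(B_M\cap\pd\R^N_+)$ (which holds since $p<\frac{(N-1)p}{N-p}$ for $1<p<N$) gives strong $L^p$-convergence of traces, hence a.e. convergence along a subsequence, and a standard subsequence argument then yields the full limit of integrals — but as written the boundary a.e. convergence of the $\phi_m$ is asserted rather than established, and this step must be supplied.
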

\begin{proof}
	Let $u_1$ be a renormalized solution of 
	\be
	\begin{cases}
		-\lap_pu_1= 2 \mu_1 & \mbox{ in } B_1 \\
		u_1=0 & \mbox{ on } \pd B_1 ,
	\end{cases} 
	\ee
	where $ \mu_1(E)=\mu\left( E\cap B_1\right) $. Such a function exists by, for example, the results in \cite{DMOP99}. By testing against $T_k(min(u_1,0))$ one can see that $\mu\geq 0$ implies that $u_1$ is nonnegative (see Remark 6.5 of \cite{PV08}). By Lemma \ref{trace_potential_estimate}, $u_1$ satisfies
	\[
	u_1\leq c(N,p)W_{1,p,N}^4\left[ 2 \mu_1\right] \leq c(N,p)W_{1,p,N}\left[ 2 \mu\right] 
	\]
	$cap_{1,p,N}-q.e.$ in $B_1$, where $c(N,p)$ is the constant in Theorem \ref{B-VHV14}. As observed in \eqref{identity_trace_potential}
	\[
	W_{1,p,N}\left[2  \mu\right] = W_{1-\frac{1}{p},p,N-1}\left[2 \mu\right] \mbox{ in } \pd\R^N_+
	\]
	so $u_1 \in L^q\left( B_1\cap\pd\R^N_+\right) $ and, by \eqref{bound_trace_potential}, 
	\[
	u_1 (x',x_N)\leq c(N,p)W_{1-\frac{1}{p},p,N-1}\left[2 \mu\right] (x')
	\]
	$cap_{1,p,N}-q.e.$ in $B_1$, and in the whole $\R^N$ if we extend the function by zero outside of $B_1$. Suppose $m>1$, $m\in \N$, and $u_m$ is a renormalized solution to
	\be\label{inductive_solution}
	\begin{cases}
		-\lap_pu_m= 2 u_{m-1}^q\mathcal{H} + 2 \mu_m & \mbox{ in } B_m \\
		u_m=0 & \mbox{ on } \pd B_m
	\end{cases}
	\ee
	where $ \mu_m(E)=\mu\left( E\cap B_m \right) $ and $u_{m-1}  \in L^q\left(\pd\R^N_+\right) $ is nonnegative, supported in $B_{m-1}$, and satisfies
	\[
	u_{m-1}(x',x_N)\leq \a_{m-1} W_{1-\frac{1}{p},p,N-1}\left[ 2\mu\right](x')  \mbox{ for every } (x',x_N)\in\O\cap B_{m-1}
	\]
	for some constant $\a_{m-1}$, where $\O$ is a set of the form $\O=\O_{1,m-1}\cap \left( \O_{2}\times \R\right) $ with $cap_{1,p,N}\left( \O_{1,m-1}^c\right) =0$ and $\O_{2}\subset \R^{N-1}$ the set where $W_{1-\frac{1}{p},p,N-1}\left[ \mu\right](x')$ is finite and condition \eqref{condition_potential} holds (note that $\abs{\O_2^c}=0$). Since the measure $ 2u_{m-1}^q\mathcal{H} + 2 \mu_m$ is nonnegative we have $u_m\geq 0$ and, again by Lemma \ref{trace_potential_estimate}, 
	\[
	u_m\leq c(N,p) W_{1,p,N}^{4m}\left[ 2 u_{m-1}^q\mathcal{H} + 2 \mu_m\right] \ cap_{1,p,N}-q.e. \mbox{ in } B_m.
	\]
	By definition of the Wolff potential one can see that 
	\[
	 W_{1,p,N}^{4m}\left[ 2 u_{m-1}^q\mathcal{H} + 2 \mu_m\right] \leq C(p)\left(  W_{1,p,N}^{4m}\left[2  u_{m-1}^q\mathcal{H} \right] +  W_{1,p,N}^{4m}\left[ 2 \mu_m\right]\right) 
	\] 
	where $C(p)=\max\left\lbrace 1,2^{\frac{2-p}{p-1}}\right\rbrace $. Therefore, we can use that $ u_{m-1}^q\mathcal{H}$ and $ \mu_m$ are supported in $\pd\R^N_+$, together with \eqref{bound_trace_potential}, the monotonicity of the Wolff potential, assumption \eqref{condition_potential}, and the induction hypothesis, to compute that
	\begin{align*}
	u_m (x',x_N)& \leq C(p)c(N,p)\left(  W_{1,p,N}^{4m}\left[ 2 u_{m-1}^q\mathcal{H} \right] +  W_{1,p,N}^{4m}\left[ 2 \mu_m\right]\right)(x',x_N) \\
		& \leq C(p)c2^{\frac{1}{p-1}}\left(  W_{1,p,N}^{4m}\left[  u_{m-1}^q\mathcal{H} \right] +  W_{1,p,N}^{4m}\left[  \mu_m\right]\right)(x',0) \\
		& \leq C(p)c2^{\frac{1}{p-1}}\left(  W_{1,p,N}\left[ \left( \a_{m-1} W_{1-\frac{1}{p},p,N-1}\left[2\mu\right] \right) ^q \mathcal{H}\right] + W_{1,p,N}\left[  \mu\right]\right)(x',0) \\
		&\leq C(p)c2^{\frac{1}{p-1}}\left(  \left( 2^\frac{1}{p-1} \a_{m-1}\right) ^{\frac{q}{p-1}}C_1 + 1 \right)  W_{1,p,N}\left[ \mu\right](x',0) \\
		&= C(p)c\left(  \left( 2^\frac{1}{p-1}\a_{m-1}\right) ^{\frac{q}{p-1}}C_1 + 1 \right)  W_{1-\frac{1}{p},p,N-1}\left[2 \mu\right] (x')
	\end{align*}
	for every $(x',x_N)\in \O\cap B_m$, where $\O=\O_{1,m}\cap \left( \O_2\times \R\right) $, $\O_2\subset \R^{N-1}$ is as described above, and $\O_{1,m}$ is the intersection of $\O_{1,m-1}$ with the set where the first inequality holds. Note that $cap_{1,p,N}\left( \O_{1,m}^c\right) =0$. Hence, by induction starting with $\a_1=c(N,p)$, we obtain a sequence of nonnegative functions $\left\lbrace u_m\right\rbrace _m \subset L^q\left( \pd\R^N_+\right) $ such that
	\[
	u_m(x',x_N)\leq \a_m W_{1-\frac{1}{p},p,N-1}\left[2\mu\right](x') \mbox{ in } \O\cap B_m,
	\]  
	with $\O$ as described above, and where 
	\[
	\a_m= C(p)c(N,p)\left(  \left( 2^\frac{1}{p-1}\a_{m-1}\right) ^{\frac{q}{p-1}}C_1 + 1 \right) .
	\]
	Since $C(p)\geq 1$, it is easy to show by induction that the assumption
	\[
	C_1\leq \left( \frac{q-p+1}{qc(N,p)C(p)2^\frac{1}{p-1}}\right) ^{\frac{q}{p-1}}\left( \frac{p-1}{q-p+1}\right) 
	\]
	implies that the sequence $\left\lbrace\a_m\right\rbrace _m$ satisfies
	\[
	\a_m\leq M:=\frac{qc(N,p)C(p)}{q-p+1} \mbox{ for all } m\in \N
	\]
	and so we obtain 
	\[
	u_m(x',x_N)\leq M W_{1-\frac{1}{p},p,N-1}\left[2\mu\right](x') \mbox{ in } \O\cap B_m .
	\] 
	Note that we may assume $u_{m-1}\leq u_m$ $cap_{1,p,N}-q.e.$ in $\R^N$. Indeed, assume $u_{m-1}$ is a solution of \eqref{inductive_solution} such that $u_{m-2}\leq u_{m-1}$ $cap_{1,p,N}-q.e.$ in $\R^N$. Set $\nu_m= 2 u_{m-1}\mathcal{H} + \mu_m$. Then, since $\nu_{m-1} \leq \nu_{m}$, Lemma \ref{modification_L.6.9_PV08} shows that we can obtain a renormalized solution $u_m$ of \eqref{inductive_solution} such that $u_{m-1}\leq u_{m}$ $a.e.$ in $B_{m-1}$. Extending by zero and using $cap_{1,p,N}-$ quasi-continuous representatives we conclude $u_{m-1}\leq u_m$ $cap_{1,p,N}-q.e.$ in $\R^N$.
	
	Now, since these solutions are nonnegative, we may identify them with their $p-$ superharmonic representatives and conclude $u_{m-1}\leq u_m$ everywhere in $\R^N$ (see Remark \ref{remark_on_cap_equality_with_psuperharmonic}). Then, by Lemma 7.3 of \cite{HKM} $u=\sup_mu_m$ defines a $p-$ superharmonic function which, by Theorem 10.9 of \cite{HKM}, is $cap_{1,p,N}-$ quasi-continuous in $\R^N$ (note that $u$ is finite in $\O$ and $\abs{\O^c\cap B_m}=0$ for every $m\in \N$). Moreover, it follows that $u\in L^q\left( \pd\R^N_+\right) $ and $u_m^q\goesto u^q$ in $L^1\left( \pd\R^N_+\right) $. Notice that $\left\lbrace u_m\right\rbrace _m$ is uniformly bounded in $L^q\left(\pd\R^N_+\right)$. Hence, by Lemma \ref{general_existence_of_limit_functions} $u$ satisfies properties $(1)$, $(2)$, and $(3)$ in the statement of that lemma. Note also that $u$ satisfies the desired estimate \eqref{estimate_target}.
	
	By Lemma \ref{general_passage_to_trace_nonlinearity_R^N}, to show that \eqref{condition_general_existence_global_solution} holds it is enough to have 
	\[\int_{\left\lbrace u_m\geq k\right\rbrace \cap B_{M} \cap \pd\R^N_+} u^q_m dx' + \int_{\left\lbrace u\geq k\right\rbrace \cap B_{M} \cap \pd\R^N_+} u^q dx' \goesto 0\]
	as $k\goesto 0$, uniformly in $m$. But this is clearly true since $u_m\uparrow u$ $a.e.$ in $B_M\cap\pd\R^N_+$ and $u^q\in L^1\left( \pd\R^N_+\right) $. Hence, we may apply Lemma \ref{general_existence_global_solution}, with $g_m=-2u_{m-1}^q$ and $g=-2u^q$, to conclude that $u$ is a local renormalized solution to 
	\[
	-\lap_pu=2 u^q \mathcal{H}+ 2 \mu \mbox{ in } \R^N .
	\]
	By Theorem \ref{symmetry} such a solution is symmetric, and so by applying Theorem \ref{thm_on_existence_from_symmetry} the result follows.
\end{proof}

Combining Theorems \ref{Sufficiency_supercritical_source}, \ref{Necessity_supercritical_source_I}, \ref{Necessity_supercritical_source_W}, and Remark \ref{remark_on_necessary_conditions} we obtain the following.
\begin{cor}\label{equivalence_supercritical_source}
		Let $1<p<N$, $p-1<q$ , and assume $\mu$ in $\mathfrak{M}_b\left( \pd\R^N_+\right)$ is nonnegative. Then the following are equivalent:
	\begin{enumerate}
		\item For some $\e>0$ there exists a nonnegative renormalized solution to
		\[
		\begin{cases}
			-\lap_pu=0 & \mbox{ in } \R^N_+ \\
			\abs{\nabla u}^{p-2}u_\n = \e\mu + u^q  & \mbox{ on } \pd\R^N_+
		\end{cases}
		\]
		satisfying
		\[
		u(x',x_N)\leq C(p,q,N,\e) W_{1-\frac{1}{p},p,N-1}\left[\mu\right](x') \mbox{ in } \O \cap \overline{\R^N_+},
		\] 
		where $\O=\O_1\cap \left( \O_2\times \R\right) $, $\O_2\subset \R^{N-1}$, with $cap_{1,p,N}\left( \O_1^c\right) =0$ and $\abs{\O_2^c}=0$.
		\item There exists $C>0$ such that for all balls $B\subset \pd\R^N_+\simeq \R^{N-1}$ 
		\[ 
		\int_B \left( I_{p-1,N-1}\left[ \mu_B\right] \right)^{\frac{q}{p-1}}dx' \leq C  \mu\left( B\right) 
		\]
		where $\mu_B$ is the restriction of $\mu$ to $B$. 
		\item There exists $C>0$ such that for all compact sets $K\subset \pd\R^N_+\simeq \R^{N-1}$
		\[
		\mu\left( K\right) \leq C cap_{I_{p-1},\frac{q}{q-(p-1)},N-1}\left( K\right) .
		\]
		\item There exists $C>0$ such that for all balls $B\subset \pd\R^N_+\simeq \R^{N-1}$
		\[
		\int_{\R^{N-1}} \left( W_{1-\frac{1}{p},p,N-1}\left[ \mu_B\right] \right)^{q}dx' \leq C  \mu\left( B\right) .
		\]
		\item $W_{1-\frac{1}{p},p,N-1}\left[ \mu\right] \in L^q\left( \pd\R^N_+\right) $ and 
		\[
		W_{1-\frac{1}{p},p,N-1}\left[ \left( W_{1-\frac{1}{p},p,N-1}\left[ \mu\right] \right) ^q\right] \leq C W_{1-\frac{1}{p},p,N-1}\left[ \mu\right] \ \mbox{ $a.e.$ in } \pd\R^N_+.
		\] 
	\end{enumerate}
\end{cor}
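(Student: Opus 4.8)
The plan is to assemble Corollary \ref{equivalence_supercritical_source} from the four results that precede it, organizing the five conditions into a cycle of implications. The implications (2) $\Leftrightarrow$ (3), (2) $\Rightarrow$ (4), and (4) $\Rightarrow$ (5) are already recorded: the equivalence of (2) and (3) is the equivalence of \eqref{testing_inequality_I} and \eqref{testing_inequality_cap_I} noted in Remark \ref{remark_on_necessary_conditions} (citing \cite{V99}); the implication (2) $\Rightarrow$ (4) is the chain ending in \eqref{testing_inequality_W} in that same remark (via \cite{VW98}, \cite{PV08}, and Proposition 5.1 of \cite{PV08}); and (4) $\Rightarrow$ (5) is precisely Theorem \ref{Necessity_supercritical_source_W} (which is stated as ``\eqref{testing_inequality_I} implies \eqref{condition_potential}'' but whose proof, as remarked there, only uses \eqref{testing_inequality_W}). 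So the first step is simply to cite these and close the loop $(2)\Rightarrow(4)\Rightarrow(5)$ and $(2)\Leftrightarrow(3)$.

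Next I would close the cycle using the analytic core of the chapter. The implication (5) $\Rightarrow$ (1) is Theorem \ref{Sufficiency_supercritical_source}: if $\mu$ satisfies \eqref{condition_potential} with constant $C_1$, then $\e\mu$ satisfies it with constant $\e^{q/(p-1)-1}C_1$ (since $W_{1-\frac1p,p,N-1}[\cdot]$ is homogeneous of degree $1/(p-1)$ and $q/(p-1)>1$), so choosing $\e$ small enough makes the smallness hypothesis on the constant hold; Theorem \ref{Sufficiency_supercritical_source} then produces a nonnegative renormalized solution to \eqref{eq_source} with datum $\e\mu$ satisfying the pointwise bound \eqref{estimate_target}, which rescaled is exactly the estimate in (1). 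Conversely, (1) $\Rightarrow$ (2) is Theorem \ref{Necessity_supercritical_source_I}: a nonnegative renormalized solution to $\abs{\nabla u}^{p-2}u_\n=\e\mu+u^q$ on $\pd\R^N_+$ forces $\e\mu$ to satisfy \eqref{testing_inequality_I}, and since \eqref{testing_inequality_I} is manifestly invariant under multiplying $\mu$ by a positive constant (both sides scale linearly in $\mu_B$ after absorbing $\e^{q/(p-1)}$ versus $\e$ into $C$), $\mu$ itself satisfies it. That completes the cycle $(1)\Rightarrow(2)\Rightarrow(3)$, $(2)\Rightarrow(4)\Rightarrow(5)\Rightarrow(1)$, hence all five are equivalent.

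The only genuine subtlety — and the place where care is needed — is the bookkeeping of constants under the scaling $\mu \mapsto \e\mu$, and the fact that (1) quantifies existentially over $\e$. For (5) $\Rightarrow$ (1) one must verify that the constant $C_1$ in (5), after the substitution $\mu\mapsto\e\mu$, can be driven below the explicit threshold $\left(\frac{q-p+1}{qc(N,p)C(p)2^{1/(p-1)}}\right)^{q/(p-1)}\left(\frac{p-1}{q-p+1}\right)$ appearing in Theorem \ref{Sufficiency_supercritical_source}; since that threshold is a fixed positive number and $\e^{q/(p-1)-1}\to 0$ as $\e\to 0^+$, this is immediate, but it should be stated. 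For (1) $\Rightarrow$ (2) one should note that the constant $C(N,p,q)$ in \eqref{testing_inequality_I} does not depend on $\e$ after renaming, because \eqref{testing_inequality_I} applied to $\e\mu$ reads $\e^{q/(p-1)}\int_B(I_{p-1,N-1}[\mu_B])^{q/(p-1)}\,dx' \le C(N,p,q)\,\e\,\mu(B)$, and $\e^{q/(p-1)}/\e = \e^{(q-p+1)/(p-1)} \le$ (bounded) only if $\e\le 1$, so one either restricts to $\e\le 1$ without loss of generality or simply absorbs $\e^{(q-p+1)/(p-1)}$ into the constant, since $\e$ is a fixed (if unknown) positive number for the purposes of condition (2). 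I expect no real obstacle here — the corollary is a synthesis — but the scaling accounting is the one spot where a careless statement could introduce a spurious $\e$-dependence into a constant that is meant to be absolute.

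Concretely, the write-up will be short: state ``$(1)\Rightarrow(2)$ follows from Theorem \ref{Necessity_supercritical_source_I} applied to $\e\mu$, using the scaling invariance of \eqref{testing_inequality_I}; $(2)\Leftrightarrow(3)$ and $(2)\Rightarrow(4)$ are contained in Remark \ref{remark_on_necessary_conditions}; $(4)\Rightarrow(5)$ is Theorem \ref{Necessity_supercritical_source_W} (whose proof uses only \eqref{testing_inequality_W}); and $(5)\Rightarrow(1)$ follows from Theorem \ref{Sufficiency_supercritical_source} applied to $\e\mu$ with $\e>0$ chosen so small that $\e^{\frac{q-p+1}{p-1}}C_1$ lies below the threshold constant, the rescaled estimate \eqref{estimate_target} giving the bound in $(1)$.'' That is the entire argument.
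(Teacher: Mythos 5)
Your argument takes exactly the same route as the paper: close a cycle $(1)\Rightarrow(2)\Leftrightarrow(3)$, $(2)\Rightarrow(4)\Rightarrow(5)\Rightarrow(1)$, citing Theorem \ref{Necessity_supercritical_source_I}, Remark \ref{remark_on_necessary_conditions}, Theorem \ref{Necessity_supercritical_source_W}, and Theorem \ref{Sufficiency_supercritical_source} in the same roles, with the existential quantifier over $\e$ in $(1)$ absorbed by the homogeneity of the Wolff potential. The one slip is quantitative: since $W_{1-\frac1p,p,N-1}[\,\cdot\,]$ is homogeneous of degree $\frac{1}{p-1}$, replacing $\mu$ by $\e\mu$ in \eqref{condition_potential} rescales the constant by $\e^{\frac{q}{(p-1)^2}-\frac{1}{p-1}} = \e^{\frac{q-(p-1)}{(p-1)^2}}$ (two applications of the homogeneity: once inside the outer $W$ raised to the power $\frac{1}{p-1}$, and once on the right-hand side), not by $\e^{\frac{q}{p-1}-1}$ as you wrote. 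Both exponents are positive, so your conclusion that the rescaled constant can be driven below the threshold of Theorem \ref{Sufficiency_supercritical_source} is unaffected, but the exponent itself should be corrected to match the paper's $\e^{\frac{q-(p-1)}{(p-1)^2}}$.
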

\begin{proof}
	We know $(1)$ implies $(2)$ from Theorem \ref{Necessity_supercritical_source_I}. We noted in Remark \ref{remark_on_necessary_conditions} that $(2)$ is equivalent with $(3)$ and implies $(4)$. That $(4)$ implies $(5)$ was shown in Theorem \ref{Necessity_supercritical_source_W}. Finally, suppose $(5)$ holds for some constant $C$. Then we see that for any $\e>0$ 
	\[
	W_{1-\frac{1}{p},p,N-1}\left[ \left( W_{1-\frac{1}{p},p,N-1}\left[ \e\mu\right] \right) ^q\right] \leq C \e^{\frac{q-(p-1)}{(p-1)^2}}W_{1-\frac{1}{p},p,N-1}\left[ \e\mu\right]
	\]
	 $a.e.$ in $\pd\R^N_+$, and so $(1)$ follows from Theorem \ref{Sufficiency_supercritical_source} provided $\e>0$ is chosen small enough.
\end{proof}

\section{Nonexistence for the subcritical case}

We now turn to the problem of nonexistence. 

Notice that when showing \eqref{testing_inequality_I}, in the proof of Theorem \ref{Necessity_supercritical_source_I}, we actually obtain
\[ 
\int_B \left( I_{p-1,N-1}\left[ \w_B\right] \right)^{\frac{q}{p-1}}dx' \leq C(N,p,q)  \w\left( B\right) 
\]
where $\w=2 \bar{u}^q\mathcal{H} + 2 \mu$. Note also that the argument could have been applied directly to a $p-$ superharmonic function $v$ solving $-\lap_p v=2 v^q\mathcal{H} + 2 \mu$ in $\R^N$. On the other hand, we obtained
\[
\sum_{Q\subset P} \left( \frac{\w(Q)}{\abs{Q}^{1-\frac{p-1}{N-1}}}\right) ^{\frac{q}{p-1}}\abs{Q} \leq C(N,p) \w(P) 
\]
for all dyadic cubes $Q,P\subset\pd\R^N_+$, which in the case $p=N$ implies
\[
\w(P)^{\frac{q}{N-1}}\abs{P}\leq C(N)\w\left( P\right).
\]
This last inequality cannot hold for a bounded nonnegative $\w$ defined in $\pd\R^N_+$ unless it is trivial. Hence, considering also the equivalences in Remark \ref{remark_on_necessary_conditions}, we can conclude the following.
\begin{cor}\label{improved_testing_inequality} 
	Let $1<p\leq N$ and $p-1<q$. Let $\mu$ in $\mathfrak{M}_b\left( \pd\R^N_+\right)$ be nonnegative and suppose $u$ is a nonnegative $p-$ superharmonic solution to $-\lap_p u=2 u^q\mathcal{H} + 2 \mu$ in $\R^N$. If $p<N$ then 
	\[
	\int_K u^qdx' + \mu\left( K\right) \leq C(N,p,q)cap_{I_{p-1},\frac{q}{q-(p-1)},N-1}\left( K\right) 
	\]
	for all compact sets $K\subset \pd\R^N_+$. If $p=N$ then $u(x',0)= 0$ $a.e.$ in $\pd\R^N_+$ and $\mu\equiv 0$.
\end{cor}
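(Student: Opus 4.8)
## Proof Proposal for Corollary \ref{improved_testing_inequality}

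The plan is to follow the argument of Theorem \ref{Necessity_supercritical_source_I} but to stop partway through, before the step that is specific to $p<N$, and to track the refined measure $\w = 2\bar u^q \mathcal{H} + 2\mu$ (rather than just $\mu$) throughout. First I would observe that the hypothesis already gives a $p$-superharmonic $u$ solving $-\lap_p u = \w$ in $\R^N$, so I do not need to pass through Remark \ref{renormalized_solutions_are_local}; instead I can invoke directly the lower Wolff potential bound from Remark \ref{remark_on_wolff_potential_estimate} (or Theorems 4.3.2 and 4.2.5 of \cite{Veron16}, as in the original proof) to get $W_{1,p,N}[\w] \leq C(N,p) u$ $cap_{1,p,N}$-q.e., hence $\mathcal{H}$-a.e. on $\pd\R^N_+$ via Proposition \ref{trace&extensionE}. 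The same dyadic estimate as in the proof of Theorem \ref{Necessity_supercritical_source_I} then yields, for every dyadic cube $P \subset \pd\R^N_+$,
\[
\sum_{Q\subset P}\left(\frac{\w(Q)}{\abs{Q}^{1-\frac{p-1}{N-1}}}\right)^{\frac{q}{p-1}}\abs{Q} \leq C(N,p)\,\w(P),
\]
where the sum runs over dyadic subcubes $Q$ of $P$ and $\abs{\cdot}$ is $(N-1)$-dimensional measure.

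For the case $1<p<N$ I would then continue exactly as in Theorem \ref{Necessity_supercritical_source_I}: the displayed inequality passes through Theorem 3 of \cite{V96} and duality to give the testing inequality \eqref{testing_inequality_I} with $\mu$ replaced by $\w$, which by the equivalence recorded in Remark \ref{remark_on_necessary_conditions} (proved in \cite{V99}) is equivalent to $\w(K) \leq C(N,p,q)\,cap_{I_{p-1},\frac{q}{q-(p-1)},N-1}(K)$ for all compact $K \subset \pd\R^N_+$. Since $\w \geq 2\bar u^q \mathcal{H} + 2\mu \geq \bar u^q \mathcal{H} + \mu$ on $\pd\R^N_+$ (absorbing the constant $2$ into $C$), this gives $\int_K u^q\, dx' + \mu(K) \leq C(N,p,q)\,cap_{I_{p-1},\frac{q}{q-(p-1)},N-1}(K)$, which is the claimed inequality. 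Here I use that $u(x',0) = \bar u(x',0)$ and that $u$ is $\mathcal{H}$-measurable.

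For the case $p = N$ the exponent $1 - \frac{p-1}{N-1} = 1 - 1 = 0$, so taking only the term $Q = P$ in the sum above collapses the estimate to
\[
\w(P)^{\frac{q}{N-1}}\abs{P} \leq C(N)\,\w(P)
\]
for every dyadic cube $P \subset \pd\R^N_+$. I would then argue that this forces $\w$ to be trivial: if $\w$ were not identically zero there would be a dyadic cube $P$ with $0 < \w(P) < \infty$ (finiteness since $\mu$ is bounded and $u^q \in L^1_{loc}(d\mathcal{H})$, which one gets from $u^q \mathcal{H}$ being part of the Radon measure $\w$), and then dilating $P$ to dyadic subcubes $P'$ of ever smaller side length, for which $\w(P') \to 0$ but $\w(P') \leq \w(P)$, one derives $\w(P') \geq (\abs{P'}/C(N))^{\frac{N-1}{q - (N-1)}} \cdot (\text{something})$ — more carefully, rewrite the inequality as $\abs{P} \leq C(N)\,\w(P)^{1 - \frac{q}{N-1}} = C(N)\,\w(P)^{-\frac{q-(N-1)}{N-1}}$, and since $q > p - 1 = N-1$ the exponent is negative, so $\w(P) \leq C(N)\abs{P}^{-\frac{N-1}{q-(N-1)}}$, which blows up as $\abs{P} \to 0$ while $\w(P)$ must stay bounded by $\w$ of any fixed containing cube; playing the two bounds against nested cubes, or simply noting that for the smallest-scale cubes the required lower bound on $\w(P)$ exceeds the total mass of $\w$ in a fixed large cube, yields a contradiction unless $\w \equiv 0$ on $\pd\R^N_+$. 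Hence $u^q \mathcal{H} \equiv 0$ and $\mu \equiv 0$, i.e. $u(x',0) = 0$ a.e. in $\pd\R^N_+$ and $\mu \equiv 0$.

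The main obstacle I anticipate is making the $p = N$ contradiction argument fully rigorous: one must be careful that the estimate $\w(P)^{q/(N-1)}\abs{P} \leq C(N)\w(P)$ holds for \emph{all} dyadic cubes (including arbitrarily small ones), check that $\w$ restricted to $\pd\R^N_+$ is genuinely a locally finite measure so that $\w(P)$ is finite, and then extract the contradiction cleanly — the cleanest route is probably to fix a large cube $P_0$, note $\w(P) \leq \w(P_0) =: m < \infty$ for all dyadic $P \subset P_0$, and observe the inequality gives $m^{q/(N-1)}\abs{P} \geq \w(P)^{q/(N-1)}\abs{P}$... which goes the wrong way, so instead one rearranges to $\w(P) \geq (\abs{P}/C(N))^{(N-1)/(q-(N-1))}$ whenever $\w(P) > 0$; summing this over the $\abs{P_0}/\abs{P}$ disjoint dyadic subcubes of $P_0$ of a fixed small scale on which $\w$ is positive, against $\sum \w(P) \leq m$, forces all but finitely many such cubes to have $\w(P) = 0$, and letting the scale shrink shows $\w(P_0) = 0$; since $P_0$ was an arbitrary large cube, $\w \equiv 0$ on $\pd\R^N_+$. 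I would present this last sub-argument carefully since it is the only genuinely new piece.
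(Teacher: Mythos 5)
Your overall strategy is exactly the paper's: rerun the proof of Theorem \ref{Necessity_supercritical_source_I} tracking $\w=2u^q\mathcal{H}+2\mu$ instead of $\mu$, obtain the dyadic inequality $\sum_{Q\subset P}\bigl(\w(Q)/\abs{Q}^{1-\frac{p-1}{N-1}}\bigr)^{\frac{q}{p-1}}\abs{Q}\leq C\w(P)$, and for $p<N$ pass through \cite{V96} and the equivalence of Remark \ref{remark_on_necessary_conditions} to get the capacitary bound for $\w$, hence for $u^q\mathcal{H}+\mu$. That part is correct and is what the paper does.

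The gap is in the $p=N$ case, precisely the sub-argument you single out as the only genuinely new piece (the paper merely asserts that $\w(P)^{\frac{q}{N-1}}\abs{P}\leq C(N)\w(P)$ forces $\w$ trivial). From this inequality, whenever $\w(P)>0$ you may divide by $\w(P)$ to get $\w(P)^{\frac{q-(N-1)}{N-1}}\leq C(N)/\abs{P}$, i.e.\ the \emph{upper} bound $\w(P)\leq \left(C(N)/\abs{P}\right)^{\frac{N-1}{q-(N-1)}}$ — which you in fact write down correctly at one point. It cannot be ``rearranged'' into the lower bound $\w(P)\geq \left(\abs{P}/C(N)\right)^{\frac{N-1}{q-(N-1)}}$ on which your final summation-over-small-disjoint-subcubes argument rests; that step reverses the inequality, and without a lower bound the small-cube strategy yields nothing (summing the correct upper bound over the $\abs{P_0}/\abs{P}$ subcubes of scale $\abs{P}$ gives a total that blows up as $\abs{P}\goesto 0$). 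The argument should instead go to \emph{large} cubes: if $\w(P_0)>0$ for some dyadic $P_0$, then every dyadic $P\supset P_0$ satisfies $\w(P)\geq \w(P_0)>0$ together with $\w(P)\leq \left(C(N)/\abs{P}\right)^{\frac{N-1}{q-(N-1)}}\goesto 0$ as $\abs{P}\goesto\infty$ (the exponent is positive since $q>N-1$), a contradiction. Hence $\w$ vanishes on every dyadic cube and so $\w\equiv 0$ on $\pd\R^N_+$, giving $u(x',0)=0$ $a.e.$ and $\mu\equiv0$. With this one-line repair the proposal is complete.
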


Since $cap_{I_{p-1},\frac{q}{q-(p-1)},N-1}\equiv 0$ whenever $\frac{(p-1)q}{q-(p-1)}\geq N-1$ (see \cite{AdamsHedberg}) we have the following Liouville-type theorem for subcritical problems with source.

\begin{thm}\label{Liouville_Theorem}
	Let $1<p\leq N$, $p-1<q$, and $\mu\in \mathfrak{M}_b\left( \pd\R^N_+\right) $ nonnegative. If $p=N$, or $p<N$ and $q\leq \frac{(N-1)(p-1)}{N-p}$, then there are no nontrivial nonnegative $p-$ superharmonic solutions of  $-\lap_p u=2 u^q\mathcal{H} + 2 \mu$ in $\R^N$. In particular, there are no nontrivial nonnegative renormalized solutions of \eqref{eq_source}. 
\end{thm}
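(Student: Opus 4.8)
The plan is to obtain Theorem \ref{Liouville_Theorem} as an essentially immediate consequence of Corollary \ref{improved_testing_inequality} together with the standard fact that the relevant Riesz capacity is identically zero in the subcritical range.

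Suppose, for contradiction, that $u$ is a nontrivial nonnegative $p$-superharmonic solution of $-\lap_p u = 2u^q\mathcal{H} + 2\mu$ in $\R^N$, and apply Corollary \ref{improved_testing_inequality}. When $p=N$ the corollary gives directly that $u(x',0)=0$ for $\mathcal{H}$-a.e.\ $x'\in\pd\R^N_+$ and that $\mu\equiv 0$. When $p<N$ I would invoke the fact (see \cite{AdamsHedberg}) that $cap_{I_\a,s,d}\equiv 0$ on compact sets whenever $\a s\ge d$; here $\a=p-1$, $s=\frac{q}{q-(p-1)}$, $d=N-1$, and since $q>p-1$ one has
\[
(p-1)\,\frac{q}{q-(p-1)}\ge N-1 \iff (p-1)q\ge (N-1)\bigl(q-(p-1)\bigr)\iff q\le \frac{(N-1)(p-1)}{N-p},
\]
which is precisely the assumed subcritical range. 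Hence $cap_{I_{p-1},\frac{q}{q-(p-1)},N-1}(K)=0$ for every compact $K\subset\pd\R^N_+$, and the estimate of Corollary \ref{improved_testing_inequality} forces $\int_K u^q\,dx'+\mu(K)\le 0$ for all such $K$; as $u\ge0$ and $\mu\ge0$ this again yields $u(x',0)=0$ $\mathcal{H}$-a.e.\ and $\mu\equiv0$.

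In either case the right-hand side $2u^q\mathcal{H}+2\mu$ is the zero measure, so $u$ is a nonnegative $p$-superharmonic function with vanishing Riesz measure, i.e.\ $u$ is $p$-harmonic — in particular continuous — on all of $\R^N$. Being continuous and vanishing $\mathcal{H}$-a.e.\ on $\pd\R^N_+$, $u$ vanishes at every point of the hyperplane, and these are interior points of $\R^N$; since $u\ge0$, the strong minimum principle for $p$-harmonic functions (see \cite{HKM}) forces $u\equiv0$, contradicting nontriviality. For the final assertion: if $u$ were a nontrivial nonnegative renormalized solution of \eqref{eq_source} (so that $g(u)=-u^q$), then Remark \ref{renormalized_solutions_are_local} shows that the even reflection $\bar u$ of $u$ across $\pd\R^N_+$ is a local renormalized solution of $-\lap_p\bar u=2\mu+2u^q\mathcal{H}$ in $\R^N$; this datum is a nonnegative bounded Radon measure because condition (4) of Definition \ref{defn} gives $u^q\in L^1(\pd\R^N_+)$, so by Remark \ref{remark_on_p_superharmonic_representative} $\bar u$ coincides a.e.\ with a $p$-superharmonic function solving the same equation, whose boundary trace agrees with that of $\bar u$ by Remark \ref{remark_on_cap_equality_with_psuperharmonic}. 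The first part then gives $\bar u\equiv0$, hence $u\equiv0$.

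The main obstacle is not any single hard estimate — all the quantitative content has already been packaged into Corollary \ref{improved_testing_inequality} (via Theorem \ref{Necessity_supercritical_source_I} and the dyadic/capacity machinery of \cite{PV08} and \cite{V96}) — but rather the bookkeeping: one must read off the subcritical threshold correctly from the capacity–exponent inequality, and ensure that the passage from a renormalized solution of the Neumann problem to a $p$-superharmonic solution of the reflected equation keeps track of the nonnegativity of the datum and of the boundary trace, which is exactly where the relevant remarks are used.
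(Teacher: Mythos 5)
Your proposal is correct and follows essentially the same route as the paper: reduce via Corollary \ref{improved_testing_inequality} and the vanishing of $cap_{I_{p-1},\frac{q}{q-(p-1)},N-1}$ in the range $\frac{(p-1)q}{q-(p-1)}\geq N-1$ (equivalently $q\leq \frac{(N-1)(p-1)}{N-p}$) to the statement that a nonnegative $p$-harmonic function on $\R^N$ with trace vanishing $a.e.$ on $\pd\R^N_+$ is trivial, and then conclude by Harnack (the paper uses the Wolff-potential form $u(x)\leq C\inf_{B_M}u$ from Remark \ref{remark_on_wolff_potential_estimate}, which is the same strong minimum principle you invoke). The passage from a renormalized solution of \eqref{eq_source} to a $p$-superharmonic solution of the reflected equation via Remarks \ref{renormalized_solutions_are_local} and \ref{remark_on_p_superharmonic_representative} is also exactly the paper's reduction.
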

\begin{proof}
	Since every nonnegative local renormalized solution coincides $a.e.$ with a $p-$ superharmonic solution of the same equation (see Remark \ref{remark_on_p_superharmonic_representative}), by Remark \ref{renormalized_solutions_are_local}, the hypothesis, and the previous corollary, we see that is enough to show that there are no nontrivial nonnegative $p-$ superharmonic solutions of $-\lap_pu=0$ in $\R^N_+$ whose trace vanishes $a.e.$ in $\pd\R^N_+$. As noted in Remark \ref{remark_on_wolff_potential_estimate}, any such solution $u$ satisfies 
	\[
	u(x)\leq C \inf_{B_M}u
	\]
	in $B_M$ for any $M>0$, and so $u\equiv0$. 

\end{proof}

\section{Characterization of removable sets}

In this section we obtain a characterization of removable sets for problem \eqref{eq_source} when $\mu\equiv0$. In order to properly define removable sets we first define what does it mean to have a renormalized solution up to a portion of the boundary. We give a definition which is a natural variant of definition \ref{defn}.

\begin{defn}\label{defn_solution_on_portion_of_boundary}
	Let $1<p\leq N$ and $p-1<q$. Given $K\subset \pd\R^N_+$ compact, a \textit{renormalized solution} of 
	\be\label{equation_on_portion_of_boundary}
	\begin{cases}
		-\lap_pu=0 & \mbox{ in } \R^N_+ \\
		\abs{\nabla u}^{p-2}u_\n = \abs{u}^{q-1}u & \mbox{ on } \pd\R^N_+\setminus K
	\end{cases}
	\ee
	is a function $u$ defined in $\R^N_+$ such that:
	\begin{enumerate}
		\item $u$ is measurable, finite $a.e.$, and $T_k(u)\in W^{1,p}_{loc}\left( \R^N_+\right) $ for all $k>0$;
		\item $\abs{\nabla u}^{p-1}\in L_{loc}^s\left( \R^N_+\right) $ for all $1\leq s<\frac{N}{N-1}$;
		\item $\abs{u}^{p-1}\in L_{loc}^s\left( \R^N_+\right) $ for all $1<s<\frac{N}{N-p}$ ($1<s<\infty$ if $p=N$);
		\item $u$ is finite $a.e.$ in $\pd\R^N_+\setminus K$, and $u\in L^q\left( \O\right) $ for any closed set $\O\subset \pd\R^N_+$ such that $\O\subset K^c$;
		\item there holds
		\[
		\int_{\R^N_+}\abs{\nabla u}^{p-2}\nabla u\cdot\nabla w dx = \int_{\pd\R_+^N\setminus K} \abs{u}^{q-1}u w  dx' 
		\]
		for all $w\in W^{1,p}\left( \R^N_+\right) $ compactly supported in $\overline{\R^N_+}\setminus K$, whose trace belongs to $L^\infty\left( \pd\R^N_+\setminus K\right) $, and satisfying the following condition: there exists $k>0$, $r>N$, and functions $w^{\pm\infty}\in W^{1,r}\left( \R^N_+\right) $ such that
		\[
		\begin{cases}
		w=w^{+\infty} & \mbox{ $a.e.$ in } \left\lbrace x\in \R^N_+ \ : \ u>k\right\rbrace \\
		w=w^{-\infty} & \mbox{ $a.e.$ in } \left\lbrace x\in \R^N_+ \ : \ u<-k\right\rbrace  .
		\end{cases} 
		\]
	\end{enumerate}
\end{defn}

\begin{remark}
	We note that, just as in Remark \ref{remark_quasicontinuity}, it makes sense to talk about the boundary values of $u$ in $\pd\R^N_+\setminus K$. 
\end{remark}

Now we define removable sets.

\begin{defn}
	We say that a compact set $K\subset\pd\R^N_+$  is \textit{removable} for \eqref{equation_on_portion_of_boundary} if every nonnegative renormalized solution of \eqref{equation_on_portion_of_boundary} is a nonnegative renormalized solution of 
	\be\label{equation_removed}
	\begin{cases}
		-\lap_pu=0 & \mbox{ in } \R^N_+ \\
		\abs{\nabla u}^{p-2}u_\n = u^q & \mbox{ on } \pd\R^N_+ .
	\end{cases} 
	\ee
\end{defn}

We have the following characterization of removable sets.

\begin{thm}\label{removability_criterion}
	If $1<p<N$ and $q> \frac{(N-1)(p-1)}{N-p}$ then a compact set $K\subset\pd\R^N_+$ is removable for \eqref{equation_on_portion_of_boundary} if and only if $cap_{I_{p-1},\frac{q}{q-(p-1)},N-1}\left( K\right) = 0$.
\end{thm}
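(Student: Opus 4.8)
The statement asserts a two-way equivalence, so I would prove each direction separately. The ``if'' direction (zero capacity $\Rightarrow$ removable) is the substantive one. Suppose $cap_{I_{p-1},\frac{q}{q-(p-1)},N-1}(K)=0$ and let $u$ be a nonnegative renormalized solution of \eqref{equation_on_portion_of_boundary}. The natural approach is to show that $u^q\mathcal{H}$ extends to a bounded measure on all of $\pd\R^N_+$ (equivalently, that $u\in L^q(\pd\R^N_+)$) and that $u$ in fact satisfies the boundary condition across $K$ as well. First I would pass, via even reflection as in Remark \ref{renormalized_solutions_are_local}, to the associated local renormalized solution $\bar u$ of $-\lap_p\bar u = 2u^q\mathcal{H}$ in $\R^N\setminus K$ (where $K$ is viewed as a compact subset of $\R^N$ lying in $\pd\R^N_+$). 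Since $\bar u$ is nonnegative it coincides $a.e.$ with a nonnegative $p$-superharmonic function (Remark \ref{remark_on_p_superharmonic_representative}), which I will also call $\bar u$. The key is to show $K$ carries no mass for the Riesz measure of $\bar u$, i.e. that $-\lap_p\bar u$ does not charge $K$. For this I would use the capacity hypothesis together with the removability theory for $p$-superharmonic functions (the relevant statements in \cite{HKM}, and the comparison of $cap_{I_{p-1},\cdot,N-1}$ in $\pd\R^N_+$ with $cap_{1,p,N}$ in $\R^N$ via Proposition \ref{trace&extensionE} and Lemma \ref{L_trace_characteristic}): a set of zero $cap_{1,p,N}$-capacity is removable for bounded $p$-superharmonic functions, and more generally any measure concentrated on such a set but absolutely continuous with respect to capacity must vanish. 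The subtlety is that $K$ need not have zero $cap_{1,p,N}$-capacity in $\R^N$ — it is the lower-dimensional Riesz capacity that vanishes — so I expect to argue instead that the necessary condition from Corollary \ref{improved_testing_inequality}, applied on $\pd\R^N_+\setminus K$ and then letting the excluded neighborhood shrink, forces $u^q\mathcal{H}+(\text{any boundary part of }-\lap_p\bar u)$ to be absolutely continuous with respect to $cap_{I_{p-1},\frac{q}{q-(p-1)},N-1}$, hence to put no mass on $K$.

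More concretely, the plan for the ``if'' direction is: (i) establish that $u$, a priori only in $L^q$ on closed subsets of $K^c$, actually lies in $L^q(\pd\R^N_+)$, by exhausting $\pd\R^N_+\setminus K$ by an increasing sequence of compact sets and using the Wolff-potential estimate of Remark \ref{remark_on_wolff_potential_estimate} together with the testing inequality of Corollary \ref{improved_testing_inequality} to get a uniform bound $\int_{\pd\R^N_+\setminus V}u^q\,dx'\le C$ independent of the neighborhood $V\supset K$; (ii) conclude $u^q\mathcal{H}$ is a bounded measure on $\pd\R^N_+$; (iii) show the boundary part of $-\lap_p\bar u$ equals exactly $2u^q\mathcal{H}$ with no extra singular piece on $K$: here one tests the equation with functions of the form $w\eta_n$, where $\eta_n$ are cutoffs equal to $0$ near $K$ and $1$ away from it, built from the capacity hypothesis so that $\nabla\eta_n\to 0$ in the appropriate norm (this is the standard ``capacitary cutoff'' construction, using that $cap_{I_{p-1},\frac{q}{q-(p-1)},N-1}(K)=0$, lifted to $\R^N$); (iv) pass to the limit in $n$ to recover the full weak formulation of \eqref{equation_removed}, using that $u^q\in L^1(\pd\R^N_+)$ to control the boundary term. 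Finally I would check the renormalized formulation (condition (5) of Definition \ref{defn}) is inherited — this follows because the level-set estimates and the local regularity of $u$ are unaffected by the lower-dimensional set $K$, so conditions (1)--(4) of Definition \ref{defn} hold automatically, and (5) is what the limiting procedure in (iii)--(iv) delivers.

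For the converse (removable $\Rightarrow$ zero capacity), I would argue by contraposition: if $cap_{I_{p-1},\frac{q}{q-(p-1)},N-1}(K)>0$, then by the equivalences in Corollary \ref{equivalence_supercritical_source} (specifically the equivalence of (1) and (3) there) there is a nonzero nonnegative measure $\mu$ supported on $K$ with $\mu(E)\le C\,cap_{I_{p-1},\frac{q}{q-(p-1)},N-1}(E)$ for all compact $E$ — e.g. a suitable capacitary measure of $K$ — and for $\e$ small enough Theorem \ref{Sufficiency_supercritical_source} produces a nonnegative renormalized solution $u$ of the problem with datum $\e\mu + u^q$ on $\pd\R^N_+$. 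This $u$ solves \eqref{equation_on_portion_of_boundary} (since $\mu$ is concentrated on $K$, on $\pd\R^N_+\setminus K$ the equation reads $\abs{\nabla u}^{p-2}u_\n = u^q$), it is finite and $L^q$ away from $K$, but it is \emph{not} a solution of \eqref{equation_removed} because its reflection satisfies $-\lap_p\bar u = 2u^q\mathcal{H}+2\e\mu$ with $\e\mu\not\equiv 0$ supported on $K$; concretely, $u\notin L^q(\pd\R^N_+)$ near $K$, or more cleanly, the boundary measure of $u$ charges $K$, contradicting removability. I expect the main obstacle to be step (iii) of the ``if'' direction — showing no singular boundary mass survives on $K$ — because it requires combining the lower-dimensional capacitary cutoff construction with the already-established $L^q$ bound in just the right way, and one must be careful that the cutoffs $w\eta_n$ remain admissible test functions in the sense of Definition \ref{defn_solution_on_portion_of_boundary} (compact support in $\overline{\R^N_+}\setminus K$, bounded trace, correct behavior on $\{u>k\}$). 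A secondary technical point is justifying the uniform $L^q$ bound in step (i) uniformly over shrinking neighborhoods of $K$, which relies on the fact that the constant in Corollary \ref{improved_testing_inequality} depends only on $N,p,q$.
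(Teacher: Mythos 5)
Your converse direction is essentially the paper's: take the capacitary measure of $K$, observe it satisfies the testing inequality, and invoke Corollary \ref{equivalence_supercritical_source} to manufacture a solution of \eqref{equation_on_portion_of_boundary} whose reflection charges $K$. That is correct.

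The gap is in the ``if'' direction, and it stems from a premise you flag as a ``subtlety'' but is in fact false: you write that ``$K$ need not have zero $cap_{1,p,N}$-capacity in $\R^N$ — it is the lower-dimensional Riesz capacity that vanishes,'' and then design steps (iii)--(iv) (a capacitary-cutoff argument built from the $(N-1)$-dimensional fractional Riesz capacity, lifted to $\R^N$) to work around a difficulty that does not exist. In the supercritical regime $q>\frac{(N-1)(p-1)}{N-p}$ one has $(p-1)\cdot\frac{q}{q-(p-1)}<N-1$ while still $(p-1)\cdot\frac{q}{q-(p-1)}>p-1$, and the paper uses the comparison theorems (Theorems 5.1.4 and 5.5.1 of \cite{AdamsHedberg}) to deduce that $cap_{I_{p-1},\frac{q}{q-(p-1)},N-1}(K)=0$ forces $cap_{1-\frac{1}{p},p,N-1}(K)=0$, and hence $cap_{1,p,N}(K)=0$ via Proposition \ref{trace&extensionE}. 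This is the crucial step: once $K$ has zero $cap_{1,p,N}$ capacity in the ambient space, the $p$-superharmonic representative of $\bar u$ extends across $K$ (Theorem 4.3.6 of \cite{Veron16}) as a $p$-superharmonic function with a well-defined Riesz measure $\mu$, one checks $\mu\geq 2u^q\mathcal{H}$ by Fatou (testing with cutoffs $\phi_n\uparrow\phi$ supported away from $K$), identifies $\mu=2u^q\mathcal{H}+\mu^K$ with $\mu^K$ concentrated on $K$, and then Corollary \ref{improved_testing_inequality} gives $\mu^K(K)\leq C\,cap_{I_{p-1},\frac{q}{q-(p-1)},N-1}(K)=0$. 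Your alternative scheme of constructing cutoffs ``so that $\nabla\eta_n\to 0$ in the appropriate norm'' from a lower-dimensional fractional capacity, and of applying Corollary \ref{improved_testing_inequality} on shrinking punctured neighborhoods of $K$, is not something the paper's toolbox supports as stated: Corollary \ref{improved_testing_inequality} is proved for $p$-superharmonic solutions on all of $\R^N$, not on $\R^N\setminus K$, and a capacitary-cutoff construction in $\R^N$ would itself need zero $cap_{1,p,N}$ capacity of $K$ — which is precisely the fact you set out to avoid proving. In short, you missed that supercriticality of $q$ upgrades the vanishing of the Riesz capacity in $\R^{N-1}$ to the vanishing of $cap_{1,p,N}(K)$ in $\R^N$; that single observation is the hinge of the paper's proof.
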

\begin{proof}
	Let $u$ be a renormalized solution to \eqref{equation_on_portion_of_boundary} and suppose $cap_{I_{p-1},\frac{q}{q-(p-1)},N-1}\left( K\right)$ is equal to zero. Since $\frac{q(p-1)}{q-(p-1)}<N-1$ we can combine Theorems 5.1.4 and 5.5.1 of \cite{AdamsHedberg} to conclude that $cap_{1-\frac{1}{p},p,N-1}\left(K\right) =0$. Let $\bar{u}$ be the extension of $u$ to $\R^N$ by even reflection. Then $\bar{u}$ is a local renormalized solution to $-\lap_p\bar{u}=2 u^q\mathcal{H}$ in $\R^N\setminus K$. By Proposition \ref{trace&extensionE} $cap_{1,p,N}\left( K\right) =0$ and by Theorem 4.3.6 of \cite{Veron16} this implies that the $p-$ superharmonic representative of $\bar{u}$ can be extended to $\R^N$ as a nonnegative $p-$ superharmonic function. By Remark \ref{remark_on_cap_equality_with_psuperharmonic}, this $p-$ superharmonic representative coincides $cap_{1,p,N}-q.e.$ with $u$ in $\overline{\R^N_+}$. Let $\mu$ be the Radon measure associated to $\bar{u}$, i.e., the measure such that $-\lap_p \bar{u}=\mu$ in $\D'\left( \R^N\right)$. Let us show that $\mu= 2 u^q\mathcal{H}$.
	
	Take $\phi\in C_0^\infty \left( \R^N\right)$ nonnegative and let $\phi_n$ be such that $0\leq \phi_n\leq \phi$, $\phi_n\in C_0^\infty\left( \R^N\setminus K\right) $, and $\phi_n\goesto \phi$ point-wise in $\R^N\setminus K$. Note in particular that $\phi_n\goesto\phi$ $\mathcal{H}-$$a.e.$. Hence, by Fatou's Lemma,
	\begin{align*}
	\int_{\pd\R^N_+}2u^q\phi dx' &= \int_{\R^N}  2u^q\phi d\mathcal{H} \\
						&\leq \liminf_{n\goesto\infty}\int_{\R^N}2u^q\phi_nd\mathcal{H}\\
						&=\liminf_{n\goesto\infty}\int_{\R^N}\abs{\nabla \bar{u}}^{p-2}\nabla \bar{u} \cdot\nabla \phi_n dx \\
						&=\liminf_{n\goesto\infty}\int_{\R^N} \phi_nd\mu \\
						&\leq \int_{\R^N} \phi d\mu 
	\end{align*}
	and so we conclude $u^q\in L^1\left( \pd \R^N_+\right) $ and $\mu\geq 2 u^q\mathcal{H}$ in $\D'\left( \R^N\right) $ (recall that $u$ satisfies $(4)$ of Definition \ref{defn_solution_on_portion_of_boundary}). It follows at once from considering the equations solved by $\bar{u}$ that in fact $\mu=2 u^q\mathcal{H}$ in $\R^N\setminus K$. Then, setting $\mu^K=\mu-2 u^q\mathcal{H}$ we have that $\bar{u}$ is a $p-$ superharmonic solution of 
	\[
	-\lap_p\bar{u} = 2 u^q \mathcal{H}+ \mu^K \mbox{ in } \R^N
	\]
	where the measure $\mu^K$ is supported in $K$ (and hence bounded). Then, by Corollary \ref{improved_testing_inequality}, 
	\[
	\mu^K\left( K\right) \leq C cap_{I_{p-1},\frac{q}{q-(p-1)},N-1}\left( K\right) = 0
	\]
	and so $\mu^K\equiv 0$. By Theorem 4.3.4 of \cite{Veron16}, $\bar{u}$ is a local renormalized solution to $-\lap_p\bar{u} = 2 u^q \mathcal{H}$ in $\R^N$, and so, by Theorem \ref{thm_on_existence_from_symmetry}, the restriction of $\bar{u}$ to $\R^N_+$ is a renormalized solution of \eqref{equation_removed}.

	For the converse, suppose $cap_{I_{p-1},\frac{q}{q-(p-1)},N-1}\left( K\right)>0$. We let $\mu$ be the capacitary measure of $K$ (see Theorem 2.5.3 of \cite{AdamsHedberg}) and extend it to $\pd\R^N_+$ by setting $\mu\left( A\right) =\mu\left( A\cap K\right) $. By Theorem 2.5.5 of \cite{AdamsHedberg} we see that $\mu$ satisfies \eqref{testing_inequality_cap_I} and so, by Corollary \ref{equivalence_supercritical_source}, there exists a renormalized solution $u$ of \eqref{eq_source} with measure $\e\mu$ for some $\e>0$. Since $\mu$ is concentrated in $K$, $u$ is also a solution of \eqref{equation_on_portion_of_boundary} and thus $K$ is not removable.
\end{proof}

\newpage

\addcontentsline{toc}{chapter}{Bibliography}
\bibliography{Biblio} 
\bibliographystyle{abbrv}

\end{document}